\def \R {{\mathbb R}}
\def \C {{\mathbb C}}
\def \GTNN {{Gr^{\mbox{\tiny TNN}} (k,n)}}
\def \S {{\mathcal S}_{\mathcal M}^{\mbox{\tiny TNN}}}
\numberwithin{equation}{section}
\newtheorem{theorem}{Theorem}[section]
\newtheorem{corollary}[theorem]{Corollary}
\newtheorem{lemma}[theorem]{Lemma}
\newtheorem{proposition}[theorem]{Proposition}
\newtheorem{remark}[theorem]{Remark}
\newtheorem{definition}[theorem]{Definition}
\newtheorem{notation}[theorem]{Notation}
\newtheorem{example}[theorem]{Example}
\newtheorem{conjecture}[theorem]{Conjecture}
\title[Geometric relations on plabic graphs]{Geometric nature of relations on plabic graphs and totally non-negative  Grassmannians}
\author{Simonetta Abenda}
\address{Dipartimento di Matematica and Alma Mater Research Center on Applied Mathematics, Universit\`a di Bologna, Italy\\ INFN, sez. di Bologna, Italy}
\email{simonetta.abenda@unibo.it}
\author{Petr G. Grinevich}
\address {Steklov Mathematical Institute of Russian Academy of Sciences, Moscow, Russia\\
L.D.Landau Institute for Theoretical Physics, Chernogolovka, Russia\\
Lomonosov Moscow State University, Faculty of Mechanics and Mathematics, Moscow, Russia}
\email{pgg@landau.ac.ru}
\thanks{This research has been partially supported by GNFM-INDAM and RFO University of Bologna.
The work of P.G. Grinevich was performed at the Steklov International Mathematical Center and supported by the Ministry of Science and Higher Education of the Russian Federation (agreement no. 075-15-2019-1614). It was also partially supported by the Russian Foundation for Basic Research, grant 20-01-00157.}
\begin{document}

\begin{abstract}
 The standard parametrization of totally non-negative Grassmannians was obtained by A. Postnikov \cite{Pos} introducing the boundary measurement map in terms of discrete path integration on planar bicoloured (plabic) graphs in the disc. An alternative parametrization was proposed by T. Lam \cite{Lam2} introducing systems of relations at the vertices of such graphs, depending on some signatures defined on their edges. The problem of characterizing the signatures corresponding to the totally non-negative cells, was left open in \cite{Lam2}. In our paper we provide an explicit construction of such signatures, satisfying both the full rank condition and the total non--negativity property on the full positroid cell. If the graph $\mathcal G$ satisfies the following natural constraint: each edge belongs to some oriented path from the boundary to the boundary, then such signature is unique up to a vertex gauge transformation. 

  Such signature is uniquely identified by geometric indices (local winding and intersection number) ruled by the orientation $\mathcal O$ and the gauge ray direction $\mathfrak l$ on $\mathcal G$. Moreover, we provide a combinatorial representation of the geometric signatures by showing that the total signature of every finite face just depends on the number of white vertices on it. The latter characterization is a Kasteleyn-type property \cite{AGPR,A3}, and we conjecture a mechanical-statistical interpretation of such relations. An explicit connection between the solution of Lam's system of relations and the value of Postnikov's boundary measurement map is established using the generalization of Talaska's formula \cite{Tal2} obtained in \cite{AG6}. In particular, the components of the edge vectors are rational in the edge weights with subtraction-free denominators.  

 Finally, we provide explicit formulas for the transformations of the signatures under Postnikov's moves and reductions, and  amalgamations of networks.
\end{abstract}

\maketitle

\section{Introduction}

Totally non--negative matrices and Grassmannians $\GTNN$  naturally arise in many different areas of mathematics. Totally non--negative matrices were first introduced in \cite{Sch}, and in \cite{GK,GK2} it was shown that they naturally arise in one-dimensional mechanical problems. The classical notion of total positivity was generalized by Lusztig  \cite{Lus1,Lus2} to reductive Lie groups, and a special case of this construction corresponds to totally non-negative Grassmanians. The literature dedicated to their applications is very wide, and it includes connections to the theory of cluster algebras of Fomin-Zelevinsky  \cite{FZ1,FZ2} in \cite{Sc,OPS}, the topological classification of real forms for isolated singularities of plane curves \cite{FPS}, the computation of scattering amplitudes in $N=4$ supersymmetric Yang--Mills theory \cite{AGP2,ADM}, the theory of Josephson's junctions \cite{BG}, and statistical mechanical models such as the asymmetric exclusion process \cite{CW} and dimer models in the disc \cite{Lam1}.

The restriction of the Gelfand-Serganova stratification of real Grassmannians to their totally non-negative part generates a cell decomposition of the totally non-negative Grassmannians \cite{PSW}. A rational parametrization of these cells was obtained in \cite{Pos} in terms of planar networks in the disc, where the parameters are positive edge weights, and the map from equivalence classes of weights to the points of a Grassmannian is Postnikov's boundary measurement map. The matrix elements of this map are expressed as sums over all weighted paths from a fixed boundary source to a fixed boundary sink.

An alternative approach was proposed by T. Lam \cite{Lam2}. In this approach the boundary measurement map is defined as the solution of a linear system of relations at the vertices of the graph. The total non-negativity property is encoded in the choice of the edge signatures. The existence of such a signature was proven in \cite{Lam2}, whereas the question of characterizing them explicitly was left open.

These relation spaces associated to graphs were originally proposed by T. Lam \cite{Lam2} to represent the amalgamation of totally non--negative Grassmannians, which is a special case of the amalgamation of cluster varieties originally introduced in \cite{FG1}. The latter have relevant applications in cluster algebras and relativistic quantum field theory \cite{AGP2,Kap,MS}. In particular, if the projected graphs represent positroid cells, the amalgamation of adjacent boundary vertices preserves the total non--negativity property and plays a relevant role also in real algebraic geometric problems such as polyhedral subdivisions \cite{Pos2}. 

Our interest in this subject was essentially motivated by applications of total non-negativity to the theory of the Kadomtsev--Petviashvili 2 (KP2) equation.  A connection between total non-negativity and the regularity of real KP 2 soliton solutions was first pointed out in \cite{Mal}. In fact, any such solution corresponds to a point of a totally non-negative Grassmannian. The structure and asymptotic behaviour of these solutions turned out to be very non-trivial and was described in terms of the combinatorial structure of the Grassmann cells (see \cite{BPPP,BK, CK,KW1,KW2} and references therein). In \cite{AG1,AG2,AG3} we proved that real regular soliton solutions can be obtained degenerating real regular finite-gap solutions, where the latter correspond to real divisors on $\mathtt M$--curves \cite{DN, Kr4}. In \cite{AG3,AG5} the double points of the degenerate spectral curve correspond to the edges of a plabic graph in the family associated by Postnikov \cite{Pos} to the positroid cell representing the given family of soliton solutions.  In particular, in \cite{AG5}, it was shown that the values of the KP wave functions at the double points solve Lam's relations \cite{Lam2} on the graph.

\smallskip

In this paper we provide an explicit solution to the problem posed by Lam: how to characterize all the admissible edge signatures. Admissible here means that, for every choice of positive edge weights 1) Lam's relations have full rank, 2) the solution at the boundary vertices generates a point in the totally non-negative Grassmannian. We provide an explicit formula for such a signature - we call it geometric -, and show that this signature is unique up to the vertex gauge transformation. The map from the positive weights to the totally non-negative Grassmannian for the geometric signature coincides with Postnikov's boundary measurement map \cite{Pos}. We also show that the solution of Lam's system at the internal edges coincides with the one obtained in \cite{AG6} using generalized Talaska flows. 

Moreover, we provide a completeness result. If a signature has the following properties: for any collection of positive weights:
\begin{enumerate}
\item Lam's system of relation has full rank;
\item The image at the boundary vertices is a point of the totally non-negative part of the Grassmannian,  
\end{enumerate}
then this signature is gauge equivalent to the geometric one.

In particular, we prove that the total signature of each face depends just on the number of white vertices bounding it. We claim that this property is equivalent to a variant of Kasteleyn theorem proven in \cite{Sp}. Indeed, this is true in the case of reduced bipartite graphs \cite{A3,AGPR}.

Finally, we describe the action of both Postnikov's moves and reductions and of the amalgamation of graphs on the signature.

\smallskip

We are convinced that our construction will turn out useful also for other applications connected to totally non--negative Grassmannians.
In \cite{GSV1} it is proven that the boundary measurement map possesses a natural Poisson-Lie structure, compatible with the natural cluster algebra structure on such Grassmannians. An interesting open question is how to use such Poisson--Lie structure in association with our geometric approach. Another open problem is to extend our construction to planar graphs on orientable surfaces with boundaries. We expect that a proper modification of Talaska formula \cite{Tal2} may again be used. Moreover, in such case it is necessary to modify the present construction using the procedure established in \cite{GSV2} to extend the boundary measurement map to planar networks in the annulus. An extension of the present construction to planar graphs in geometries different from the disc would also open the possibility of investigating the generalization of geometric relations in the framework of discrete integrable systems in cluster varieties \cite{KG, FG1}, amplitude scatterings on non-planar on-shell diagrams \cite{BFGW}, dimer models \cite{KO}, and possible relations to the Deodhar decomposition of the Grassmannian \cite{MR, TW}, which has already proven relevant for KP soliton theory \cite{KW1}.

\smallskip

\paragraph{\textbf{Main results and plan of the paper}}
In Section \ref{sec:plabic_graphs} we recall some useful properties of totally non--negative Grassmannians $\GTNN$, set up the class of the networks $\mathcal N$ used throughout the paper and recall the definition of Lam's relations for half-edge vectors. In the following $\S \subset Gr^{\mbox{\tiny TNN}}(k,n)$ is a positroid cell of dimension $|D|$, and $\mathcal G$ is a planar bicoloured directed trivalent perfect (plabic) graph in the disc representing $\S$ (Definition \ref{def:graph}). In our setting boundary vertices are all univalent, internal sources or sinks are not allowed, internal vertices may be either bivalent or trivalent and $\mathcal G$ may be either reducible or irreducible in Postnikov's sense \cite{Pos}. $\mathcal G$ has $g+1$ faces where $g=|D|$ if the graph is reduced, otherwise $g>|D|$. We also introduce a more restricted class of graphs (PBDTP graphs) with the following additional property: for each edge there exists at least one directed path from the boundary to the boundary passing through this edge.

In Section \ref{sec:lam_vectors} we summarize the results from \cite{AG6} necessary for us. In particular, we define the edge vectors $E_e$ as the formal sums of the signed contributions over all directed walks from the given edge $e$ to the boundary sinks. By definition, edge vectors satisfy a full rank system of linear relations at the vertices, and their components are explicit rational expressions in the edge weights with subtraction--free denominators, expressed in terms of the generalized Talaska's formula in \cite{AG6}.

In Section~\ref{sec:signatures} we introduce the geometric signatures, and show that the corresponding Lam's relations have full rank and that their solutions coincide with the edge vectors defined in Section~\ref{sec:lam_vectors}. In particular, if the vector space is $\R^n$,  the solution of Lam's relations at the boundary sources coincides with the value of Postnikov's boundary measurement map for any choice of positive weights. Moreover, we show that the combinatorial signature introduced in \cite{AG3} is geometric.

In Section~\ref{sec:vector_changes} we prove that the following transformations of the network: (1) changes of perfect orientations; (2) changes of gauge directions; (3) changes of positions of the vertices respecting the topology of the graph, are all gauge transformations; therefore  there is a unique equivalence class of geometric signatures on each given graph. The proofs are in the Appendix. 

In Section \ref{sec:comb} we prove that, for PBDTP networks, the only signatures such that Lam's relations have full rank and the image is a point of the totally non-negative part of the Grassmannian for every choice of positive weights, are geometric. Therefore we provide a geometric solution of Lam's problem of characterizing such signatures \cite{Lam2}.

In Section~\ref{sec:kasteleyn} we provide a topological characterization of the geometric signature: the total signature of each face just depends on the number of white vertices bounding it. This result is a Kasteleyn-type theorem since in the case of bipartite graphs this value coincides with that of the Kasteleyn's signature (see \cite{AGPR,A3}). Using this topological characterization, we compute the effect of Postnikov's moves and reductions, and of the amalgamation of graphs on the geometric signature.

\section{Plabic networks and totally non--negative Grassmannians}\label{sec:plabic_graphs}

In this Section we recall some basic properties of totally non--negative Grassmannians, and define the class of graphs $\mathcal G$ representing a given positroid cell which we use throughout the text. 
We use the following notations throughout the paper:
\begin{enumerate}
\item $k$ and $n$ are positive integers such that $k<n$;
\item  For $s\in {\mathbb N}$  $[s] =\{ 1,2,\dots, s\}$; if $s,j \in {\mathbb N}$, $s<j$, then
$[s,j] =\{ s, s+1, s+2,\dots, j-1,j\}$;
\end{enumerate}

\begin{definition}\textbf{Totally non-negative Grassmannian \cite{Pos}.}
Let $Mat^{\mbox{\tiny TNN}}_{k,n}$ denote the set of real $k\times n$ matrices of maximal rank $k$ with non--negative maximal minors $\Delta_I (A)$. Let $GL_k^+$ be the group of $k\times k$ matrices with positive determinants. Then the totally non-negative Grassmannian for these data is 
\[
\GTNN = GL_k^+ \backslash Mat^{\mbox{\tiny TNN}}_{k,n}.
\]
\end{definition}
In the theory of totally non-negative Grassmannians an important role is played by the positroid stratification. Each cell in this stratification is defined as the intersection of a Gelfand-Serganova stratum \cite{GS,GGMS} with the totally non-negative part of the Grassmannian. More precisely:
\begin{definition}\textbf{Positroid stratification \cite{Pos}.} Let $\mathcal M$ be a matroid i.e. a collection of $k$-element ordered subsets $I$ in $[n]$, satisfying the exchange axiom (see, for example \cite{GS,GGMS}). Then the positroid cell $\S$ is defined as
$$
\S=\{[A]\in \GTNN\ | \ \Delta_{I}(A) >0 \ \mbox{if}\ I\in{\mathcal M} \ \mbox{and} \  \Delta_{I}(A) = 0 \ \mbox{if} \ I\not\in{\mathcal M}  \}.
$$
A positroid cell is irreducible if, for any $j\in [n]$, there exist $I, J\in \mathcal M$ such that $j\in I$ and $j\not\in J$.
\end{definition}
The combinatorial classification of the non-empty positroid cells and their rational parametrizations were obtained in \cite{Pos}, \cite{Tal2}. In our construction we use the classification of positroid cells via directed planar networks in the disc in \cite{Pos}. More precisely, we use the following class of graphs ${\mathcal G}$ introduced by Postnikov \cite{Pos}:
\begin{definition}\label{def:graph} \textbf{Planar bicoloured directed trivalent perfect graphs in the disc (plabic graphs).} A graph ${\mathcal G}$ is called plabic if:
\begin{enumerate}
\item  ${\mathcal G}$ is planar, directed and lies inside a disc. Moreover ${\mathcal G}$ is connected in the sense it does not possess components isolated from the boundary;
\item It has finitely many vertices and edges;
\item It has $n$ boundary vertices on the boundary of the disc labeled $b_1,\cdots,b_n$ clockwise. Each boundary vertex has degree 1. We call a boundary vertex $b_i$ a source (respectively sink) if its edge is outgoing (respectively incoming);
\item The remaining vertices are called internal and are located strictly inside the disc. They are either bivalent or trivalent;
\item ${\mathcal G}$ is a perfect graph, that is each internal vertex in  ${\mathcal G}$ is incident to exactly one incoming edge or to one outgoing edge. In the first case the vertex is coloured white, in the second case black. Bivalent vertices are assigned either white or black colour.
\end{enumerate}
Moreover, to simplify the overall construction we further assume that the boundary vertices $b_j$, $j\in [n]$, lie on a common interval of the boundary of the disc.
\end{definition}

\begin{remark}
\begin{enumerate}
\item The trivalency assumption is not restrictive, since any perfect plabic graph can be transformed into a trivalent
  one;
\item The assumption that the boundary vertices $b_j$, $j\in [n]$ lie on a common interval of the boundary of the disc considerably simplifies the use of the gauge ray directions for defining the geometric signature in terms of the local winding and intersections' numbers (see \cite{AG6} and Section~\ref{sec:def_edge_vectors}). Indeed, one may assume that the graph lies inside the upper half-plane, all edges are straight intervals, and the infinite face contains the infinite point. 
\end{enumerate}
\end{remark}

\begin{definition}\textbf{Acyclic orientation}\label{def:acyclic}
A plabic graph ${\mathcal G}$ is called \textbf{acyclically oriented} if it does not have closed directed paths \cite{Pos}.
\end{definition}
\begin{figure}
\centering{\includegraphics[width=0.4\textwidth]{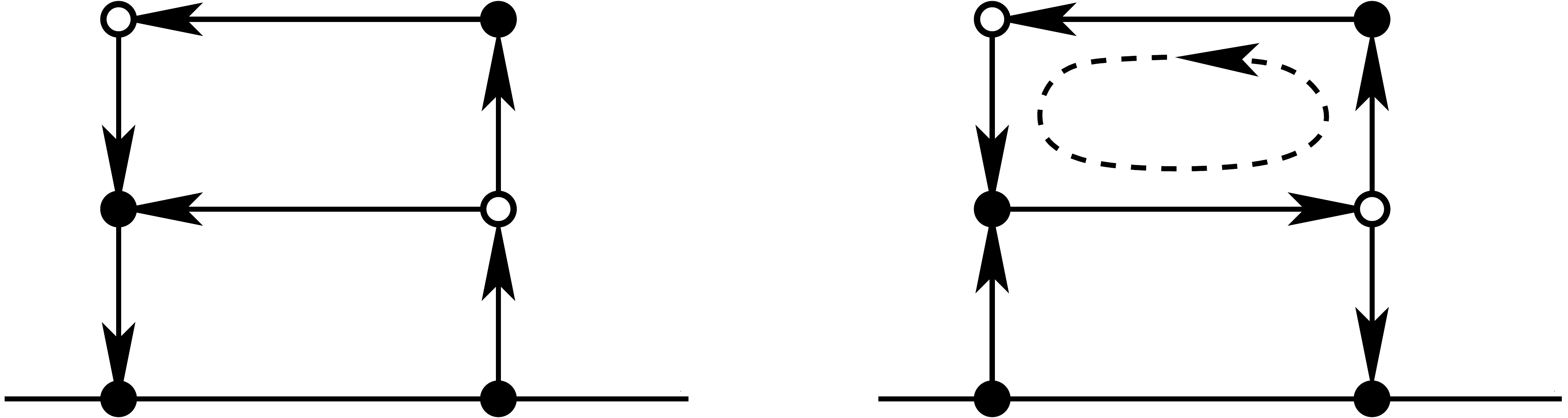}}
\caption{\small{\sl The graph on the left is acyclic, whereas the one on the right has a non-trivial cycle.}\label{fig:acycl}}
\end{figure}

\begin{definition}\textbf{Terminology for faces}\label{def:faces}
A face $\Omega$ is internal if its boundary has empty intersection with the boundary of the disc, otherwise it is a boundary face. There is a unique boundary face including the boundary segment from $b_n$ to $b_1$ clockwise and we call it the infinite face. All other faces are finite faces.
\end{definition}

The class of perfect orientations of the plabic graph ${\mathcal G}$ are those which are compatible with the colouring of the vertices. The graph is of type $(k,n)$ if it has $n$ boundary vertices and $k$ of them are boundary sources. Any choice of perfect orientation preserves the type of ${\mathcal G}$. To any perfect orientation $\mathcal O$ of ${\mathcal G}$ one assigns the base $I_{\mathcal O}\subset [n]$ of the $k$-element source set for $\mathcal O$. Following \cite{Pos} the matroid of ${\mathcal G}$ is the set of $k$-subsets  $I_{\mathcal O}$ for all perfect orientations:
$$
\mathcal M_{\mathcal G}:=\{I_{\mathcal O}|{\mathcal O}\ \mbox{is a perfect orientation of}\ \mathcal G \}.
$$
In \cite{Pos} it is proven that $\mathcal M_{\mathcal G}$ is a totally non-negative matroid $\mathcal S^{\mbox{\tiny TNN}}_{\mathcal M_{\mathcal G}}\subset \GTNN$. 
In Figure \ref{fig:Rules0} we present an example of a plabic graph satisfying Definition~\ref{def:graph}, and representing a 10-dimensional positroid cell in $Gr^{\mbox{\tiny TNN}}(4,9)$.

\begin{figure}
  \centering{\includegraphics[width=0.5\textwidth]{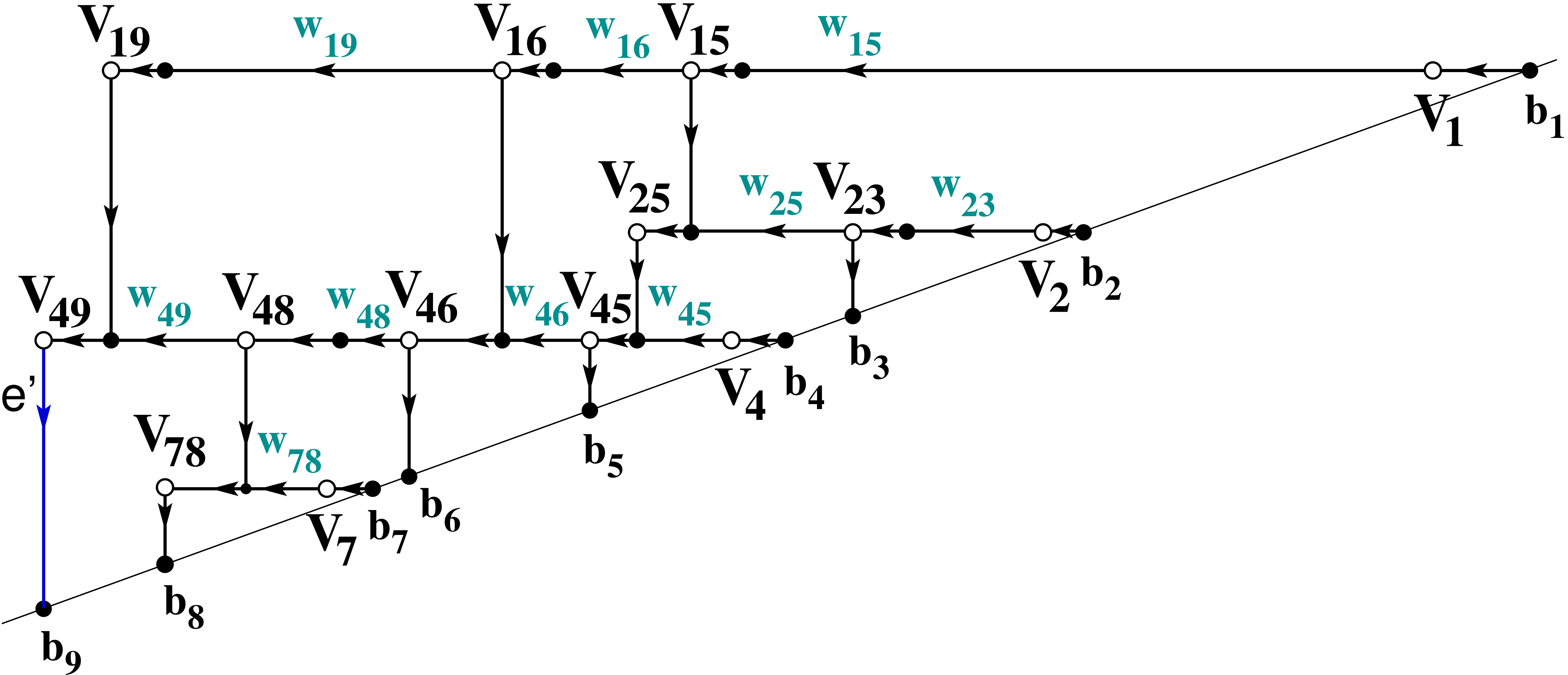}
      \caption{\small{\sl The plabic network represents a point in a 10--dimensional positroid cell in $Gr^{\mbox{\tiny TNN}} (4,9)$.
					}}\label{fig:Rules0}}
\end{figure}

\begin{theorem}\cite{Pos}
A plabic graph $\mathcal G$ can be transformed into a plabic graph $\mathcal G'$ via a finite sequence of Postnikov's moves and reductions if and only if $\mathcal M_{\mathcal G}=\mathcal M_{\mathcal G'}$.
\end{theorem}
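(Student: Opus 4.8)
The plan is to prove the two implications separately, the ``only if'' direction being elementary bookkeeping and the converse carrying all of the difficulty.

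For the ``only if'' direction I would argue that a single move or reduction already preserves the matroid, since move-equivalence is generated by such single steps. For each of Postnikov's moves it suffices to exhibit a bijection between the perfect orientations of $\mathcal G$ and those of $\mathcal G'$ preserving the source set $I_{\mathcal O}\subset[n]$. This is immediate for the insertion/removal of a bivalent vertex and for the unicoloured contraction, and reduces to a short local computation at the affected faces for the square move and for the reductions eliminating parallel edges or leaves. Running over the finite list of moves then gives $\mathcal M_{\mathcal G}=\mathcal M_{\mathcal G'}$.

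For the converse the key tool is the decorated (trip) permutation $\pi_{\mathcal G}$, read off from the zig-zag trips that turn maximally left at white and maximally right at black internal vertices. I would first record two facts from \cite{Pos}: (i) $\pi_{\mathcal G}$ is itself invariant under all moves and reductions; and (ii) the positroid $\mathcal M_{\mathcal G}$ and the decorated permutation $\pi_{\mathcal G}$ determine one another. Granting (ii), the hypothesis $\mathcal M_{\mathcal G}=\mathcal M_{\mathcal G'}$ is equivalent to $\pi_{\mathcal G}=\pi_{\mathcal G'}$, so the problem reduces to connecting two plabic graphs with the same decorated permutation by moves.

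Next comes the reduction to reduced graphs: every plabic graph can be brought, applying reductions only, to a reduced graph with the same decorated permutation, so it is enough to connect two reduced graphs sharing a permutation (for which the square move together with contraction/expansion suffice). The decisive step, and the main obstacle, is the connectivity of the set of reduced graphs with a fixed $\pi$ under these moves. My approach would be to fix a canonical representative, the reduced graph $\Gamma_{\pi}$ attached to the Le-diagram (equivalently, the bridge decomposition) of $\pi$, and to transport an arbitrary reduced graph with permutation $\pi$ to $\Gamma_{\pi}$. I would do this by induction on the number of faces, peeling off one bridge at a time: a boundary edge realizing an adjacent transposition in a reduced word for $\pi$ is isolated using square moves and then removed by a contraction, after which the inductive hypothesis applies to the smaller graph. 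The delicate points are showing that the required bridge can always be exposed by moves without altering $\pi$, and verifying that the induction never leaves the class of reduced graphs; it is exactly here that Postnikov's analysis of trips and of the face structure of reduced graphs is needed.
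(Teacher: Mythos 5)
The paper itself gives no proof of this statement (it is quoted from \cite{Pos}), so your proposal can only be measured against Postnikov's argument, whose overall architecture you reconstruct correctly: local invariance checks for the ``only if'' direction, passage to reduced graphs, and move-connectivity of reduced graphs with a fixed decorated permutation via a canonical Le-graph representative. However, two of the facts you ``record from \cite{Pos}'' are false as stated, and the step resting on them fails. Fact (i) is wrong for reductions: the moves (M1)--(M3) preserve the decorated trip permutation, but the reductions do not. Take the bigon graph: $b_1$ joined to a black vertex $u$, $u$ joined to a white vertex $v$ by two parallel edges, $v$ joined to $b_2$. Whatever turning convention you use, the trip starting at $b_1$ runs around the bigon and returns to $b_1$, so $\pi_{\mathcal G}$ is the identity; after the parallel edge reduction (R1) one gets a single edge from $b_1$ to $b_2$, with $\pi_{\mathcal G'}=(1\,2)$. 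Yet both graphs have matroid $\{\{1\},\{2\}\}$, which also kills fact (ii) in the generality you invoke it: the matroid determines the decorated permutation only for \emph{reduced} graphs, so your asserted equivalence ``$\mathcal M_{\mathcal G}=\mathcal M_{\mathcal G'}$ if and only if $\pi_{\mathcal G}=\pi_{\mathcal G'}$'' breaks down precisely in the unreduced situation the theorem is about. Your next sentence fails twice more: reductions do not preserve $\pi$, and reductions \emph{alone} need not suffice to reach a reduced graph --- insert a bivalent vertex on one of the two parallel edges of the bigon and no reduction applies until you remove it by the move (M3), so moves must be interleaved. Finally, a smaller slip in the ``only if'' direction: for reductions there is no bijection of perfect orientations (the bigon has three, the single edge two); what one actually proves is that the collections of source sets $\{I_{\mathcal O}\}$ coincide, which is all the definition of $\mathcal M_{\mathcal G}$ requires.

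All of this is repairable, and the repair is exactly Postnikov's route: transport the \emph{matroid}, not the permutation, through the reduction process. Your part one (suitably weakened for reductions as above) gives that moves and reductions preserve $\mathcal M$. Bring both $\mathcal G$ and $\mathcal G'$ to reduced form by moves \emph{and} reductions; the resulting reduced graphs then have equal matroids, hence equal decorated permutations by the correspondence between positroids and decorated permutations, which is valid at the level of reduced graphs; only then invoke the move-connectivity of reduced plabic graphs sharing a decorated permutation, for which your bridge-peeling induction toward the Le-graph of $\pi$ is indeed the standard and correct strategy. With the argument rerouted this way, your outline becomes a faithful reconstruction of the proof in \cite{Pos}.
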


A graph  $\mathcal G$ is reduced if there is no other graph in its move reduction equivalence class which can be obtained from $\mathcal G$ applying a sequence of transformations containing at least one reduction \cite{Pos}. Each positroid cell $\S$ is represented by at least one reduced graph, the so called Le--graph (see \cite{Pos} and Definition \ref{def:can_Le}), associated to the Le--diagram representing $\S$. The plabic graph in Figure \ref{fig:Rules0} is a Le-graph.

If $\mathcal G$ is a reduced  plabic graph, then the dimension of  $\mathcal S^{\mbox{\tiny TNN}}_{\mathcal M_{\mathcal G}}$ is equal to the number of faces of $\mathcal G$ minus 1.

\begin{definition}\label{def:plabic_network} \textbf{Plabic network}
  A plabic network  $\mathcal N$ is a plabic graph $\mathcal G$ together with a collection of positive weights assigned to its edges.  
\end{definition}

\begin{definition}\label{def:weight_gauge} \textbf{Weight gauge equivalence}
  A pair of plabic networks  $\mathcal N_1$,  $\mathcal N_2$ are called weight gauge equivalent if
\begin{enumerate}
\item They share the same graph $\mathcal G$;  
\item There exists a positive function  $t(U)$ on the vertices of $\mathcal G$ such that $t(U)=1$ at all boundary vertices and  at each directed edge
$e=(U,V)\in\mathcal G$:
\begin{equation}
\label{eq:gauge}
w^{(2)}_e =  w^{(1)}_e t_U \left(t_V\right)^{-1}.
\end{equation}
Here $w^{(1)}_e$ and  $w^{(2)}_e$ are the weights assigned to the edge $e$ in networks  $\mathcal N_1$,  $\mathcal N_2$ respectively.  
\end{enumerate}
\end{definition}

Plabic networks provide a global parametrization of the positroid cells $\S$ \cite{Pos}.

\begin{definition}\label{def:boundary_map} \textbf{Boundary measurement map}
In \cite{Pos}, for any given oriented planar network in the disc it is defined the formal boundary measurement map 
\begin{equation}\label{eq:bmm}
M_{ij} := \sum\limits_{P \, : \, b_i\mapsto b_j} (-1)^{\mbox{\scriptsize
 Wind}(P)} w(P),
\end{equation}
where the sum is over all directed walks from the source $b_i$ to the sink $b_j$, $w(P)$ is the product of the edge weights of $P$ and $\mbox{Wind}(P)$ is its topological winding index.
\end{definition}
The image of the boundary measurement map is the same for all gauge weight equivalent networks.

These formal power series sum up to subtraction--free rational expressions in the weights \cite{Pos} and explicit expressions of the $M_{ij}$ in function of flows and conservative flows in the network are obtained in \cite{Tal2}.

\begin{definition}\label{def:boundary_matrix} \textbf{Boundary measurement matrix \cite{Pos}}
Let $I$ be the base inducing the orientation of $\mathcal N$ used in the computation of the boundary measurement map. Then the point $Meas(\mathcal N)\in Gr(k,n)$ is represented by the boundary measurement matrix $A$ such that:
\begin{itemize}
\item The submatrix $A_I$ in the column set $I$ is the identity matrix;
\item The remaining entries $A^r_j = (-1)^{\sigma(i_r,j)} M_{ij}$, $r\in [k]$, $j\in \bar I$, where $\sigma(i_r,j)$ is the number of elements of $I$ strictly between $i_r$ and $j$.
\end{itemize}
\end{definition}

\begin{remark}\label{rem:reciprocal}
If one changes the perfect orientation on the network and simultaneously changes the weights on the edges with the following rule:
\begin{enumerate}
\item If the edge $e=(U,V)$ does not change the orientation, the weight $w_e$ remains unchanged; 
\item If the edge $e=(U,V)$ changes the orientation, the weight is replaced by its reciprocal,   
\end{enumerate}
then the new  boundary measurement matrix $\tilde A$ represents the same point of the Grassmannian $Gr(k,n)$.
\end{remark}

In the following we also consider a more restrictive class of plabic graphs.
\begin{definition}\label{def:PBDTP} \textbf{PBDTP graph}
A plabic graph $\mathcal G$ is called PBDTP graph if it satisfies the following additional condition: for any edge of $\mathcal G$  there exists a directed path from the boundary to the boundary containing it.
\end{definition}
\begin{remark}
This additional requirement means that the weights at all edges contribute to the boundary measurement map.   
\end{remark}

We have the following elementary Lemma.
\begin{lemma}
  \begin{enumerate}
  \item A PBDTP graph always represents an irreducible positroid  cell;
  \item In a  PBDTP graph all internal faces contain vertices of both colours;  
  \item If a plabic graph is PBDTP in one orientation, it is PBDTP in all other perfect orientations. 
\end{enumerate}
\end{lemma}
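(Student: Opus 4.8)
The plan is to treat the three statements separately, the common tool being the elementary observation that reversing the orientation of all edges along a directed path, or along a directed cycle, again produces a perfect orientation: at an internal white vertex (one incoming edge, the rest outgoing) a directed path enters through the unique incoming edge and leaves through an outgoing one, and after reversal the vertex again has exactly one incoming edge; the check at black vertices, at bivalent vertices, and for cycles is identical. For part (1), recall that irreducibility of the positroid cell means that for every $j\in[n]$ the matroid $\mathcal M_{\mathcal G}$ contains a base with $j$ and a base without $j$, equivalently that $b_j$ is a source in some perfect orientation and a sink in another. Let $e_j$ be the unique edge at $b_j$. By Definition \ref{def:PBDTP} there is a directed boundary-to-boundary path $\gamma$ through $e_j$, and since $b_j$ has degree $1$ it cannot be an interior vertex of $\gamma$, so $b_j$ is an endpoint of $\gamma$. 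Reversing $\gamma$ yields a second perfect orientation in which $b_j$ has switched between source and sink; the two source sets are bases of $\mathcal M_{\mathcal G}$, one containing $j$ and one not. As $j$ is arbitrary, the cell is irreducible.

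For part (2) I would argue by contradiction, assuming an internal face $\Omega$ all of whose boundary vertices have one colour, say black. Traversing $\partial\Omega$, I classify the corner of $\Omega$ at each vertex by the orientations of its two boundary edges: both outgoing (a \emph{source} corner), both incoming (a \emph{sink} corner), or one of each (a \emph{pass-through} corner). Labelling each boundary edge by whether its orientation agrees with the chosen sense of traversal turns $\partial\Omega$ into a cyclic binary word in which source and sink corners are precisely the two kinds of sign changes; hence they occur equally often. A black vertex has a single outgoing edge and so admits no source corner, forcing zero source corners, hence zero sink corners, hence a constant word, i.e. $\partial\Omega$ is a directed cycle. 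At each black vertex of such a cycle the unique outgoing edge is already the cycle edge, so the remaining edge is incoming and there is no way to leave the cycle following the orientation; therefore no edge of $\partial\Omega$ lies on a directed boundary-to-boundary path, contradicting the PBDTP hypothesis. The all-white case is dual.

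For part (3) the key is to phrase the PBDTP property in an orientation-free way. Consistently with the Remark following Definition \ref{def:PBDTP}, an edge lies on a directed boundary-to-boundary walk (equivalently, its weight contributes to the boundary measurement map of Definition \ref{def:boundary_map}) exactly when its tail is reachable from a boundary source and its head reaches a boundary sink. Since in any perfect orientation every internal vertex has both an incoming and an outgoing edge, PBDTP is equivalent to the single condition that every vertex is reachable from a source and reaches a sink, i.e. that the unreachable set $Z_{\mathrm{in}}$ and the dead-end set $Z_{\mathrm{out}}$ are both empty. It then suffices to show $Z_{\mathrm{in}}$ and $Z_{\mathrm{out}}$ are independent of the perfect orientation. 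I would do this by noting that the set of edges on which two perfect orientations disagree has every internal vertex of degree $0$ or $2$ and every boundary vertex of degree $0$ or $1$, hence decomposes into vertex-disjoint directed paths (with boundary endpoints) and directed cycles; the two orientations are thus related by reversing these components one at a time. Finally one checks that a single path or cycle reversal preserves reachability: a walk from a vertex $v$ to a sink that first meets the reversed set at $x$ and last meets it at $y$ can be rerouted from $x$ to $y$ along the reversed path or cycle and then continued to a (possibly different) boundary sink, and symmetrically for reachability from a source.

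The main obstacle is this last invariance step in part (3): organising the reachability rerouting cleanly across all components, and making explicit that the operative notion is lying on a directed boundary-to-boundary \emph{walk} rather than a simple path, since for perfect orientations containing directed cycles the two genuinely differ. Parts (1) and (2) are comparatively routine once the path/cycle reversal principle and the corner-counting argument are in place.
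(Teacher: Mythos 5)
The paper states this lemma as ``elementary'' and gives no proof of it at all, so there is no argument of the authors to compare yours against; judged on its own merits, your strategy is the natural one and is essentially correct: the reversal principle for part (1), the corner-counting for part (2), and the difference-set decomposition for part (3) (which is precisely the fact from \cite{Pos} that the paper itself quotes at the beginning of Section \ref{sec:vector_changes}, so you could simply cite it). However, two of your steps fail as literally written and need repair.

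First, in part (1) you must take $\gamma$ to be a \emph{simple} (loop-erased) path before reversing it. Your opening reversal principle is only valid for paths visiting each vertex at most once, and it genuinely fails for walks: if a walk enters a trivalent black vertex through both of its incoming edges and leaves twice through its unique outgoing edge, then reversing every edge the walk uses leaves that vertex with one incoming and two outgoing edges, i.e.\ a white-type vertex, and the orientation is no longer perfect. The repair is one line: loop-erase the boundary-to-boundary walk through $e_j$ guaranteed by Definition \ref{def:PBDTP}; since $b_j$ has degree one it is visited only once, so $e_j$ survives the erasure and the reversal argument applies to the resulting simple path. This is exactly the walk-versus-path distinction you correctly flag in part (3), so it deserves the same care here.

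Second, in part (3) the rerouting ``from $x$ to $y$ along the reversed path or cycle'' is correct for cycle components but can fail for path components: after reversing a source-to-sink path $P$ from $b_{i_0}$ to $b_{j_0}$, the vertices reachable from $x$ along the reversed $P$ are only those preceding $x$ on $P$, and the last meeting point $y$ need not be among them. The repair is simpler than your statement: for a reversed path component do not aim at $y$ at all, but follow the reversed path from $x$ to its terminus $b_{i_0}$, which is a boundary \emph{sink} in the new orientation (and symmetrically, for reachability from a source, start at the new source $b_{j_0}$, run along the reversed path down to the last meeting point, and continue along the old walk). Your parenthetical ``(possibly different) boundary sink'' suggests you had this in mind, but the rerouting you wrote does not deliver it. With these two repairs, and the observation that vertex-disjointness keeps the remaining components directed paths or cycles after each reversal, the induction over components goes through and the proof is complete.
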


\subsection{Lam's relations for half-edge vectors}\label{sec:lam_relations}
The boundary measurement map, defined in the previous Section, admits an alternative representation in terms of linear relations in the half-edge vectors, proposed by Lam \cite{Lam2}. Let us recall this construction. 

Let $\mathcal V$ be a vector space and $\mathcal N$ be a plabic network. Assign a signature $\epsilon_{U,V}\in\{0,1\}$ to any edge $e=(U,V)$. The signature is independent on the orientation of the network: $\epsilon_{U,V}=\epsilon_{V,U}$. 

Then a collection of half-edge vectors $z_{U,e}\in\mathcal V$ enumerated by pairs $(U,e)$, where $e$ is an edge of $\mathcal N$ and $U$ is one of the ends of $e$, is a solution of Lam's system of relations if it fulfils the conditions of the following Definition:  

\begin{definition}\textbf{Lam's system of relations}\label{def:lam} \cite{Lam2}
\begin{enumerate}
\item For any edge $e=(U,V)$, $z_{U,e} = (-1)^{\epsilon_{U,V}}w_{U,V} z_{V,e}$, where $U$ is the starting vertex of $e$ and $V$ is the final one, and $w_{U,V}$ is the weight in this orientation;
\item If $e_i$, $i\in [m]$, are the edges at an $m$-valent white vertex $V$, then $\sum_{i=1}^m z_{V,e_i} =0$;
\item If $e_i$, $i\in [m]$, are the edges at an $m$-valent black vertex $V$, then $z_{V,e_i} =z_{V,e_j}$ for all $i,j\in[m]$.
\end{enumerate}
\end{definition}

\begin{figure}
  \centering
	{\includegraphics[width=0.8\textwidth]{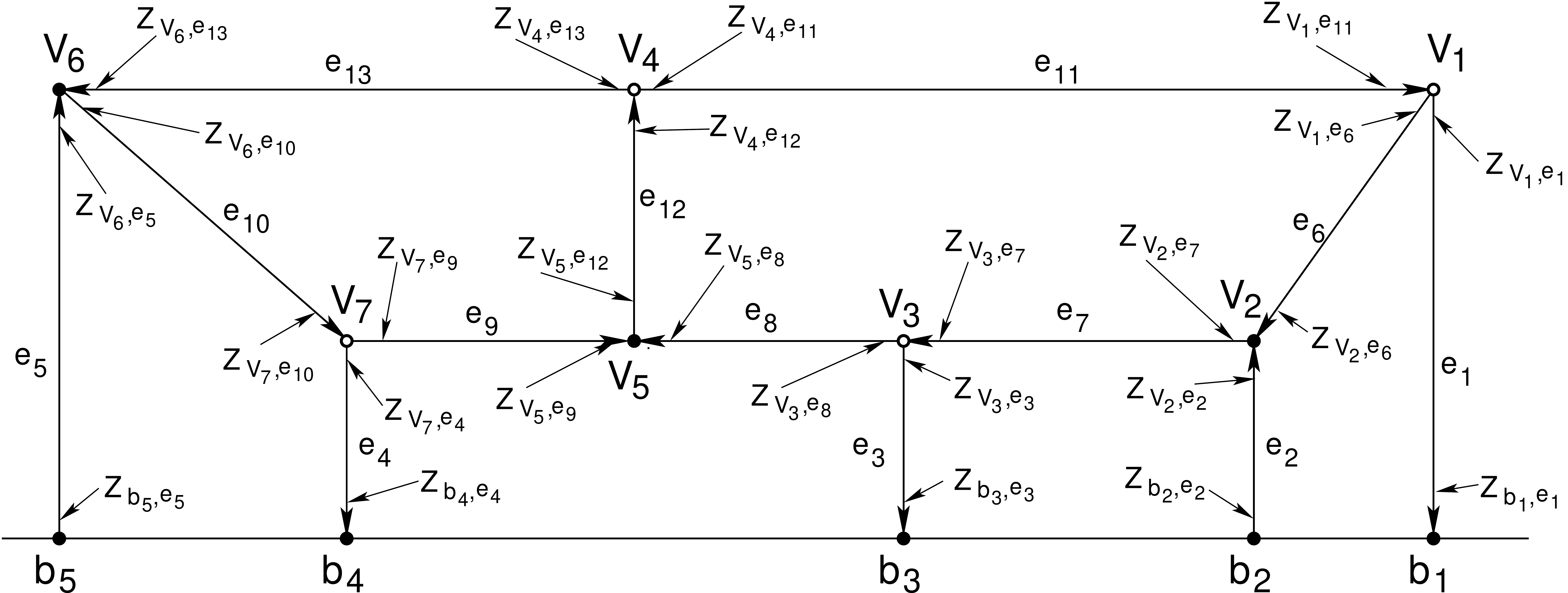}}
  \caption{\small{\sl Example of Lam's relations. }\label{fig:lam_rel}}
\end{figure}

  \begin{example}\label{ex:lam_rel} Consider the graph in Figure~\ref{fig:lam_rel}. To each edge we associate a positive weight, a sign and a pair of vectors. Lam's relations for this graph are the following:
\begin{align}\label{eq:lam_example}
  & Z_{V_1,e_1}+ Z_{V_1,e_6}+ Z_{V_1,e_{11}}= 0, & & Z_{V_3,e_3}+ Z_{V_3,e_7}+ Z_{V_3,e_8}= 0, \nonumber \\
  & Z_{V_4,e_{11}}+ Z_{V_4,e_{12}}+ Z_{V_4,e_{13}}= 0, & & Z_{V_7,e_4}+ Z_{V_7,e_9}+ Z_{V_7,e_{10}}= 0, \nonumber \\
  & Z_{V_2,e_2}= Z_{V_2,e_6}= Z_{V_2,e_7},  & & Z_{V_5,e_8}= Z_{V_5,e_9}= Z_{V_5,e_{12}}, \nonumber \\
  & Z_{V_6,e_5}= Z_{V_6,e_{10}}= Z_{V_6,e_{13}},  & &  \nonumber \\
   &  Z_{V_1,e_6}= (-1)^{s(e_6)} w(V_1,V_2) Z_{V_2,e_6}, & &  Z_{V_2,e_7}= (-1)^{s(e_7)} w(V_2,V_3) Z_{V_3,e_7},  \nonumber \\
  &  Z_{V_3,e_8}= (-1)^{s(e_8)} w(V_3,V_5) Z_{V_5,e_8},  & & Z_{V_5,e_9}= (-1)^{s(e_9)} w(V_5,V_7) Z_{V_7,e_9},    \nonumber \\
   &  Z_{V_7,e_{10}}= (-1)^{s(e_{10})} w(V_7,V_6) Z_{V_6,e_{10}}, & &  Z_{V_1,e_{11}}= (-1)^{s(e_{11})} w(V_1,V_4) Z_{V_4,e_{11}},  \nonumber \\
  &  Z_{V_4,e_{12}}= (-1)^{s(e_{12})} w(V_4,V_5) Z_{V_5,e_{12}},  & & Z_{V_4,e_{13}}= (-1)^{s(e_{13})} w(V_4,V_6) Z_{V_6,e_{13}}.    \nonumber  \\
	&  Z_{b_1,e_1}= (-1)^{s(e_1)} w(b_1,V_1) Z_{V_1,e_1}, & & Z_{b_2,e_2}= (-1)^{s(e_2)} w(b_2,V_2) Z_{V_2,e_2},  \nonumber \\
  &  Z_{b_3,e_3}= (-1)^{s(e_3)} w(b_3,V_3) Z_{V_3,e_3}, & & Z_{b_4,e_4}= (-1)^{s(e_4)} w(b_4,V_4) Z_{V_4,e_4}, \\
  &  Z_{b_5,e_5}= (-1)^{s(e_5)} w(b_5,V_5) Z_{V_5,e_5}, & &   \nonumber \\																												
\end{align}  
We have 23 equations and 26 half-edge vectors. If this system has maximal rank, we can solve it for all of the half-edge vectors at the internal vertices, and we end up with 2 equations in just the 5 half-edge vectors at the boundary $Z_{b_1,e_1}$,  $Z_{b_2,e_2}$, $Z_{b_3,e_3}$,  $Z_{b_4,e_4}$, $Z_{b_5,e_5}$.  
\end{example}

\begin{remark}
If all internal vertices are trivalent, then the linear system can be interpreted as the amalgamation of the little positive Grassmannians $Gr^{\mbox{\tiny TP}}(1,3)$, $Gr^{\mbox{\tiny TP}}(2,3)$ \cite{Lam2}.
\end{remark}

In \cite{Lam2} it is conjectured that there exist simple rules for the assignment of signatures to the edges so that the system in Definition \ref{def:lam} has full rank for any choice of positive weights, and the image of this weighted space of relations is the positroid cell $\S\subset\GTNN$ corresponding to the graph.

Of course, if such signature exists, it is not unique, because of the following gauge freedom:

\begin{definition}\textbf{Equivalence between edge signatures}.\label{def:admiss_sign_gauge}
Let $\epsilon^{(1)}_{U,V}$ and $\epsilon^{(2)}_{U,V}$ be two signatures on all the edges $e=(U,V)$ of the plabic graph $\mathcal G$, edges at the boundary included. We say that the two signatures are equivalent if there exists an index $\eta(U) \in \{ 0,1\}$ at each internal vertex $U$ such that
\begin{equation}\label{eq:equiv_sign}
\epsilon^{(2)}_{U,V} = \left\{\begin{array}{ll}
\epsilon^{(1)}_{U,V} +\eta(U)+\eta(V), \mod(2) & \mbox{ if } e=(U,V) \mbox{ is an internal edge},\\
\epsilon^{(1)}_{U,V} +\eta(U), \mod(2) & \mbox{ if } e=(U,V) \mbox{ is the edge at some boundary vertex } V.
\end{array}\right.
\end{equation} 
\end{definition}

This gauge freedom corresponds to the following transformation of the system of half-edge vectors at the internal vertices $U$:
\begin{equation}\label{eq:equiv_sign_vectors}
z^{(2)}_{U,e} =  (-1)^{\eta(U)}\ z^{(1)}_{U,e}.
\end{equation}
The gauge transformations (\ref{eq:equiv_sign_vectors}) and the weight gauge transformations (\ref{eq:gauge}) leave invariant the half-edge vectors at the boundary vertices, as well as the relations at the internal white and black vertices. Therefore:
\begin{proposition}
  If the system of Lam's relations has full rank for a given collection of weights and  a given signature $\epsilon^{(1)}_{U,V}$, then it has full rank for any signature $\epsilon^{(2)}_{U,V}$ equivalent to $\epsilon^{(1)}_{U,V}$ and  for any weight gauge equivalent collection of weights. Moreover, in such case, the solutions of all such systems of relations coincide at the boundary vertices.

Finally, the system of Lam's relations remains invariant if one changes the perfect orientation of the network $\mathcal N$ and simultaneously replaces the weights as in Remark~\ref{rem:reciprocal}.
\end{proposition}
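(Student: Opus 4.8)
The plan is to realize each of the three invariances as an explicit, invertible change of the collection of half-edge vectors that carries solutions of one system bijectively onto solutions of the other while fixing every half-edge vector at the boundary; once this is established, all three assertions reduce to elementary linear algebra. Throughout I would keep the graph $\mathcal G$ fixed, so that the set of unknowns $z_{U,e}$ and the set of equations (one per white vertex, $m-1$ per $m$-valent black vertex, one per edge) are literally the same for all systems being compared.

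For the signature gauge \eqref{eq:equiv_sign} I would use exactly the substitution \eqref{eq:equiv_sign_vectors}, that is $z^{(2)}_{U,e} = (-1)^{\eta(U)} z^{(1)}_{U,e}$ with the convention $\eta \equiv 0$ at the boundary. Inserting this into the edge relation (1) of Definition~\ref{def:lam} and using that $(-1)^{\eta(U)+\epsilon^{(1)}_{U,V}} = (-1)^{\epsilon^{(2)}_{U,V}+\eta(V)}$ is precisely \eqref{eq:equiv_sign} (with $\eta(V)=0$ on a boundary edge, so the two factors $(-1)^{\eta}$ cancel), one sees that the $z^{(1)}$ solve the $(\epsilon^{(1)},w)$-system if and only if the $z^{(2)}$ solve the $(\epsilon^{(2)},w)$-system; the white relation (2) and black relation (3) at a vertex $V$ are merely multiplied by the nonzero scalar $(-1)^{\eta(V)}$, hence have the same solution set. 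For the weight gauge \eqref{eq:gauge} I would instead use the vertex rescaling $z^{(2)}_{U,e} = t_U z^{(1)}_{U,e}$; the computation $(-1)^{\epsilon_{U,V}} w^{(2)}_{U,V} z^{(2)}_{V,e} = (-1)^{\epsilon_{U,V}} w^{(1)}_{U,V} t_U(t_V)^{-1} t_V z^{(1)}_{V,e} = t_U z^{(1)}_{U,e} = z^{(2)}_{U,e}$ shows relation (1) is preserved, while (2) and (3) are again only scaled by the positive factor $t_V$. Composing the two yields the general gauge transformation $z^{(2)}_{U,e} = (-1)^{\eta(U)} t_U z^{(1)}_{U,e}$.

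Both substitutions are invertible linear automorphisms $\Phi$ of the ambient space of all half-edge assignments, and the verifications above say precisely that $\Phi$ maps the solution space of the first system onto that of the second; hence these solution spaces have equal dimension. The step I regard as the genuine content, rather than a triviality, is the passage from ``isomorphic solution spaces'' to ``full rank is preserved'': since the systems are homogeneous and the numbers of equations and of unknowns are unchanged, the rank equals (number of unknowns) $-$ (dimension of the solution space), so equal solution-space dimensions force equal rank, and maximality of one is equivalent to maximality of the other. Because $\eta$ vanishes and $t$ equals $1$ at the boundary, $\Phi$ is the identity on every boundary half-edge vector; therefore the tuples of boundary vectors attained by solutions are identical for all equivalent signatures and all weight-equivalent networks.

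For the final assertion I would argue that reorientation affects only relation (1). Relations (2) and (3) refer to the fixed bicolouring of $\mathcal G$, which is untouched when passing between perfect orientations, so they are unchanged verbatim. An edge that keeps its direction keeps its weight and its relation. For an edge that reverses, say from $e=(U,V)$ to $e=(V,U)$, Remark~\ref{rem:reciprocal} replaces $w_{U,V}$ by $(w_{U,V})^{-1}$ while the signature is orientation-independent, $\epsilon_{V,U}=\epsilon_{U,V}$; then solving $z_{U,e}=(-1)^{\epsilon_{U,V}}w_{U,V}z_{V,e}$ for $z_{V,e}$ and using $(-1)^{-\epsilon_{U,V}}=(-1)^{\epsilon_{U,V}}$ gives $z_{V,e}=(-1)^{\epsilon_{U,V}}(w_{U,V})^{-1}z_{U,e}$, which is exactly relation (1) in the new orientation. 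Thus the reoriented system is the same set of equations, with the same solutions. I expect no serious obstacle beyond careful sign and scaling bookkeeping; the only point requiring a moment's care is the rank-preservation argument, which must be phrased through solution-space dimensions rather than by attempting to track the coefficient matrix directly.
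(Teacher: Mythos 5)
Your proposal is correct and follows essentially the same route as the paper, which treats the proposition as an immediate consequence of the observation that the substitutions $z^{(2)}_{U,e}=(-1)^{\eta(U)}z^{(1)}_{U,e}$ and $z^{(2)}_{U,e}=t_U\,z^{(1)}_{U,e}$ fix the boundary half-edge vectors and preserve the vertex relations, and that reversing an edge together with inverting its weight reproduces the same edge equation. Your write-up merely makes explicit the details the paper leaves implicit (in particular the rank-preservation step via equality of solution-space dimensions), and those details are accurate.
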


\section{Geometric systems of relations on plabic networks}\label{sec:lam_vectors}

In \cite{AG6} we constructed systems of edge vectors on plabic networks $\mathcal N$ by extending Postnikov's construction in \cite{Pos} to the internal vertices. These systems of edge vectors satisfy full rank linear systems of relations whose solutions are expressed using a generalization of Talaska flows. In this Section we recall such construction and then, in the following Section, we prove that the associated geometric signature solves Lam's problem.

\subsection{Systems of edge vectors on plabic networks, edge flows and conservative flows}\label{sec:def_edge_vectors}

In this Section, we recall the construction of the edge vectors in \cite{AG6}, and the definitions of the edge flows and of the conservative flows.

To obtain a well-defined system of edge vectors it is necessary to fix a gauge by introducing a gauge ray direction. Using this direction we define  two local indices -- the \textbf{local winding number} and the \textbf{local intersection number}. These indices extend Postnikov's \textbf{topological winding number} of a path and \textbf{the number of sources} between its starting and ending boundary vertices, to paths starting at an internal edge of the graph and ending at a boundary sink. 

Let us remark that earlier in \cite{GSV}, gauge directions were introduced to compute the winding number of a path joining boundary vertices.

\begin{definition}\label{def:gauge_ray}\textbf{The gauge ray direction $\mathfrak{l}$.}
A gauge ray direction is an oriented direction ${\mathfrak l}$ with the following properties:
\begin{enumerate}
\item The ray ${\mathfrak l}$ starting at a boundary vertex points inside the disc; 
\item No edge is parallel to this direction;
\item All rays starting at boundary vertices do not cross internal vertices of the network.
\end{enumerate}
\end{definition}
We remark that the first property may be always satisfied since we assume that all boundary vertices lie on a common straight interval of the boundary of $\mathcal N$. We then define the local winding number between a pair of consecutive edges $e_k,e_{k+1}$ as follows.

\begin{figure}
  \centering
  {\includegraphics[width=0.4\textwidth]{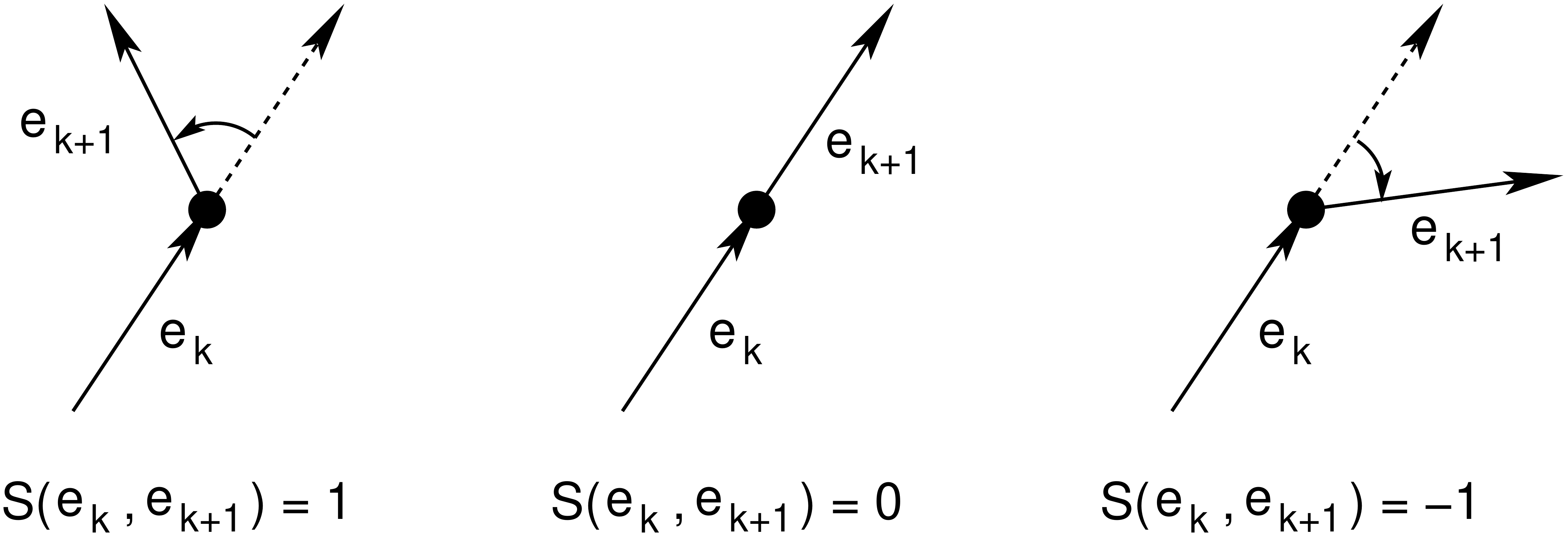}
	\caption{\footnotesize{\sl The index $s(e_k,e_{k+1})$.}}
  \label{fig:orient}}
\end{figure}
\begin{definition}\label{def:winding_pair}\textbf{The local winding number at an ordered pair of oriented edges}
Let  $(e_k,e_{k+1})$ be an ordered pair of oriented edges. Let us define
\begin{equation}\label{eq:def_s}
s(e_k,e_{k+1}) = \left\{
\begin{array}{ll}
+1 & \mbox{ if the ordered pair is positively oriented }  \\
0  & \mbox{ if } e_k \mbox{ and } e_{k+1} \mbox{ are parallel }\\
-1 & \mbox{ if the ordered pair is negatively oriented }
\end{array}
\right.
\end{equation}
Then the winding number of the ordered pair $(e_k,e_{k+1})$ with respect to the gauge ray direction $\mathfrak{l}$ is
\begin{equation}\label{eq:def_wind}
\mbox{wind}(e_k,e_{k+1}) = \left\{
\begin{array}{ll}
+1 & \mbox{ if } s(e_k,e_{k+1}) = s(e_k,\mathfrak{l}) = s(\mathfrak{l},e_{k+1}) = 1\\
-1 & \mbox{ if } s(e_k,e_{k+1}) = s(e_k,\mathfrak{l}) = s(\mathfrak{l},e_{k+1}) = -1\\
0  & \mbox{otherwise}.
\end{array}
\right.
\end{equation}
\end{definition}

Let $b_{i_r}$, $r\in[k]$, $b_{j_l}$, $l\in[n-k]$, respectively  be the set of the boundary sources and of the boundary sinks for the given orientation. Then draw the rays ${\mathfrak l}_{i_r}$ starting at $b_{i_r}$, $r\in[k]$ (see Figure \ref{fig:indexes} for an example).

\begin{figure}
  \centering
  {\includegraphics[width=0.55\textwidth]{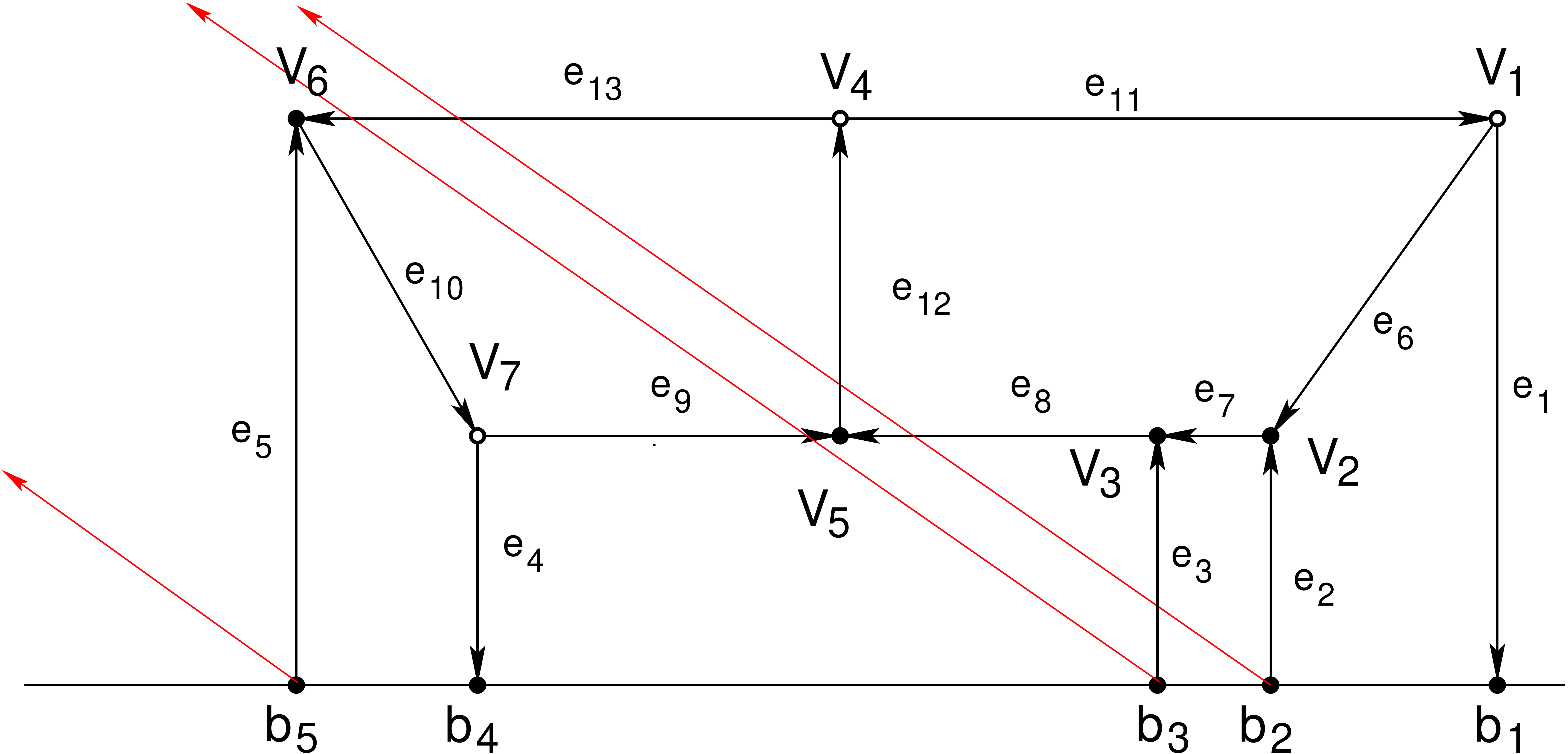}
	\caption{\footnotesize{\sl The gauge ray direction fixes the winding number of ordered pairs of edges, whereas the gauge rays starting at the boundary sources fix the intersection number of each edge.}}
        \label{fig:indexes}}
      \end{figure}
    
\begin{definition}\label{def:local_int}\textbf{The intersection number of an edge} The intersection number of the edge $e$ is the number of intersections between $e$ and the rays ${\mathfrak l}_{i_r}$, $r\in[k]$ times $s({\mathfrak l},e)$, and it is denoted by $\mbox{int}(e)$.
\end{definition}

\begin{example} Let us compute the winding and the intersection numbers for the example in Figure~\ref{fig:indexes}:
  \begin{align} \label{eq:indexes}
    & \mbox{wind}(e_2,e_7) =1, & &  \mbox{wind}(e_8,e_{12}) =-1, & &  \mbox{wind}(e_6,e_7) =0, & & \mbox{wind}(e_{10},e_9) =0,
    & & \mbox{wind}(e_{13},e_{10}) =0, \\
    & \mbox{int}(e_2) = 0, & & \mbox{int}(e_3) = -1, & & \mbox{int}(e_8) = 1, & & \mbox{int}(e_9) = -1,
    & & \mbox{int}(e_{12}) = -1, & & \mbox{int}(e_{13}) = 2. \nonumber
\end{align}  

\end{example}

Next in \cite{AG6} we define the $j$-th component of the edge vector at $e$ as the following signed summation over all directed paths starting at the edge $e$ and ending at the boundary sink $b_j$.

\begin{definition}\label{def:edge_vector}\textbf{The edge vector $E_e$ }\cite{AG6} Assume that we have a set of vectors $E_j$ assigned to the boundary sinks $b_j$ (we call these vectors \textbf{boundary conditions}). For any edge $e$, let us consider all possible directed paths ${\mathcal P}:e\rightarrow b_{j}$ in $({\mathcal N},{\mathcal O},{\mathfrak l})$ such that the first edge is $e$ and the end point is a boundary sink $b_{j}$, $j\in\bar I$, where $I$ is the base associated with the orientation ${\mathcal O}$, i.e. the subset of indices enumerating the boundary sources.
Then we consider the following formal sum:
\begin{equation}\label{eq:sum}
E_{e} = \sum\limits_{j\in \bar I}\ \sum\limits_{{\mathcal P}\, :\, e\rightarrow b_{j}} (-1)^{\mbox{wind}({\mathcal P})+ \mbox{int}({\mathcal P})} 
w({\mathcal P}) E_j,
\end{equation}
where
\begin{enumerate}
\item The \textbf{weight $w({\mathcal P})$} is the product of the weights $w_l$ of all edges $e_l$ in ${\mathcal P}$: $w({\mathcal P})=\prod_{l=1}^m w_l$. If we pass the 
same edge $e$ of weight $w_e$ $r$ times, the weight is counted as $w_e^r$;
\item The \textbf{generalized winding number} $\mbox{wind}({\mathcal P})$ is the sum of the local winding numbers at each ordered pair of its edges:
$\mbox{wind}(\mathcal P) = \sum_{k=1}^{m-1} \mbox{wind}(e_k,e_{k+1}),$ 
with $\mbox{wind}(e_k,e_{k+1})$ as in Definition \ref{def:winding_pair}; 
\item The \textbf{intersection number} $\mbox{int}(\mathcal P) = \sum\limits_{s=1}^m \mbox{int}(e_s)$, where $\mbox{int}(e_s)$ is as in Definition~\ref{def:local_int}.
\end{enumerate}

By definition, at the edge $e$ at the boundary sink $b_j$, the edge vector $E_{e}$ is
\begin{equation}\label{eq:vec_bou_sink}
E_{e} =  (-1)^{\mbox{int}(e)} w(e) E_j.
\end{equation}
\end{definition}

It is easy to check that the formal sum in (\ref{eq:sum}) converges if the weights are sufficiently small. In \cite{AG6} (see Section \ref{sec:linear}), we adapt Theorem~3.2 in \cite{Tal2} to our purposes: the sum over all paths $e\rightarrow b_j$ for a fixed pair $e,b_j$ in (\ref{eq:sum}) is a rational expression in the weights with subtraction-free denominator, and it may be explicitly computed in terms of flows.
Let us recall the necessary definitions. The following is a straightforward adaptation of Fomin's notion of loop--erased walk to our purposes (see \cite{Fom}).

\begin{definition}
\label{def:loop-erased-walk}
\textbf{Edge loop-erased walks.}  Let ${\mathcal P}$ be a walk (directed path) given by
$$
V_e \stackrel{e}{\rightarrow} V_1 \stackrel{e_1}{\rightarrow} V_2 \rightarrow \ldots \rightarrow b_j,
$$
where $V_e$ is the initial vertex of the edge $e$. The edge loop-erased part of  ${\mathcal P}$, denoted $LE({\mathcal P})$, is defined recursively as 
follows. If ${\mathcal P}$ does not pass any edge twice (i.e. all edges $e_i$ are distinct), then $LE({\mathcal P})={\mathcal P}$.
Otherwise, set $LE({\mathcal P})=LE({\mathcal P}_0)$, where ${\mathcal P}_0$ is obtained from ${\mathcal P}$ removing the first
edge loop it makes; more precisely, given all pairs $l,s$ with $s>l$ and $e_l = e_s$, one chooses the one with the smallest value of $s$ and removes the cycle
$$
V_l \stackrel{e_l}{\rightarrow} V_{l+1} \stackrel{e_{l+1}}\rightarrow V_{l+2} \rightarrow \ldots \stackrel{e_{s-1}}\rightarrow V_{s} ,
$$
from ${\mathcal P}$.
\end{definition}

With this procedure, to each path starting at $e$ and ending at $b_j$ one associates a unique edge loop-erased walk $LE({\mathcal P})$, where the latter path is either acyclic or possesses one simple cycle passing through its initial vertex, since each vertex contains either one incoming or one outgoing edge.

Next, in \cite{AG6} the definitions of flows and conservative flows in \cite{Tal2} are adapted to the present case.

\begin{definition}\label{def:cons_flow}\textbf{Conservative flow \cite{Tal2}}. A collection $C$ of distinct edges in a plabic graph $\mathcal G$ is called a conservative flow if, for each vertex $V$ in $\mathcal G$, the number of edges of $C$ that arrive at $V$ is equal to the number of edges of $C$ that leave from $V$. In particular $C$ does not contain edges incident to the boundary.

The set of all conservative flows $C$ in $\mathcal G$ is denoted by ${\mathcal C}(\mathcal G)$. 

The \textbf{weight $w(C)$} of the conservative flow $C$ is the product of the weights of all edges in $C$. In particular, ${\mathcal C}(\mathcal G)$ contains the trivial flow with no edges, which has weight 1.
\end{definition}

\begin{definition}\label{def:edge_flow}\textbf{Edge flow at $e$}. \cite{AG6} A collection $F_e$ of distinct edges in a plabic graph $\mathcal G$ is called an edge flow starting at the edge $e$ if: 
\begin{enumerate}
\item It contains the edge $e$;
\item If $V$ is the starting vertex of $e$, the number of edges of $F_e$ that arrive at $V$ is equal to the number of edges of $F_e$ that leave from $V$ minus 1;
\item For each other interior vertex $V_d$ in $\mathcal G$, the number of edges of $F_e$ that arrive at $V_d$ is equal to the number of edges of $F_e$ that leave from $V_d$;
\item The intersection of $F_e$ with the set of the edges at the boundary sources is either empty or contains just $e$ itself.  
\end{enumerate}
We  denote by   ${\mathcal F}_{ej}(\mathcal G)$ the collection of the edge flows starting at $e$ and containing the boundary sink $b_j$.

An edge flow $F_{e,b_j}\in{\mathcal F}_{e,b_j}(\mathcal G)$ is the union of an edge loop-erased walk $P_{e,b_j}$ with a conservative flow with no common edges with $P_{e,b_j}$. Three numbers are assigned to $F_{e,b_j}$:
\begin{enumerate} 
\item The \textbf{weight of the flow $w(F_e)$} is the product of the weights of all edges in $F_e$.
\item The \textbf{winding number of the flow} $\mbox{wind}(F_{e,b_j})$:
$\mbox{wind}(F_{e,b_j}) = \mbox{wind}(P_{e,b_j})$;
\item The \textbf{intersection number of the flow} $\mbox{int}(F_{e,b_j})$:
$\mbox{int}(F_{e,b_j}) = \mbox{int}(P_{e,b_j})$.
\end{enumerate}
\end{definition}

\subsection{The linear system on $({\mathcal N},\mathcal O,\mathfrak l)$ and the rational representation of the edge vectors}\label{sec:linear}

In this Section we recall the main results in \cite{AG6}: edge vectors satisfy a maximal rank linear system of relations at the vertices of ${\mathcal N}$, and the solution of such relations is obtained using flows; in particular, the components of the edge vectors are rational in the weights with subtraction-free denominators.  This representation extends Talaska's formula \cite{Tal2} for Postnikov's boundary measurement map to the internal edges.

From the definition of the components of an edge vector as summations over all paths starting at this edge, it immediately follows that
the edge vectors $E_e$ on $({\mathcal N},\mathcal O,\mathfrak l)$ satisfy the following linear equation at each vertex:  
\begin{enumerate}
\item  At each bivalent vertex with incoming edge $e$ and outgoing edge $f$:
\begin{equation}\label{eq:lineq_biv}
E_e =  (-1)^{\mbox{int}(e)+\mbox{wind}(e,f)} w_e E_f;
\end{equation}
\item At each trivalent black vertex with incoming edges $e_2$, $e_3$ and outgoing edge $e_1$ we have two relations:
\begin{equation}\label{eq:lineq_black}
E_2 =  (-1)^{\mbox{int}(e_2)+\mbox{wind}(e_2, e_1)}\ w_2 E_1,\quad\quad
E_3 =  (-1)^{\mbox{int}(e_3)+\mbox{wind}(e_3, e_1)}\  w_3 E_1;
\end{equation}
\item At each trivalent white vertex with incoming edge $e_3$ and outgoing edges $e_1$, $e_2$:
\begin{equation}\label{eq:lineq_white}
E_3 =  (-1)^{\mbox{int}(e_3)+\mbox{wind}(e_3, e_1)}\ w_3 E_1 + (-1)^{\mbox{int}(e_3)+\mbox{wind}(e_3, e_2)}\ w_3 E_2,
\end{equation}
\end{enumerate}
where $E_k$ denotes the vector associated to the edge $e_k$.

The linear system defined by equations (\ref{eq:lineq_biv}), (\ref{eq:lineq_black}), and (\ref{eq:lineq_white}) at the internal vertices of $(\mathcal N, \mathcal O, \mathfrak{l})$ possesses a unique solution, and, moreover, this solutions can be written as a rational function of the weights with subtraction-free denominator:

\begin{theorem}\textbf{Full rank of the geometric system of equations for edge vectors on $(\mathcal N, \mathcal O, \mathfrak{l})$  and rational representation for the components of vectors $E_e$}\label{theo:consist} \cite{AG6}
Let $(\mathcal N, \mathcal O, \mathfrak{l})$ be a given plabic network with orientation ${\mathcal O}={\mathcal O}(I) $ with respect to the base $I=\{1\le _1 < i_2 < \cdots < i_k \le n\}$, and gauge ray direction $\mathfrak{l}$.

Given a set $\{B_j\,|\,j\in\bar I\}$ of $n-k$ linearly independent vectors assigned to the boundary sinks $b_j$, let the edge vectors at the boundary sinks be defined as in (\ref{eq:vec_bou_sink}): $E_{e_j} = (-1)^{\mbox{int}(e_j)} w_{e_j} B_j$, $j\in\bar I$.  Then
\begin{enumerate}
\item The linear system of equations (\ref{eq:lineq_biv})--(\ref{eq:lineq_white}) at the internal vertices of $(\mathcal N, \mathcal O, \mathfrak{l})$ has full rank. Since the number of equations coincides with the number of unknowns, it provides a unique system of edge vectors on 
$(\mathcal N, \mathcal O, \mathfrak{l})$;
\item The edge vector $E_{e}$ at the edge $e$ defined in (\ref{eq:sum}), is a rational expression in the edge weights with subtraction-free denominator: 
\begin{equation}
\label{eq:tal_formula}
E_{e} =\sum\limits_{j\in\bar I}\ \ \left[ \frac{\displaystyle\sum\limits_{F\in {\mathcal F}_{e,b_j}(\mathcal G)} \big(-1\big)^{\mbox{wind}(F)+\mbox{int}(F)}\ w(F)}{\displaystyle \sum\limits_{C\in {\mathcal C}(\mathcal G)} \ w(C)}\right]\  B_j,
\end{equation}
where notations are as in Definitions~\ref{def:cons_flow} and~\ref{def:edge_flow};
\item If $e$ is the edge starting at the boundary source $b_{i_r}$, then $\mbox{wind}(F)+\mbox{int}(F) = \sigma(i_r,j)\mod{2}$ for any $F\in {\mathcal F}_{e,b_j}(\mathcal G)$, where $\sigma(i_r,j)$ is the number of boundary sources between $i_r$ and $j$ in the orientation $\mathcal O$. Therefore (\ref{eq:tal_formula}) simplifies to
\begin{equation}
\label{eq:tal_formula_source}
\frac{\displaystyle\sum\limits_{F\in {\mathcal F}_{e,b_j}(\mathcal G)} \big(-1\big)^{\mbox{wind}(F)+\mbox{int}(F)}\ w(F)}{\displaystyle\sum\limits_{C\in {\mathcal C}(\mathcal G)} \ w(C)} \, = \,\big(-1\big)^{\sigma(i_r,j)}\, \frac{\displaystyle\sum\limits_{F\in {\mathcal F}_{e,b_j}(\mathcal G)} \ w(F)}{\displaystyle\sum\limits_{C\in {\mathcal C}(\mathcal G)} \ w(C)} \,=	\, A^r_{j}, \ \ j\in \bar I,
\end{equation}
where $A^r_j$ is the the entry of the reduced row echelon matrix $A$ with respect to the base $I$;
\item Finally, if $\mathcal N$ is acyclically oriented (see Definition~\ref{def:acyclic}), then all edge vectors $E_e$ are not-null. Indeed, in such case there exists $\sigma(e, b_j)\in \{0,1\}$ such that $\mbox{wind}(F)+\mbox{int}(F)=\sigma(e, b_j)\mod{2}$ for any $F\in {\mathcal F}_{e,b_j}(\mathcal G)$, and (\ref{eq:tal_formula}) simplifies to
\begin{equation}\label{eq:edge_parity}
E_{e}= \displaystyle\sum\limits_{j\in \bar I} \Big(\sum\limits_{F\in {\mathcal F}_{e,b_j}(\mathcal G)} \big(-1\big)^{\mbox{wind}(F)+\mbox{int}(F)}\ w(F)\Big) B_j
=\sum\limits_{j\in \bar I} \Big((-1)^{\sigma(e, b_j)}\sum\limits_{F\in {\mathcal F}_{e,b_j}(\mathcal G)} \ w(F)\Big) B_j.
\end{equation}
\end{enumerate}
\end{theorem}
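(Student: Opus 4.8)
The plan is to prove the four assertions in one sweep by adapting Talaska's loop-erased walk resummation \cite{Tal2,Fom} from boundary-to-boundary walks to edge-to-boundary walks, reducing the full-rank statement to the positivity of the conservative-flow partition function. First I would check that the formal series $E_e$ of (\ref{eq:sum}) actually \emph{solves} the relations (\ref{eq:lineq_biv})--(\ref{eq:lineq_white}): a directed walk issuing from $e$ is forced, at the first internal vertex it meets, either to continue along the unique outgoing edge (bivalent and black cases) or to branch into the two outgoing edges (white case), and by the additivity of $\mbox{wind}$ and $\mbox{int}$ over consecutive edges the weight and sign factors picked up on this first step are exactly the factors appearing in the relations. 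Collecting the edge vectors at all non-sink edges into one unknown vector and the boundary-sink data (\ref{eq:vec_bou_sink}) into the right-hand side, the relations become a square linear system $M\mathbf{E}=\mathbf{b}$, squareness following from a handshake/Euler count matching one unknown per non-sink edge against the vertex relations. Thus a formal solution exists, and full rank is equivalent to $\det M\neq 0$.

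The core step is the rational representation (\ref{eq:tal_formula}). I would group the walks $\mathcal P:e\to b_j$ by their loop-erased part $LE(\mathcal P)$ of Definition~\ref{def:loop-erased-walk}: every walk decomposes uniquely as a self-avoiding edge path $P$ together with the erased loops, and the erased loops reassemble into a conservative flow (Definition~\ref{def:cons_flow}) with no common edges with $P$. Resumming the geometric series over all admissible loop configurations converts $\sum_{\mathcal P}w(\mathcal P)$ into a ratio whose numerator ranges over the edge flows $F=P\cup C\in\mathcal F_{e,b_j}$ of Definition~\ref{def:edge_flow} and whose denominator is $\sum_{C\in\mathcal C(\mathcal G)}w(C)$. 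Since $\mbox{wind}(F)$ and $\mbox{int}(F)$ are defined to equal $\mbox{wind}(P)$ and $\mbox{int}(P)$, the sign depends only on the loop-erased part and factors through the numerator, yielding (\ref{eq:tal_formula}). The denominator $\sum_C w(C)$ is a sum of monomials in the positive weights and contains the trivial flow of weight $1$, hence is strictly positive; comparing it with the cofactor expansion of $M$ identifies it, up to a sign-monomial, with $\det M$, so $M$ is invertible. This proves (1) and (2) simultaneously.

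For (3) and (4) I would establish the parity lemma that $\mbox{wind}(F)+\mbox{int}(F)\bmod 2$ is constant on $\mathcal F_{e,b_j}$ when $e$ is the edge at a boundary source, and on the set of paths $e\to b_j$ when $\mathcal O$ is acyclic. Two flows with the same boundary data have identical in/out-degree at every internal vertex, so their difference is a union of closed cycles; it then suffices to show that each such cycle contributes an even amount to $\mbox{wind}+\mbox{int}$. This is the topological heart: the generalized winding of a closed polygonal cycle is its turning number, while its signed crossings with the source rays $\mathfrak l_{i_r}$ are governed by the winding of the cycle about each source endpoint, and for cycles arising in these differences the two indices add up to an even integer. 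In the acyclic case there are no directed cycles, so $\mathcal C(\mathcal G)=\{\emptyset\}$ and $\sum_C w(C)=1$, and the same crossing analysis forces equal parity on all self-avoiding paths to $b_j$. Factoring the common sign $(-1)^{\sigma(i_r,j)}$, respectively $(-1)^{\sigma(e,b_j)}$, out of the numerator leaves the subtraction-free sum $\sum_F w(F)$; for a boundary source this matches Talaska's formula for the reduced row-echelon entry $A^r_j$, giving (\ref{eq:tal_formula_source}), and in the acyclic case the positivity of $\sum_F w(F)$ for at least one $j$, guaranteed since $\mathcal G$ is connected to the boundary, forces $E_e\neq 0$ and yields (\ref{eq:edge_parity}).

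The main obstacle I anticipate is twofold, and both parts are combinatorial-topological rather than algebraic: making the decomposition ``walk $=$ loop-erased walk $+$ disjoint conservative flow'' into a weight- and sign-preserving correspondence that resums correctly, and proving the parity invariance of $\mbox{wind}+\mbox{int}$ under the addition of a difference cycle, which requires the precise relation between the turning number of the cycle and its signed intersections with the gauge rays. Note that this invariance holds only for paths with fixed boundary data or in the acyclic case; for a generic internal edge the numerator genuinely carries signs, which is why only the denominator is subtraction-free. Once these two lemmas are in place, the full-rank, rationality, echelon-entry, and non-vanishing claims follow from Cramer's rule and weight-positivity.
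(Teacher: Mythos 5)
There is no proof of Theorem \ref{theo:consist} in this paper for me to compare you against: the theorem is imported verbatim from the companion work \cite{AG6}, and the text here only records that the path-sums (\ref{eq:sum}) satisfy (\ref{eq:lineq_biv})--(\ref{eq:lineq_white}) by the first-step decomposition you describe, and then verifies the claims on Example \ref{ex:talaska}. Your outline is, in substance, the strategy of that cited source --- a generalization of Talaska's formula \cite{Tal2} organized around loop-erased walks, edge flows versus conservative flows, a determinant identity for the full-rank claim, and parity lemmas for items (3)--(4) --- so the route is the intended one, and your count showing the system is square, as well as the identification of the denominator with the conservative-flow partition function, are correct.

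Two steps need repair before the plan would go through. First, the asserted ``unique decomposition of a walk into a self-avoiding path plus a disjoint conservative flow'' is not literally available: the loops erased from a walk share vertices with $LE(\mathcal P)$, may be traversed repeatedly, and do not assemble into a conservative flow disjoint from the loop-erased part, so the naive geometric-series factorization fails; the honest mechanisms are either Talaska's sign-reversing involution or, more economically, the determinant route you yourself half-propose: once $\det M=\sum_{C\in\mathcal C(\mathcal G)}w(C)$ is established, Cramer's rule together with the standard path-plus-disjoint-cycles expansion of the cofactors of $\mathrm{Id}-T$ produces exactly the numerator $\sum_{F\in\mathcal F_{e,b_j}(\mathcal G)}(-1)^{\mbox{\scriptsize wind}(F)+\mbox{\scriptsize int}(F)}w(F)$, so the resummation lemma can be bypassed altogether. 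Second, your justification of the parity lemma is false as stated: a simple directed cycle has $\mbox{wind}$ odd (its turning number is $\pm 1$) and $\mbox{int}=0$ (its signed crossings with each gauge ray cancel, since every boundary source lies outside the cycle), so such a ``difference cycle'' contributes an odd, not an even, amount. This does not sink the argument, but only because Definition \ref{def:edge_flow} assigns to a flow the winding and intersection numbers of its loop-erased path alone, so conservative components drop out by definition; the evenness claim must therefore be formulated and proved only for the alternating (partially orientation-reversed) closed curves arising as differences of the path parts, with the directed-cycle case handled by that definitional remark. Note, incidentally, that the odd parity of directed cycles is precisely what makes your determinant identity subtraction-free: each cycle in the expansion of $\det(\mathrm{Id}-T)$ contributes $(-1)$ from the permutation sign times $(-1)^{\mbox{\scriptsize wind}+\mbox{\scriptsize int}}=-1$ from the geometric signs, hence $+w(C)$.
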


\begin{figure}
\centering{\includegraphics[width=0.7\textwidth]{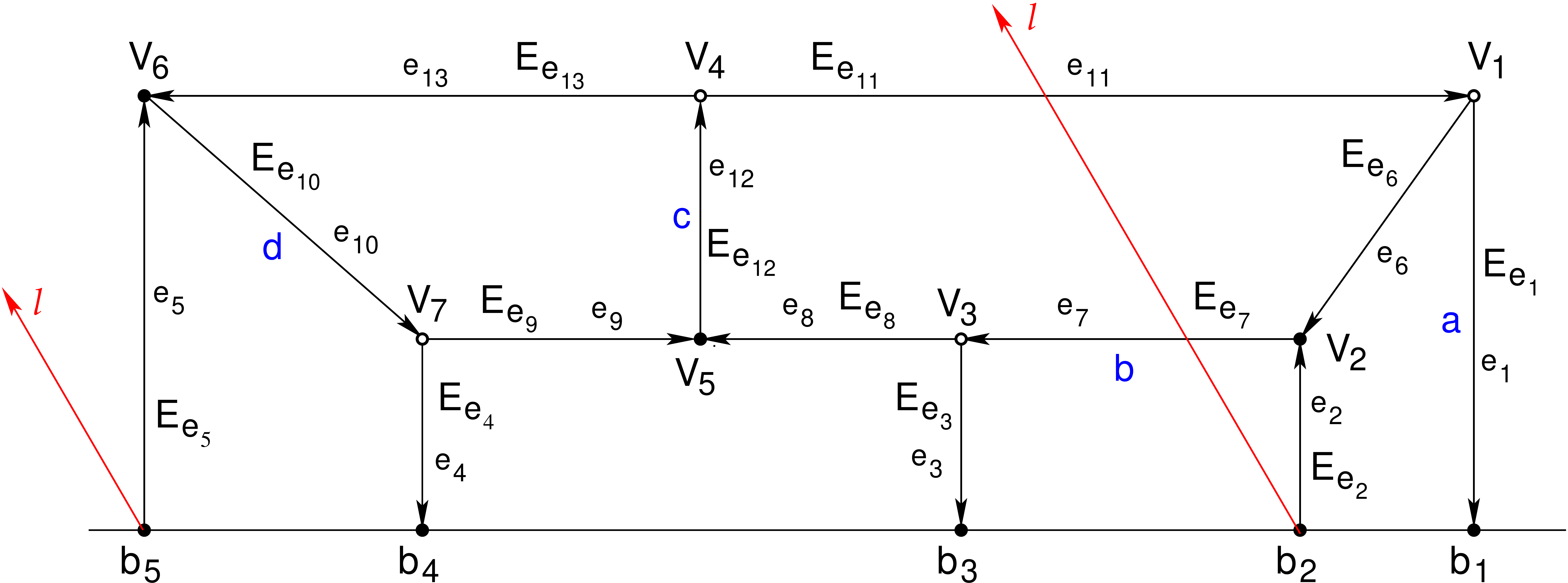}}
\caption{\small{\sl The network of Example~\ref{ex:talaska} represents a point in $Gr^{\mbox{\tiny TNN}}(2,5)$.}\label{fig:gr2_4_edge}}
\end{figure}

\begin{example}\label{ex:talaska}
Consider the network in Figure~\ref{fig:gr2_4_edge}. We assume that all weights are 1 except
\begin{equation}\label{eq:weights}
  w(V_1,b_1) =a, \ \ w(V_2,V_3) =b, \ \ w(V_5,V_4) =c, \ \  w(V_6,V_7) =d. 
\end{equation}
The graph has two oriented cycles $C_1=\{e_{11},e_6,e_7,e_8,e_{12}\}$ and $C_2=\{e_{13},e_{10},e_9,e_{12}\}$.
Our aim is to compare three equivalent approaches:
\begin{enumerate}
\item The direct computation of the edge vectors' components as sums over all paths from the edge $e$ to the corresponding boundary sinks, see (\ref{eq:sum});
\item The computation of the edge vectors' components using the generalized Talaska's formula (\ref{eq:tal_formula}). We also test that they coincide with the original Talaska' formula \cite{Tal2} if the edge is a boundary source edge (see \ref{eq:tal_formula_source});
\item The computation of the edge vectors as solutions of the linear system (\ref{eq:lineq_biv}), (\ref{eq:lineq_black}), (\ref{eq:lineq_white}).  
\end{enumerate}

1. Let us compute the edge vector $E_{e_7}$ using the direct summation. The paths from $e_7$ to $b_1$ are:
\begin{equation*}
{\mathcal P}_{71} = e_7,e_8,e_{12}, C_1^{k_1},C_2^{l_1},\ldots, C_1^{k_s}, C_2^{l_s},e_{11},e_1,
\end{equation*}
for some choice of non--negative indices $s,k_1,	\dots, l_s$, and
\begin{equation*}
\mbox{wind}({\mathcal P}_{71}) = -1 -k_1-\ldots - k_s+ l_1+\ldots+l_s, \ \ \mbox{int}({\mathcal P}_{71}) = 0, \ \ 
w({\mathcal P}_{71}) = abc (bc)^{k_1+\ldots+k_s}(dc)^{l_1+\ldots+l_s}.
\end{equation*}
Therefore the first component of $E_{e_7}$ is
\begin{equation}\label{eq:ex_tal1}
  \left(E_{e_7}\right)_{1} = -acb \sum_{k,l\ge 0}{k+l \choose k} (-1)^{k+l} (bc)^k(cd)^l= \frac{-abc}{1+bc + cd}.
\end{equation}
Similarly, the paths from $e_7$ to $b_3$ are either ${\mathcal P}^{(0)}_{73}=e_7,e_3$ or:
\begin{equation*}
{\mathcal P}^{(1)}_{73} = e_7,e_8,e_{12}, C_1^{k_1},C_2^{l_1},\ldots, C_1^{k_s}, C_2^{l_s},e_{11},e_6,e_7,e_8,
\end{equation*}
\begin{align*}
& \mbox{wind}({\mathcal P}^{(0)}_{73}) = 0 && \mbox{int}({\mathcal P}^{(0)}_{73}) = 1, && w({\mathcal P}^{(0)}_{73})_0 =b \\ 
& \mbox{wind}({\mathcal P}^{(1)}_{73}) = -1 -k_1-\ldots - k_s+ l_1+\ldots+l_s, && \mbox{int}({\mathcal P}^{(1)}_{73}) = 1, \ \ 
&& w({\mathcal P}^{(1)}_{73}) = b^2c (bc)^{k_1+\ldots+k_s}(dc)^{l_1+\ldots+l_s}.
\end{align*}
Therefore
\begin{equation}\label{eq:ex_tal2}
  \left(E_{e_7}\right)_{3} = - b\left[1 - bc \sum_{k,l\ge 0}{k+l \choose k} (-1)^{k+l} (bc)^k(cd)^l\right]  = -b\left[1-
    \frac{bc}{1+bc + cd} \right]=  \frac{-b - bcd }{1+bc + cd}.
\end{equation}
The paths from $e_7$ to $b_4$ have the following form:
\begin{equation*}
{\mathcal P}_{74} = e_7,e_8,e_{12}, C_1^{k_1},C_2^{l_1},\ldots, C_1^{k_s}, C_2^{l_s},e_{13},e_{10}e_4, 
\end{equation*}
\begin{equation*}
\mbox{wind}({\mathcal P}_{74}) = -k_1-\ldots - k_s+ l_1+\ldots+l_s, \ \ \mbox{int}({\mathcal P}_{74}) = 1, \ \ 
w({\mathcal P}_{74}) = bcd (bc)^{k_1+\ldots+k_s}(dc)^{l_1+\ldots+l_s}.
\end{equation*}
Therefore
\begin{equation}\label{eq:ex_tal3}
  \left(E_{e_7}\right)_{4} = -bcd \sum_{k,l\ge 0}{k+l \choose k} (-1)^{k+l} (bc)^k(cd)^l= \frac{-bcd}{1+bc + cd}.
\end{equation}
Finally
\begin{equation}\label{eq:ex_tal4}
E_{e_7} = -\frac{1}{1+bc + cd} \left(abc,0,b+bcd, bcd,0 \right).
\end{equation}
The computation of $E_{e_2}$ follows the same route, with all windings differing by 1 from those of $E_{e_2}$; therefore
\begin{equation}\label{eq:ex_tal5}
E_{e_2} = \frac{1}{1+bc + cd} \left(abc,0,b+bcd, bcd,0 \right).
\end{equation}

2. Let us compute the edge vector  $E_{e_7}$ using (\ref{eq:tal_formula}). We have 3 conservative flows -- the trivial one, $C_1$ and $C_2$. Therefore the denominator in (\ref{eq:tal_formula}) is exactly $1+bc+cd$. From $e_7$ to $e_1$ we have only one flow which coincides with the loop-erased walk $LE({\mathcal P}_{71})= e_7,e_8,e_{12},e_{11},e_1$  For this path 
\begin{equation*}
\mbox{wind}(LE({\mathcal P}_{71})) = -1, \ \ \mbox{int}(LE({\mathcal P}_{71})) = 0, \ \ 
w(LE({\mathcal P}_{71})) = abc,
\end{equation*} 
therefore we get (\ref{eq:ex_tal1}). From $e_7$ to $e_3$ we have one loop-erased walk: $LE({\mathcal P}^{(0)}_{73})=LE({\mathcal P}^{(1)}_{73}) = e_7,e_3$ and two flows:  $F_1=LE({\mathcal P}^{(0)}_{73})$ and  $F_2=LE({\mathcal P}^{(0)}_{73}))\cup C_2. 
$
\begin{equation*}
\mbox{wind}(F_1) = \mbox{wind}(F_2)  = 0 \ \  \mbox{int}(F_1) = \mbox{int}(F_2)  = 1, \ \ 
 w(F_1) =b, \ \  w(F_2) = bcd, 
\end{equation*}  
therefore we get (\ref{eq:ex_tal2}). From $e_7$ to $e_4$ we have only one flow which coincides with the loop-erased walk $LE({\mathcal P}_{74})= e_7,e_8,e_{12},e_{13},e_{10},e_4$.  For this path 
\begin{equation*}
\mbox{wind}(LE({\mathcal P}_{74})) = 0, \ \ \mbox{int}(LE({\mathcal P}_{74})) = 1, \ \ 
w(LE({\mathcal P}_{74})) = bcd,
\end{equation*} 
therefore we get (\ref{eq:ex_tal3}).

We can proceed in a similar way to compute $E_{e_2}$ using either  (\ref{eq:tal_formula}) or the original Talaska formula (\ref{eq:tal_formula_source}), and it is easy to check that both of them coincide with (\ref{eq:ex_tal5}).

3. The linear system of equations is
\begin{align*}
  & E_{e_{11}} = - E_{e_1}- E_{e_6}, &&  E_{e_7} =  E_{e_6} = - E_{e_2},  && E_{e_7} = -b [ E_{e_3} + E_{e_8} ],  &&   E_{e_{12}} = c[ E_{e_{11}} -  E_{e_{13}} ],\\
  &  E_{e_9} =  E_{e_{12}} = - E_{e_8}, &&   E_{e_{13}} =  E_{e_{10}} = E_{e_5}, &&  E_{e_{10}} = d  [ E_{e_4} + E_{e_9} ], && \\
  & E_{e_1} = a [1,0,0,0,0], &&   E_{e_3} =  [0,0,1,0,0], &&  E_{e_4} =  [0,0,0,1,0].  
\end{align*}
Solving it with respect to $E_{e_7}$, one gets
\begin{align*}
  &  E_{e_6} =  E_{e_7} &&  E_{e_2} = - E_{e_7},  && E_{e_{11}} = - E_{e_1}- E_{e_7}, \\
  & E_{e_8} =  - E_{e_3} - \frac{E_{e_7}}{b}, &&  E_{e_9} =  E_{e_3} + \frac{E_{e_7}}{b} , &&  E_{e_{12}} =  E_{e_3} + \frac{E_{e_7}}{b}, \\
  &   E_{e_{10}} = d \left[E_{e_{4}} + E_{e_{3}} + \frac{E_{e_7}}{b} \right], &&  E_{e_{5}} = d \left[E_{e_{4}} + E_{e_{3}} + \frac{E_{e_7}}{b} \right],
   &&  E_{e_{13}} = d \left[E_{e_{4}} + E_{e_{3}} + \frac{E_{e_7}}{b} \right],
\end{align*}
$$
E_{e_3} + \frac{E_{e_7}}{b} = - c\left[ E_{e_1} +  E_{e_7} + d\left[ E_{e_3} +E_{e_4}  + \frac{E_{e_7}}{b} \right] \right],
$$
from which one may easily conclude the $E_{e_7}$ satisfies (\ref{eq:ex_tal4}).
\end{example}

\section{Geometric signatures on plabic networks}\label{sec:signatures}

In this Section we show that, if we identify the edge vector constructed in \cite{AG6} at the edge $e=(U,V)$ with the half-edge vector $z_{U,e}$ up to a sign defined below, then it is possible to define an edge signature, which we call geometric, and to reformulate the linear system for the edge vectors in terms of Lam's relations for the corresponding half-edge vectors. Therefore, by Theorem \ref{theo:consist}, such geometric signature solves Lam's problem. Indeed, if ${\mathcal V}=\R^n$, the signature is geometric and the weights are positive, then the solution of Lam's system of relations at the boundary vertices coincides with the image of Postnikov's boundary measurement map in $\GTNN$ for the same collection of weights.

Moreover, we show that the signature defined combinatorially in \cite{AG3} on the Le-graph using the Le--diagram is geometric.

\subsection{Geometric signatures, half-edge vectors and geometric relations}\label{sec:complete}
   
The system of relations in the edge vectors has full rank for all positive edge weights and its solution is naturally related to the Postnikov's boundary measurement map, therefore it is a natural candidate for providing an explicit solution of Lam's problem of constructing a ``good'' signature. In this Section we define such a signature using the linear system of relations in the edge vectors, and we call it geometric.

\begin{figure}
\centering
{\includegraphics[width=0.48\textwidth]{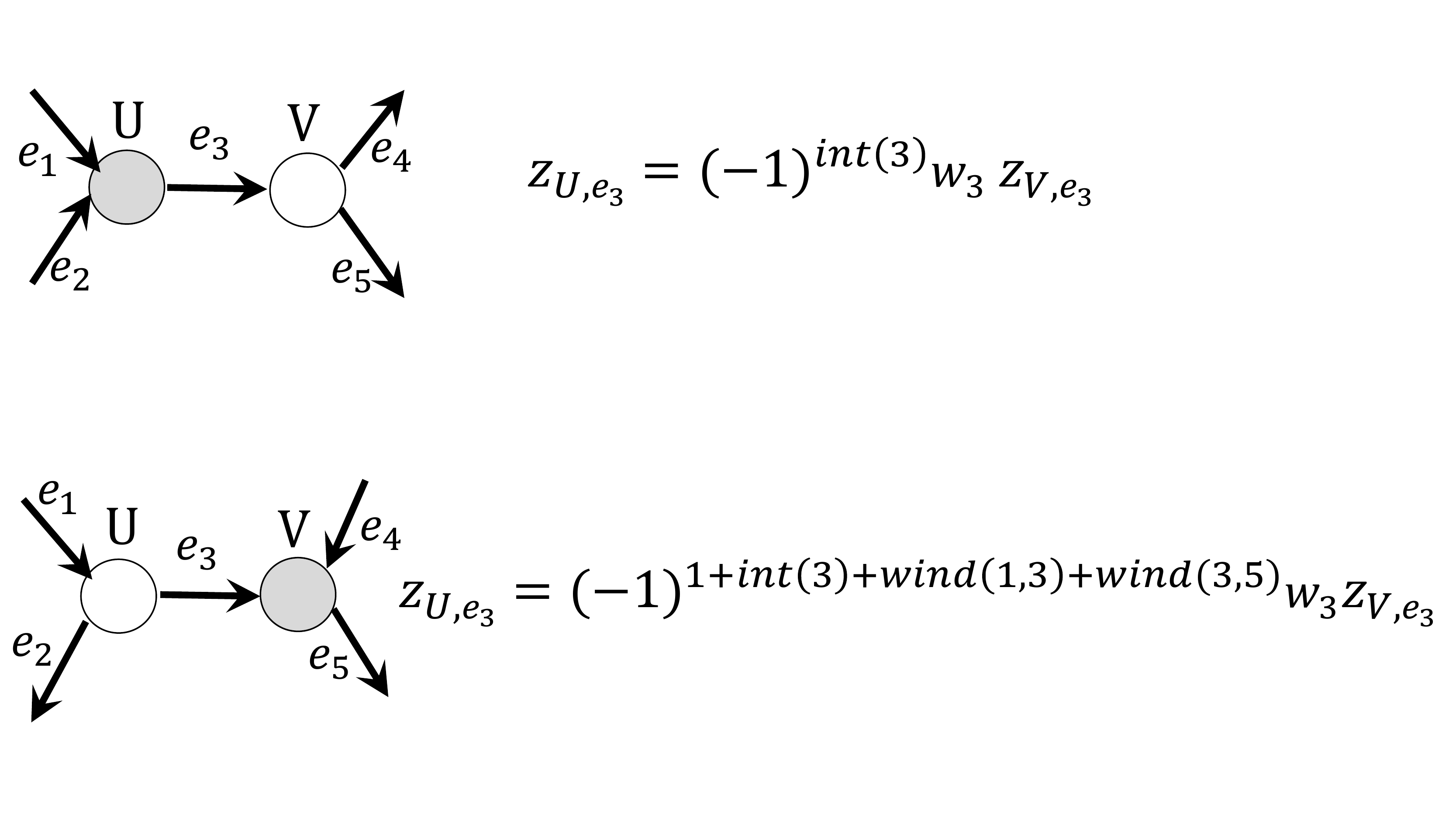}}
\hfill
{\includegraphics[width=0.48\textwidth]{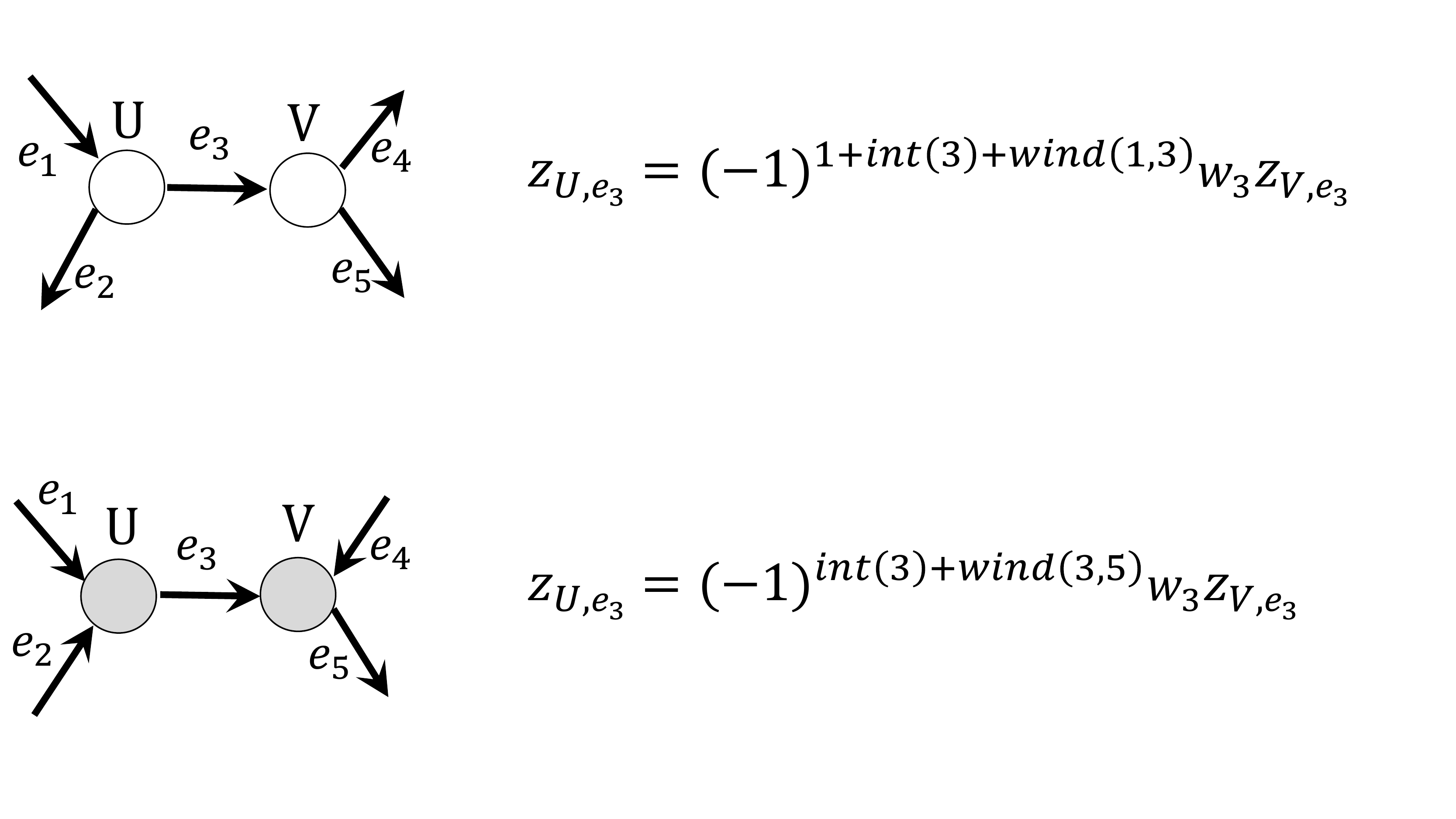}}
{\includegraphics[width=0.48\textwidth]{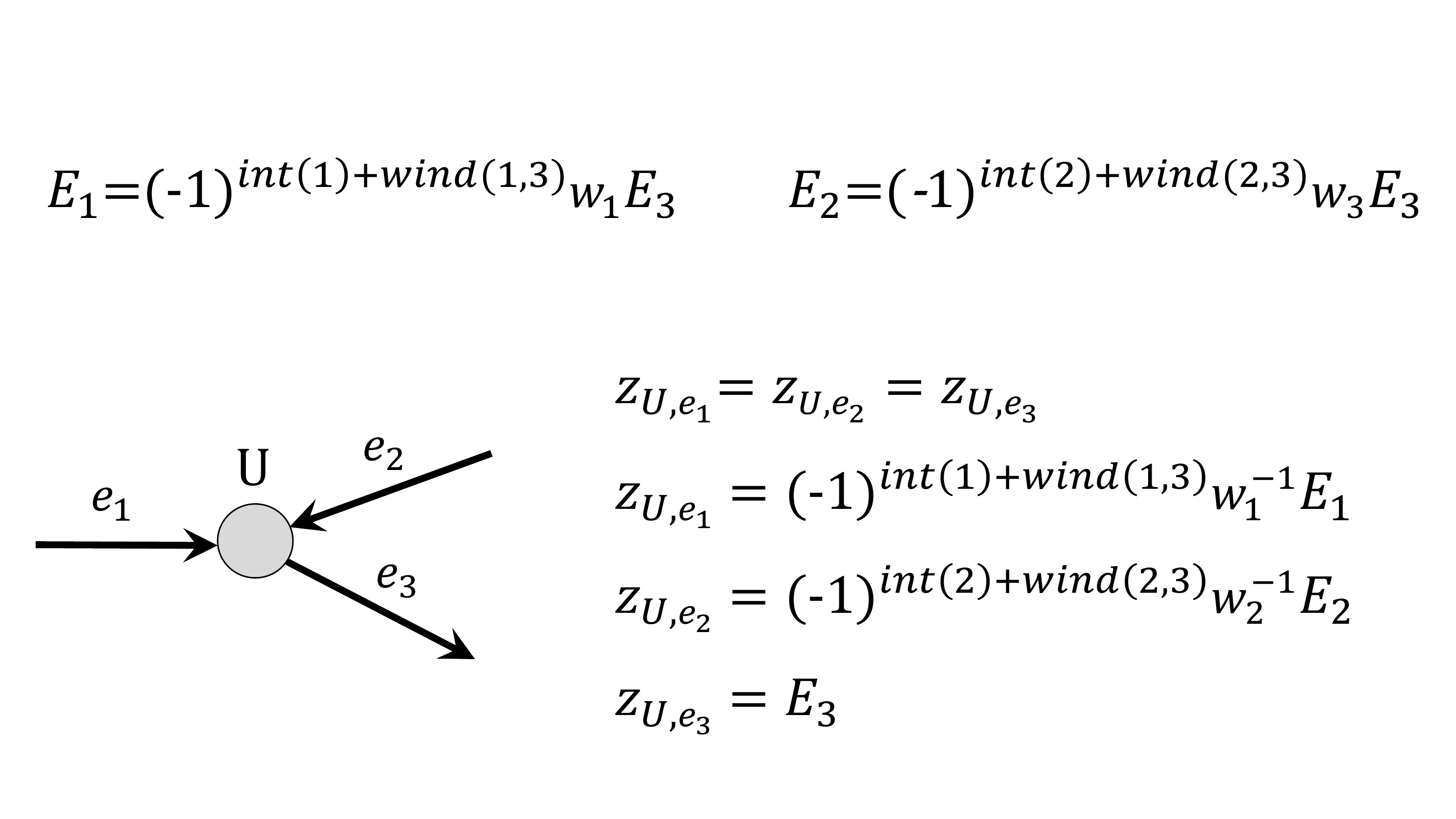}}
\hfill
{\includegraphics[width=0.48\textwidth]{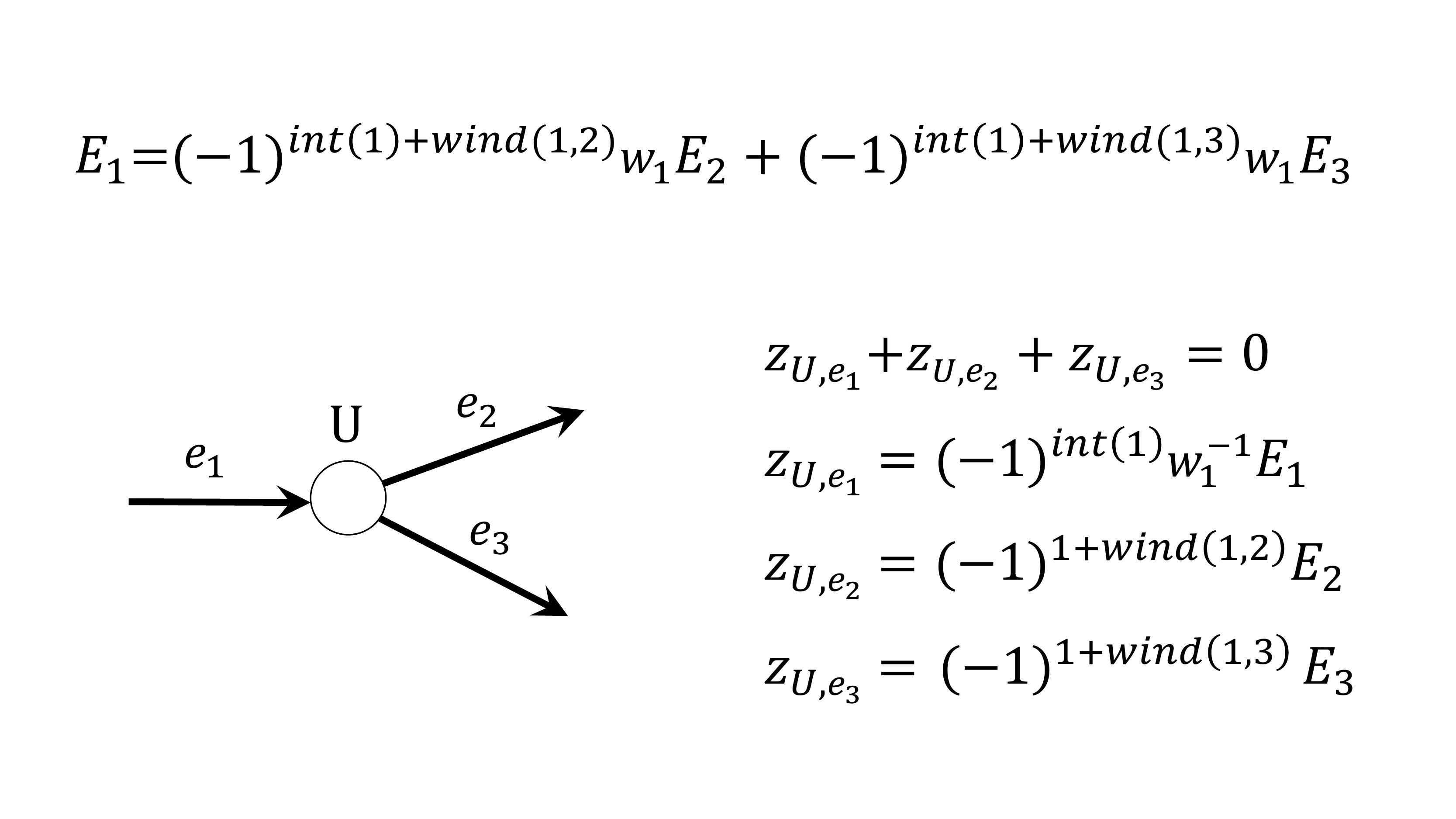}}
\caption{\small{\sl The geometric signature of Definition \ref{def:geometric-signature} [top] and the reformulation of the linear relations in the edge vectors as geometric relations in the half-edge vectors compatible with Lam's \cite{Lam2} approach [bottom]. We use the abridged notations $\mbox{wind}(i,j)\equiv \mbox{wind}(e_i,e_j)$ and $\mbox{int}(i)\equiv \mbox{int}(e_i)$.}\label{fig:lin_lam1}
}
\end{figure}

\begin{definition}\textbf{Geometric signature.} \label{def:geometric-signature}
Let $(\mathcal G, \mathcal O, \mathfrak l)$ be a plabic graph representing a $|D|$--dimensional positroid cell $\S \subset \GTNN$ with perfect orientation $\mathcal O$ associated to the base $I$ and gauge ray direction $\mathfrak l$. We call \textbf{geometric signature} on  $(\mathcal G, \mathcal O, \mathfrak l)$ any signature equivalent to the following one (the right-hand sides of all equations are taken  $\mod(2)$):
\begin{enumerate}
\item If $e=(V,b_j)$ is the edge at the boundary sink $b_j$, $j\in \bar{I}$, then
\begin{equation}
\label{eq:lin_lam_1.0.5}
\epsilon_{V,b_j}=\left\{\begin{array}{ll} \mbox{int}(e), & \mbox{ if } V \mbox{ is black,} \\ 
1+ \mbox{int}(e) + \mbox{wind}(e_1,e), & \mbox{ if } V  \mbox{ is white and } e_1 \mbox{ is incoming at } V;
                        \end{array} \right.          
\end{equation}
\item If $e=(b_i,V)$ is the edge at the boundary source $b_i$, $i\in I$, then
\begin{equation}
\label{eq:lin_lam_1.1}
\epsilon_{b_i,U}=\left\{\begin{array}{ll} 1+\mbox{int}(e)+\mbox{wind}(e,e_3), & \mbox{ if } V \mbox{ is black and }  e_3 \mbox{ is outgoing at } V,\\ 
1+ \mbox{int}(e), & \mbox{ if } V  \mbox{ is white};
      \end{array} \right.
\end{equation}
\item If $e_3=(U,V)$ is an internal edge, then 
\begin{equation}\label{eq:lam_corr_edge}
\resizebox{\textwidth}{!}{$ 
\epsilon_{U,V}= \left\{ \begin{array}{ll}
\mbox{int}(e_3), & \mbox{ if } U \mbox{ black and } V { white};\\
1+\mbox{int}(e_3)+\mbox{wind}(e_1,e_3), & \mbox{ if } U, V \mbox{ white and } e_1 \mbox{ incoming at } U;\\
1+ \mbox{int}(e_3)+\mbox{wind}(e_1,e_3)+\mbox{wind}(e_3,e_5), & \mbox{ if } e_1 \mbox{ incoming at } U \mbox{ white and } e_5 \mbox{ outgoing at } V\mbox{ black;}\\
\mbox{int}(e_3) +\mbox{wind}(e_3,e_5), & \mbox{ if } U, V \mbox{ black and } e_5 \mbox{ outgoing at } V.
\end{array} \right.
$}
\end{equation}
\end{enumerate}
\end{definition}
We illustrate Definition \ref{def:geometric-signature} in Figure \ref{fig:lin_lam1}[top].
Next we re-express the linear relations satisfied by the edge vectors (\ref{eq:lineq_biv})--(\ref{eq:lineq_white}) on the network $(\mathcal N, \mathcal O, \mathfrak l)$ of graph $\mathcal G$ as linear relations in the half-edge vectors $z_{U,e}$ using the following identification (see Figure \ref{fig:lin_lam1}[bottom]):
\begin{enumerate}
\item If $U$ is a black vertex of valency $m$, and $e_m$ denotes the unique outgoing edge at $U$, then we define:
  \begin{equation}\label{eq:z_vs_E_b}
  z_{U,e_j}=\left\{\begin{array}{ll} (-1)^{\mbox{int}(e_j) + \mbox{wind}(e_j,e_m)} w_{e_j}^{-1} E_{e_j}, & \ \ \mbox{if} \ \   j\ne m ;\\
 E_{e_m}, & \ \ \mbox{if} \ \   j= m;                   
  \end{array}\right.  
\end{equation} 
\item If $U$ is a white vertex of valency $m$, and $e_1$ denotes the unique incoming edge at $U$, then we define:
\begin{equation}\label{eq:z_vs_E_w} 
  z_{U,e_j}=\left\{\begin{array}{ll} (-1)^{1+ \mbox{wind}(e_j,e_1)}  E_{e_j}, & \ \ \mbox{if} \ \   j\ne 1 ;\\
 (-1)^{\mbox{int}(e_1)} w_{e_1}^{-1}  E_{e_1}, & \ \ \mbox{if} \ \   j= 1,                   
  \end{array}\right.
\end{equation}  
\end{enumerate}
where, in the formulas above, if $e_j$ connects the vertices $U,V$,  $w_{e_j}$ corresponds to $w_{U,V}$ if $U$ is the initial vertex of $e_j$ in the orientation $\mathcal O$, otherwise it corresponds to $w_{V,U}$.

\begin{remark}
We remark that with our definition of geometric signature, at the boundary sources the half-edge vectors take opposite values to the corresponding edge vectors. The reason of this choice is to keep the total geometric signature of each face invariant with respect to the changes of the perfect orientation of the graph (see Section \ref{sec:vector_changes}).
\end{remark}  

\begin{remark}
When passing from the system for the edge vectors (\ref{eq:lineq_biv})--(\ref{eq:lineq_white}) to the system of relations in the half--edge vectors, there is a gauge freedom at each vertex which is evident from Figure \ref{fig:lin_lam1}[top]. Indeed, at any internal black vertex $U$ we may use 
\begin{equation}\label{eq:corr_black_2}
 z_{U,e_j}=\left\{\begin{array}{ll} (-1)^{1+\mbox{int}(e_j) + \mbox{wind}(e_j,e_m)} w_{e_j}^{-1} E_{e_j}, & \ \ \mbox{if} \ \   j\ne m ;\\
 -E_{e_m}, & \ \ \mbox{if} \ \   j= m,                   
  \end{array}\right.    
\end{equation}
instead of (\ref{eq:z_vs_E_b}).

Similarly, at any white vertex $U$, we may use
\begin{equation}\label{eq:corr_white_2}
 z_{U,e_j}=\left\{\begin{array}{ll} (-1)^{\mbox{wind}(e_j,e_1)}  E_{e_j}, & \ \ \mbox{if} \ \   j\ne 1 ;\\
 (-1)^{1+\mbox{int}(e_1)} w_{e_1}^{-1}  E_{e_1}, & \ \ \mbox{if} \ \   j= 1,              
\end{array}\right.                
\end{equation}
instead of (\ref{eq:z_vs_E_w}).

These alternative choices of the correspondence between half-edge vectors and edge vectors are equivalent up to a vertex gauge transformation; therefore they correspond to gauge-equivalent geometric signatures.
\end{remark}

From now on we enumerate edges from 1 to $m$ counterclockwise at any $m$-valent vertex $V$ without reference to their orientation, and we use cyclical order in summations.  
 We remark that the following Theorem also holds in a perfectly oriented bicoloured network of valency bigger than three. 

\begin{theorem}\textbf{Lam's relations for the geometric signature parametrize $\S$.}\label{lemma:lam_rela} 
Let $(\mathcal G, \mathcal O, \mathfrak l)$ be a plabic graph representing a positroid cell $\S \subset \GTNN$ with perfect orientation $\mathcal O$ associated to the base $I$ and gauge ray direction $\mathfrak l$. 
Let $w_{U,V}$ be positive weights at the oriented edges $e=(U,V)$, let $\mathcal N$ be the associated network of graph $\mathcal G$, and let $\epsilon_{U,V}$ be the geometric signature defined in equations (\ref{eq:lin_lam_1.0.5})--(\ref{eq:lam_corr_edge}). Then the linear system of relations (\ref{eq:lineq_biv})--(\ref{eq:lineq_white}) on $(\mathcal N, \mathcal O, \mathfrak l)$ may be re-expressed in Lam's form:
\begin{enumerate}
\item If $e=(U,V)$, then $z_{U,e} = (-1)^{\epsilon_{U,V}} w_{U,V} z_{V, e}$;
\item At each $m$-valent black vertex $V$ and for any pair of edges $(e_i,e_{i+1})$  at $V$,
$z_{V,e_i} = z_{V,e_{i+1}}$, $i\in [m]$;
\item  For any $m$-valent white vertex $V$,
$\displaystyle \sum_{i=1}^m z_{V,e_i} =0$,
where $e_i$, $i\in [m]$, are the edges at $V$.
\end{enumerate}
Such geometric system of relations has full rank for any choice of positive weights. If the space for half-edge vector is $\R^n$ or $\C^n$, and we assign the canonical basis vectors to the boundary sink half-edge vectors $ z_{b_j, e}$, then the solution at the boundary sources $b_{i_r}$ is $E_{i_r}-A[r]$, where $E_{i_r}$ is the $i_r$-th vector of the canonical base, and $A[r]$ is the $r$-th row of the matrix in reduced row echelon form for the base $I=\{i_1,i_2,\ldots,i_k\}$. Therefore the solution produces the boundary measurement matrix for $\mathcal N$ with respect to the base $I$, and the half-edge vectors $z_{V,e}$ may be expressed as in Theorem~\ref{theo:consist}.

Moreover, if ${\hat\epsilon}_{U,V}$ is the geometric signature equivalent to  $\epsilon_{U,V}$ via the gauge transformation $\eta(U)$, then the system of half--edge vectors ${\hat z}_{U,e}$ on $\mathcal N$ associated to ${\hat\epsilon}_{U,V}$ and satisfying the same boundary conditions at the boundary sinks is given by:
$$
{\hat z}_{U,e} = \left\{\begin{array}{ll} (-1)^{\eta(U)} z_{U,e} & \mbox{if $U$ is an internal vertex,} \\
                        z_{U,e}  & \mbox{if $U$ is a boundary source vertex}. 
  \end{array}\right.
$$
\end{theorem}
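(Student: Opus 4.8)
The plan is to observe that Lam's system for the geometric signature is nothing but the edge-vector linear system \eqref{eq:lineq_biv}--\eqref{eq:lineq_white} of Theorem~\ref{theo:consist}, rewritten under the edgewise change of variables \eqref{eq:z_vs_E_b}--\eqref{eq:z_vs_E_w}. I would therefore split the argument into three blocks: (i) check that this substitution turns the relations \eqref{eq:lineq_biv}--\eqref{eq:lineq_white} at every vertex into the three Lam relations carrying exactly the signature of Definition~\ref{def:geometric-signature}; (ii) deduce full rank and the boundary values from Theorem~\ref{theo:consist} together with the invertibility of the substitution; (iii) verify the gauge statement by a direct computation.

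For block (i) I would work one vertex colour at a time. At an $m$-valent black vertex with outgoing edge $e_m$, the $m$-valent form of \eqref{eq:lineq_black} reads $E_{e_j}=(-1)^{\mbox{int}(e_j)+\mbox{wind}(e_j,e_m)}w_{e_j}E_{e_m}$ for each incoming $e_j$; substituting this into \eqref{eq:z_vs_E_b} makes the prefactor cancel and yields $z_{V,e_j}=E_{e_m}=z_{V,e_m}$, i.e. $z_{V,e_i}=z_{V,e_{i+1}}$. At an $m$-valent white vertex with incoming edge $e_1$, \eqref{eq:lineq_white} gives $z_{V,e_1}=\sum_{j\ne 1}(-1)^{\mbox{wind}(e_1,e_j)}E_{e_j}$ via \eqref{eq:z_vs_E_w}, while $z_{V,e_j}=(-1)^{1+\mbox{wind}(e_j,e_1)}E_{e_j}$ for the outgoing edges, so $\sum_i z_{V,e_i}=\sum_{j\ne 1}\big[(-1)^{\mbox{wind}(e_1,e_j)}-(-1)^{\mbox{wind}(e_j,e_1)}\big]E_{e_j}$, which vanishes as soon as $\mbox{wind}(e_1,e_j)\equiv\mbox{wind}(e_j,e_1)\pmod 2$. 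The edge relations (item (1)) are obtained by the same mechanism: for each of the four internal colour combinations of \eqref{eq:lam_corr_edge} and the two boundary cases \eqref{eq:lin_lam_1.0.5}--\eqref{eq:lin_lam_1.1} I substitute the appropriate branch of \eqref{eq:z_vs_E_b}--\eqref{eq:z_vs_E_w} (and \eqref{eq:vec_bou_sink} at the boundary) into $z_{U,e}=(-1)^{\epsilon_{U,V}}w_{U,V}z_{V,e}$, read off the forced exponent of $(-1)$, and match it against Definition~\ref{def:geometric-signature}. The black--white and black--black internal cases close immediately; the white--white, white--black and white-boundary cases close only after interchanging the two arguments of one local winding.

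The single recurring technical point, which I expect to be the crux, is the parity identity $\mbox{wind}(e,f)\equiv\mbox{wind}(f,e)\pmod 2$ for any ordered pair of edges incident to a common vertex, with the edge directions kept fixed. I would prove it straight from Definition~\ref{def:winding_pair}: regarding the oriented directions of $e$, $f$ and of $\mathfrak l$ as points of the circle of directions, the three sign conditions in \eqref{eq:def_wind} are simultaneously satisfied (with common value $\pm 1$) precisely when $\mathfrak l$ lies strictly inside the shorter of the two arcs cut out by the directions of $e$ and $f$; this arc is unchanged under $(e,f)\mapsto(f,e)$, so the event $\mbox{wind}\ne 0$ is symmetric, which is exactly the asserted parity. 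This lemma reconciles the argument order of Definition~\ref{def:geometric-signature} (where $\mbox{wind}(e_1,e_3)$ and $\mbox{wind}(e_3,e_5)$ appear) with the order produced by the substitution, and it also powers the white-vertex cancellation above.

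For block (ii), full rank is inherited: edge by edge the substitution \eqref{eq:z_vs_E_b}--\eqref{eq:z_vs_E_w} is multiplication by a nonzero scalar (a sign times a power of a positive weight), hence a bijection between solutions of Lam's system and of the edge-vector system, and the latter is uniquely solvable given the sink data by Theorem~\ref{theo:consist}(1). Setting $z_{b_j,e}=B_j$ to be the canonical basis vectors and using \eqref{eq:vec_bou_sink}, the signature \eqref{eq:lin_lam_1.1} forces the source half-edge vector to equal $-E_e$; by Theorem~\ref{theo:consist}(3) the source edge vector is $E_e=\sum_{j\in\bar I}A^r_j B_j=A[r]-E_{i_r}$, whence the source value is $E_{i_r}-A[r]$, recovering the $r$-th row of the boundary measurement matrix, and the rational form of the remaining $z_{V,e}$ is read off from Theorem~\ref{theo:consist}(2). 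Finally, for block (iii) I put $\hat z_{U,e}=(-1)^{\eta(U)}z_{U,e}$ at internal vertices and $\hat z_{U,e}=z_{U,e}$ at the sources, as in \eqref{eq:equiv_sign_vectors}, and check that this carries the $\epsilon$-system to the $\hat\epsilon$-system with unchanged sink data: in the edge relation the factor $(-1)^{\eta(V)}$ produced by $\hat\epsilon_{U,V}=\epsilon_{U,V}+\eta(U)+\eta(V)$ cancels the one carried by $\hat z_{V,e}$, while each black- and white-vertex relation is scaled by the single common sign $(-1)^{\eta(V)}$ and hence preserved. The main obstacle is not the bookkeeping itself but establishing the winding-parity identity cleanly and keeping the argument order straight across all colour cases at once.
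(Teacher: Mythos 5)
Your proposal is correct and takes essentially the same route as the paper: there the theorem is presented as an immediate consequence of Definition~\ref{def:geometric-signature} (which is read off from the substitution (\ref{eq:z_vs_E_b})--(\ref{eq:z_vs_E_w}), cf.\ Figure~\ref{fig:lin_lam1}) together with Theorem~\ref{theo:consist}, and your three blocks simply make that argument explicit, including the source-edge computation giving $z_{b_{i_r},e}=-E_e=E_{i_r}-A[r]$ and the direct gauge check. Your key parity lemma is sound and is in fact the exact antisymmetry $\mbox{wind}(f,e)=-\mbox{wind}(e,f)$, which follows at once from $s(f,e)=-s(e,f)$, $s(f,\mathfrak l)=-s(\mathfrak l,f)$, $s(\mathfrak l,e)=-s(e,\mathfrak l)$ in Definition~\ref{def:winding_pair}.
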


\begin{figure}
  \centering
	{\includegraphics[width=0.49\textwidth]{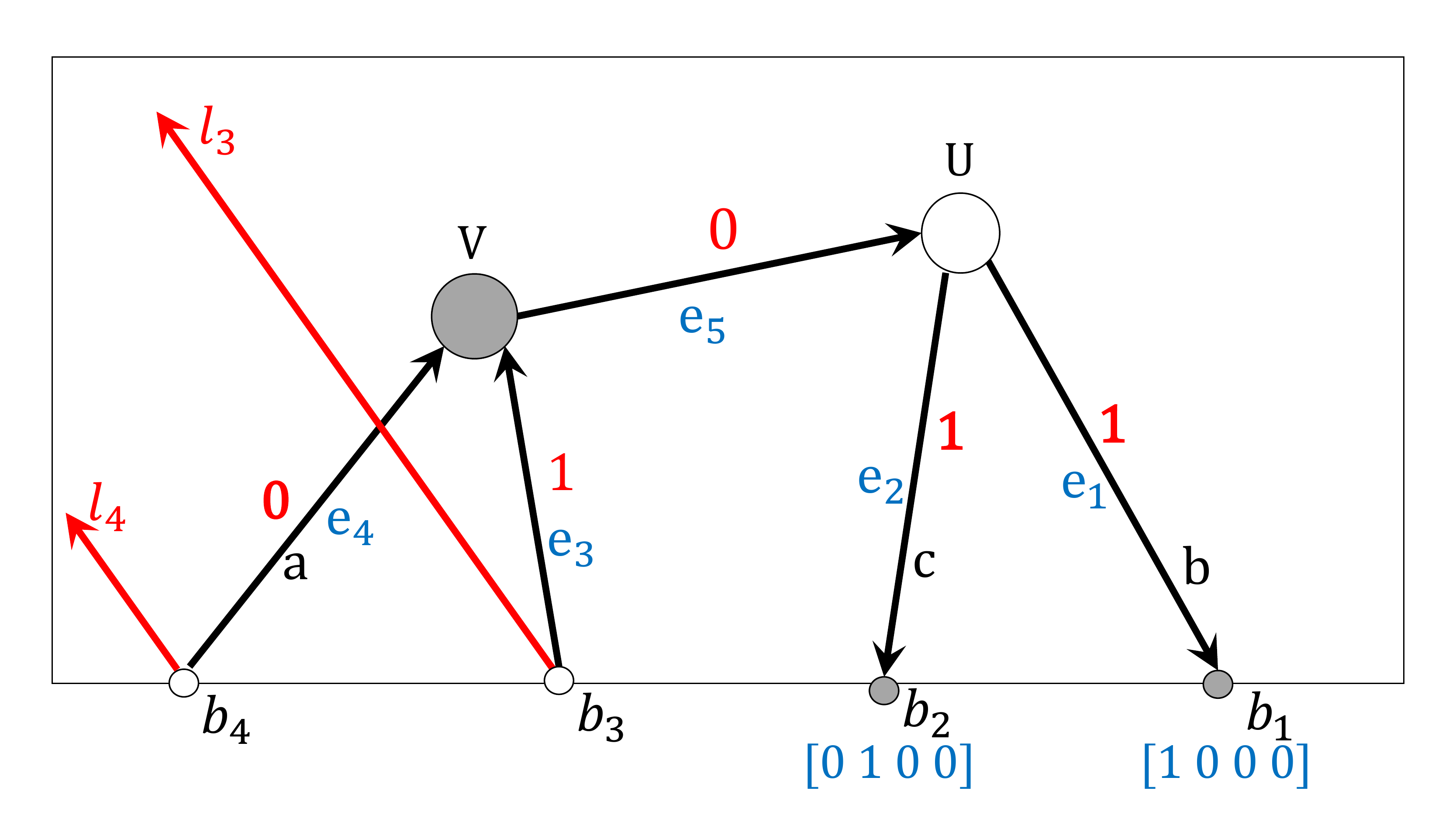}}
  \caption{\small{\sl The geometric signature for Example~\ref{ex:glick}. }\label{fig:glick}}
\end{figure}

\begin{example}\label{ex:glick}
Let us compare the construction of the edge vectors and of the half--edge vectors for the directed network in Figure \ref{fig:glick}.
All edges,  except $e_1,e_2$ and $e_4$, carry unit weight, and $a,b,c$ are assumed to be positive constants. The boundary measurement matrix is
\[
A = \left( \begin{array}{cccc}
b & c & 1& 0\\
-ab & -ac & 0 & 1
\end{array}
\right).
\]
Let $E_i$, $i\in [4]$, be the canonical vectors in $\R^4$.
Choosing $z_{b_1,e_1}=E_1$ and  $z_{b_2,e_2}=E_2$, we obtain the following system of half-edge vectors for the geometric signature associated to the given orientation and gauge ray direction:
\[
\resizebox{\textwidth}{!}{$ 
\begin{array}{lll}
z_{U,e_1} = -b z_{b_1,e_1}, & \, z_{U,e_2} = -c z_{b_2,e_2}, & \, z_{U,e_5} = -z_{U,e_1} - z_{U,e_2} =(b,c,0,0), \\
z_{V,e_3} = z_{V,e_4} = z_{V,e_5} = z_{U,e_5},   &\, z_{b_3,e_3} =-z_{V,e_3} =(-b,-c,0,0) = E_3 -A[1],& \, z_{b_4,e_4} =az_{V,e_4} =(ab,ac,0,0) = E_4 -A[2].\\
\end{array}
$}
\]
The solution to the linear system for the edge vectors $E_{e_j}$, $j\in [5]$, defined in Section \ref{sec:linear} for the same boundary conditions $E_{b_i}=E_i$, $i\in[2]$, is
\[
\begin{array}{lll}
E_{e_1} =bE_1, &\; E_{e_2} =cE_2, &\; E_{e_5} = E_{e_1} +E_{e_2},\\
E_{e_3} = E_{e_5} = (b,c,0,0) = A[1] -E_3, & \; E_{e_4} = -aE_{e_5} = (-ab,-ac,0,0) = A[2] - E_4. &
\end{array}
\]
Notice that the components coincide with (\ref{eq:tal_formula}) in Theorem \ref{theo:consist} and satisfy (\ref{eq:z_vs_E_b}) and 
(\ref{eq:z_vs_E_w}):
\[
z_{U,e_1} = -E_{e_1}, \quad z_{U,e_2} = -E_{e_2}, \quad z_{b_3,e_3} = -E_{e_3}, \quad z_{b_4,e_4} = -E_{e_4}, \quad z_{V,e_5} = E_{e_5}.
\]
\end{example}

\subsection{The master signature on Le--networks is geometric}\label{sec:master} 
In \cite{AG3}, we have used combinatorics on the Le--diagram to associate a master signature to the edges of each canonically oriented Le--graph. The results of Section~\ref{sec:complete} imply that this signature is geometric. Moreover, it is easy to provide a gauge ray direction generating this signature. Let us recall its construction from \cite{AG3}.

\begin{figure}
  \centering
  {\includegraphics[width=0.46\textwidth]{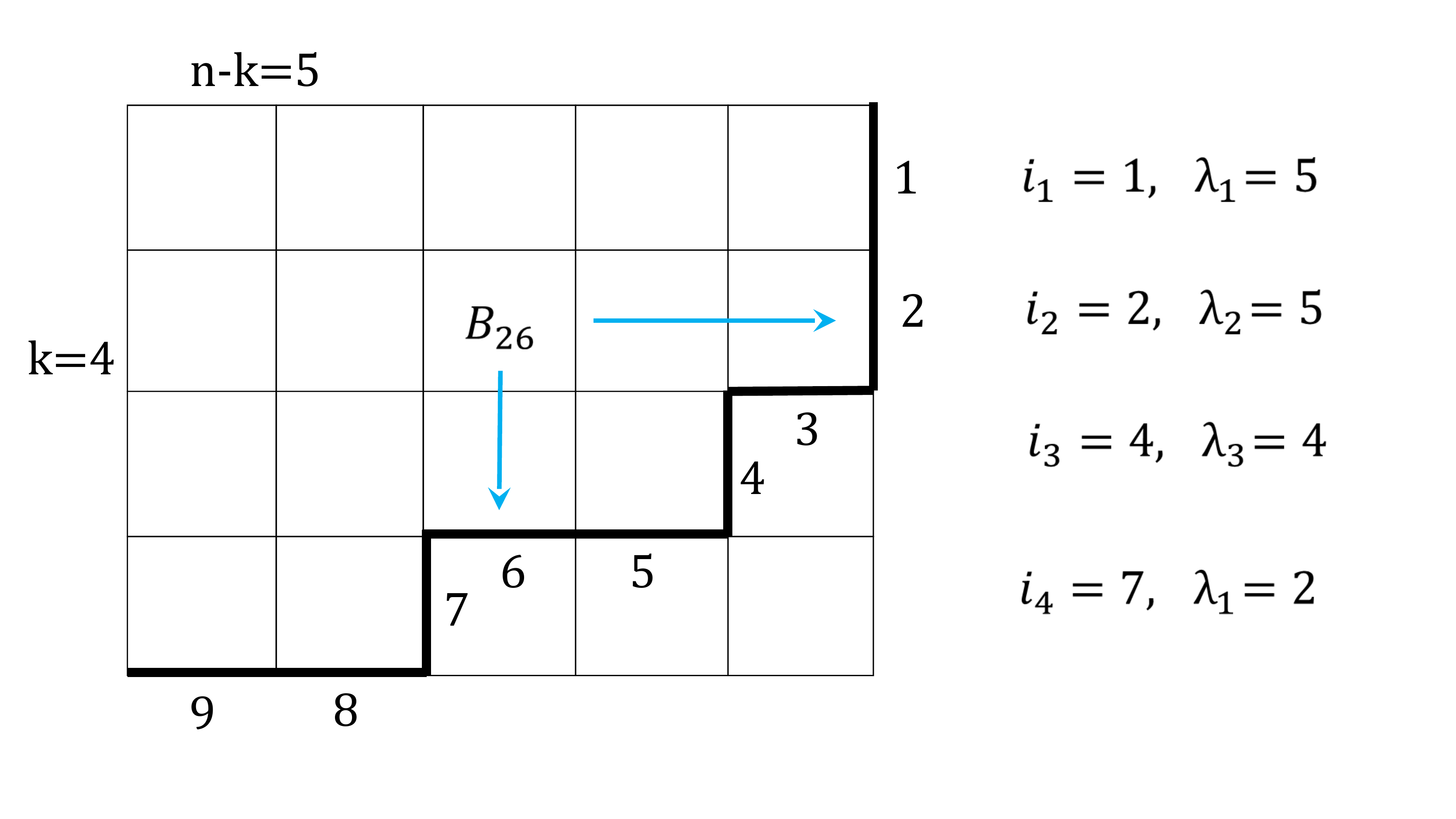}}
  \caption{\label{fig:young}\footnotesize{\sl The Young diagram associated to the partition $(5,5,4,2)$, $k=4$, $n=9$.}}
\end{figure}

\begin{remark}
As in \cite{AG3} we use the ship battle rule to enumerate the boxes of the  Young diagram of a given partition $\lambda$. Let $I=I(\lambda)$ be the pivot set of the $k$ vertical steps in the path along the SE boundary of the Young diagram proceeding from the NE vertex to the SW vertex of the $k\times(n-k)$ bound box, and let ${\bar I} = [n]\backslash I$ be the non--pivot set. Then the box $B_{ij}$ corresponds to the pivot element $i\in I$ and the non--pivot element $j\in {\bar I}$ (see Figure~\ref{fig:young} for an example).
\end{remark}

\begin{definition}\textbf{Le--diagram and Le--tableau.\cite{Pos}}
For a partition $\lambda$, a Le--diagram $L$ of shape $\lambda$ is a filling of the boxes of its Young diagram with $0$'s and $1$'s such that, for any three boxes indexed $(i,k)$, $(l, k)$, $(l, j)$, where $i<l$ and $k<j$, filled correspondingly with $a,b,c$, if $a,c\not =0$, then $b\not =0$. For such a diagram 
denote by $d$ the number of boxes of $D$ filled with $1$s. 

The Le--tableau $T$  is obtained from a Le--diagram $L$ of shape $\lambda$, by replacing all 1s in $L$ by positive 
numbers $w_{ij}$ (weights). We show an example in Figure~\ref{fig:lediag}[left].
\end{definition}

\begin{figure}
  \centering
  {\includegraphics[width=0.49\textwidth]{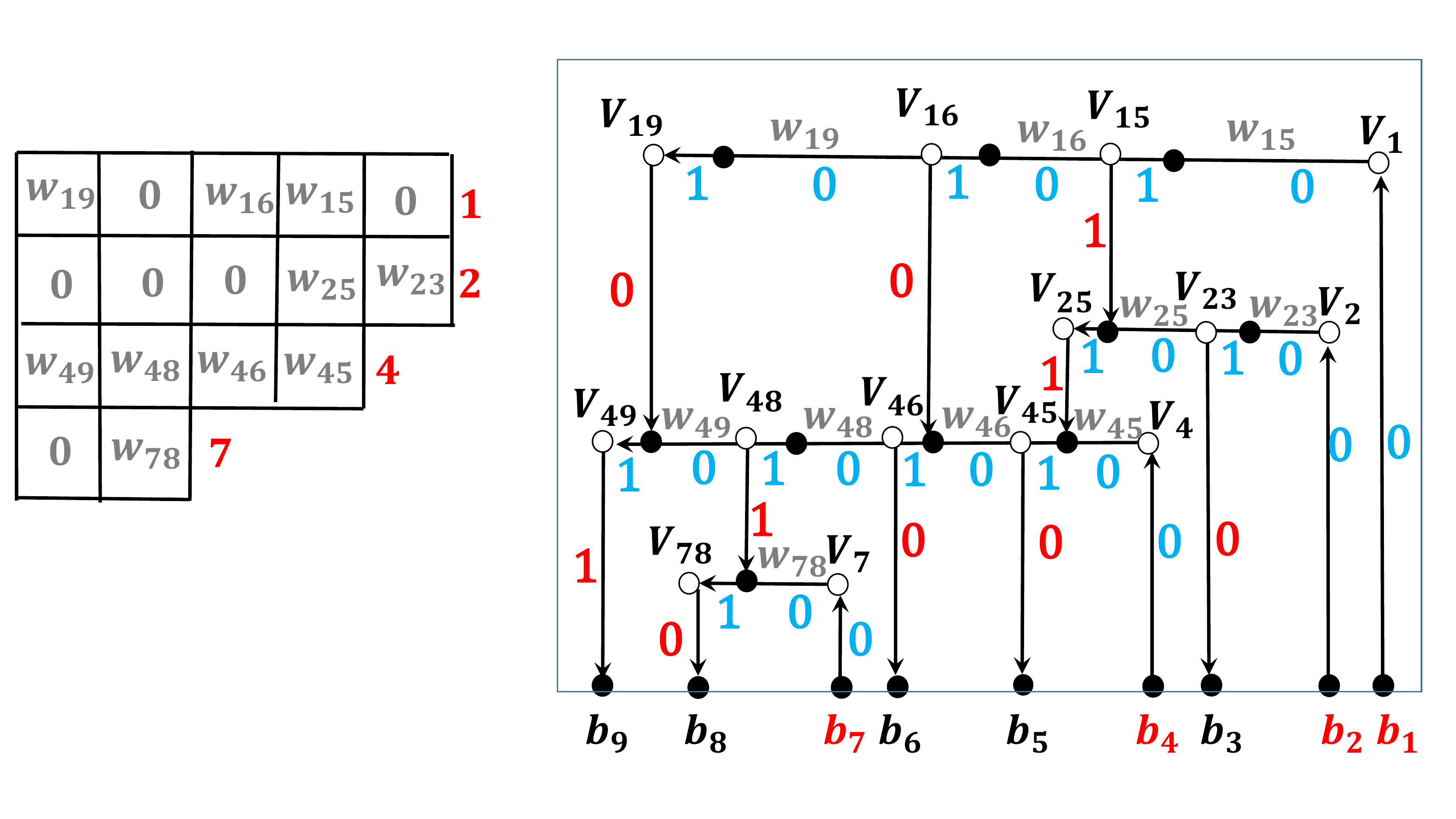}
	\includegraphics[width=0.49\textwidth]{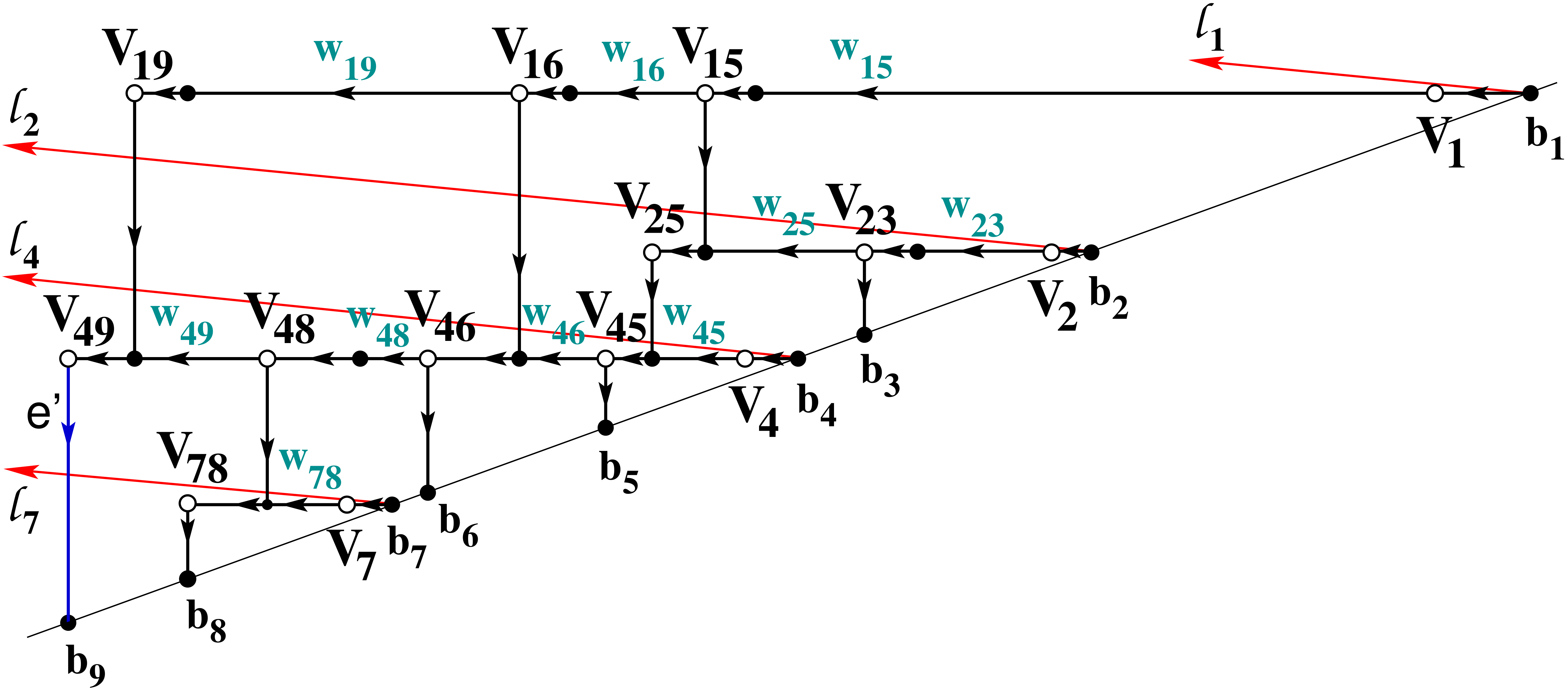}}
	\caption{\footnotesize{\sl Left: a Le--tableau $T$ for the Young diagram of Figure \ref{fig:young} and its Le--network. We mark the master signature of Definition \ref{def:master} with blue colour on the horizontal edges and the edges on the boundary sources, and with red colour on the remaining edges. Right: the choice of gauge ray direction corresponding to the master signature for the same graph.}}
  \label{fig:lediag}
\end{figure}

To any Le--tableau there is associated a canonically oriented bipartite trivalent network defined as follows (see Figure \ref{fig:lediag}[right] for an example of a 10--dimensional positroid cell in $Gr^{\mbox{\tiny TNN}} (4,9)$).

\begin{definition}\label{def:can_Le}\textbf{The trivalent bipartite Le--network \cite{Pos}}
The acyclically oriented perfect trivalent bipartite network  in the disc, ${\mathcal N}$, associated to the Le--tableau $T$ is recursively defined as follows. On the tableau place a white vertex $V_i$ and a boundary source vertex labelled $b_i$ in correspondence of each $i$-th vertical boundary segment, $i\in I$, and a boundary sink vertex $b_j$ for each $j$--th horizontal boundary segment, $j\in \bar I$. colour black all boundary vertices; deform the contour containing all boundary vertices to a horizontal line.
Then starting from the bottom row of the tableau and moving right to left on the given row, do the following 
for any $i\in I$, $j\in \bar I$:
\begin{enumerate}
\item Add a vertical edge of unit weight from $b_i$ to $V_i$. 
\item Ignore all empty boxes;
\item For each box $B_{ij}$ of $T$ filled with $w_{ij}>0$, place a couple of vertices inside the box: $V^{\prime}_{ij}$ coloured black and $V_{ij}$ coloured white to its left in such a way that all internal vertices corresponding to the same row, included $V_{i}$ lie on a common horizontal line.
Then add:
\begin{enumerate}
\item A horizontal edge of unit weight directed from $V_{ij}^{\prime} $ to $V_{ij}$;
\item A horizontal edge of weight $w_{ij}$ from $V_{i, l}$ to $V_{ij}^{\prime}$ , where $V_{i, l}$ is the first white vertex on the same horizontal line to the right of $V_{il}$ (here $V_{i0}$ means $V_i$);
\item A vertical edge of unit weight and oriented downwards from $V_{ij}$ to either the boundary sink $b_j$ if all boxes in the same column and below $B_{ij}$ are empty, or to the black vertex $V^{\prime}_{lj}$ if $B_{lj}$ is the first filled box below $B_{ij}$.
\end{enumerate}
\end{enumerate}
\end{definition}

In \cite{AG3}, we define the following signature on the Le--graph using the Le--diagram.

\begin{definition}\textbf{The master signature for canonically oriented Le--graphs \cite{AG3}}\label{def:master}
Let $\mathcal G$ be the Le--bipartite graph associated to the positroid cell $\S \subset Gr^{\mbox{\tiny TNN}} (k,n)$ acyclically oriented with respect to the lexicographically minimal base $I$ and
let $L$ be its Le--diagram. We define the master signature $\epsilon^{\mbox{\tiny mas}}$ on $\mathcal G$ as follows
\begin{equation}\label{eq:master}
\resizebox{\textwidth}{!}{$
\epsilon^{\mbox{\tiny mas}}_{e} = \left\{ \begin{array}{ll}
0 & \mbox{ if } e \mbox{ is a horizontal edge starting at a white vertex and ending at a black one},\\
1 & \mbox{ if } e \mbox{ is a horizontal edge starting at a black vertex and ending at a white vertex},\\
0 & \mbox{ if } e \mbox{ is an edge starting at a boundary source}, \\
\#(I\cap(i,l]) \mod(2) &  \mbox{ if } e=(V_{ij},V^{\prime}_{lj}) \mbox{ is vertical and incident at internal vertices},\\
\#(I\cap(i,j)) \mod(2) &  \mbox{ if } e=(V_{ij},b_{j}) \mbox{ is vertical and ends at a boundary sink}.
\end{array}
\right.$}
\end{equation}
\end{definition}

\begin{proposition}\textbf{The master edge signature is of geometric type}
Let $\mathcal G$ be Le--graph associated to the positroid cell $\S \subset Gr^{\mbox{\tiny TNN}} (k,n)$. Then its master signature $\epsilon^{\mbox{\tiny mas}}$ is of geometric type.
\end{proposition}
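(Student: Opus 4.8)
The plan is to exhibit an explicit gauge ray direction $\mathfrak{l}$ --- the one depicted in Figure~\ref{fig:lediag}[right] --- for the canonically oriented Le--network, to evaluate the geometric signature of Definition~\ref{def:geometric-signature} for this data edge by edge, and to match the result to the master signature (\ref{eq:master}) up to a vertex gauge transformation $\eta$ of the type (\ref{eq:equiv_sign}). Since by Definition~\ref{def:can_Le} every edge of the Le--network is either a horizontal segment directed right-to-left (from a white vertex $V_{i,l}$ to a black vertex $V^{\prime}_{ij}$, or from $V^{\prime}_{ij}$ to $V_{ij}$) or a vertical segment directed downward (the source edge $b_i\to V_i$, or $V_{ij}\to V^{\prime}_{lj}$, or $V_{ij}\to b_j$), there are only finitely many local configurations to analyse; because this orientation is acyclic, Theorem~\ref{theo:consist}(4) guarantees the edge vectors are well defined, and both the local winding numbers $\mbox{wind}(e_k,e_{k+1})$ and the intersection numbers $\mbox{int}(e)$ can be read off combinatorially from the rigid horizontal/vertical geometry.

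First I would compute the intersection numbers. Choosing $\mathfrak{l}$ so that the gauge rays emanating from the boundary sources sweep across the columns of the diagram, a vertical edge lying in column $j$ and spanning from row $i$ down to row $l$ is crossed exactly by the rays issuing from those sources $b_{i_r}$ with $i < i_r \le l$; hence $\mbox{int}(V_{ij},V^{\prime}_{lj}) \equiv \#(I\cap(i,l]) \bmod 2$ and $\mbox{int}(V_{ij},b_j)\equiv\#(I\cap(i,j))\bmod 2$, which already reproduces the two vertical cases of (\ref{eq:master}). For the horizontal edges and the source edges I would check that, with this $\mathfrak{l}$, each is crossed by a controlled (constant-parity) number of rays, so that their intersection numbers are explicit constants. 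Next I would evaluate the local winding numbers: because consecutive edges along any directed walk in the Le--network turn only between the two fixed directions, each $\mbox{wind}(e_k,e_{k+1})$ appearing in (\ref{eq:lin_lam_1.0.5})--(\ref{eq:lam_corr_edge}) takes a value determined solely by the ordered pair of directions relative to $\mathfrak{l}$, and I would tabulate these.

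Finally I would substitute the computed values of $\mbox{int}$ and $\mbox{wind}$ into the four internal-edge rules of (\ref{eq:lam_corr_edge}) and the boundary rules (\ref{eq:lin_lam_1.0.5})--(\ref{eq:lin_lam_1.1}), reduce modulo $2$, and compare term by term with (\ref{eq:master}). Any residual discrepancy is absorbed by a single global choice of $\eta(U)\in\{0,1\}$ at the internal white and black vertices, the natural candidate being the one flipping the convention at the vertices $V_i$, $V_{ij}$, $V^{\prime}_{ij}$ so as to reconcile the $+1$ offsets appearing in (\ref{eq:lin_lam_1.1}) and in the white--white and white--black cases of (\ref{eq:lam_corr_edge}) with the $0$/$1$ values prescribed for horizontal and source edges in (\ref{eq:master}). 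The main obstacle I anticipate is precisely this bookkeeping step: the gauge index $\eta(U)$ at a vertex simultaneously affects every edge incident to $U$, so the adjustments forced by the horizontal edges, the vertical edges and the boundary edges must all be shown to be mutually compatible for one and the same $\eta$. Establishing this compatibility --- equivalently, exhibiting the difference between the master and the raw geometric signature as a coboundary $\eta(U)+\eta(V)$ on the edges of $\mathcal{G}$ --- is where the careful, case-by-case verification of the winding contributions is essential.
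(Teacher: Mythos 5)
Your proposal follows essentially the same route as the paper's (very terse) proof: take the almost-horizontal gauge ray direction of Figure~\ref{fig:lediag}[right], check that for this choice every local winding number vanishes while the intersection numbers of the vertical edges reproduce exactly the parities $\#(I\cap(i,l])$ and $\#(I\cap(i,j))$ of (\ref{eq:master}) (and vanish on horizontal and source edges), and then pass to the master signature by a vertex gauge transformation. Two concrete points in your write-up would, however, fail as stated.

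First, the gauge function. With the data above, the raw geometric signature of Definition~\ref{def:geometric-signature} differs from (\ref{eq:master}) by $1$ on \emph{every} edge of the Le--network: on white-to-black internal edges and on both kinds of boundary edges this is the ``$1+$'' offset you mention, and on the black-to-white horizontal edges it is because the geometric rule gives $\mbox{int}(e)=0$ while (\ref{eq:master}) prescribes $1$ there. Since the Le--network is bipartite and every boundary edge meets a white internal vertex ($V_i$ or $V_{ij}$), this all-ones discrepancy is precisely the coboundary, in the sense of (\ref{eq:equiv_sign}), of the gauge $\eta=1$ at white vertices and $\eta=0$ at black vertices, which is what the paper writes. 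Your candidate, flipping at $V_i$, $V_{ij}$ \emph{and} $V^{\prime}_{ij}$, cannot work: an internal edge whose two endpoints are both flipped keeps its signature, so the discrepancy would survive on all internal edges. Your plan does defer the choice of $\eta$ to a compatibility check, which would detect this, but the check succeeds only with the white-only gauge. Second, the geometry. The source edge $b_i\to V_i$ is directed from the boundary \emph{into} the disc, not downward as you wrote; and your key intersection count (the ray from $b_{i_r}$ crosses exactly the vertical edges of column $j$ whose span contains row $i_r$) holds only when the sources are drawn at the heights of their rows, as in the figure. If instead all boundary vertices are literally placed on a common horizontal line, an almost-horizontal ray stays at height $O(\epsilon)$ and crosses \emph{no} internal vertical edge, so those intersection numbers would all be $0$ and the edge-by-edge matching with (\ref{eq:master}) would break down. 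This slip is harmless for the proposition itself, since repositioning vertices is a signature gauge transformation by Theorem~\ref{thm:z_orient_gauge}, but the explicit verification you outline must be carried out in the figure's drawing, not in the one you set up.
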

\begin{proof}
  To produce the geometric signature for a canonically oriented  Le-network, it is sufficient to draw the gauge rays almost horizontally (see Figure \ref{fig:lediag} right). Indeed, all windings for this choice are 0, and the intersections with the vertical edges have the same parity as the master signature. Moreover, it is clear that this signature is gauge-equivalent to (\ref{eq:master}) with
  $$
  \eta(U)=\left\{\begin{array}{ll} 0 & \mbox{if  } U \mbox{  is black},  \\ 1 & \mbox{if  } U \mbox{  is white.}
                 \end{array}  \right.
  $$ 
\end{proof}

In \cite{AG3}, we have proven that the system of relations associated to the master signature has full rank and explicitly constructed its solution recursively, verifying that the vectors at the boundary sources satisfy Theorem \ref{theo:consist} for any choice of positive weights.

\section{Invariance of the geometric signature with respect to the network gauge freedoms}\label{sec:vector_changes}

In this Section we prove that the following transformations of the network:
\begin{enumerate}
\item Changes of perfect orientation;
\item Changes of gauge direction;
\item Changes of graph vertices positions respecting the topology,
\end{enumerate}
do not change the equivalence class of the geometric signature. Therefore, for any graph the geometric signature is uniquely defined modulo gauge transformations.

The proof of this statement is rather straightforward: for any such transformation we explicitly calculate the corresponding gauge change.

Following \cite{Pos}, any change of perfect orientation can be represented as a finite composition of elementary changes of orientation, each one consisting in a change of orientation either along a simple cycle ${\mathcal Q}_0$ or along a non-self-intersecting oriented path ${\mathcal P}$ from a boundary source $i_0$ to a boundary sink $j_0$.

\subsection{Some auxiliary indices}\label{sec:auxiliary}
Following \cite{AG6} let us introduce the following auxiliary indices to study the effect of changes of orientation.

\begin{definition}\textbf{Cyclic order.} Generic triples of vectors in the plane have a natural cyclic order. We write $[f,g,h]=0$ if the triple $f$, $g$, $h$ is ordered counterclockwise, and $[f,g,h]=1$ if the triple $f$, $g$, $h$ is ordered clockwise (see Fig~\ref{fig:cyclic_order}).
\end{definition}  
\begin{figure}
  \centering
  \includegraphics[width=0.3\textwidth]{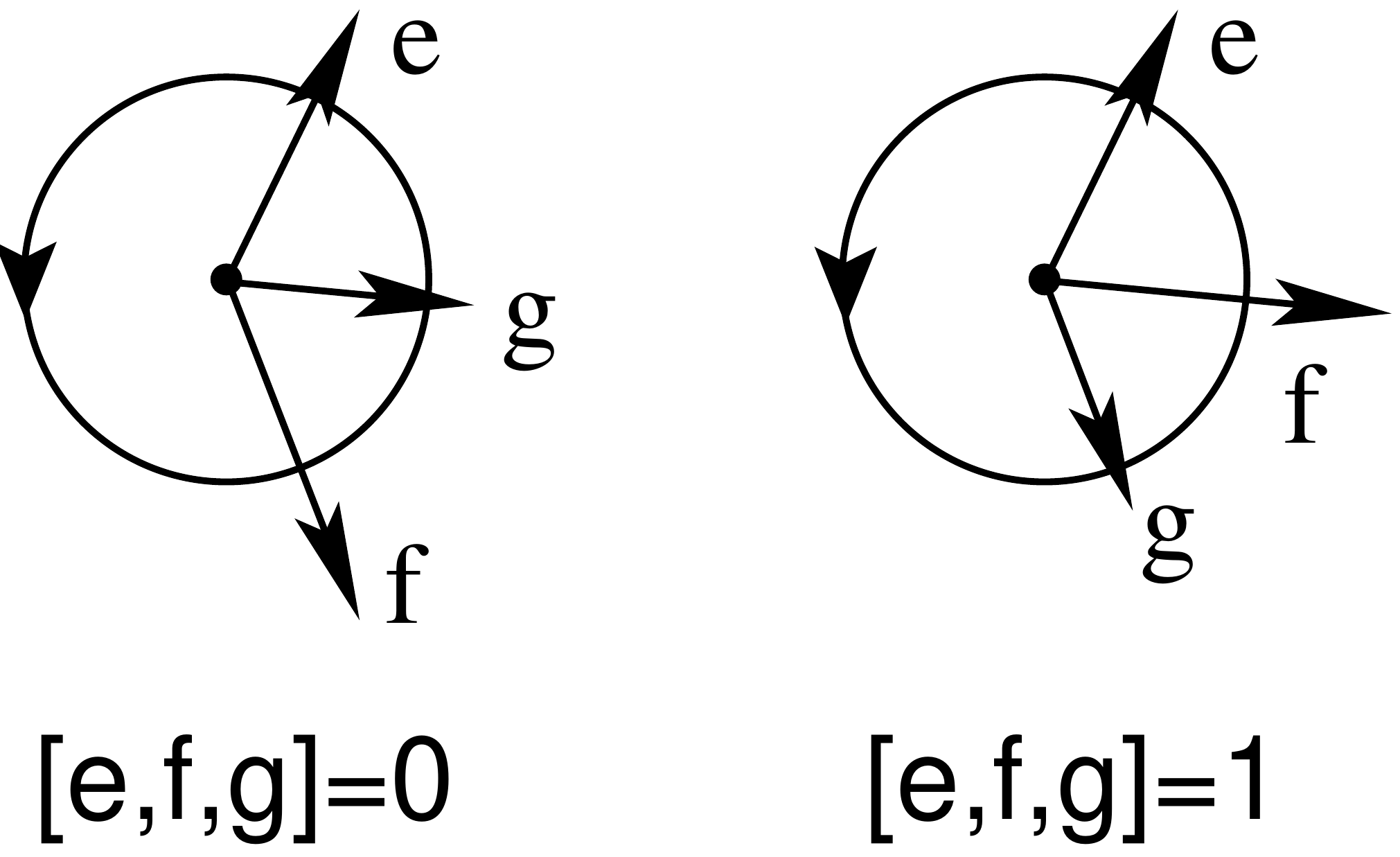}
  \caption{Cyclic order on triples of vectors. By definition $[e,f,g]=[f,g,e]=[g,e,f]=1-[e,g,f]=1-[g,f,e]=1-[f,e,g]$.}
	\label{fig:cyclic_order}
\end{figure}

Assume that we change the perfect orientation either along a non-self-intersecting oriented path from a boundary source to a boundary sink or along a simple cycle. Then mark all regions of the disc by either $+$ or $-$ using the following rule.
\begin{enumerate}
\item If $\mathcal P_0$ is a non-self-intersecting oriented path from a boundary source $i_0$ to a boundary sink $j_0$ in the initial orientation of ${\mathcal N}$, one divides the interior of the disc into a finite number of regions bounded by the gauge ray ${\mathfrak l}_{i_0}$ oriented 
upwards, the gauge ray ${\mathfrak l}_{j_0}$ oriented downwards, the path $\mathcal P_0$ in the initial orientation and the boundary of the disc divided into two arcs, each oriented from $j_0$ to $i_0$. Then a region is marked with a $+$ if its boundary is
oriented, otherwise with $-$.
\item If $\mathcal Q_0$ is a closed oriented simple path, it  divides the interior of the disc into two regions: we mark the region external to $\mathcal Q_0$ with a $+$ and the internal region with $-$. 
\end{enumerate}
\begin{figure}
  \centering
	{\includegraphics[width=0.4\textwidth]{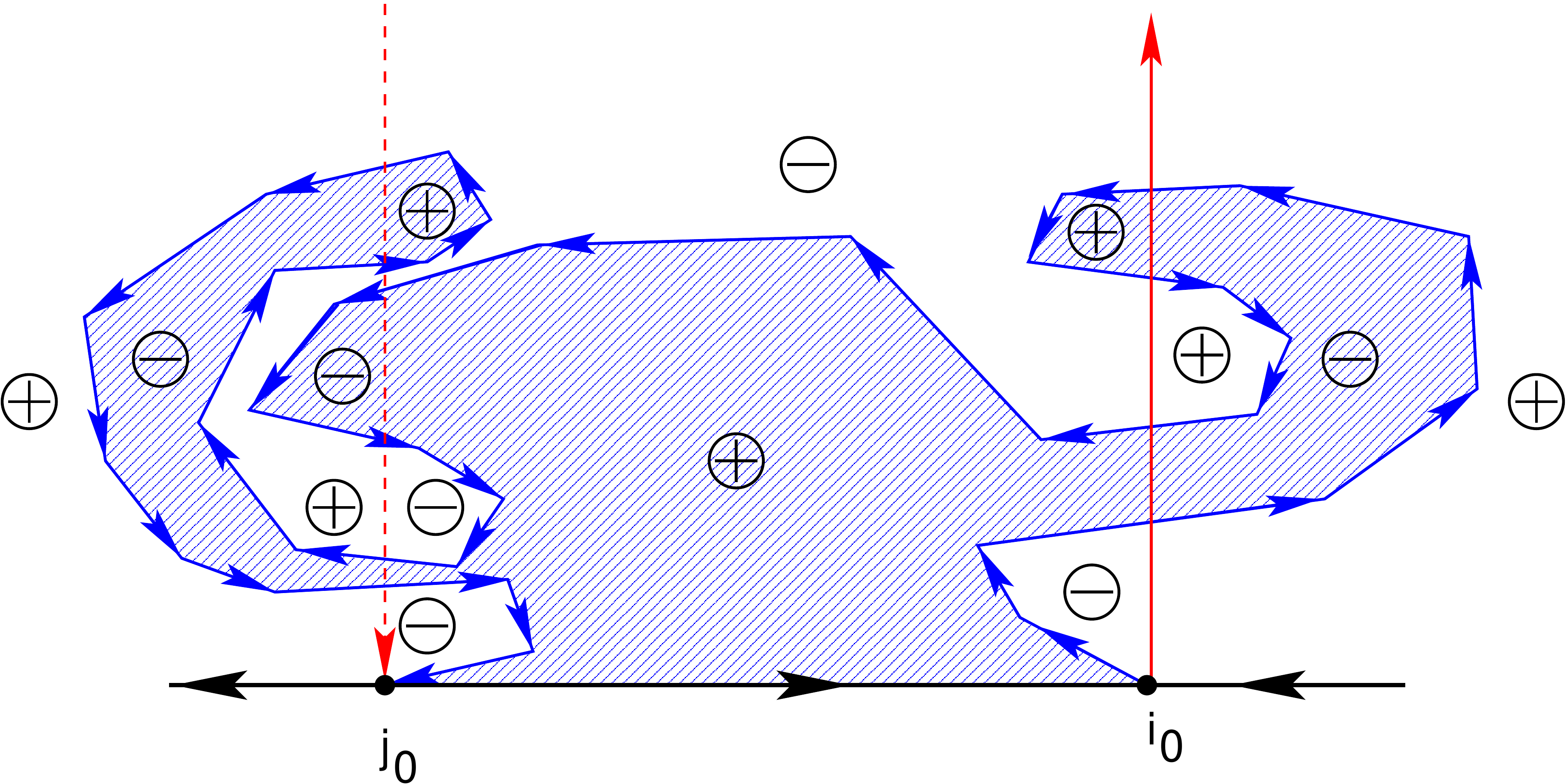}}
	\hfill
	{\includegraphics[width=0.4\textwidth]{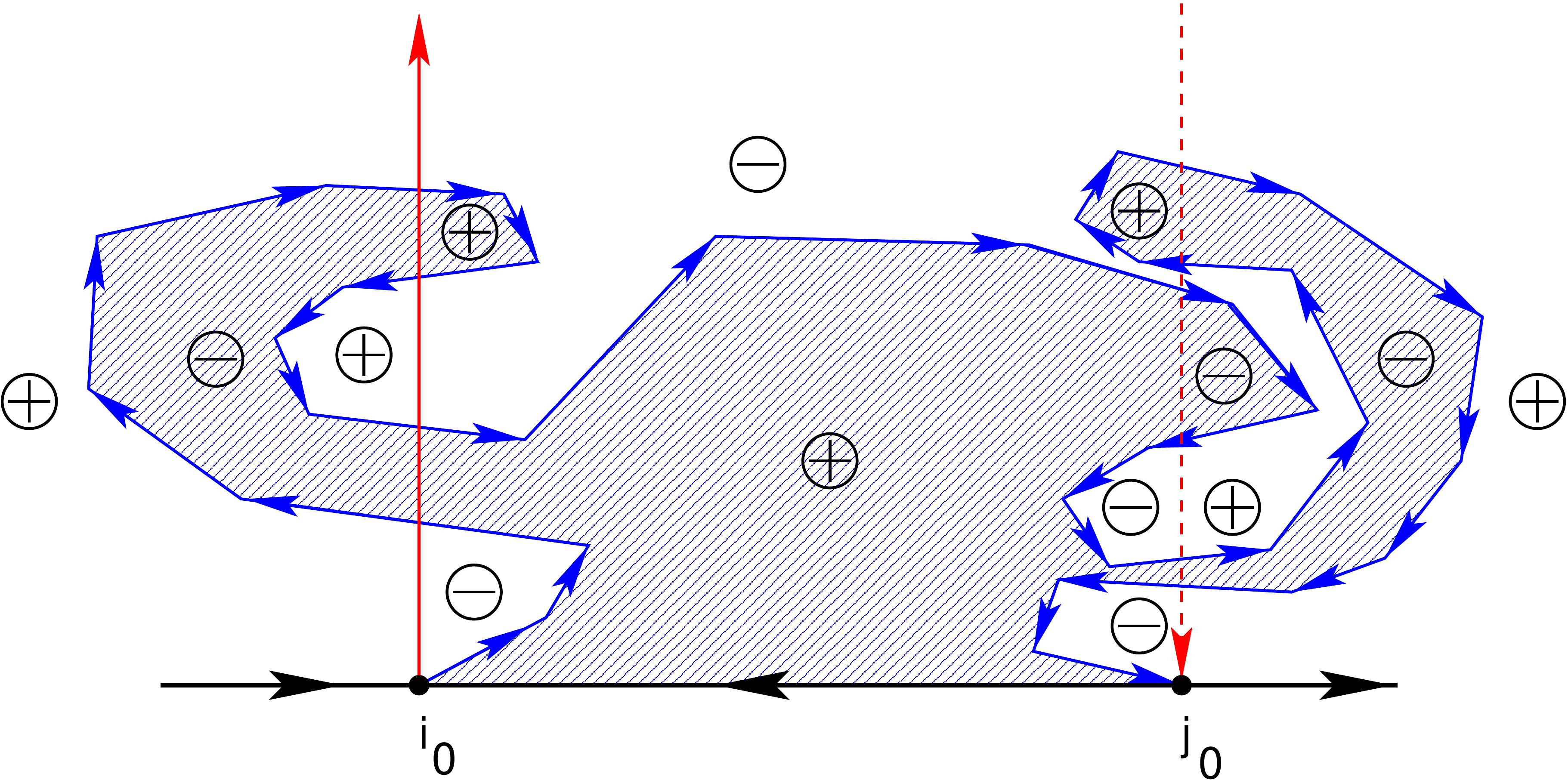}}
  \caption{\small{\sl We illustrate the marking of the regions.}\label{fig:inv_symb}}
\end{figure}

This marking remains invariant after the change of orientation.
\begin{enumerate}
\item If the edge $e\not \in \mathcal P_0$ (respectively $e\not \in \mathcal Q_0$), then we assign it the following index:
\begin{equation}\label{eq:eps_not_path}
\gamma (e) =
\left\{ 
\begin{array}{ll} 
0 & \mbox{ if the starting vertex of } e \mbox{ belongs to a } + \mbox{ region, }\\
1 & \mbox{ if the starting vertex of } e \mbox{ belongs to a } - \mbox{ region, }\\
\end{array}
\right.
\end{equation}
where, in case the initial vertex of $e$ belongs to $\mathcal P_0$ or $\mathcal Q_0$, we make an infinitesimal shift of the starting vertex in the direction of $e$ before assigning the edge to a region;
\item If the edge $e\in \mathcal P_0$ (respectively $e\in \mathcal Q_0$), we assign it the following two indices $\gamma_1(e)$,  $\gamma_2(e)$ using the initial orientation $\mathcal O$:
\begin{enumerate}
\item We look at the region to the left and near the ending point of $e$, and assign the index 
\begin{equation}\label{eq:gamma1}
\gamma_1 (e) =
\left\{ 
\begin{array}{ll} 
0 & \mbox{ if the region is marked with } + ,\\
1 & \mbox{ if the region is marked with } - ;\\
\end{array}
\right.
\end{equation}
\item We consider the ordered pair $(e, \mathfrak{l})$ and assign the index 
\begin{equation}\label{eq:gamma2}
\gamma_2 (e) = \frac{1-s(e,\mathfrak{l})}{2}
\end{equation}
with $s(\cdot,\cdot)$ as in (\ref{eq:def_s}).
\end{enumerate}
\end{enumerate}
It is easy to check that $\gamma_1(e)+\gamma_2(e)+\mbox{int}(e) $ does not change after the change of orientation.

\subsection{Invariance of the geometric signature}

By definition, geometric signatures depend on the orientation of the graph, on the position of the vertices inside the disc and on the gauge ray direction. In this Section we show that all these transformations generate gauge transformations of the signature, therefore geometric signatures are well-defined.

\begin{remark}\label{rem:gauge_vertices}\textbf{Vertex gauge freedom of the graph} The boundary map is the same if we move vertices in $\mathcal G$ without changing their relative positions in the graph. Such transformation acts on edges via rotations, translations and contractions/dilations of their lenghts. Any such transformation may be decomposed in a sequence of elementary transformations in which a single vertex is moved whereas all other vertices remain fixed (see Figure \ref{fig:vertex_gauge_vectors}).
\end{remark}
\begin{figure}
  \centering{\includegraphics[width=0.6\textwidth]{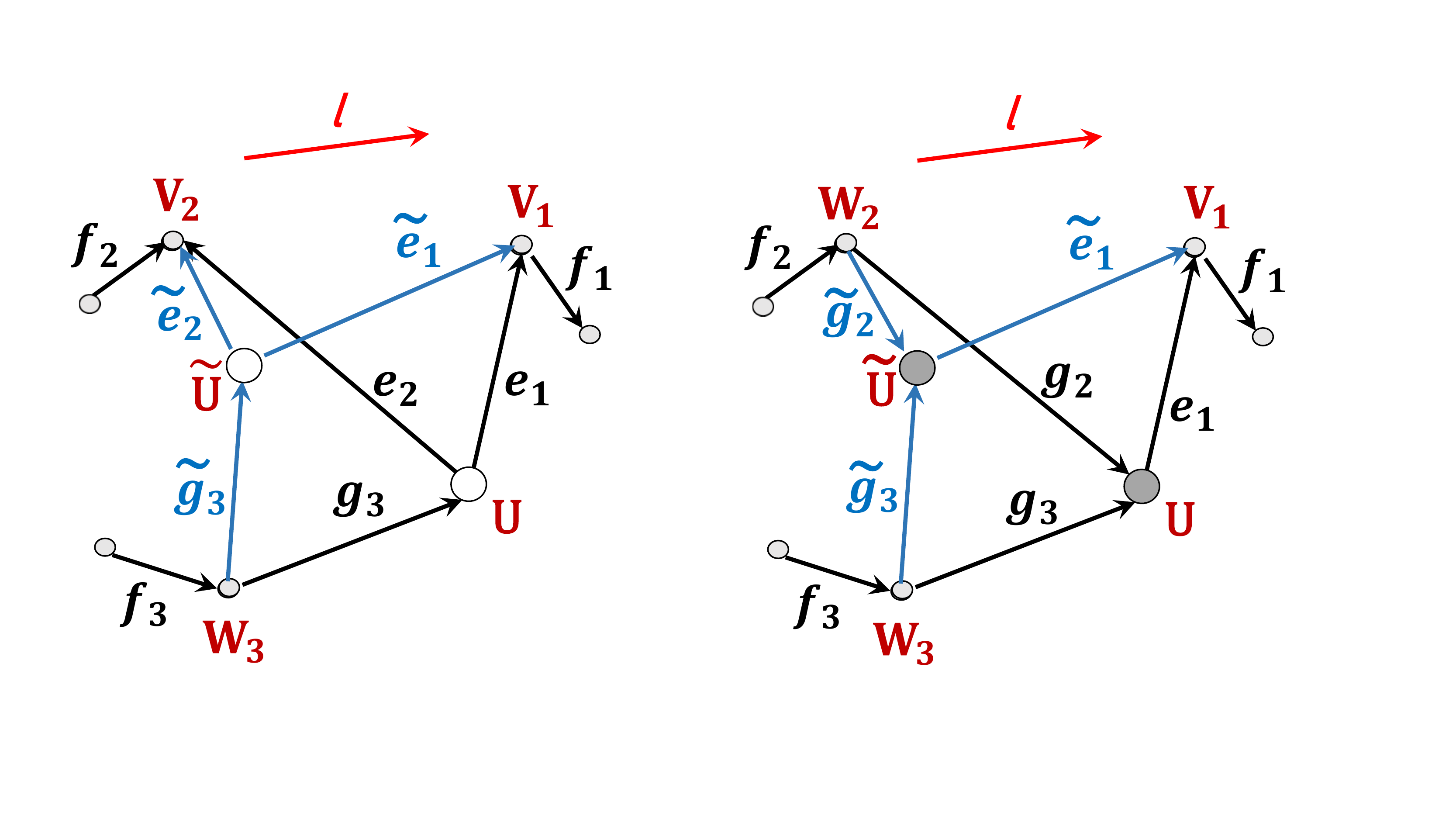}}
	\vspace{-.9 truecm} 
  \caption{\small{\sl The vertex gauge transformation at a white [left] and at a black [right] vertex consists in moving an internal vertex from position $U$ to $\tilde U$.}\label{fig:vertex_gauge_vectors}}
\end{figure}

\begin{theorem}\textbf{The effect of the network transformations on signatures and the uniqueness of the geometric signature}\label{thm:z_orient_gauge}
For any given plabic graph representing a positroid cell $\S$ there is a unique geometric signature up to gauge equivalence.
Indeed, let  $(\mathcal N, \mathcal O, \mathfrak l)$ and $(\hat{\mathcal N}, \hat{ \mathcal O},\hat{\mathfrak l})$ be a pair of plabic networks representing the same point $[A]\in \S \subset \GTNN$, where $\hat{\mathcal N}$ is related to $\mathcal N$ by a composition of changes of the perfect orientation, of the gauge ray direction and of the positions of the vertices. Let $\epsilon_{e}$, ${\hat\epsilon}_{e}$ be the geometric signatures on them defined by equations (\ref{eq:lin_lam_1.0.5}), (\ref{eq:lin_lam_1.1}), (\ref{eq:lam_corr_edge}).

Then $\epsilon_{e}$ and ${\hat\epsilon}_{e}$ are gauge equivalent in the sense of Definition~\ref{def:admiss_sign_gauge}, that is there exists a gauge function $\eta(U)\in\{0,1\}$ defined at the internal vertices of the initial graph such that  
\begin{equation}\label{eq:gauge_gen}
{\hat \epsilon}_{U,V} = \left\{\begin{array}{ll}
\epsilon_{U,V} +\eta(U)+\eta(V), \mod(2) & \mbox{ if } e=(U,V) \mbox{ is an internal edge},\\
\epsilon_{U,V} +\eta(U), \mod(2) & \mbox{ if } e=(U,V) \mbox{ is the edge at some boundary vertex } V.
\end{array}
\right.
\end{equation}
More precisely, the gauge function may be explicitly computed by composing the following elementary transformations:
\begin{enumerate}
\item If $(\mathcal N, \hat{ \mathcal O},\hat{\mathfrak l})$ is obtained from $\mathcal N$ reversing the orientation either along an edge loop-erased path $\mathcal P$ from the boundary source $i_0$ to the boundary sink $j_0$ or along a closed cycle $\mathcal Q$, and $\hat{\mathfrak l}= \mathfrak l$, then the signatures $\epsilon_{e}$ and ${\hat\epsilon}_{e}$ are related by the gauge transformation (\ref{eq:equiv_sign}) with the following gauge function $\eta(U)$:
\begin{equation}\label{eq:zeta_orient}
\eta(U) = \left\{ \begin{array}{ll} 
                    0 , & \mbox{ if } U \not \in \mathcal Q\ (\mathcal P) \mbox{ and belongs to a } + \mbox{ region};\\
                    1 , & \mbox{ if } U \not \in \mathcal Q\ (\mathcal P)\mbox{ and belongs to a } - \mbox{ region};\\
\mbox{wind}(e_1,e_2)+\gamma_1 (e_1) +\gamma_2(e_2) +1, & \mbox{ if } U \in \mathcal Q\ (\mathcal P) \ \ \mbox{is white};\\
\mbox{wind}(e_1,e_2)+\gamma_1 (e_1) +\gamma_2(e_1),    & \mbox{ if } U \in \mathcal Q\ (\mathcal P) \ \ \mbox{is black}. 
\end{array} \right.
\end{equation}
Here $e_1$, $e_2$ is the pair of edges changing orientation at $U$ and $e_1$ is an incoming edge at $U$ in the initial orientation. The indices in the right hand side of (\ref{eq:zeta_orient}) are as in (\ref{eq:gamma1}),  (\ref{eq:gamma2}).

\item If $(\hat{ \mathcal N}, \mathcal O,\hat{\mathfrak l})$ is obtained from $\mathcal N$ changing the gauge ray direction from $\mathfrak l$ to $\hat{ \mathfrak l}$, then the signatures $\epsilon_{e}$ and ${\hat\epsilon}_{e}$ are related by the gauge transformation  (\ref{eq:equiv_sign}) with the following gauge function $\eta(U)$:
\begin{equation}\label{eq:zeta_gauge}
\eta(U) = \left\{ \begin{array}{ll} 
\mbox{cr}(U)+\mbox{par}(e_m),    & \mbox{ if } U  \ \ m-\mbox{valent white};\\
\mbox{cr}(U)+\mbox{par}(e_1),    & \mbox{ if } U  \ \ m-\mbox{valent black}.
\end{array} \right.
\end{equation}
In the right hand side of (\ref{eq:zeta_gauge}), $\mbox{par(e)}$ is 1 if $e$ belongs to the angle $\widehat{{\mathfrak l},{\mathfrak l}^{\prime}}$ , and 0 otherwise; $\mbox{cr}(U)$ denotes the number of gauge rays passing the vertex $U$ during the rotation from  ${\mathfrak l}$ to ${\mathfrak l}^{\prime}$ inside the disc; $e_m$ (respectively $e_1$) denotes the unique incoming (respectively outgoing) edge at $U$;
\item If $(\hat{ \mathcal N}, \mathcal O,\hat{\mathfrak l})$ is obtained from $\mathcal N$ moving an internal vertex $U$ where notations are as in Figure \ref{fig:vertex_gauge_vectors}, then the signatures $\epsilon_{e}$ and ${\hat\epsilon}_{e}$ are related by the gauge transformation  (\ref{eq:equiv_sign}) with the following gauge function:
  \begin{enumerate}  
  \item If an internal vertex $V$ is not connected to $U$ by an edge, then $\eta(V)=0$;
  \item $\eta (U)$ is given by (\ref{eq:zeta_gauge});  
  \item If the oriented edge $e_i=(U,V_i)$ connects $U$ with $V_i$ ($i=1$ if $U$ is black and $i\in[m-1]$ if $U$ is white), then     
\begin{equation}\label{eq:zeta_vertex_gauge_V}
      \eta(V_i) =  \left\{ \begin{array}{ll}    
\mbox{par}(e_i), & \mbox{ if } {V_i} \mbox{ is white};\\
0,    & \mbox{ if } V_i \mbox{ is black}.
\end{array} \right.
\end{equation}
\item If the oriented edge $g=(W_j,U)$ connects $W_j$ with $U$  ($j\in[2,m]$ if $U$ is black and $j=m$ if $U$ is white), then     
\begin{equation}\label{eq:zeta_vertex_gauge_W}
      \eta(W_j) =  \left\{ \begin{array}{ll}    
\mbox{par}(e_j), & \mbox{ if } {W_j} \mbox{ is black};\\
0,    & \mbox{ if } W_j \mbox{ is white}.
\end{array} \right.
\end{equation}
\end{enumerate}

\end{enumerate}
\end{theorem}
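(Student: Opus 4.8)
The plan is to prove the uniqueness statement by reducing to the three elementary transformations and treating each separately, then composing the resulting gauges. Following Postnikov, any change of perfect orientation factors as a finite sequence of reversals along a single source-to-sink path $\mathcal P$ or a single simple cycle $\mathcal Q$; any change of gauge ray direction factors through a continuous rotation of $\mathfrak l$ into $\hat{\mathfrak l}$; and any isotopy of the vertices respecting the topology factors through moves of one internal vertex at a time. For each elementary transformation I would exhibit the explicit gauge function $\eta(U)$ claimed in the statement and verify relation (\ref{eq:gauge_gen}) edge by edge, using only the local data entering Definition~\ref{def:geometric-signature}. Since the composition of gauge transformations in the sense of Definition~\ref{def:admiss_sign_gauge} is again a gauge transformation (the indices $\eta$ simply add mod $2$), this yields the gauge connecting $\epsilon_e$ and $\hat\epsilon_e$ for an arbitrary composite.

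For the orientation case I would use the marking of the regions of the disc by $\pm$ and the auxiliary indices $\gamma,\gamma_1,\gamma_2$ introduced in Section~\ref{sec:auxiliary}. The decisive input, recorded immediately after (\ref{eq:gamma2}), is that $\gamma_1(e)+\gamma_2(e)+\mbox{int}(e)$ is invariant under the reversal. For an edge $e\notin\mathcal P$ (resp.\ $e\notin\mathcal Q$) the orientation of $e$ is unchanged, so only its intersection number is affected, through the disappearance of the source ray at $i_0$ and the appearance of a new source ray at $j_0$; one checks that this shift is exactly $\gamma$ at the two endpoints and is absorbed by the $0/1$-valued first two lines of (\ref{eq:zeta_orient}). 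For an edge on $\mathcal P$ (or $\mathcal Q$) both endpoints lie on the reversed path, incoming and outgoing edges swap their roles, the winding terms of Definition~\ref{def:geometric-signature} get reshuffled, and I would verify that the residual discrepancy equals $\eta(U)+\eta(V)$ with $\eta$ given by the last two lines of (\ref{eq:zeta_orient}). The verification is purely local at each vertex and splits according to its colour and to whether it lies on $\mathcal P$.

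For the gauge-direction case I would track separately the two effects of rotating $\mathfrak l$ to $\hat{\mathfrak l}$: the change of each local winding $\mbox{wind}(e_i,e_j)$, which depends only on whether the edges lie in the angular sector $\widehat{{\mathfrak l},{\mathfrak l}^{\prime}}$ and is therefore governed by $\mbox{par}$, and the change of each $\mbox{int}(e)$, which is governed by the number $\mbox{cr}(U)$ of source rays sweeping across a vertex. At a fixed vertex the winding changes telescope over the incident half-edges, leaving a net contribution constant on all of them; I would confirm this constant equals $\mbox{cr}(U)+\mbox{par}(e_m)$ in the white case and $\mbox{cr}(U)+\mbox{par}(e_1)$ in the black case, which is precisely (\ref{eq:zeta_gauge}). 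The vertex-move case is then handled by the same local mechanism: moving a single internal vertex $U$ affects only the edges incident to $U$, hence only the signatures on those edges and the winding contributions at the neighbours $V_i,W_j$, and the resulting change is again encoded by local $\mbox{par}$/$\mbox{cr}$ data, giving (\ref{eq:zeta_vertex_gauge_V})--(\ref{eq:zeta_vertex_gauge_W}) at the neighbours and (\ref{eq:zeta_gauge}) at $U$ itself.

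I expect the main obstacle to be the orientation case. There a single edge can simultaneously change its intersection number, through the reshuffling of the source rays at $i_0$ and $j_0$, and, if it lies on $\mathcal P$ or $\mathcal Q$, its winding contributions, through the reversal of adjacent edges; the point is to show that these two effects recombine into a \emph{pure} vertex gauge rather than an edge-dependent sign. This hinges on the invariance of $\gamma_1+\gamma_2+\mbox{int}$ together with a careful case analysis of the local configurations (white/black vertex, on/off the path, and the cyclic order of the incident edges relative to $\mathfrak l$ and to $\mathcal P$). The remaining two cases are comparatively mechanical once the $\mbox{par}$/$\mbox{cr}$ bookkeeping is set up, so I would present the orientation computation in full in the Appendix and indicate the analogous local checks for the other two.
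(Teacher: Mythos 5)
Your overall plan --- factor the transformation into elementary changes of orientation, of gauge ray direction, and of single-vertex position, exhibit the stated gauge function for each, verify (\ref{eq:gauge_gen}) edge by edge, and compose --- is exactly the paper's, and your treatment of the gauge-ray rotation and of the vertex motion (continuous deformation, with discrete events controlled by $\mbox{cr}$ and $\mbox{par}$) matches the Appendix. The gap is in the orientation case, where your dichotomy ``edge on $\mathcal P$ / edge off $\mathcal P$'' is the wrong one. You assert that for an edge off the path only the intersection number changes and that the change is absorbed by the $0/1$-valued first two lines of (\ref{eq:zeta_orient}). That is true only when \emph{both} endpoints lie off the path. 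If $f\notin\mathcal P$ but one endpoint $V\in\mathcal P$, then the two path edges $e_1,e_2$ at $V$ reverse, so the winding terms that $\epsilon_f$ acquires at $V$ through Definition~\ref{def:geometric-signature} change as well (e.g.\ when $V$ is black, $\mbox{wind}(f,e_2)$ is replaced by $\mbox{wind}(f,-e_1)$); moreover the gauge at $V$ is given by the third or fourth line of (\ref{eq:zeta_orient}), not the first two, so the ``absorption'' must be checked against those expressions.

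This mixed case is precisely where the paper needs its one nontrivial external input, Lemma~\ref{lemma:equiv_rel} from \cite{AG6}: the cyclic-order identities such as $\widehat{\mbox{int}}(f)+\mbox{int}(f)+\gamma(f)+\gamma_1(e_1)=[e_1,-e_2,f]$ and $\mbox{wind}(e_1,e_2)+\mbox{wind}(f,e_2)+\mbox{wind}(f,-e_1)+\gamma_2(e_1)=[e_1,-e_2,f]$ (with white-vertex analogues), which are what convert the intersection-plus-winding discrepancy of $\epsilon_f$ into $\eta(U)+\eta(V)$ with $\eta$ as in (\ref{eq:zeta_orient}). The invariance of $\gamma_1+\gamma_2+\mbox{int}$, which you single out as the decisive input, does enter the computation for edges lying on the path, but it does not control the edges hanging off path vertices. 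To complete your outline you must add this third family of edges to the case analysis (together with the boundary-edge case, where only one endpoint carries a gauge value), and either prove or import the cyclic-order identities; with that ingredient in place, the rest of the argument goes through as you describe.
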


To prove Theorem~\ref{thm:z_orient_gauge} it is sufficient to check the effect of transformations case by case. We do these calculations in Appendix~\ref{app:invariance}. In the following examples we apply Theorem~\ref{thm:z_orient_gauge} to check that the initial and transformed geometric signatures are related by a gauge transformation at the vertices.

\begin{example}
\begin{figure}
\centering{\includegraphics[width=0.5\textwidth]{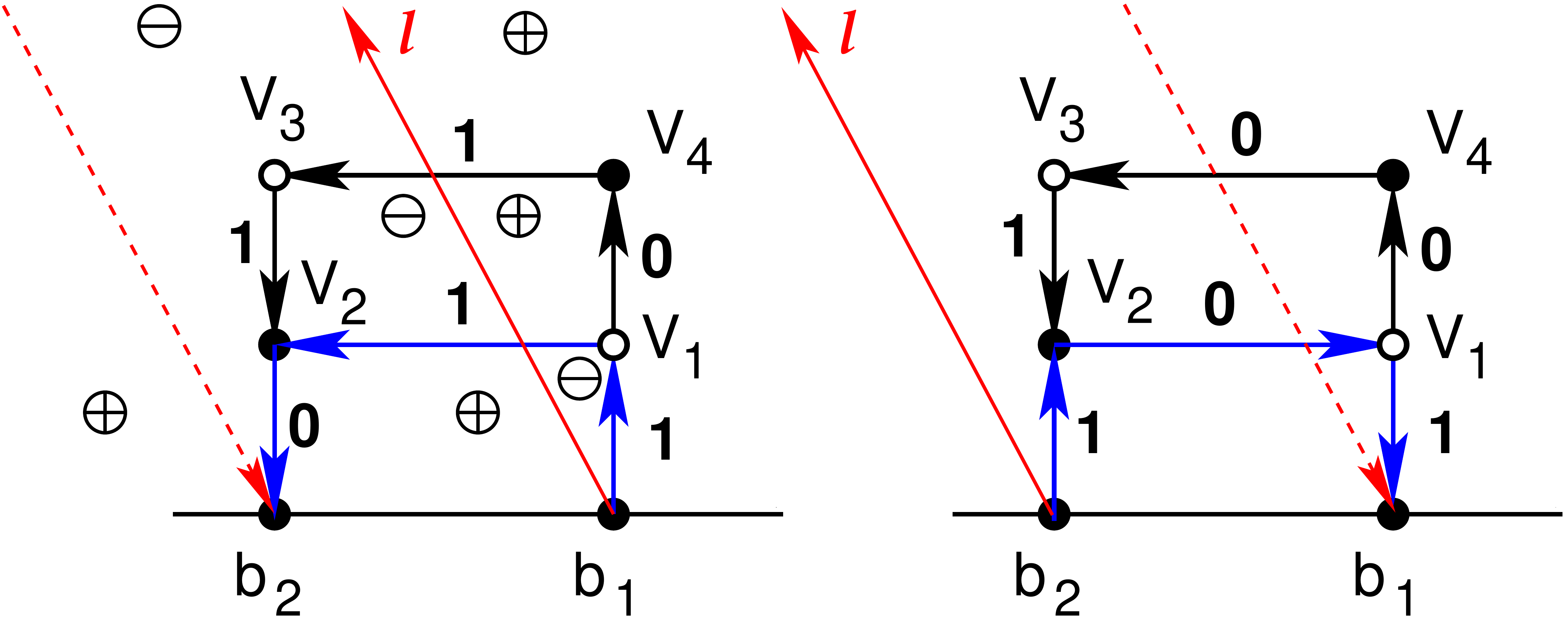}}
\caption{\small{\sl The computation of the gauge function in the case of a change of orientation. The bold numbers are the geometric edge signatures.}\label{fig:acycl_2}}
\end{figure}
Consider the graphs in Figure~\ref{fig:acycl_2}. We mark the regions in the initial graph following the rules of Section~\ref{sec:auxiliary}, and the edge signatures are calculated using Definition~\ref{def:geometric-signature}. Let us compute the gauge function $\eta(U)$ using Formula (\ref{eq:zeta_orient}). Since
\begin{equation*}
 \gamma_1(e_{b_1,V_1}) =1, \ \  \gamma_1(e_{V_1,V_2}) =0, \ \  \gamma_1(e_{V_2,b_2}) =0, \ \
 \gamma_2(e_{b_1,V_1}) =0, \ \  \gamma_2(e_{V_1,V_2}) =1, \ \  \gamma_2(e_{V_2,b_2}) =1, 
\end{equation*}
\begin{align*}
  & \eta(V_1) = \mbox{wind}(e_{b_1,V_1},e_{V_1,V_2}) +  \gamma_1(e_{b_1,V_1}) + \gamma_2(e_{V_1,V_2})+1 = 0, \\
  & \eta(V_2) = \mbox{wind}(e_{V_1,V_2},e_{V_2,b_2}) +  \gamma_1(e_{V_1,V_2}) + \gamma_2(e_{V_2,b_2}) = 1, \\
  & \eta(V_3) = 1,  \ \  \eta(V_4) = 0, 
\end{align*}
It is easy to check that that the signatures are related by this gauge function.
\end{example}
\begin{example}
\begin{figure}
\centering{\includegraphics[width=0.6\textwidth]{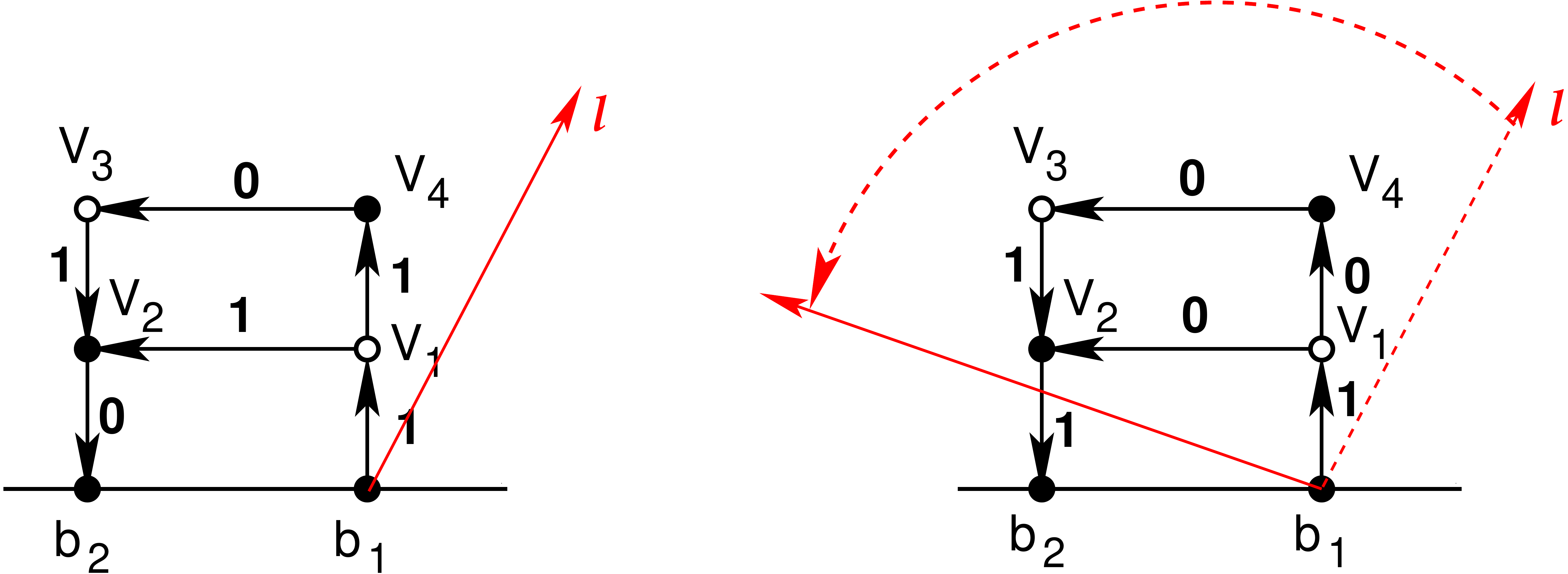}}
\caption{\small{\sl The computation of the gauge function in the case of a change of gauge ray direction. The bold numbers are the geometric edge signatures.}\label{fig:acycl_3}}
\end{figure}
Consider the graphs in Figure~\ref{fig:acycl_3}. The edge signatures are calculated using Definition~\ref{def:geometric-signature}. Let us compute the gauge function $\eta(U)$ using Formula (\ref{eq:zeta_gauge}):
\begin{align*}
  & \eta(V_1) =   \mbox{cr}(V_1)+  \mbox{par}(e_{b_1,V_1})  = 0, &&   \eta(V_2) =   \mbox{cr}(V_2)+  \mbox{par}(e_{V_2,b_2}) =1, \\
  &  \eta(V_3) =  \mbox{cr}(V_3)+  \mbox{par}(e_{V_4,V_3})  = 1  &&   \eta(V_4) =  \mbox{cr}(V_4)+  \mbox{par}(e_{V_4,V_3})  = 1  . 
\end{align*}
Again, it is easy to check that that the signatures are related by this gauge function.
\end{example}  
\begin{example}
\begin{figure}
\centering{\includegraphics[width=0.6\textwidth]{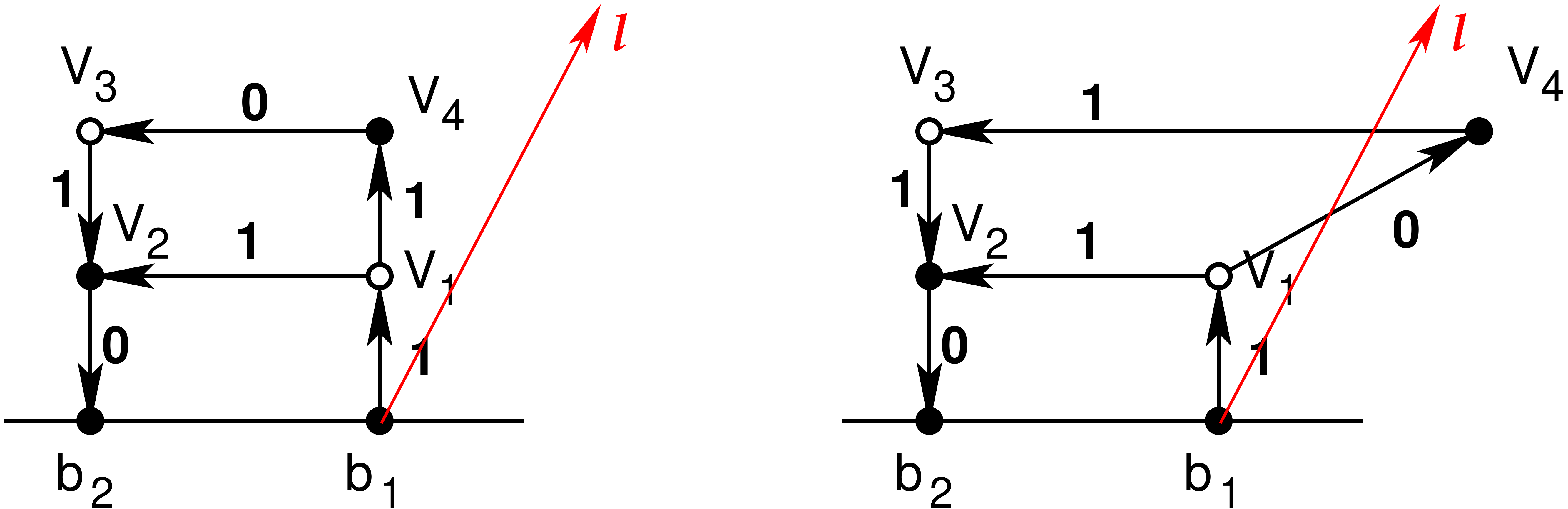}}
\caption{\small{\sl The computation of the gauge function in the case of a change of a vertex position. The bold numbers are the geometric edge signatures.}\label{fig:acycl_4}}
\end{figure}
Consider the graph in Figure~\ref{fig:acycl_4}. The edge signatures are calculated using Definition~\ref{def:geometric-signature}. Let us compute the gauge function $\eta(U)$ using Formulas (\ref{eq:zeta_gauge}), (\ref{eq:zeta_vertex_gauge_V}),
(\ref{eq:zeta_vertex_gauge_W}).
\begin{align*}
  & \eta(V_1) = 0, &&   \eta(V_2) = 0, \\
  & \eta(V_3) = \mbox{par}(e_{V_4,V_3}) = 0  &&   \eta(V_4) =  \mbox{cr}(V_4)+  \mbox{par}(e_{V_4,V_3})  = 1 .
\end{align*}
It is easy to check that that the signatures are related by this gauge function.
\end{example}

\begin{corollary}\textbf{The effect of the network transformations on half--edge vectors}\label{lem:z_orient_gauge} Let  $(\mathcal N, \mathcal O, \mathfrak l)$ and $(\mathcal N, \hat{ \mathcal O},\hat{\mathfrak l})$ be a pair of plabic networks as in Theorem~\ref{thm:z_orient_gauge}, and $z_{U,e}$,  $\hat z_{U,e}$  be the corresponding half edge vectors for the same boundary conditions. Then, at any internal vertex $U$,
  $$
 {\hat z}_{U,e} =  (-1)^{\eta(U)} z_{U,e}. 
  $$
\end{corollary}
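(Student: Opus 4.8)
The plan is to obtain the Corollary directly from the two results it sits between. Theorem~\ref{thm:z_orient_gauge} produces, for each of the three admissible transformations, an explicit gauge function $\eta(U)$ identifying the two geometric signatures $\epsilon_e$ and $\hat\epsilon_e$ as gauge equivalent in the sense of Definition~\ref{def:admiss_sign_gauge}; and the final assertion of Theorem~\ref{lemma:lam_rela} records precisely how the solution of Lam's system transforms when the signature is replaced by a gauge-equivalent one through such an $\eta$, namely by the sign $(-1)^{\eta(U)}$ at each internal vertex. The whole argument therefore amounts to reducing, in each case, the comparison of $\hat z_{U,e}$ with $z_{U,e}$ to the comparison of the solutions of two Lam systems that differ solely by the signature gauge $\eta$; once this reduction is in place the formula follows at once from the full-rank uniqueness of the solution.

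First I would dispose of the changes of gauge ray direction and of vertex positions. Neither alters the orientation $\mathcal O$ nor the weights, so by Definition~\ref{def:lam} the underlying linear system in the half-edge vectors is literally unchanged; what changes is only the geometric signature, which is recomputed from the new gauge ray or the new embedding and passes from $\epsilon_e$ to $\hat\epsilon_e$. By Theorem~\ref{thm:z_orient_gauge} these signatures are gauge equivalent through the explicit functions \eqref{eq:zeta_gauge}, \eqref{eq:zeta_vertex_gauge_V}, \eqref{eq:zeta_vertex_gauge_W}. Since both systems have full rank and share the same boundary data, the last paragraph of Theorem~\ref{lemma:lam_rela} applies verbatim and yields $\hat z_{U,e}=(-1)^{\eta(U)}z_{U,e}$ at internal vertices, the boundary-source vectors staying fixed because $\eta$ vanishes at boundary vertices.

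The delicate case is the reversal of the perfect orientation along a path $\mathcal P$ or a cycle $\mathcal Q$, because by Remark~\ref{rem:reciprocal} this simultaneously replaces the weights of the reversed edges by their reciprocals in order to keep representing the same point $[A]$. Here I would first invoke the Proposition following Definition~\ref{def:admiss_sign_gauge}: at a \emph{fixed} signature, Lam's system is invariant under the joint reversal of orientation and reciprocal replacement of weights, so that move alone leaves the solution untouched. The genuine change is that the geometric signature attached to the new orientation differs from $\epsilon_e$, and Theorem~\ref{thm:z_orient_gauge} identifies the difference as the explicit gauge $\eta(U)$ of \eqref{eq:zeta_orient}. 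Applying once more the transformation rule of Theorem~\ref{lemma:lam_rela} to this $\eta$ gives the claim. Equivalently, I would verify directly that the vectors $(-1)^{\eta(U)}z_{U,e}$ satisfy Lam's relations for $\hat\epsilon_e$ with the reciprocal weights and then conclude by uniqueness.

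The main obstacle I anticipate is purely the sign bookkeeping in this last case: one must confirm that a single scalar $(-1)^{\eta(U)}$ per vertex is simultaneously compatible with the two Lam relations meeting at the endpoints of every reversed edge, i.e. that $\eta(U)+\eta(V)$ reproduces exactly the parity shift $\hat\epsilon_{U,V}-\epsilon_{U,V}$ induced by the orientation reversal together with the reciprocal weight. This consistency is, however, precisely the signature identity already established in proving \eqref{eq:zeta_orient} (carried out in the Appendix), so no new computation is required; the Corollary simply transports that identity from the level of signatures to the level of half-edge vectors through the uniqueness guaranteed by the full rank of Lam's system.
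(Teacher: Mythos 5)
Your proposal is correct and takes essentially the same route as the paper, whose proof is declared ``straightforward'' precisely because it amounts to combining the gauge equivalence of signatures from Theorem~\ref{thm:z_orient_gauge} with the final assertion of Theorem~\ref{lemma:lam_rela}, invoking in the orientation-reversal case the invariance of Lam's system under simultaneous orientation reversal and reciprocal replacement of weights. Your extra discussion of the sign bookkeeping along reversed edges only spells out what the paper leaves implicit.
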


The proof is straightforward.

\begin{corollary}
For a face  $\Omega$  of the graph $\mathcal G$ let $\epsilon(\Omega)$ be the total contribution of the geometric signature at the edges $e=(U,V)$ bounding the face $\Omega$:
\begin{equation}\label{eq:eps_tot}
\epsilon(\Omega) = \sum_{e\in\partial\Omega} \epsilon_{e}.
\end{equation}
Then $\epsilon(\Omega)\mod(2)$ is invariant with respect to the changes of orientation, gauge ray direction and of the position of the internal vertices.
\end{corollary}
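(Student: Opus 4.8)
The plan is to reduce the statement to a single combinatorial identity for gauge transformations. The essential input is Theorem~\ref{thm:z_orient_gauge}: each of the three elementary transformations (change of perfect orientation, of gauge ray direction, of vertex position) replaces the geometric signature $\epsilon_e$ by a gauge-equivalent one $\hat\epsilon_e$ in the sense of Definition~\ref{def:admiss_sign_gauge}, for an explicit gauge function $\eta(U)$ defined on the internal vertices. Since a composition of gauge transformations is again a gauge transformation (the functions $\eta$ add modulo $2$ on internal vertices), it suffices to prove that $\epsilon(\Omega)\bmod 2$ is unchanged under an arbitrary gauge transformation (\ref{eq:equiv_sign}) with gauge function $\eta$.

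First I would write out the transformed total signature using (\ref{eq:equiv_sign}):
\begin{equation*}
\hat\epsilon(\Omega)=\sum_{e\in\partial\Omega}\hat\epsilon_e = \sum_{e\in\partial\Omega}\epsilon_e + \sum_{\substack{e=(U,V)\in\partial\Omega\\ \text{internal}}}\big(\eta(U)+\eta(V)\big) + \sum_{\substack{e=(U,V)\in\partial\Omega\\ V\text{ boundary}}}\eta(U)\pmod 2,
\end{equation*}
so the claim is that the two $\eta$-sums together vanish modulo $2$. The key geometric observation is that every \emph{internal} vertex $U$ lying on $\partial\Omega$ is incident to exactly two edges of $\partial\Omega$: at any internal (bivalent or trivalent) vertex the cyclically ordered edges and the angular sectors between them alternate, and the face $\Omega$ occupies one such sector, delimited by precisely two consecutive edges at $U$. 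If $\Omega$ happens to touch $U$ in several sectors, each sector still contributes two incident edges, so the count at $U$ remains even.

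Next I would regroup the two $\eta$-sums by internal vertex rather than by edge. Because boundary vertices carry no gauge index and internal vertices lie strictly inside the disc, each occurrence of $\eta(U)$ arises from an edge of $\partial\Omega$ incident to the internal vertex $U$, and by the previous observation each such $U$ is met by exactly two edges of $\partial\Omega$. Hence
\begin{equation*}
\sum_{\substack{e=(U,V)\in\partial\Omega\\ \text{internal}}}\big(\eta(U)+\eta(V)\big) + \sum_{\substack{e=(U,V)\in\partial\Omega\\ V\text{ boundary}}}\eta(U) = \sum_{\substack{U\in\partial\Omega\\ U\text{ internal}}} 2\,\eta(U)\equiv 0\pmod 2,
\end{equation*}
which gives $\hat\epsilon(\Omega)\equiv\epsilon(\Omega)\pmod 2$ and proves the corollary.

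I expect the only delicate point to be the bookkeeping for boundary faces, including the infinite face: there part of $\partial\Omega$ is an arc of the disc boundary carrying no edges, and some bounding edges terminate at boundary vertices, producing the single-$\eta(U)$ terms. One must check that these terms are still absorbed into the doubled count. This holds because the internal endpoint $U$ of such an edge is strictly interior, and therefore meets $\Omega$ in a sector bounded by a second edge of $\partial\Omega$ as well; the disc boundary arc never replaces an edge at an internal vertex. Once this is verified, the parity argument closes.
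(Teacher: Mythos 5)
Your proof is correct and takes essentially the same route as the paper: the paper also invokes Theorem~\ref{thm:z_orient_gauge} to reduce all three transformations to a vertex gauge transformation (\ref{eq:equiv_sign}), and then observes that, upon insertion into (\ref{eq:eps_tot}), the gauge function $\eta(U)$ contributes exactly twice at each internal vertex $U$ in $\partial\Omega$, hence cancels modulo $2$. Your extra bookkeeping (regrouping the sums vertex by vertex, the treatment of boundary faces, and the remark on faces meeting a vertex in several sectors) only spells out what the paper's one-line parity count leaves implicit.
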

\begin{proof}
In Theorem~\ref{thm:z_orient_gauge} we prove that there exists a gauge transformation of the signature for any change of the orientation, of the gauge ray direction and of the position of the internal vertices. Therefore all these changes preserve the equivalence class of the initial geometric signature, and they do not affect $\epsilon(\Omega)$. Indeed, inserting (\ref{eq:zeta_gauge}) into (\ref{eq:eps_tot}), the gauge function $\eta(U)$ contributes twice at each internal vertex $U$ in $\partial \Omega$.
\end{proof}

\section{Completeness of the geometric linear relations}\label{sec:comb}

From now on we restrict ourselves to PBDTP graphs (see Definition~\ref{def:PBDTP}). We also assume that the half-edge vectors are elements on $\R^n$, and the construction of the Grassmannian point is as in Theorem~\ref{lemma:lam_rela}. Then, we show that, if a signature has the following properties: for any collection of positive weights:
\begin{enumerate}
\item Lam's system has full rank;
\item The image at the boundary vertices is a point of the totally non-negative part of the Grassmannian,  
\end{enumerate}
then this signature is gauge equivalent to the geometric one.
Therefore, as a consequence of Theorem~\ref{lemma:lam_rela} , the boundary map associated to the present construction coincides with Postnikov's boundary measurement map.

\smallskip

The following Lemma is the key ingredient in the proof of completeness. Indeed,
by definition, for all plabic graphs the sum of equivalent signatures has the same parity along each directed path from the boundary to the boundary of the graph. In Lemma \ref{lemma:equiv_on_path}  we show that also the opposite is true under the hypothesis that the graph in a PBDTP graph. 

\begin{lemma}\textbf{Parity of equivalent signatures on simple paths and cycles}\label{lemma:equiv_on_path}
Let $\mathcal G$ be a PBDTP graph representing an irreducible positroid cell $\S$ with a fixed orientation  $\mathcal O$. Let $\epsilon^{(1)}_{U,V}$, $\epsilon^{(2)}_{U,V}$ be two signatures on $(\mathcal G,\mathcal O)$. Then, $\epsilon^{(2)}_{U,V}$ is equivalent to $\epsilon^{(1)}_{U,V}$ if and only if, for any path $P$ from a boundary source to a boundary sink and for any closed cycle $C$, the sum of the signatures of the edges on $P$, respectively on $C$ has the same parity with respect to both signatures.
\end{lemma}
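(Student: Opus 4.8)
The plan is to prove the two implications separately: the forward (``only if'') direction is a short telescoping computation valid for all plabic graphs, while the reverse direction requires a direct construction of the gauge function $\eta$, and this is where the PBDTP hypothesis enters.

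For the ``only if'' direction, suppose $\epsilon^{(2)}_{U,V}$ is equivalent to $\epsilon^{(1)}_{U,V}$ via a gauge function $\eta$ as in Definition~\ref{def:admiss_sign_gauge}, and set $\delta_e := \epsilon^{(2)}_{e}+\epsilon^{(1)}_{e} \pmod 2$, so that $\delta_{U,V}=\eta(U)+\eta(V)$ on internal edges and $\delta_{U,V}=\eta(U)$ on an edge at a boundary vertex (with the convention $\eta\equiv 0$ at boundary vertices). Summing $\delta$ along a directed source-to-sink path $P=(b_i\to V_1\to\cdots\to V_r\to b_j)$, every internal vertex $V_s$ contributes $\eta(V_s)$ exactly twice, so $\sum_{e\in P}\delta_e\equiv 0\pmod 2$; likewise along any closed cycle each vertex appears in exactly two of its edges. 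Hence the two signatures have equal parity along every such path and cycle, with no hypothesis used beyond the combinatorial definition.

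For the ``if'' direction I would construct $\eta$ explicitly. Since $\mathcal G$ is PBDTP, every edge, and hence every internal vertex $U$, lies on some directed path from a boundary source to a boundary sink. For such a path $P$ through $U$ with source endpoint $b_i$, set $\eta(U):=\sum_{e\in P[b_i\to U]}\delta_e\pmod 2$, where $P[b_i\to U]$ is the initial directed segment. The crux is well-definedness, i.e.\ independence of the chosen path. Given a second directed source-to-sink path $P'$ through $U$ with source $b_{i'}$ and sink $b_{j'}$, the concatenation of $P[b_i\to U]$ with the terminal segment $P'[U\to b_{j'}]$ is again a directed walk from a boundary source to a boundary sink; applying the edge loop-erasure of Definition~\ref{def:loop-erased-walk} removes directed cycles, each carrying zero $\delta$-sum by the cycle hypothesis, so the source-to-sink-path hypothesis gives $\sum_{e\in P[b_i\to U]}\delta_e + \sum_{e\in P'[U\to b_{j'}]}\delta_e\equiv 0$. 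Combining this with $\sum_{e\in P'[b_{i'}\to U]}\delta_e + \sum_{e\in P'[U\to b_{j'}]}\delta_e=\sum_{e\in P'}\delta_e\equiv 0$ yields $\sum_{e\in P[b_i\to U]}\delta_e \equiv \sum_{e\in P'[b_{i'}\to U]}\delta_e$, proving independence of the choice.

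It then remains to verify that $\eta$, extended by $0$ at the boundary, realizes the equivalence of Definition~\ref{def:admiss_sign_gauge} on every edge. For an internal edge $e=(U\to V)$, picking a directed source-to-sink path $P$ through $e$ and using well-definedness along $P$ gives $\eta(V)=\eta(U)+\delta_e$, i.e.\ $\delta_e=\eta(U)+\eta(V)$. For the edge at a boundary source $b_i$ one takes a path starting with $e$, whence $\eta(U)=\delta_e$; for the edge at a boundary sink $b_j$ one takes a path $P$ ending with $e$ and uses that $\sum_{e\in P[b_i\to V]}\delta_e=\sum_{e\in P[V\to b_j]}\delta_e=\delta_e$ (both equal $\eta(V)$ because $\sum_{e\in P}\delta_e\equiv 0$), giving $\delta_e=\eta(V)$. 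Thus $\eta$ is the required gauge function. I expect the well-definedness step to be the main obstacle, since it is precisely where the source-to-sink-path and directed-cycle hypotheses are used in tandem with the defining PBDTP property; irreducibility is not needed separately, being automatic for PBDTP graphs by the elementary Lemma.
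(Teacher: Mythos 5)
Your proof is correct, and it uses the same basic ingredients as the paper (telescoping of the gauge for the forward direction; the PBDTP property together with the two parity hypotheses for the reverse), but the reverse direction is organized along a genuinely different route. The paper constructs the gauge function $\eta$ by an iterative marking algorithm: it initializes a working signature $\epsilon^{(3)}=\epsilon^{(2)}$, walks along a first loop-erased source-to-sink path assigning $\eta$ vertex by vertex and inverting $\epsilon^{(3)}$ on all edges at a vertex whenever $\eta=1$ is assigned, and then processes the remaining boundary-to-boundary paths with a case analysis on marked versus unmarked edges, the consistency at the final edge of a path (or at the unique unmarked edge closing a cycle) being forced by the parity hypotheses. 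You instead define $\eta(U)$ globally as the $\delta$-sum along the initial segment of any source-to-sink path through $U$ --- a discrete potential-function argument --- and reduce the whole construction to a single well-definedness check, carried out by concatenating the initial segment of one path with the terminal segment of another and invoking loop-erasure together with the cycle hypothesis; the edge-by-edge verification of Definition~\ref{def:admiss_sign_gauge} is then immediate. Your route avoids the paper's bookkeeping and case analysis and makes transparent that $\eta$ is canonically determined by the discrepancy $\delta$, while the paper's route has the merit of being explicitly algorithmic. One point both treatments gloss over to the same degree: upgrading the hypotheses from simple paths and simple cycles to arbitrary directed walks requires decomposing the closed walks removed by the loop-erasure of Definition~\ref{def:loop-erased-walk} into simple directed cycles; this is standard, and particularly easy here since in a perfect orientation every vertex has a unique incoming or a unique outgoing edge.
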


\begin{proof}
In one direction the proof is trivial and follows directly from the definition of equivalence. 

Let us now suppose that two signatures $\epsilon^{(1)}$, $\epsilon^{(2)}$ have the same parity on any edge loop-erased path $\mathcal P$ from a boundary source to a boundary sink and on any closed cycle $\mathcal Q$. Then the parities also coincide on any directed path from a boundary source to a boundary sink. Let us construct a gauge transformation $\eta(U)$ transforming  $\epsilon^{(2)}$ to $\epsilon^{(1)}$ step by step using the following procedure:
\begin{enumerate}
\item Define at the initial step $\epsilon^{(3)}(e)=\epsilon^{(2)}(e)$ at all edges of the graph;  
\item Next, select a loop-erased path from a boundary source to a boundary sink, and compare the signatures at each edge starting from the edge at the boundary source. If the edge $e=(U,V)$ is not the final edge of the path, then  
\begin{enumerate}
\item If $\epsilon^{(3)}(e)=\epsilon^{(1)}(e)$, we mark $e$ and assign $\eta(V)=0$ and proceed to the next edge;
\item If $\epsilon^{(3)}(e)\ne\epsilon^{(1)}(e)$, we mark $e$, assign $\eta(V)=1$, invert $\epsilon^{(3)}$ at all edges at $V$ and proceed to the next edge; 
\end{enumerate}
If $e$ is the final edge of such path and the signatures coincide at all its other edges, then necessarily they also coincide at $e$;
\item Next for any other path (not necessary loop-erased) from a boundary source to a boundary sink, containing at least one unmarked edge, we proceed as follows:
\begin{enumerate}
\item If the edge $e=(U,V)$ is marked, we have already assigned  $\eta(U)$ and $\eta(V)$, and we proceed to the next edge. In particular, $e$ is always marked if $V$ is white and at least one of the outgoing edges at $V$ is marked.
\item If  $e=(U,V)$ is not marked and all outgoing edges at $V$ are not marked, we proceed as in Step~1.
\item Let  $e=(U,V)$ be unmarked, $V$ be black and the outgoing edge at $V$ be marked (therefore the signature $\eta(V)$ is already assigned). Since $\eta(U)$  was assigned at a previous step, we have two possibilities:
\begin{enumerate}
\item  The outgoing edge at $V$ was marked using one of the previous paths. In such case there exists a path from a boundary source to a boundary sink such that $e$ is the only unmarked edge. Since the initial signatures have the same parity and the transformation procedure preserves the parity, $\epsilon^{(1)}$ at $e$ coincide with the current $\epsilon^{(3)}$ at $e$. Therefore we mark $e$ and proceed to the next edge.  
\item The outgoing edge at $V$ was marked at some previous step while going along this path. Then these two edges at $V$ belong to a cycle, and $e$ is the only unmarked edge of this cycle. Then the parity of signatures $\epsilon^{(1)}$,  $\epsilon^{(3)}$ guarantees that they coincide at $e$, we mark $e$ and proceed to the next edge.
\end{enumerate}
\end{enumerate}
\end{enumerate}
\end{proof}

\begin{remark}
If some edge $e$ does not belong to any path from boundary to boundary, it never participates in the boundary measurement map, and we have an extra gauge freedom in assigning its signature. Therefore the PBDTP condition in Lemma~\ref{lemma:equiv_on_path} is essential.
\end{remark}

In the following Theorem we verify that for any signature not of geometric type there exists a collection of positive weights such that either the image is not totally non-negative or the linear system has not full rank. Therefore, a signature on  $\mathcal G$ guarantees the total non-negativity property for arbitrary positive weights if and only if it is geometric; moreover, in such case the image is exactly $\S$.

As a consequence the edge vectors represent a natural parametrization for the amalgamation procedure introduced in \cite{FG1} (see also \cite{AGP2,Kap,MS}) in the total non--negative setting. 

\begin{theorem}\textbf{Completeness of the geometric signatures}\label{theo:complete}

 Let $({\mathcal G},{\mathcal O}(I))$ be a perfectly oriented PBDTP graph representing the irreducible positroid cell $\S$. Let $\epsilon_{U,V}$ be a signature on ${\mathcal G}$. Then
\begin{enumerate}
\item Lam's system of relations in Definition~\ref{def:lam} possesses a unique solution $z_{U,e}$ in $\R^n$ on the signed network $({\mathcal G},{\mathcal O}(I),w_{U,V},\epsilon_{U,V})$
for almost any collection $w_{U,V}$ of real edge weights.
\item Let us assign the canonical basis vectors at the boundary sinks and let $A$ be the $k\times n$ matrix whose $r$-th row $A[r]=E_{i_r} -z_{b_{i_r}, e_{i_r}}$, where $E_{i_r}$ is the $i_r$-th vector of the canonical basis and $z_{b_{i_r}, e_{i_r}}$ is the half-edge vector at the boundary source $b_{i_r}$, $r\in[k]$. 

If, moreover, the matrix $A$ is totally non-negative for almost every collection of positive weights, then 
\begin{enumerate}
\item The signature $\epsilon_{U,V}$ is equivalent to the geometric signature;
\item The system of relations possesses a unique solution for every choice of positive weights;
\item $A$ coincides with Postnikov's boundary measurement matrix for the base $I$.
\end{enumerate}
\end{enumerate}
\end{theorem}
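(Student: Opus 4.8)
The plan is to establish the two assertions by different mechanisms. Assertion (1) is a genericity statement about the rank of a square linear system, which I would obtain by exhibiting a single coefficient determinant as a non-vanishing polynomial in the weights. Assertion (2) is a rigidity statement that I would reduce, through Lemma~\ref{lemma:equiv_on_path}, to matching the parity of the given signature $\epsilon_{U,V}$ against the geometric one along every directed boundary-to-boundary path and every closed cycle. Once this parity matching is in place, Lemma~\ref{lemma:equiv_on_path} gives (2a) directly, and (2b), (2c) follow at once: gauge equivalent signatures yield the same half-edge vectors at the boundary vertices (the gauge-invariance proposition of Section~\ref{sec:lam_relations}), so the full-rank-for-every-positive-weight property and the identification of $A$ with Postnikov's boundary measurement matrix, both proved for the geometric signature in Theorem~\ref{lemma:lam_rela}, transfer verbatim to $\epsilon_{U,V}$.

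For (1) I would observe that Lam's system has as many relations as half-edge unknowns, so full rank is equivalent to the non-vanishing of the coefficient determinant $\Delta_\epsilon(w)$. Expanding $\Delta_\epsilon$, its monomials in the weights are indexed by the conservative flows of Definition~\ref{def:cons_flow}, exactly as in the denominator of Talaska's formula (Theorem~\ref{theo:consist}, following \cite{Tal2,AG6}); the essential point is that changing the signature multiplies each coefficient by a sign but leaves the monomial support unchanged. For the geometric signature $\Delta_{\mathrm{geo}}(w)=\sum_{C\in\mathcal C(\mathcal G)}w(C)$ is subtraction-free, with constant term $1$ coming from the empty flow. Hence for an arbitrary signature $\Delta_\epsilon$ has the same support with coefficients $\pm1$, its constant term is still $\pm1\neq0$, and so $\Delta_\epsilon\not\equiv0$; a non-zero polynomial vanishes only on a measure-zero set, which yields unique solvability for almost every real weight.

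The cycle part of (2) is the main obstacle, and I would settle it first, through the denominator. Solving Lam's system by the same combinatorics as Theorem~\ref{theo:consist} but with the signs $(-1)^{\epsilon_e}$ in place of the geometric ones, every entry of $A$ is a ratio with common denominator $\Delta_\epsilon(w)$. For the geometric signature this denominator is the strictly positive $\sum_C w(C)$, because each simple cycle $Q$ contributes a geometric series $\sum_k\big((-1)^{\epsilon_{\mathrm{geo}}(Q)}w(Q)\big)^k$ with the sign that renders the sum subtraction-free. If some simple cycle had $\epsilon(Q)\not\equiv\epsilon_{\mathrm{geo}}(Q)$, the corresponding series would contribute with the opposite sign, forcing $\Delta_\epsilon(w)$ to change sign across the positive orthant; crossing the zero locus of $\Delta_\epsilon$ the rational entries $A^r_j$ would flip sign, while the prescribed minor signs $(-1)^{\sigma(i_r,j)}$ cannot, contradicting total non-negativity on a full-dimensional region. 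Thus every cycle matches the geometric parity and the denominator is the positive $\sum_C w(C)$.

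With the denominator now subtraction-free, the path part is comparatively direct. Each entry $A^r_j$ equals, up to the fixed sign $(-1)^{\sigma(i_r,j)}$, the almost-simple minor $\Delta_{(I\setminus\{i_r\})\cup\{j\}}$, so total non-negativity forces $(-1)^{\sigma(i_r,j)}A^r_j\ge0$ for every positive weight; since the denominator is positive, this is a sign constraint on the numerator $\sum_{F\in\mathcal F_{e_{i_r},b_j}(\mathcal G)}(-1)^{\epsilon(P_F)}w(F)$, where $P_F$ is the directed walk contained in $F$. If some source-to-sink path carried a parity different from its geometric one, the numerator would contain monomials of both signs on distinct flows, and by degenerating the remaining edge weights to isolate that walk, using that parity is weight-independent and that total non-negativity is a closed condition stable under such limits, one produces positive weights with $(-1)^{\sigma(i_r,j)}A^r_j<0$, a contradiction; hence $\epsilon(P)\equiv\epsilon_{\mathrm{geo}}(P)$ for every path $P$. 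Lemma~\ref{lemma:equiv_on_path} then gives (2a), and (2b), (2c) follow from the gauge-invariance argument of the first paragraph. The points I expect to demand the most care are verifying that a single wrong simple-cycle parity genuinely changes the sign of $\Delta_\epsilon$ rather than being masked by competing flows, and making the single-path isolation rigorous via weight degenerations together with the closedness of the total non-negativity condition.
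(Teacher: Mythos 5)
Your part (1) and your handling of (2b), (2c) are sound and essentially coincide with the paper's proof: the key shared observation is that replacing the geometric signature $\varepsilon_e$ by $\epsilon_e$ amounts to the substitution $w_e\mapsto(-1)^{\epsilon_e-\varepsilon_e}w_e$, which preserves the monomial support of the determinant (constant term $\pm1$ from the empty conservative flow) and lets you write every entry of $A$ by the generalized Talaska formula with signed weights; the reduction of (2a) to parity matching on boundary-to-boundary paths and on cycles via Lemma~\ref{lemma:equiv_on_path} is also the paper's route. The genuine gap is in your decision to prove the cycle parities \emph{first}, via the sign change of $\Delta_\epsilon$. After establishing (correctly) that a wrong parity on a simple cycle $Q$ forces $\Delta_\epsilon$ to change sign inside the positive orthant, you assert that ``crossing the zero locus of $\Delta_\epsilon$ the rational entries $A^r_j$ would flip sign.'' This needs some numerator $N_{rj}$ to be nonvanishing and of locally constant sign near a generic crossing point, and at this stage you know nothing about the signs inside $N_{rj}$: the path parities are still unestablished, so the leading flows in a degeneration could cancel, the numerator could change sign at the same locus, or it could vanish identically on the relevant component of $\{\Delta_\epsilon=0\}$ (share a factor with $\Delta_\epsilon$). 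You flag exactly this as the point demanding most care, but it is not a technicality to be checked later: it is the missing ingredient, and it is hard to supply precisely because the numerators are uncontrolled before the path step.

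The paper removes the difficulty by reversing the order, and this is also the natural repair of your proposal. Do paths first: degenerating so that the edges of a simple path $P_1$ from $b_i$ to $b_j$ have order $1$ and all other weights order $\delta\ll1$, the denominator is $1+O(\delta)>0$ \emph{independently of any cycle parity}, because a nontrivial conservative flow is a union of directed cycles and hence cannot lie inside the edge set of a simple path, so it must contain a $\delta$-edge; TNN then pins the parity on $P_1$. Note that your own path argument does not actually need the ``denominator now subtraction-free'' hypothesis for the same reason, so it can be run unconditionally. Cycles second: for a one-component conservative flow $C$, PBDTP provides a simple path $P$ through an edge of $C$; degenerating around $P\cup C$, the already-proved path statement fixes the numerator to be $w(P)+O(\delta)>0$, so TNN forces the denominator $1+(-1)^{\epsilon(C)-\varepsilon(C)}w(C)+O(\delta)$ to be nonnegative, and since $w(C)$ is an arbitrary positive number the parity on $C$ must be geometric. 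With that reordering your argument closes; as written, the cycle step does not.
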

\begin{proof}

\textbf{Step 1.} Let $\varepsilon_e$ be the geometric signature. From Theorem~\ref{lemma:lam_rela} we know that the matrix representing Lam's system of relations for such signature has non zero--determinant for any choice of positive weights. The determinant is a polynomial in the weights, therefore it remains different from zero for the geometric signature and almost all real weights.  

Lam's system of relations is not affected  by the following transformation of signature and weights:
\begin{equation}
  \label{eq:sig_trans}  
  \begin{split}
   \epsilon_e & \rightarrow \varepsilon_e \\  
   w_e & \rightarrow \tilde w_e = (-1)^{\epsilon_e -\varepsilon_e} w_e.
\end{split}
\end{equation}
Therefore Lam's system of relations on the signed network $({\mathcal G},{\mathcal O}(I),w_{U,V},\epsilon_{U,V})$ is equivalent to Lam's system of relations on $({\mathcal G},{\mathcal O}(I),{\tilde w_e},\varepsilon_e)$. This completes the proof of the first part.

\textbf{Step 2.} For the geometric signature $\varepsilon_e$ and the collection of weights ${\tilde w_e}$, the matrix $A$ can be directly computed using Talaska formula (\ref{eq:tal_formula_source}):
\begin{equation}
    \label{eq:talaska1}
A_{ij} = (-1)^{\sigma_{ij}} \frac{\displaystyle\sum\limits_{F\in {\mathcal F}_{ij}(\mathcal G)}  {\tilde w}(F) }{\displaystyle\sum\limits_{C\in {\mathcal C}(\mathcal G)}  {\tilde w}(C)},
\end{equation}
where $\sigma_{ij}$ is the number of sources between $b_i$ and $b_j$. We observe that transformation (\ref{eq:sig_trans}) does not change Lam's equations and their solution, therefore (\ref{eq:talaska1}) can be rewritten as:
\begin{equation}
    \label{eq:talaska1.1}
A_{ij} = (-1)^{\sigma_{ij}} \frac{{\displaystyle\sum\limits_{F\in {\mathcal F}_{ij}(\mathcal G)}}\big(-1\big)^{\sum\limits_{e\in F}\epsilon_e -\varepsilon_e  }  w(F) }{{\displaystyle\sum\limits_{C\in {\mathcal C}(\mathcal G) }} \big(-1\big)^{\sum\limits_{e\in C}\epsilon_e -\varepsilon_e  } w(C)},
\end{equation}

\textbf{Step 3.} Let us now assume that the matrix $A$ is totally non-negative for for almost every collection of positive weights $w_e$, and let us prove that the total signatures $\epsilon(P)$, $\epsilon(C)$) have the same parity of the total geometric signatures  $\varepsilon(P)$, $\varepsilon(C)$), for any simple path $P$ from the boundary to the boundary and for any conservative flow $C$ with one connected component: 
\begin{equation}
  \label{eq:tot_sign1}
  \begin{split}
    &\epsilon(P) = \varepsilon(P) \mod(2), \ \ \ \ \ \ \  \epsilon(C) = \varepsilon(C)  \mod(2), \ \ \mbox{i.e.}\\
    &\sum\limits_{e\in P}\epsilon_e -\varepsilon_e = 0   \mod(2), \ \ \ \ \ \ \ \sum\limits_{e\in C}\epsilon_e -\varepsilon_e = 0   \mod(2).
    \end{split}
\end{equation}

\begin{figure}
  \centering{\includegraphics[width=0.45\textwidth]{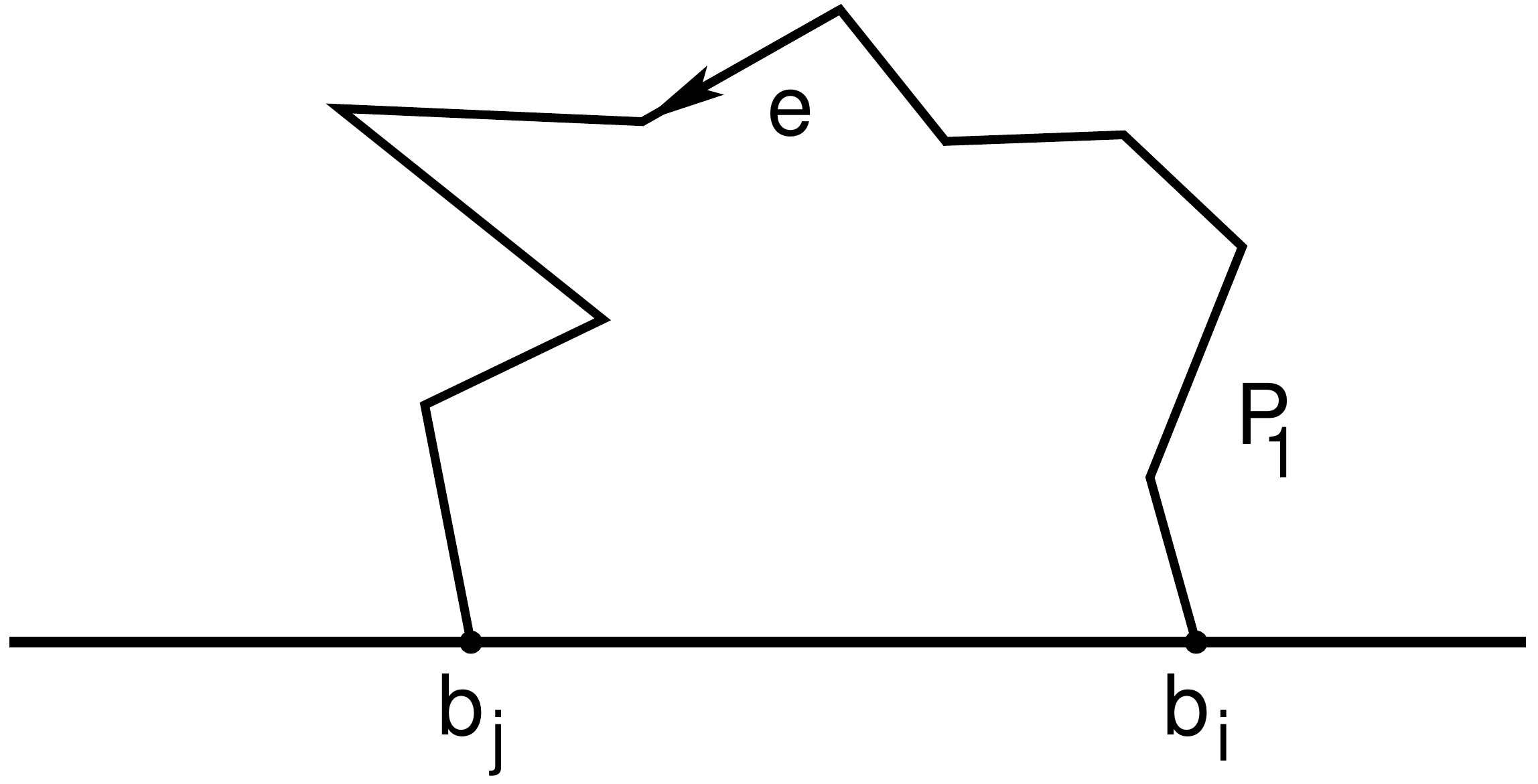}
    \includegraphics[width=0.45\textwidth]{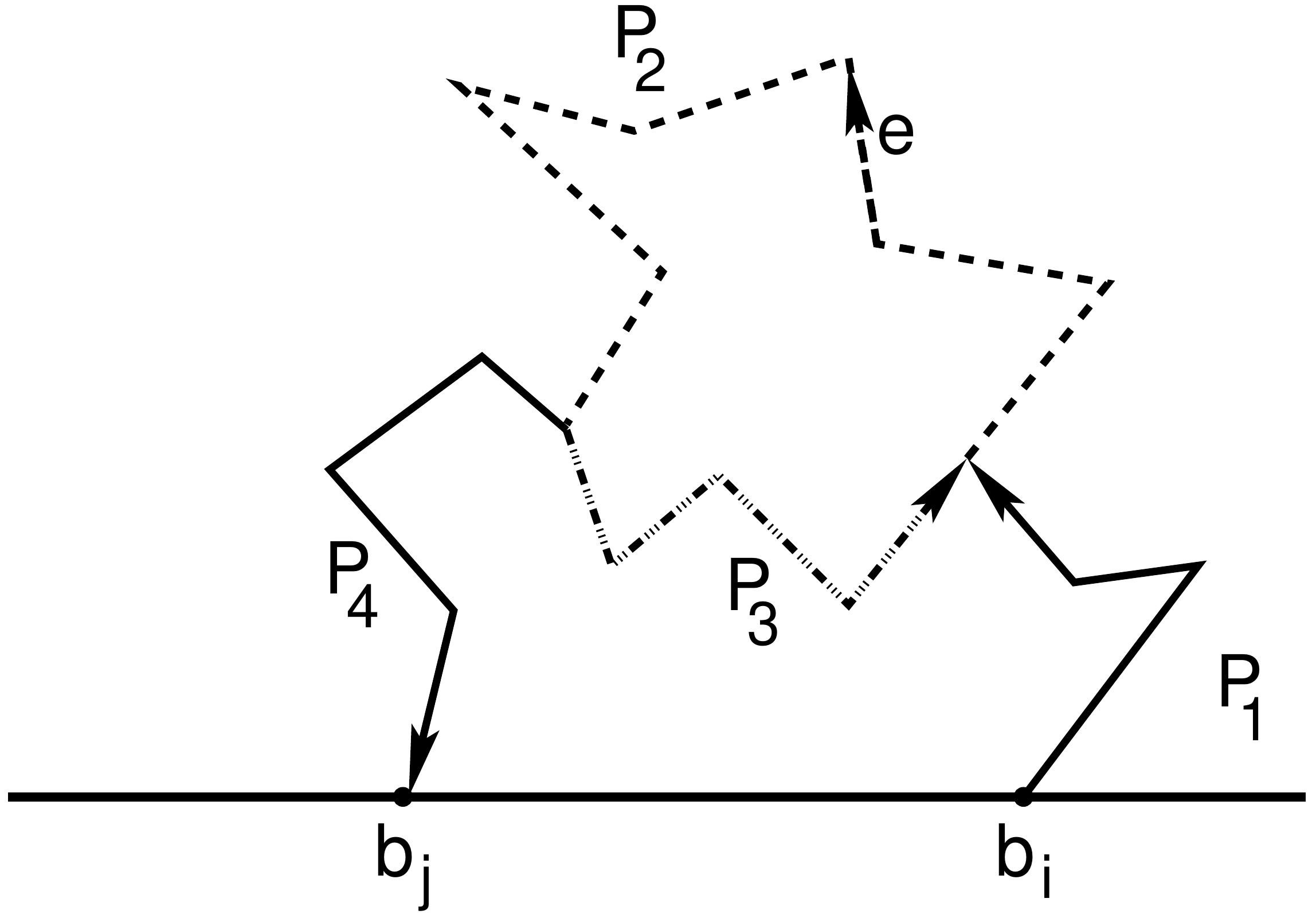}
      \caption{\small{\sl On the left: a simple path $P_1$ from the boundary source $b_i$ to the boundary sink $b_j$ passing through the edge $e$. On the right: a one-component conservative flow $C=P_2+P_3$ containing $e$ and a simple path $P=P_1+P_2+P_4$ from the boundary source $b_i$ to the boundary sink $b_j$ passing through the edge $e$. 
					}}\label{fig:path}}
\end{figure}

In the following we use notations consistent with Figure~\ref{fig:path}.

To prove the first identity let $P_1$ be a simple path from the boundary source $b_i$ to the boundary sink $b_j$. Consider all collections of positive weights $w_e$ with the following property: all the weights on the edges belonging to $P_1$ are of order 1 and all other weights are of order $\delta\ll 1$. Then equation (\ref{eq:talaska1.1}) takes the form
\begin{equation}
    \label{eq:talaska2}
A_{ij} = (-1)^{\sigma_{ij}} \frac{ \big(-1\big)^{\sum\limits_{e\in P_1}\epsilon_e -\varepsilon_e  } w(P_1) + O(\delta)} {1+ O(\delta)}.
\end{equation}
The total non-negativity condition implies that the sign of $A_{ij}$ is the same as the sign of  $A_{ij}$ for the geometric signature, therefore (\ref{eq:tot_sign1}) holds true for all simple paths from the boundary to the boundary.

To prove the second identity consider a one-component conservative flow $C=P_2+P_3$. By hypothesis there exists a simple path $P=P_1+P_2+P_4$ from the boundary source $b_i$ to the boundary sink $b_j$ passing through a given edge $e\in C$. Consider all collections of positive edge weights with the following property: all the weights on the edges belonging to $P_1,P_2,P_3,P_4$ are of order 1 and all other weights are of order $\delta\ll 1$. Then equation (\ref{eq:talaska1.1}) takes the form
\begin{equation}
    \label{eq:talaska2.1}
    A_{ij} = (-1)^{\sigma_{ij}} \frac{ \big(-1\big)^{\sum\limits_{e\in P_1+P_2+P_4}\epsilon_e -\varepsilon_e  } w(P) + O(\delta)}
    {1+ \big(-1\big)^{\sum\limits_{e\in P_2+P_3}\epsilon_e -\varepsilon_e  } w(C) +  O(\delta)}=
    (-1)^{\sigma_{ij}} \frac{w(P) + O(\delta)} {1+ \big(-1\big)^{\sum\limits_{e\in P_2+P_3}\epsilon_e -\varepsilon_e  } w(C) +  O(\delta)},
  \end{equation}
  where we use the first identity. The total non-negativity condition implies that the sign of the denominator has to be positive. Since the weight $w(C)$ can be any positive number, we conclude that (\ref{eq:tot_sign1}) holds true also in this case.
  
\textbf{Step 4.} Lemma~\ref{lemma:equiv_on_path} implies that the signatures $\epsilon_e$ and  $\varepsilon_e$ are equivalent.
\end{proof}

\section{Total signatures of the faces of graphs}\label{sec:kasteleyn}

In this Section we compute the total geometric signature of the faces of the oriented PBDTP graph $\mathcal G$.

\begin{remark}
In the following we orient the boundary of the disc clockwise.
\end{remark}

\begin{definition}\textbf{Topological winding of faces}
Let $\Omega$ be a face. Let us define a topological winding $\mbox{wind}_{\Omega}(v_i)$ at both the internal and boundary vertices $v_i$ in $\partial\Omega$. Let us label the edges bounding $\Omega$ (including those belonging to the intersection of $\Omega$ with the boundary of the disc) in increasing order counterclockwise: $e_1,e_2,\ldots,e_l,e_{l+1}=e_1$. Let $v_i$ be the vertex in $\partial\Omega$ common to the pair of consecutive edges  $(e_i,e_{i+1})$. Then
\begin{equation}
   \label{eq:twind1} 
   \mbox{wind}_{\Omega}(v_i)=\left\{
     \begin{array}{ll}
       \mbox{wind}(e_i,e_{i+1}), &  \mbox{if both of them are internal and both are}\\
                                & \mbox{either oriented clockwise or counterclockwise,} \\
       \mbox{wind}(e_i,e_{i+1}) =0,  & \mbox{if} \ \ v_i \ \ \mbox{is a boundary sink,} \\
       \mbox{wind}(e_i,e_{i+1}),  & \mbox{if} \ \ v_i \ \ \mbox{is a boundary source,}\\
       \mbox{wind}(e_i,f)+  \mbox{wind}(f, e_{i+1}), &  \mbox{if both of them are internal, one is oriented }\\
                                & \mbox{clockwise, and the other counterclockwise, and}\\
                                & f \ \ \mbox{is the third edge at} \ \ v_i,
 \end{array}  
 \right.   
\end{equation}
(see Figure~\ref{fig:twind1}).
\begin{figure}
  \centering{\includegraphics[width=0.45\textwidth]{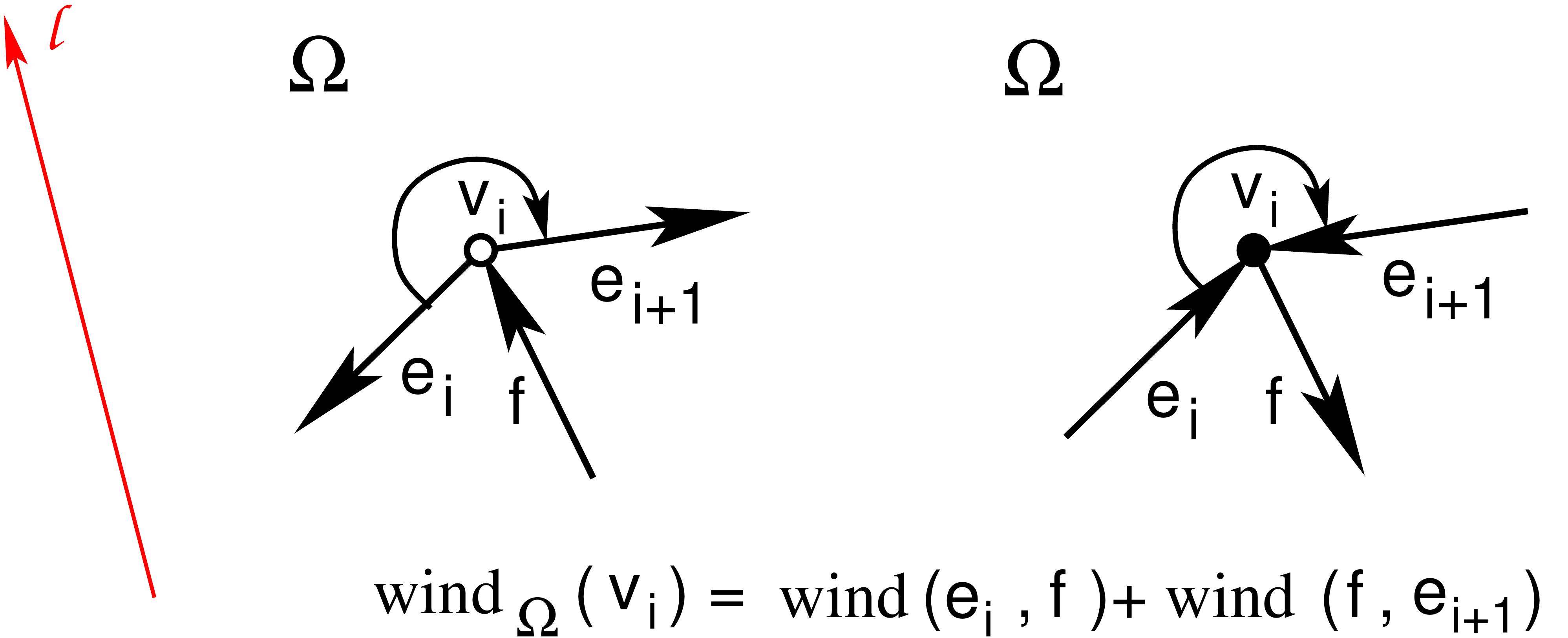}\hfill\includegraphics[width=0.45\textwidth]{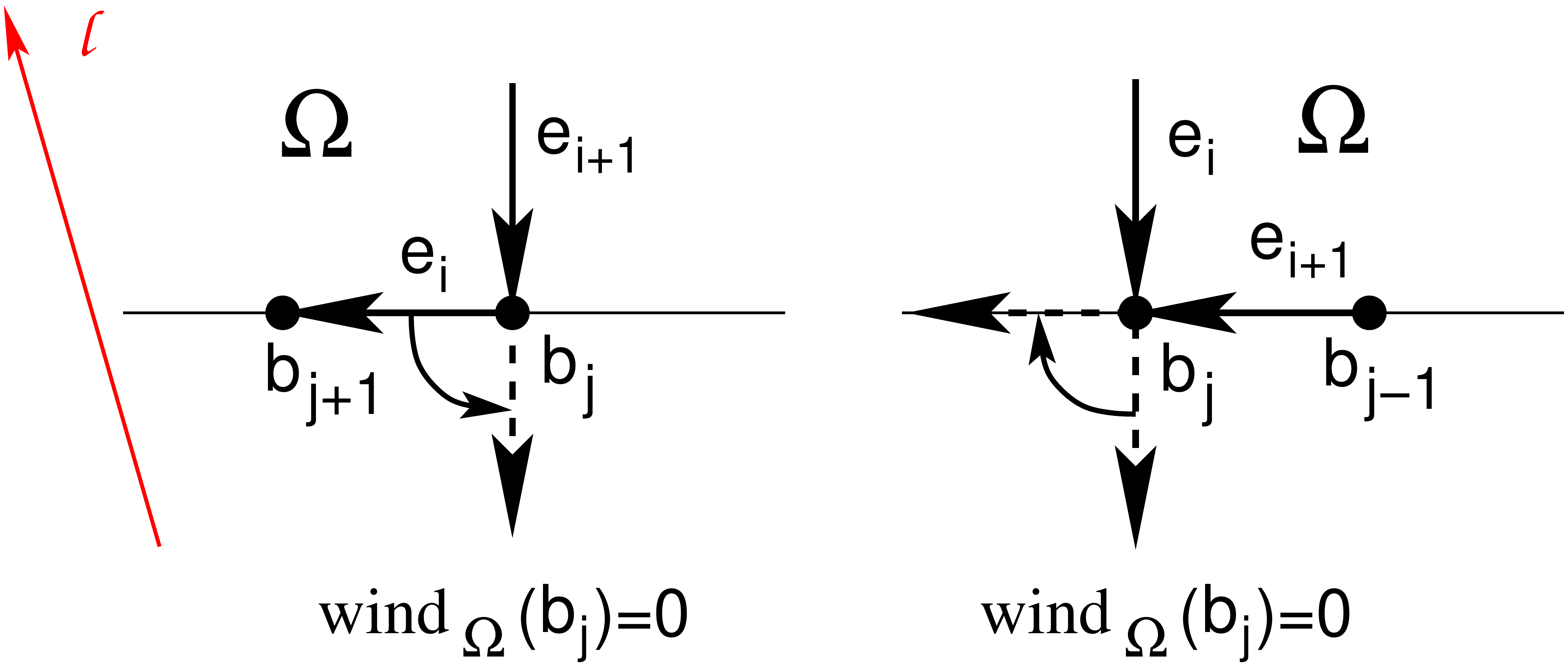}\\
    \bigskip
    \includegraphics[width=0.45\textwidth]{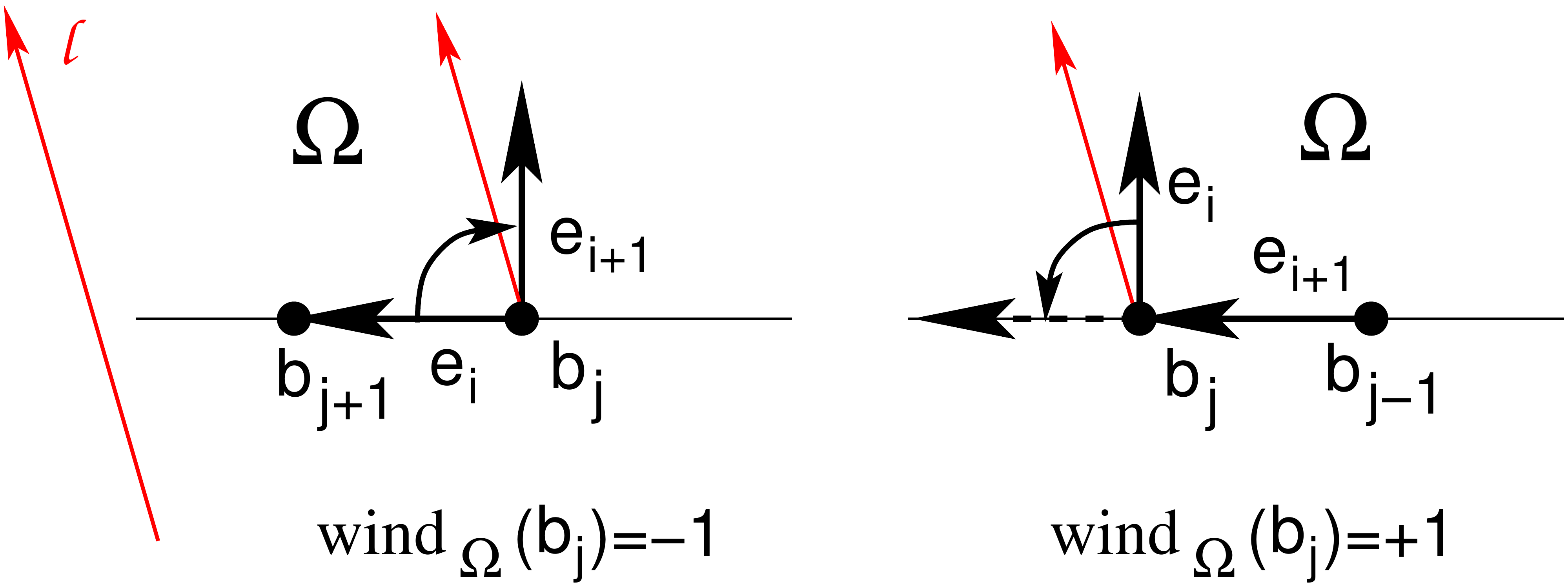}\hfill\includegraphics[width=0.45\textwidth]{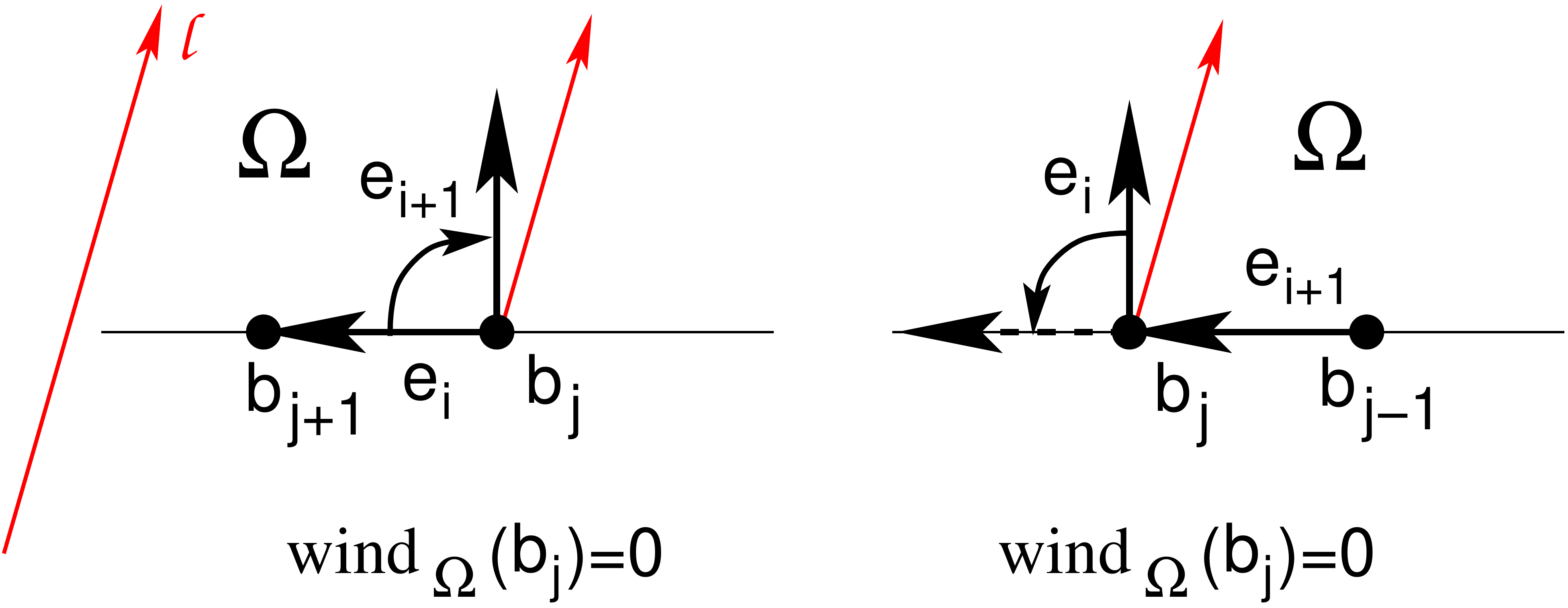}
      \caption{\small{\sl The topological winding $w_{\Omega}$ at pairs of edges at internal vertices with change of orientation (top left), at the boundary sinks (top right) and at the boundary sources (bottom).
        }}\label{fig:twind1}}
\end{figure}

Next we define the total winding of a face $\Omega$:
\begin{equation}
   \label{eq:twind2} 
\mbox{wind}(\Omega) = \sum\limits_{i=1}^{l} \mbox{wind}_{\Omega}(v_i).
\end{equation}  
If $\Omega$ is a boundary face (see Definition~\ref{def:faces}), we also define
\begin{equation}
  \label{eq:twind3}
  \begin{split}
   \mbox{wind}_{\mbox{\scriptsize{i}}}(\Omega) &= \sum\limits_{\mbox{internal vertices}\ v_i\in\partial\Omega } \mbox{wind}_{\Omega}(v_i),\\
   \mbox{wind}_{\mbox{\scriptsize{b}}}(\Omega) &=\sum\limits_{\mbox{boundary vertices}\ v_i\in\partial\Omega } \mbox{wind}_{\Omega}(v_i) 
   \end{split}
\end{equation}  
Of course, for boundary faces $\Omega$ we have
\begin{equation}
   \label{eq:twind4} 
\mbox{wind}(\Omega) = \mbox{wind}_{\mbox{\scriptsize{i}}}(\Omega) + \mbox{wind}_{\mbox{\scriptsize{b}}}(\Omega).
\end{equation}  
\end{definition}

\begin{notation}
  Let us introduce the following notations:
\begin{enumerate}
\item  $\epsilon(\Omega)$ is the face signature defined in (\ref{eq:eps_tot});
\item  $\mbox{int}(\Omega)=\sum\limits_{e\in\partial\Omega}\mbox{int}(e)$. Let us remark that for all internal faces $\mbox{int}(\Omega)=0$.
\item  $n_{\mbox{\scriptsize{white}}}(\Omega)$ denotes the total number of white vertices in $\partial \Omega$;
\item  $n_{\mbox{\scriptsize{source}}}(\Omega)$ is the number of boundary sources belonging to $\Omega$;
\item  $\mbox{cd}_{\mbox{\scriptsize{white}}}(\Omega)$ is the number of internal white vertices $v_i$ such that $e_i$, $e_{i+1}$ have opposite orientation;
\item  $\mbox{cd}_{\mbox{\scriptsize{source}}}(\Omega)$ is the number of boundary source vertices $v_i$ such that $e_i$, $e_{i+1}$ have opposite orientation.
 \end{enumerate} 
\end{notation}

We need the following simple topological Lemma:
\begin{lemma}\textbf{The total winding of a face}. The following formula holds true:
  \begin{equation}
   \label{eq:tot_wind} 
  \mbox{wind}(\Omega) =
  \left \{
    \begin{array}{ll}
      1 -  \mbox{cd}_{\mbox{\scriptsize{white}}}(\Omega) & \mbox{if} \ \ \Omega \ \ \mbox{is an internal face}\\
      1 -  \mbox{cd}_{\mbox{\scriptsize{white}}}(\Omega) - \mbox{cd}_{\mbox{\scriptsize{source}}}(\Omega)  & \mbox{if} \ \ \Omega \ \ \mbox{is a finite boundary face}\\
      -  \mbox{cd}_{\mbox{\scriptsize{white}}}(\Omega) - \mbox{cd}_{\mbox{\scriptsize{source}}}(\Omega)  & \mbox{if} \ \ \Omega \ \ \mbox {is the infinite boundary face.}
      \end{array}
  \right.
\end{equation}
\end{lemma}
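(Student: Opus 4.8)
The plan is to read $\mbox{wind}(\Omega)$ as a regularized count of how many times the tangent to $\partial\Omega$ sweeps past the fixed gauge direction $\mathfrak{l}$ as the boundary is traversed counterclockwise, and to compare it with the topological rotation index of that boundary. Walking along $\partial\Omega$ in the counterclockwise order $e_1,\dots,e_l$ of the definition, the unit tangent $\vec\tau$ is constant along each straight edge and jumps only at the vertices $v_i$, so the signed number of crossings of the ray $\mathfrak{l}$ by $\vec\tau$ equals the rotation index of the closed polygon $\partial\Omega$. By the Umlaufsatz this index is $+1$ whenever $\Omega$ bounds a finite region (internal or finite boundary face) and, after discarding the turn carried by the point at infinity, is $0$ for the infinite face. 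This already produces the constants $1$, $1$, $0$ of the three cases, so the entire content of the lemma is that the gap between this genuine rotation index and the quantity $\mbox{wind}(\Omega)$ built from the perfect-orientation windings of \eqref{eq:twind1} equals $-\mbox{cd}_{\mbox{\scriptsize white}}(\Omega)$ in the internal case and $-\mbox{cd}_{\mbox{\scriptsize white}}(\Omega)-\mbox{cd}_{\mbox{\scriptsize source}}(\Omega)$ in the boundary cases.

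Next I would localize the comparison. The key observation is that $\mbox{wind}(e_i,e_{i+1})$ records the crossing of $\mathfrak{l}$ by the short turn between the \emph{perfect-orientation} direction vectors of $e_i$ and $e_{i+1}$, whereas the rotation index uses the \emph{traversal} tangent; the two vectors agree on an edge exactly when its orientation is coherent with the counterclockwise traversal and are opposite otherwise. Hence the perfect-orientation tangent is discontinuous precisely at the vertices where the boundary changes orientation, i.e. where $e_i,e_{i+1}$ have opposite orientation. At every coherent vertex (the Case~A situations, and the boundary-source case of \eqref{eq:twind1}) one checks directly that $\mbox{wind}_\Omega(v_i)$ reproduces the traversal crossing, so such vertices contribute no defect. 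The incoherent vertices are exactly those handled by the detour through the third edge $f$, and I would verify that writing the turn as $\mbox{wind}(e_i,f)+\mbox{wind}(f,e_{i+1})$ recovers the traversal crossing up to an explicit defect depending only on the colour of $v_i$.

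The heart of the argument is this finite local computation, and the white/black asymmetry falls out of the perfect-graph condition. At an internal vertex where $e_i,e_{i+1}$ have opposite orientations the two boundary half-edges are both outgoing exactly when $v_i$ is white (one-in, two-out) and both incoming exactly when $v_i$ is black (two-in, one-out); in the white case the detour through the incoming edge $f$ contributes one extra crossing of $\mathfrak{l}$, giving a defect $-1$, while in the black case the two half-turns through the outgoing $f$ cancel and the defect is $0$. The same dichotomy appears on the boundary, with a source playing the role of a white vertex (outgoing edge, nonzero $\mbox{wind}_\Omega$) and a sink that of a black vertex (incoming edge, $\mbox{wind}_\Omega=0$), yielding defect $-1$ at the incoherent sources and $0$ at the sinks. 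Summing the local defects over $\partial\Omega$ gives $-\mbox{cd}_{\mbox{\scriptsize white}}(\Omega)$ and, for boundary faces, the extra $-\mbox{cd}_{\mbox{\scriptsize source}}(\Omega)$, which together with the rotation-index constant is exactly \eqref{eq:tot_wind}.

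I expect the main obstacle to be the sign bookkeeping in this local analysis rather than any deep idea: since $\mbox{wind}(e_i,e_{i+1})$ secretly records crossings of $\mathfrak{l}$ or of $-\mathfrak{l}$ according to whether the incident edges are coherently oriented, one cannot naively equate each $\mbox{wind}_\Omega(v_i)$ with the corresponding traversal crossing, and the discrepancies must telescope correctly to the global rotation index. A clean way to organize this is to deform, one incoherent vertex at a time, to a reference configuration in which every boundary edge is coherently oriented counterclockwise, checking that each elementary reversal at a white or source vertex changes $\mbox{wind}(\Omega)$ by $\mp1$ while a reversal at a black or sink vertex leaves it unchanged; this reduces the lemma to the fully coherent case, where $\mbox{wind}(\Omega)$ literally is the rotation index. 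The second and more delicate point is pinning down the constant for the infinite face: one must show that the turn of the traversal tangent carried by the arc through the point at infinity accounts for exactly the unit difference between the infinite and the finite boundary faces, which I would establish by comparing two boundary faces sharing the gauge rays at $b_n$ and $b_1$.
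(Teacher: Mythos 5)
Your overall skeleton is the same as the paper's: settle the coherently oriented case by identifying $\mbox{wind}(\Omega)$ with the rotation index of $\partial\Omega$ (Umlaufsatz), then attribute the discrepancy to the vertices where the traversal changes direction. The genuine gap is in the step you yourself call the heart of the argument: the claim that the defect is \emph{local and colour-determined}, equal to $-1$ at each incoherent white/source vertex and $0$ at each incoherent black/sink vertex. This is false pointwise. Take a square face whose four sides are alternately coherent and incoherent, so that its corners alternate black (both boundary edges incoming, third edge outgoing away from the face) and white (both outgoing, third edge incoming), and let $\mathfrak{l}$ point slightly east of north. At the black corner where the traversal turns from east to north, the traversal tangent crosses $\mathfrak{l}$ (contributing $+1$ to the rotation index), while the detour $\mbox{wind}(e_i,f)+\mbox{wind}(f,e_{i+1})$ rotates east $\to$ southeast $\to$ south and never meets $\mathfrak{l}$: the defect is $-1$ at a \emph{black} vertex. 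At the adjacent white corner the traversal turns north $\to$ west and the detour rotates south $\to$ southwest $\to$ west; neither crosses $\mathfrak{l}$, so the defect there is $0$ at a \emph{white} vertex --- exactly opposite to your attribution. Rotating $\mathfrak{l}$ relocates these unit defects onto other vertices (one per black/white pair, but on either colour), so which colour carries a defect is gauge-dependent and no colour-determined vertex-by-vertex count can hold. For the same reason your preliminary assertion that coherent vertices carry no defect also fails: at a vertex interior to a run of incoherent edges the perfect-orientation directions are antipodal to the traversal directions, so crossings of $\mathfrak{l}$ by one correspond to crossings of $-\mathfrak{l}$ by the other. You flag precisely this ambiguity in your last paragraph, but the argument does not survive it: once the pointwise attribution is dropped, nothing in the proposal establishes the total.

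What is true locally --- and is the paper's actual proof --- is colour-symmetric: at \emph{every} direction change, white or black, source or sink, the interpolated angle increment of the perfect-orientation direction differs from that of the traversal tangent by exactly $-\pi$ (here the geometric input is that the third edge $f$ lies outside the sector of $\Omega$ at the vertex); equivalently, each maximal incoherent run of $\partial\Omega$ contributes total defect $-1$, distributed among its vertices in an $\mathfrak{l}$-dependent way. The asymmetry of (\ref{eq:tot_wind}) then comes not from an asymmetric defect but from counting: direction changes of type ``both edges incoming'' (black or sink) and of type ``both edges outgoing'' (white or source) alternate around $\partial\Omega$, hence are equinumerous, so the total defect $-\frac{1}{2}\,(\mbox{number of direction changes})$ equals $-\mbox{cd}_{\mbox{\scriptsize white}}(\Omega)-\mbox{cd}_{\mbox{\scriptsize source}}(\Omega)$. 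Your fallback deformation scheme cannot substitute for this finer claim: reorientation is carried by edges, not vertices, so the elementary move is reversing a maximal incoherent run, which removes one direction change of \emph{each} type simultaneously and changes $\mbox{wind}(\Omega)$ by $+1$. No sequence of such moves can isolate the contribution of a single white vertex; it proves exactly the paired, colour-symmetric statement above (and, in passing, leaves the class of perfect orientations, so the quantities being tracked must be reinterpreted). In short, the strategy and the final formula are right, but the central local computation as stated is wrong and must be replaced by the symmetric ``$-\pi$ per direction change, then pair'' argument.
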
  
\begin{proof}
If all the edges at the boundary of $\Omega$ are oriented counterclockwise (or clockwise), the statement is well-known: one starts from $e_1$, continuously assigns angles to each $e_i$ and counts the increment of angle $\delta\phi$ after returning to $e_1$. Then $\mbox{wind}(\Omega)=\delta\phi/2\pi$. Any change of direction along the boundary adds $-\pi$ to $\delta\phi$. Since changes of directions occur in pairs for both internal and boundary faces, it is enough to count the number of changes from counterclockwise to clockwise and multiply it by 2.  
\end{proof}
  
\begin{theorem}\textbf{The total signature of a face}\label{theo:sign_face}
Let $({\mathcal G},{\mathcal O}(I))$ be a PBDTP graph in the disc representing a positroid cell $\S\subset \GTNN$, and let $\epsilon_{U,V}$ be its geometric signature. Then
\begin{equation}\label{eq:sign_face}
\epsilon(\Omega) \; = \;  
\left\{ \begin{array}{ll}
n_{\mbox{\scriptsize{white}}}(\Omega) \; + \; 1 \quad \mod 2, & \quad \mbox{if } \Omega \mbox{ is a finite face}; \\
\\
n_{\mbox{\scriptsize{white}}}(\Omega) \; + \; k \quad \mod 2, & \quad \mbox{if } \Omega \mbox{ is the infinite face}.
\end{array}
\right.
\end{equation}
\end{theorem}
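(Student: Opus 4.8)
The plan is to expand $\epsilon(\Omega)=\sum_{e\in\partial\Omega}\epsilon_e$ directly from Definition~\ref{def:geometric-signature} and sort the resulting terms. Reading (\ref{eq:lin_lam_1.0.5})--(\ref{eq:lam_corr_edge}) in a unified way, an oriented edge $e=(U,V)$ contributes $\mbox{int}(e)$, plus $1+\mbox{wind}(e_{\mbox{\scriptsize in}},e)$ exactly when its tail $U$ is an internal white vertex (with $e_{\mbox{\scriptsize in}}$ the incoming edge at $U$), plus $\mbox{wind}(e,e_{\mbox{\scriptsize out}})$ exactly when its head $V$ is an internal black vertex (with $e_{\mbox{\scriptsize out}}$ the outgoing edge at $V$), plus one extra $1$ precisely when $e$ issues from a boundary source. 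Attributing the white-tail and black-head terms of each face edge to the internal endpoint at which they are anchored, I would write
\[
\epsilon(\Omega)\ \equiv\ \sum_{\mbox{\scriptsize internal }v\in\partial\Omega} c(v)\ +\ n_{\mbox{\scriptsize{source}}}(\Omega)\ +\ \mbox{int}(\Omega)\pmod 2 ,
\]
where $c(v)$ collects all signature terms anchored at $v$. Since $\mbox{int}(\Omega)=0$ on internal faces (by the Notation) and $n_{\mbox{\scriptsize{source}}}(\Omega)=0$ there, the internal-face case will be governed entirely by the $c(v)$.

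The heart of the argument is a vertex-by-vertex comparison of $c(v)$ with the topological winding $\mbox{wind}_\Omega(v)$ of (\ref{eq:twind1}). The subtlety is that $c(v)$ is built from windings taken in the \emph{graph} orientation and anchored at the distinguished edge at $v$ (incoming if $v$ is white, outgoing if black), whereas $\mbox{wind}_\Omega(v)$ is taken in the counterclockwise \emph{face} orientation. I would reconcile the two using two elementary parity rules for the local winding number, both immediate from (\ref{eq:def_wind}): $\mbox{wind}(f,g)+\mbox{wind}(g,f)\equiv 1$ and $\mbox{wind}(-f,g)\equiv \mbox{wind}(f,g)+1\pmod 2$ for non-parallel edges, together with the cocycle $\mbox{wind}(f,g)+\mbox{wind}(g,h)+\mbox{wind}(h,f)=1$ at a trivalent vertex with $f,g,h$ in counterclockwise order. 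A short case check (white vs.\ black $v$; the two face edges both leaving, both entering, or one of each) gives, mod $2$, that $c(v)\equiv \mbox{wind}_\Omega(v)$ at every black vertex and at every white vertex where the two face edges have opposite orientation, while $c(v)\equiv \mbox{wind}_\Omega(v)+1$ at every white vertex where they have the same orientation; the crucial mechanism is that a white vertex carries one $+1$ per outgoing face edge, i.e.\ two (hence $0$) in the opposite-orientation case and one in the same-orientation case. As the opposite-orientation white vertices are exactly those counted by $\mbox{cd}_{\mbox{\scriptsize{white}}}(\Omega)$, summing over internal vertices yields
\[
\sum_{v} c(v)\ \equiv\ \mbox{wind}_{\mbox{\scriptsize{i}}}(\Omega)+n_{\mbox{\scriptsize{white}}}(\Omega)-\mbox{cd}_{\mbox{\scriptsize{white}}}(\Omega)\pmod 2 .
\]

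Plugging in the topological Lemma (\ref{eq:tot_wind}) then finishes the finite-face cases. For an internal face $\mbox{wind}_{\mbox{\scriptsize{i}}}(\Omega)=\mbox{wind}(\Omega)=1-\mbox{cd}_{\mbox{\scriptsize{white}}}(\Omega)$, so $\epsilon(\Omega)\equiv 1-\mbox{cd}_{\mbox{\scriptsize{white}}}+n_{\mbox{\scriptsize{white}}}-\mbox{cd}_{\mbox{\scriptsize{white}}}\equiv n_{\mbox{\scriptsize{white}}}+1$, the two $\mbox{cd}_{\mbox{\scriptsize{white}}}$ cancelling. For a finite boundary face I would split $\mbox{wind}(\Omega)=\mbox{wind}_{\mbox{\scriptsize{i}}}(\Omega)+\mbox{wind}_{\mbox{\scriptsize{b}}}(\Omega)=1-\mbox{cd}_{\mbox{\scriptsize{white}}}-\mbox{cd}_{\mbox{\scriptsize{source}}}$ and reduce the claim to the boundary identity $\mbox{int}(\Omega)\equiv \mbox{wind}_{\mbox{\scriptsize{b}}}(\Omega)+\mbox{cd}_{\mbox{\scriptsize{source}}}(\Omega)+n_{\mbox{\scriptsize{source}}}(\Omega)\pmod 2$, checked directly at the boundary vertices: at a boundary sink $\mbox{wind}_\Omega=0$, while at a boundary source the three boundary quantities together record exactly the parity of the gauge-ray crossings tallied by $\mbox{int}$.

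For the infinite face the same computation applies, with two changes: the Lemma now gives $\mbox{wind}(\Omega)=-\mbox{cd}_{\mbox{\scriptsize{white}}}-\mbox{cd}_{\mbox{\scriptsize{source}}}$ (the $+1$ is absent), and the intersection count behaves differently, since each of the $k$ gauge rays $\mathfrak l_{i_r}$ emanating from the boundary sources runs off to $\Omega$ and so crosses its boundary a net odd number of times; tracing the resulting extra $k-1$ through the sum turns $n_{\mbox{\scriptsize{white}}}+1$ into $n_{\mbox{\scriptsize{white}}}+k$. Throughout I may fix any convenient orientation, gauge direction and vertex placement, since by the Corollary to Theorem~\ref{thm:z_orient_gauge} the quantity $\epsilon(\Omega)\bmod 2$ is invariant, while $n_{\mbox{\scriptsize{white}}}(\Omega)$ and $k$ are manifestly invariant. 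I expect the main obstacle to be the orientation reconciliation of the second step: matching the graph-oriented, edge-anchored windings of the signature to the traversal-oriented $\mbox{wind}_\Omega(v)$ across all vertex colours and both face-edge configurations, and, for the infinite face, pinning down the parity of $\mbox{int}(\Omega)$ as precisely $k$.
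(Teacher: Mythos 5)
Your architecture is the same as the paper's proof. Your identity $\epsilon(\Omega)\equiv \mbox{wind}_{\mbox{\scriptsize{i}}}(\Omega)+n_{\mbox{\scriptsize{white}}}(\Omega)+\mbox{cd}_{\mbox{\scriptsize{white}}}(\Omega)+n_{\mbox{\scriptsize{source}}}(\Omega)+\mbox{int}(\Omega)\pmod 2$ (the sign of $\mbox{cd}_{\mbox{\scriptsize{white}}}$ is immaterial mod $2$) is precisely (\ref{eq:face_ind1}), which the paper asserts directly from Definition \ref{def:geometric-signature}; your ``boundary identity'' $\mbox{int}(\Omega)\equiv \mbox{wind}_{\mbox{\scriptsize{b}}}(\Omega)+\mbox{cd}_{\mbox{\scriptsize{source}}}(\Omega)+n_{\mbox{\scriptsize{source}}}(\Omega)$ is exactly the combination of (\ref{eq:face_ind2}), (\ref{eq:face_ind4}) and (\ref{eq:face_ind6}) that the paper uses to reach (\ref{eq:face_ind7}); and both proofs finish by inserting the topological Lemma (\ref{eq:tot_wind}), the infinite face picking up $k$ from the gauge rays while losing the $+1$ of the Lemma. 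Your unified reading of the geometric signature (one $\mbox{int}(e)$ per edge, a white-tail term, a black-head term, one extra $1$ per boundary source) is also correct.

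The genuine flaw is in the auxiliary rules you invoke for the vertex-by-vertex comparison: all three are false. By (\ref{eq:def_wind}), exchanging the arguments reverses all three signs $s(\cdot,\cdot)$, so $\mbox{wind}(g,f)=-\mbox{wind}(f,g)$ exactly, whence $\mbox{wind}(f,g)+\mbox{wind}(g,f)\equiv 0\pmod 2$ always, never $1$. Likewise $\mbox{wind}(-f,g)\equiv \mbox{wind}(f,g)+1$ holds only when $\mathfrak l$ lies in the open half-plane bounded by the line through $f$ and containing $g$; otherwise both windings vanish, and this is why the paper's own reversal identity (\ref{eq:wind-opposite}) in the Appendix carries $\gamma_2$-corrections rather than a universal $+1$. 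The cocycle fails as well, e.g.\ for three vectors at $0^{\circ},10^{\circ},20^{\circ}$ and generic $\mathfrak l$ the three windings sum to $0$. Moreover your stated case-check outcomes contradict your rules: at a black vertex $v$ whose two face edges $a,b$ are both incoming, with outgoing third edge $f$, one has $c(v)=\mbox{wind}(a,f)+\mbox{wind}(b,f)$ while $\mbox{wind}_{\Omega}(v)=\mbox{wind}(a,f)+\mbox{wind}(f,b)$ by (\ref{eq:twind1}), so your first rule would force $c(v)\equiv\mbox{wind}_{\Omega}(v)+1$, not the $c(v)\equiv\mbox{wind}_{\Omega}(v)$ you assert. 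Fortunately the outcomes you assert are the correct ones, and they all follow from the single true rule, antisymmetry: in the example just given $c(v)\equiv\mbox{wind}_{\Omega}(v)$ because $\mbox{wind}(b,f)\equiv\mbox{wind}(f,b)\pmod 2$, and at a white vertex with both face edges outgoing, $c(v)=2+\mbox{wind}(f,a)+\mbox{wind}(f,b)\equiv\mbox{wind}(a,f)+\mbox{wind}(f,b)=\mbox{wind}_{\Omega}(v)$. So the repair is local: discard your three rules, use $\mbox{wind}(g,f)=-\mbox{wind}(f,g)$, and the case check, and with it your whole proof, which then coincides with the paper's, goes through; as written, however, the justification of the key step (\ref{eq:face_ind1}) does not stand.
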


\begin{remark}
It is easy to check that Theorem~\ref{theo:sign_face} is true for the examples discussed in the previous sections (see Figures \ref{fig:glick}, \ref{fig:lediag}, \ref{fig:acycl_2},  \ref{fig:acycl_3},  \ref{fig:acycl_4}).
\end{remark}  

\begin{proof}
From Definition~\ref{def:geometric-signature} we have
\begin{equation}\label{eq:face_ind1}
\epsilon(\Omega) = \mbox{wind}_{\mbox{\scriptsize{i}}(\Omega)} +n_{\mbox{\scriptsize{white}}}(\Omega)+\mbox{cd}_{\mbox{\scriptsize{white}}}(\Omega) + n_{\mbox{\scriptsize{source}}}(\Omega) +\mbox{int}(\Omega) \qquad \mod 2.
\end{equation}

Let us introduce the following notations:
\begin{enumerate}
\item $\#\,{\raisebox{-2.mm}{\includegraphics[height=8mm]{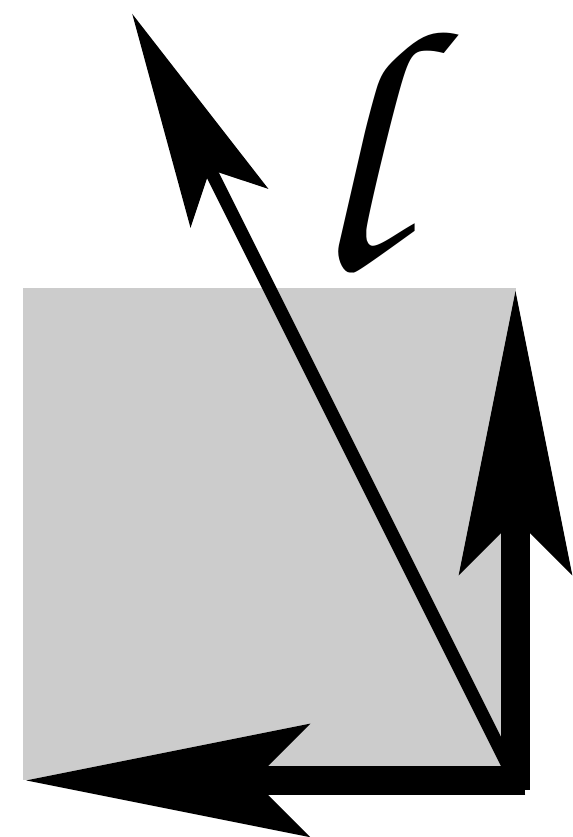}}}$ denotes the number of sources at the boundary of $\Omega$ such that $\Omega$ lies to the left of the boundary source and the gauge ray direction points inside the face; 
\item  $\#\,{\raisebox{-2.mm}{\includegraphics[height=8mm]{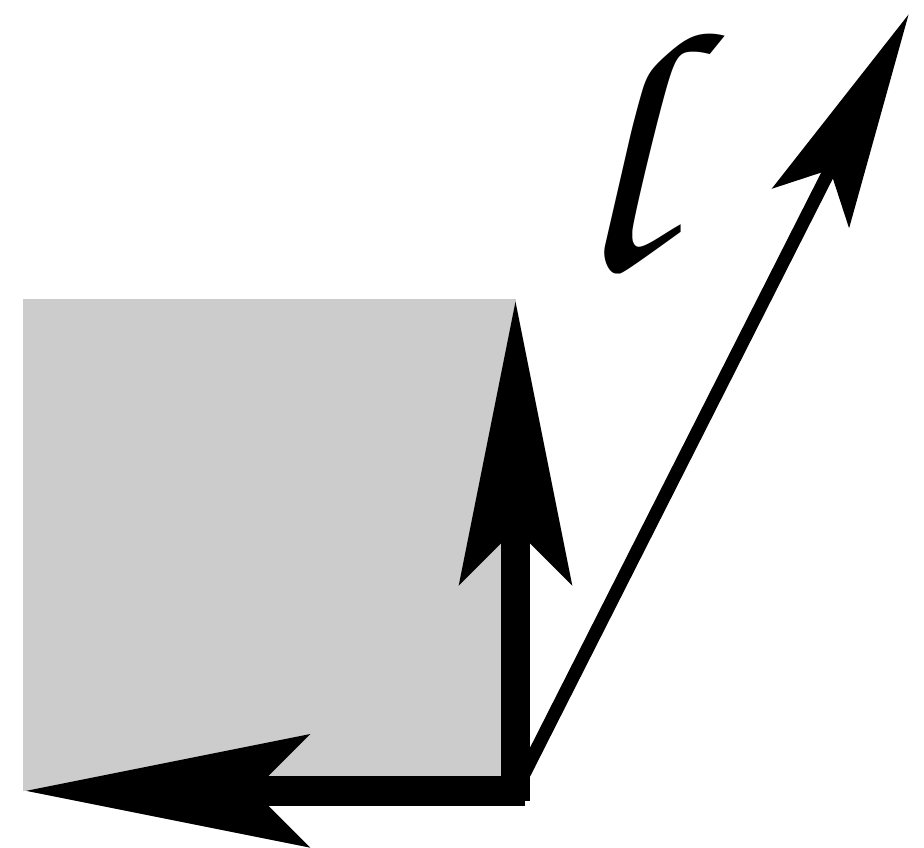}}}$ denotes the number of sources at the boundary of $\Omega$ such that $\Omega$ lies to the left of the boundary source and the gauge ray direction points outside the face; 
\item $\#\,{\raisebox{-2.mm}{\includegraphics[height=8mm]{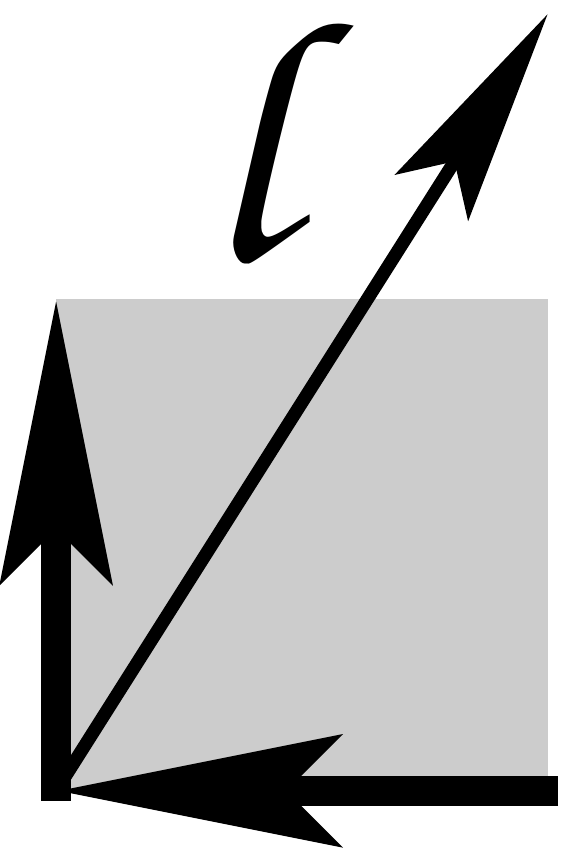}}}$ denotes the number of sources at the boundary of $\Omega$ such that $\Omega$ lies to the right of the boundary source and the gauge ray direction points inside the face; 
\item  $\#\,{\raisebox{-2.mm}{\includegraphics[height=8mm]{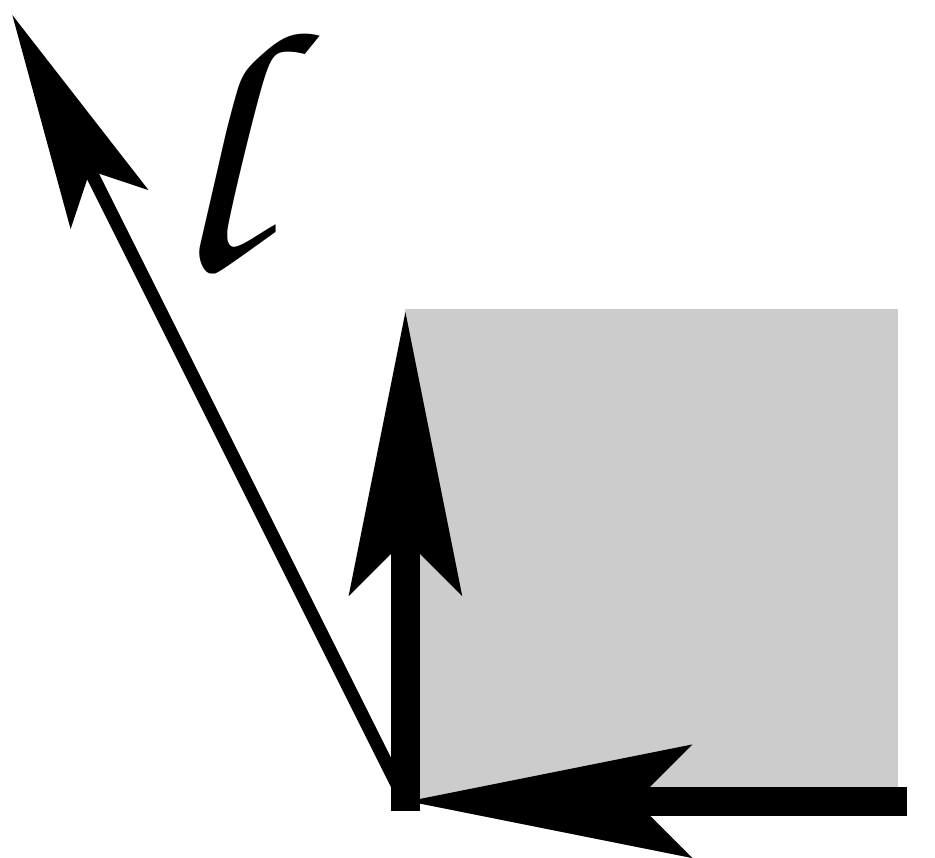}}}$ denotes the number of sources at the boundary of $\Omega$ such that $\Omega$ lies to the right of the boundary source and the gauge ray direction points outside the face;   
\end{enumerate}
Since 
\begin{equation}\label{eq:face_ind2}
\#\,{\raisebox{-2.mm}{\includegraphics[height=8mm]{vert1.pdf}}}+ \#\,{\raisebox{-2.mm}{\includegraphics[height=8mm]{vert2.pdf}}} = \mbox{cd}_{\mbox{\scriptsize{source}}}(\Omega)
\end{equation}
we have
\begin{equation}\label{eq:face_ind3}
  \mbox{wind}(\Omega) = \left\{
    \begin{array}{ll}
      1 + \mbox{cd}_{\mbox{\scriptsize{white}}}(\Omega) +\#\,{\raisebox{-2.mm}{\includegraphics[height=8mm]{vert1.pdf}}}+\#\,{\raisebox{-2.mm}{\includegraphics[height=8mm]{vert2.pdf}}} \mod 2,& \mbox{if} \ \ \Omega \ \ \mbox{is a finite face },\\
      \mbox{cd}_{\mbox{\scriptsize{white}}}(\Omega) +\#\,{\raisebox{-2.mm}{\includegraphics[height=8mm]{vert1.pdf}}}+\#\,{\raisebox{-2.mm}{\includegraphics[height=8mm]{vert2.pdf}}} \mod 2,& \mbox{if} \ \ \Omega \ \ \mbox{is the infinite face}.
\end{array}    
\right.   
\end{equation}
Since
\begin{equation}\label{eq:face_ind4}
\mbox{wind}_{\mbox{\scriptsize{b}}}(\Omega) = \#\,{\raisebox{-2.mm}{\includegraphics[height=8mm]{vert1.pdf}}}+\#\,{\raisebox{-2.mm}{\includegraphics[height=8mm]{vert3.pdf}}}\bigskip \mod 2,
\end{equation}
we have
\begin{equation}\label{eq:face_ind5}
  \mbox{wind}_{\mbox{\scriptsize{i}}}(\Omega) = \left\{
    \begin{array}{ll}
      1+  \mbox{cd}_{\mbox{\scriptsize{white}}}(\Omega) + \#\,{\raisebox{-2.mm}{\includegraphics[height=8mm]{vert2.pdf}}}+\#\,{\raisebox{-2.mm}{\includegraphics[height=8mm]{vert3.pdf}}} \mod 2, & \mbox{if} \ \ \Omega \ \ \mbox{is a finite face }\\
      \mbox{cd}_{\mbox{\scriptsize{white}}}(\Omega) + \#\,{\raisebox{-2.mm}{\includegraphics[height=8mm]{vert2.pdf}}}+\#\,{\raisebox{-2.mm}{\includegraphics[height=8mm]{vert3.pdf}}} \mod 2,  & \mbox{if} \ \ \Omega \ \ \mbox{is the infinite face}.
     \end{array}    
\right. 
\end{equation}
\begin{equation}\label{eq:face_ind6}
   \mbox{int}(\Omega) = \left\{
     \begin{array}{ll}
       0 \mod 2,  & \mbox{if} \ \ \Omega \ \ \mbox{is an internal }\\
       \#\,{\raisebox{-2.mm}{\includegraphics[height=8mm]{vert1.pdf}}}+\#\,{\raisebox{-2.mm}{\includegraphics[height=8mm]{vert4.pdf}}}\bigskip \mod 2,\ & \mbox{if} \ \ \Omega \ \ \mbox{is a finite boundary face }\\
       k + \#\,{\raisebox{-2.mm}{\includegraphics[height=8mm]{vert1.pdf}}}+\#\,{\raisebox{-2.mm}{\includegraphics[height=8mm]{vert4.pdf}}}\bigskip \mod 2,\ & \mbox{if} \ \ \Omega \ \ \mbox{is the infinite face}.
\end{array}    
\right. 
\end{equation}
Therefore 
\begin{equation}\label{eq:face_ind7}
  n_{\mbox{\scriptsize{source}}}(\Omega)+ \mbox{int}(\Omega) = \left\{
     \begin{array}{ll}
       \#\,{\raisebox{-2.mm}{\includegraphics[height=8mm]{vert2.pdf}}}+\#\,{\raisebox{-2.mm}{\includegraphics[height=8mm]{vert3.pdf}}}\bigskip \mod 2,\ & \mbox{if} \ \ \Omega \ \ \mbox{is a finite boundary face }\\
       k + \#\,{\raisebox{-2.mm}{\includegraphics[height=8mm]{vert2.pdf}}}+\#\,{\raisebox{-2.mm}{\includegraphics[height=8mm]{vert3.pdf}}}\bigskip \mod 2,\ & \mbox{if} \ \ \Omega \ \ \mbox{is the infinite face}.
\end{array}    
\right. 
\end{equation}

Inserting (\ref{eq:face_ind7}) into (\ref{eq:face_ind5}) we get
 \begin{equation}\label{eq:face_ind8}
   \mbox{wind}_{\mbox{\scriptsize{i}}}(\Omega) = \left\{
     \begin{array}{ll}
       1+  \mbox{cd}_{\mbox{\scriptsize{white}}}(\Omega) + n_{\mbox{\scriptsize{source}}}(\Omega) +\mbox{int}(\Omega) \bigskip \mod 2  & \mbox{if} \ \ \Omega \ \ \mbox{is a finite face }\\
        k+  \mbox{cd}_{\mbox{\scriptsize{white}}}(\Omega) + n_{\mbox{\scriptsize{source}}}(\Omega) +\mbox{int}(\Omega) \bigskip \mod 2 & \mbox{if} \ \ \Omega \ \ \mbox{is the infinite face}.
\end{array}    
\right. 
\end{equation}
Inserting (\ref{eq:face_ind8}) into (\ref{eq:face_ind1}) we finally get
\begin{equation}\label{eq:face_ind9}
  \epsilon(\Omega) =
  \left\{
     \begin{array}{l}
       1+  2\, \mbox{cd}_{\mbox{\scriptsize{white}}}(\Omega) + 2\, n_{\mbox{\scriptsize{source}}}(\Omega) + 2\, \mbox{int}(\Omega) + n_{\mbox{\scriptsize{white}}}(\Omega) =1 + n_{\mbox{\scriptsize{white}}}(\Omega) \mod 2,\\
       k+  2\, \mbox{cd}_{\mbox{\scriptsize{white}}}(\Omega) + 2\, n_{\mbox{\scriptsize{source}}}(\Omega) + 2\, \mbox{int}(\Omega) + n_{\mbox{\scriptsize{white}}}(\Omega) =k + n_{\mbox{\scriptsize{white}}}(\Omega) \mod 2,
 \end{array}    
\right.       
\end{equation}
respectively for a finite face $\Omega$  and for the infinite one.
\end{proof}

\begin{remark}\textbf{Connection between $\epsilon(\Omega)$ and  Kasteleyn theorem}
In  \cite{A3}, a Kasteleyn's sign matrix is associated to each reduced bipartite graph in the disc, with entries equal to  $(-1)^{\epsilon(e_{ij})}$ if the edge $e_{ij}$ joins the black vertex $b_i$ and the white vertex $w_j$, and $0$ otherwise; and it is proven that its maximal minors count the dimer configurations on the graph sharing the same boundary conditions. Therefore, the geometric signature realizes Speyer's variant \cite{Sp} of the  classical Kasteleyn's theorem \cite{Kas1,Kas2} for such class of graphs.
\end{remark}

\begin{conjecture}
In the present setting of PBDTP graphs, we can define a sign matrix using the geometric signature and we claim that it explicitly realizes the variant \cite{Sp} of classical Kasteleyn theorem for planar bicoloured graphs.
\end{conjecture}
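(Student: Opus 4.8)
The plan is to expand the face signature $\epsilon(\Omega)=\sum_{e\in\partial\Omega}\epsilon_e$ directly from Definition~\ref{def:geometric-signature} and to reorganize the summands by the type of vertex they sit at. Inspecting (\ref{eq:lin_lam_1.0.5})--(\ref{eq:lam_corr_edge}), every contribution is of one of three kinds: an intersection number $\mbox{int}(e)$ of a bounding edge, a local winding $\mbox{wind}(e_i,e_{i+1})$ of a consecutive pair of bounding edges at a vertex, or a constant $1$ carried by each white endpoint. First I would collect these contributions: the intersection terms sum to $\mbox{int}(\Omega)$; the constant terms count the white vertices of $\partial\Omega$, producing $n_{\mbox{\scriptsize white}}(\Omega)$; and the winding terms assemble, vertex by vertex, into the internal winding $\mbox{wind}_{\mbox{\scriptsize i}}(\Omega)$ of (\ref{eq:twind3}), provided one accounts for the white and source vertices at which the two incident bounding edges carry opposite orientations. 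At such vertices the local winding splits through the third edge $f$ exactly as in the last line of (\ref{eq:twind1}), and these splittings are counted by $\mbox{cd}_{\mbox{\scriptsize white}}(\Omega)$ and $\mbox{cd}_{\mbox{\scriptsize source}}(\Omega)$. This bookkeeping yields the master identity $\epsilon(\Omega)\equiv \mbox{wind}_{\mbox{\scriptsize i}}(\Omega)+n_{\mbox{\scriptsize white}}(\Omega)+\mbox{cd}_{\mbox{\scriptsize white}}(\Omega)+n_{\mbox{\scriptsize source}}(\Omega)+\mbox{int}(\Omega)\pmod 2$, which is the starting point of the computation.

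Next I would eliminate the winding using the topological Lemma (\ref{eq:tot_wind}), which evaluates $\mbox{wind}(\Omega)$ in terms of $\mbox{cd}_{\mbox{\scriptsize white}}(\Omega)$ and $\mbox{cd}_{\mbox{\scriptsize source}}(\Omega)$ and distinguishes internal, finite boundary, and infinite faces. Since $\mbox{wind}(\Omega)=\mbox{wind}_{\mbox{\scriptsize i}}(\Omega)+\mbox{wind}_{\mbox{\scriptsize b}}(\Omega)$ for boundary faces by (\ref{eq:twind4}), I would isolate $\mbox{wind}_{\mbox{\scriptsize i}}(\Omega)$ by computing the boundary part $\mbox{wind}_{\mbox{\scriptsize b}}(\Omega)$ separately. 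Here the analysis localizes at the boundary sources, and the natural device is to split them into four classes according to whether the face lies to the left or right of the source and whether the gauge ray $\mathfrak{l}$ points into or out of the face. These four counts control simultaneously $\mbox{cd}_{\mbox{\scriptsize source}}(\Omega)$, the boundary winding $\mbox{wind}_{\mbox{\scriptsize b}}(\Omega)$, and the boundary contribution to $\mbox{int}(\Omega)$. Moreover the gauge rays emanate from all $k$ sources, so for the infinite face --- whose boundary is the segment from $b_n$ to $b_1$ crossed by every one of these rays --- the intersection count acquires an extra $k$, which is precisely the origin of the $k$ in the claimed formula.

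Finally I would substitute the resulting expressions for $\mbox{wind}_{\mbox{\scriptsize i}}(\Omega)$ and for $n_{\mbox{\scriptsize source}}(\Omega)+\mbox{int}(\Omega)$ back into the master identity. The mechanism is that $\mbox{cd}_{\mbox{\scriptsize white}}(\Omega)$, $n_{\mbox{\scriptsize source}}(\Omega)$ and $\mbox{int}(\Omega)$ each end up occurring an even number of times and hence drop out modulo $2$, leaving only the constant ($1$ for finite faces, $k$ for the infinite face) together with $n_{\mbox{\scriptsize white}}(\Omega)$. The main obstacle is exactly the boundary bookkeeping of the second paragraph: one must verify the precise parity relations among the four source classes, the boundary winding, and the intersection numbers so that every auxiliary quantity cancels cleanly and the infinite face alone picks up the $k$. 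For internal faces the argument is considerably shorter, since $\mbox{int}(\Omega)=0$ and there are no boundary vertices, so the topological Lemma immediately collapses the winding against the constant and the white-vertex count. I would treat the internal, finite-boundary, and infinite cases in parallel, and use the examples of Figures~\ref{fig:glick}, \ref{fig:lediag}, \ref{fig:acycl_2}, \ref{fig:acycl_3} and \ref{fig:acycl_4} as a consistency check on the sign conventions.
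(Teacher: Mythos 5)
Your proposal does not address the statement in question. The statement is the \emph{Conjecture} at the end of Section~\ref{sec:kasteleyn}, which asserts that in the PBDTP setting one can build a sign matrix from the geometric signature which realizes Speyer's variant \cite{Sp} of Kasteleyn's theorem, i.e.\ that suitable minors of that matrix \emph{enumerate dimer configurations} (almost perfect matchings) with prescribed boundary conditions, all matchings contributing with the same sign. What you have written out instead is a proof of Theorem~\ref{theo:sign_face}, the parity formula $\epsilon(\Omega)\equiv n_{\mbox{\scriptsize white}}(\Omega)+1$ (resp.\ $+\,k$) $\bmod\ 2$ --- and indeed your argument, from the master identity (\ref{eq:face_ind1}) through the topological Lemma (\ref{eq:tot_wind}), the four classes of boundary sources, and the final cancellation, is step for step the paper's own proof of that theorem. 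But the face-parity formula is only the \emph{Kasteleyn condition}: it is the combinatorial criterion that a sign assignment must satisfy, not the counting theorem itself. The paper explicitly leaves the counting statement open (it is stated as a conjecture, with no proof), and cites \cite{A3,AGPR} for the only case where the implication ``face parities $\Rightarrow$ minors count matchings'' has been carried out, namely \emph{reduced bipartite} graphs in the disc.

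Concretely, two things are missing and neither is routine. First, for a general PBDTP graph the sign matrix is not even defined by your argument: the Kasteleyn-type matrix of \cite{A3} has rows indexed by black vertices and columns by white vertices, which presupposes a bipartite graph, whereas PBDTP graphs admit unicoloured edges, bivalent vertices, and may be reducible; one must either specify how such edges and vertices enter the matrix or reduce to the bipartite case by moves (M2)--(M3) and track the effect on both the signature and the matching count. Second, even granted a definition, one must prove that any two dimer configurations with the same boundary conditions contribute with equal sign to the relevant minor; the standard route is to superpose two matchings, decompose the symmetric difference into cycles enclosing faces, and use the face parities to show each cycle flip preserves the sign --- an argument entirely absent from your proposal, and one whose adaptation to non-bipartite, possibly non-reduced graphs is precisely the open content of the conjecture. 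So your write-up reproves a theorem the paper already proves (by the same method), and leaves the conjectured statement untouched.
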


\subsection{Effect of moves and reductions on signatures}\label{sec:moves_reduc}

In \cite{Pos} Postnikov classified the set of local transformations - moves and reductions - on planar bicoloured networks in the disc which leave invariant the boundary measurement map. Two networks in the disc connected by a sequence of such moves and reductions represent the same point in $\GTNN$. Since we are interested in the class of PBDTP graphs, we shall describe the effect of
\begin{enumerate}
\item (M1) the square move (Figure \ref{fig:squaremove}),
\item (M2) the unicoloured edge contraction/uncontraction (Figure \ref{fig:flipmove}),
\item (M3) the middle vertex insertion/removal (Figure \ref{fig:middle}),
\item (R1) the parallel edge reduction (Figure \ref{fig:parall_red_poles}),
\end{enumerate}
on the geometric signature,  using the characterization of geometric signatures obtained in the previous Sections.

Let $({\mathcal N}, \mathcal O, \mathfrak l)$ be the initial oriented network, and $({\tilde {\mathcal N}}, {\tilde{\mathcal O}}, \mathfrak l)$ be the oriented network obtained from it by applying one move (M1)--(M3) or one reduction (R1).

\smallskip

{\bf (M1) The square move}
If a network has a square formed by four trivalent vertices
whose colours alternate as one goes around the square, then one can switch the colours of these
four vertices and transform the weights of adjacent faces as shown in Figure \ref{fig:squaremove}.
The relation between the face weights before and after the square move is \cite{Pos}
${\tilde f}_5 = (f_5)^{-1}$, ${\tilde f}_1 = f_1/(1+1/f_5)$, ${\tilde f}_2 = f_2 (1+f_5)$, ${\tilde f}_3 = f_3 (1+f_5)$, ${\tilde f}_4 = f_4/(1+1/f_5)$,
so that the relation between the edge weights with the orientation in Figure \ref{fig:squaremove} is
${\tilde \alpha}_1 = \frac{\alpha_3\alpha_4}{{\tilde\alpha}_2}$, 
${\tilde \alpha}_2 = \alpha_2 + \alpha_1\alpha_3\alpha_4$, ${\tilde \alpha}_3 = \alpha_2\alpha_3/{\tilde \alpha}_2$,  ${\tilde \alpha}_4 = \alpha_1\alpha_3/{\tilde \alpha}_2$.

\begin{figure}
\centering{\includegraphics[width=0.45\textwidth]{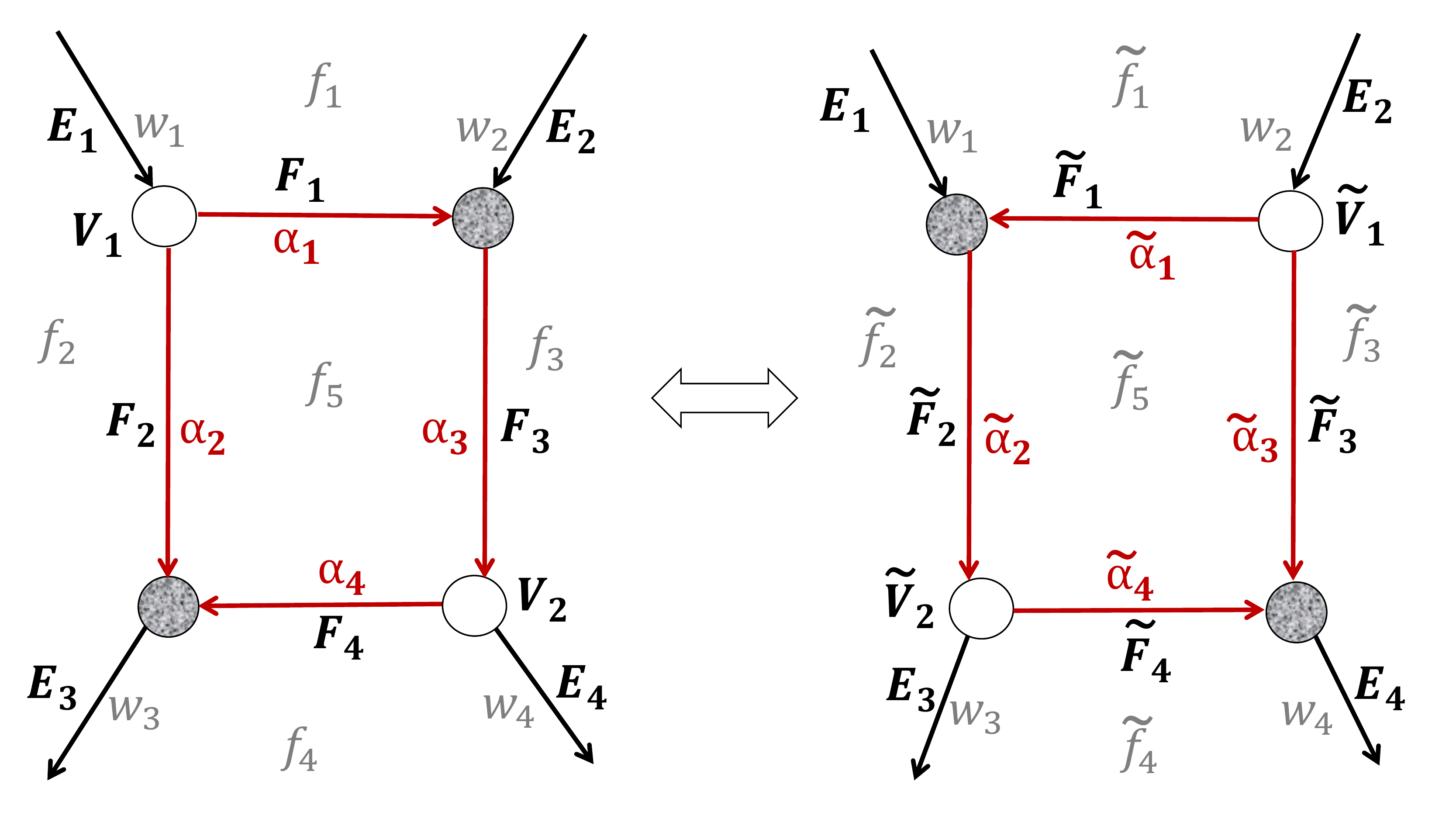}}
\caption{\small{\sl The effect of the square move.}\label{fig:squaremove}}
\end{figure}

\begin{lemma} If a signature $\epsilon(e)$ is geometric for the initial network  $({\mathcal N}, \mathcal O, \mathfrak l)$, then it remains geometric after the square move.
\end{lemma}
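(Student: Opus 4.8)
The plan is to use the topological characterization of geometric signatures through total face signatures (Theorem~\ref{theo:sign_face}) as the sole bookkeeping device, so that none of the weight relations of the square move ever enters the argument. First I would observe that the square move is purely local: it keeps the four edges of the square and the four external legs, together with all faces of the disc, identified canonically before and after; only the colours of the four corner vertices are switched and the edge weights are modified as in Figure~\ref{fig:squaremove}. Since the signature $\epsilon$ is carried unchanged along this identification, and since the set of edges bounding any face $\Omega$ is unchanged, every total face signature $\epsilon(\Omega)=\sum_{e\in\partial\Omega}\epsilon_e$ of (\ref{eq:eps_tot}) is literally the same number before and after the move.

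Next I would track $n_{\mbox{\scriptsize{white}}}(\Omega)$ across the move. The only vertices changing colour are the four corners of the square, whose colours alternate around it. The central face is bounded by all four corners, hence sees two white and two black corners both before and after, so its white count is unchanged. Each of the four adjacent faces is bounded by a single square edge whose two endpoints are consecutive corners of opposite colour; swapping the two colours leaves exactly one white corner on such a face, so again the white count is preserved. Every remaining face touches none of the four corners and is therefore unaffected. Hence $n_{\mbox{\scriptsize{white}}}(\Omega)$ is invariant for every face, and so is $k$.

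Combining the two observations, I would conclude as follows. Because $\epsilon$ is geometric for $({\mathcal N},\mathcal O,\mathfrak l)$, Theorem~\ref{theo:sign_face} gives $\epsilon(\Omega)=n_{\mbox{\scriptsize{white}}}(\Omega)+1 \mod 2$ for finite faces and $\epsilon(\Omega)=n_{\mbox{\scriptsize{white}}}(\Omega)+k \mod 2$ for the infinite face. Since both sides are invariant under the square move, the same identities hold on $({\tilde{\mathcal N}},{\tilde{\mathcal O}},\mathfrak l)$. Letting $\epsilon^{\mbox{\tiny new}}$ denote the geometric signature of the transformed network, which satisfies the very same face formula by Theorem~\ref{theo:sign_face}, we obtain $\epsilon(\Omega)=\epsilon^{\mbox{\tiny new}}(\Omega)$ for every face, so the difference $\epsilon_e+\epsilon^{\mbox{\tiny new}}_e$ sums to zero modulo $2$ around the boundary of every face. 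The internal face boundaries generate the cycle space of the planar graph, while the boundary faces encode the parities along boundary-to-boundary paths; hence the two signatures have equal parity on every closed cycle and on every path from a boundary source to a boundary sink. By Lemma~\ref{lemma:equiv_on_path} this forces $\epsilon$ and $\epsilon^{\mbox{\tiny new}}$ to be gauge equivalent, so $\epsilon$ is geometric for the transformed network.

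I expect the delicate point to be this final implication, namely that equality of all total face signatures (internal, finite boundary, and infinite) really supplies both hypotheses of Lemma~\ref{lemma:equiv_on_path}. The cycle condition follows at once, since the internal faces span the cycle space; but verifying the path condition requires care with how the boundary faces, whose boundaries mix graph edges with disc-boundary arcs, pin down the parity differences along the boundary-to-boundary paths. Everything else reduces to the colour-swap bookkeeping of $n_{\mbox{\scriptsize{white}}}$ carried out above.
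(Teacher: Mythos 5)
Your proof is correct and is essentially the paper's own argument unpacked: the paper's entire proof is the citation of Theorems~\ref{theo:sign_face} and \ref{theo:complete}, and you make that explicit by combining the face-signature formula with Lemma~\ref{lemma:equiv_on_path} (the engine inside Theorem~\ref{theo:complete}) to convert equality of all total face signatures into gauge equivalence with the geometric signature of the transformed network. One cosmetic caveat: in a PBDTP graph a face can abut the square along two \emph{opposite} edges rather than exactly one (adjacent edges are excluded, since that would force a bridge), but such a face sees all four corners, hence two white vertices both before and after the move, so your invariance of $n_{\mbox{\scriptsize{white}}}(\Omega)$ holds in every case.
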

The statement follows immediately from Theorems~\ref{theo:sign_face} and \ref{theo:complete}.

\smallskip

{\bf (M2) The unicoloured edge contraction/uncontraction}
The unicoloured edge contraction/un\-con\-traction consists in the elimination/addition of an internal vertex of equal colour and of a unit edge, and it leaves invariant the face weights and the boundary measurement map \cite{Pos}. 
The contraction/uncontraction of an unicoloured internal edge combined with the trivalency condition is equivalent to a flip of the unicoloured vertices involved in the move (see Figure \ref{fig:flipmove}). We consider only pure flip moves, i.e.
all vertices keep the same positions before and after the move. 

\begin{figure}
  \centering{\includegraphics[width=0.6\textwidth]{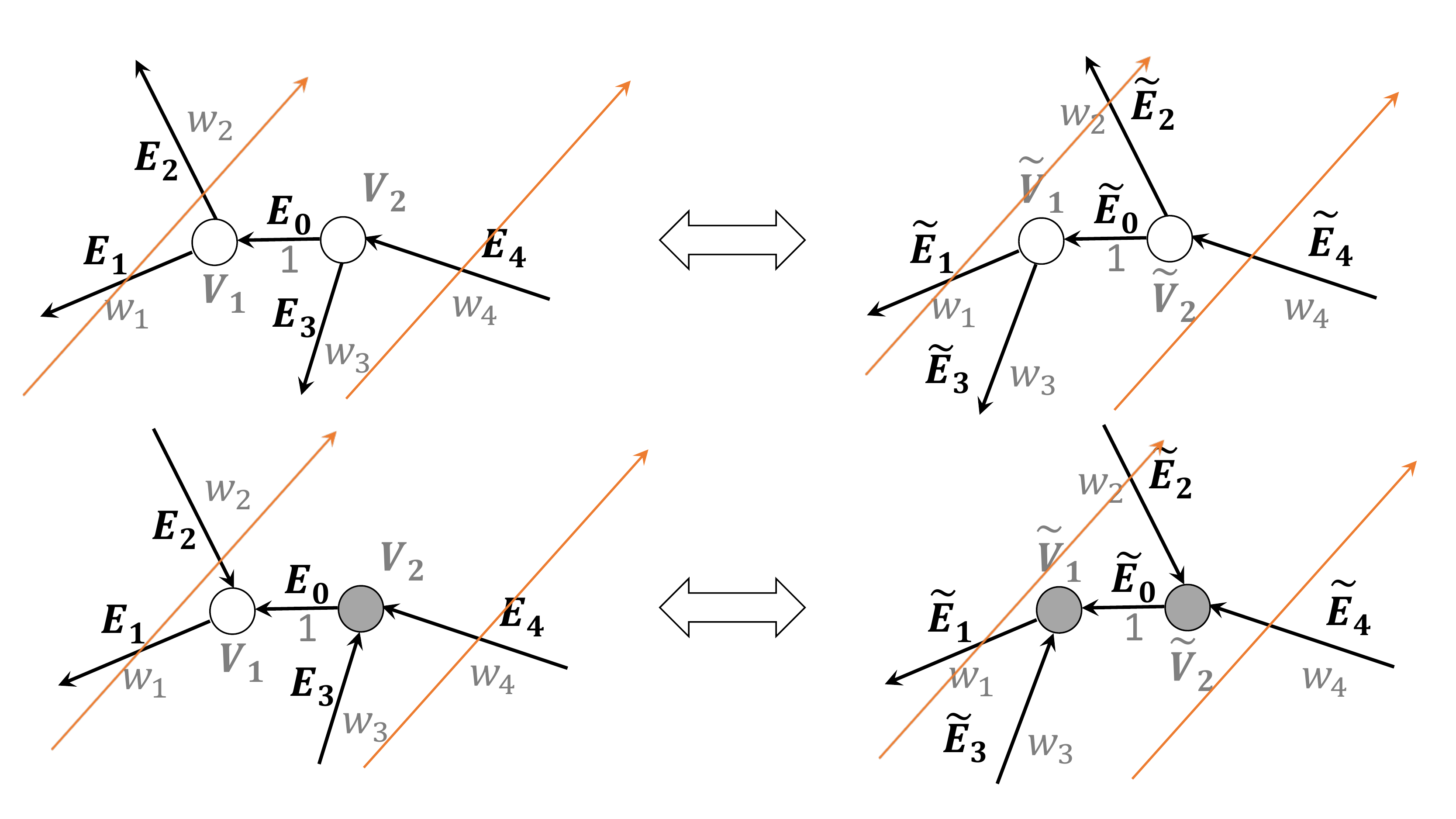}}
  \caption{\small{\sl The insertion/removal of an unicoloured internal vertex is equivalent to a flip move of the unicoloured vertices.}\label{fig:flipmove}}
\end{figure}

\begin{lemma}
Let the initial signature be geometric, and such that the signature of the internal unicoloured edge is equal to 1 if both vertices are white, respectively to 0 if both vertices are black. Then, the signature remains geometric after the transformation.
\end{lemma}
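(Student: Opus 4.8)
The plan is to avoid recomputing individual edge signatures and instead rely on the combinatorial characterization of geometric signatures packaged in Theorems~\ref{theo:sign_face} and~\ref{theo:complete}. Concretely, I would use the criterion that, on a PBDTP graph, a signature is geometric if and only if the total signature of every finite face $\Omega$ equals $n_{\mbox{\scriptsize{white}}}(\Omega)+1 \mod 2$ and that of the infinite face equals $n_{\mbox{\scriptsize{white}}}(\Omega)+k \mod 2$. The ``only if'' direction is Theorem~\ref{theo:sign_face}; for the ``if'' direction, note that if a signature $\tilde\epsilon$ obeys this face formula then it shares the total face signature of the geometric signature $\varepsilon$ on every face. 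Since any closed cycle bounds a union of faces and any boundary-to-boundary path closes up to such a cycle along an edge-free arc of the boundary circle, interior edges cancel $\mod 2$ and the parities of $\tilde\epsilon$ and $\varepsilon$ agree on all cycles and on all boundary-to-boundary paths; Lemma~\ref{lemma:equiv_on_path} then gives $\tilde\epsilon\sim\varepsilon$. Thus it suffices to check that the signature produced by the move preserves the face formula.

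Next I would set up the local bookkeeping. The move is supported in a disc around the two flipped vertices $U,V$, so every face not meeting this disc retains both $\epsilon(\Omega)$ and $n_{\mbox{\scriptsize{white}}}(\Omega)$, while $k$ is preserved because the move does not change the positroid cell. Labelling the four outgoing legs cyclically $\ell_1,\ell_2,\ell_3,\ell_4$ and the surrounding faces $\Omega_{12},\Omega_{23},\Omega_{34},\Omega_{41}$, I would record that before the flip the central edge $e$ lies on $\partial\Omega_{23}$ and $\partial\Omega_{41}$, whereas after the flip the new central edge $e'$ lies on $\partial\Omega_{12}$ and $\partial\Omega_{34}$. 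Comparing the incidences of $U$ and $V$ before and after shows that $\Omega_{12}$ and $\Omega_{34}$ each acquire one more of the two flipped vertices on their boundary, while $\Omega_{23}$ and $\Omega_{41}$ each lose one; correspondingly $\epsilon(\Omega_{12}),\epsilon(\Omega_{34})$ gain the term $\epsilon_{e'}$ and $\epsilon(\Omega_{23}),\epsilon(\Omega_{41})$ lose the term $\epsilon_{e}$.

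The verification then splits into the two colour cases, using the hypothesis on the central edge. In the white--white case $U$ and $V$ are white, so $n_{\mbox{\scriptsize{white}}}$ increases by $1$ on $\Omega_{12},\Omega_{34}$ and decreases by $1$ on $\Omega_{23},\Omega_{41}$; since $\epsilon_e=\epsilon_{e'}=1$, the total signature changes by $1 \mod 2$ on exactly those four faces, so $\epsilon(\Omega)$ and $n_{\mbox{\scriptsize{white}}}(\Omega)$ change in lockstep and the face formula survives. In the black--black case $U,V$ contribute nothing to $n_{\mbox{\scriptsize{white}}}$, so the white counts of all four faces are unchanged, while $\epsilon_e=\epsilon_{e'}=0$ leaves the total signatures unchanged as well; again the formula is preserved. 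By the criterion of the first paragraph the transformed signature is geometric.

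I expect the computation itself to be routine; the two genuine points of care are the ``if'' reduction of the first paragraph, where Theorem~\ref{theo:complete} and Lemma~\ref{lemma:equiv_on_path} enter, and the degenerate configurations in which two of the four surrounding faces coincide. In the latter the gained and lost contributions still occur in matching pairs and cancel $\mod 2$, so the conclusion is unaffected; I would also confirm that the flipped graph remains PBDTP, so that the characterization applies on the transformed network.
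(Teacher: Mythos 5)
Your proposal is correct and takes essentially the same route as the paper: the paper disposes of this lemma in one line by appealing to Theorems~\ref{theo:sign_face} and~\ref{theo:complete}, which is precisely the face-parity characterization of geometric signatures you use (necessity from Theorem~\ref{theo:sign_face}, sufficiency via the planarity argument and Lemma~\ref{lemma:equiv_on_path}, the key lemma behind Theorem~\ref{theo:complete}), combined with the local bookkeeping of how the flip redistributes the central edge and the two unicoloured vertices among the four surrounding faces. Your write-up simply makes explicit the details the paper leaves implicit, including the degenerate-face configurations and the preservation of the PBDTP property.
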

Again, the statement follows immediately from Theorems~\ref{theo:sign_face} and \ref{theo:complete}.

We remark that the above condition on the initial signature is not restrictive, since we can always apply a gauge transformation at one vertex to fulfill it.  

\smallskip

{\bf (M3) The middle edge insertion/removal}
The middle edge insertion/removal consists in the addition/elimination of bivalent vertices (see Figure \ref{fig:middle}) without changing the face configuration.
\begin{figure}
  \centering
	{\includegraphics[width=0.49\textwidth]{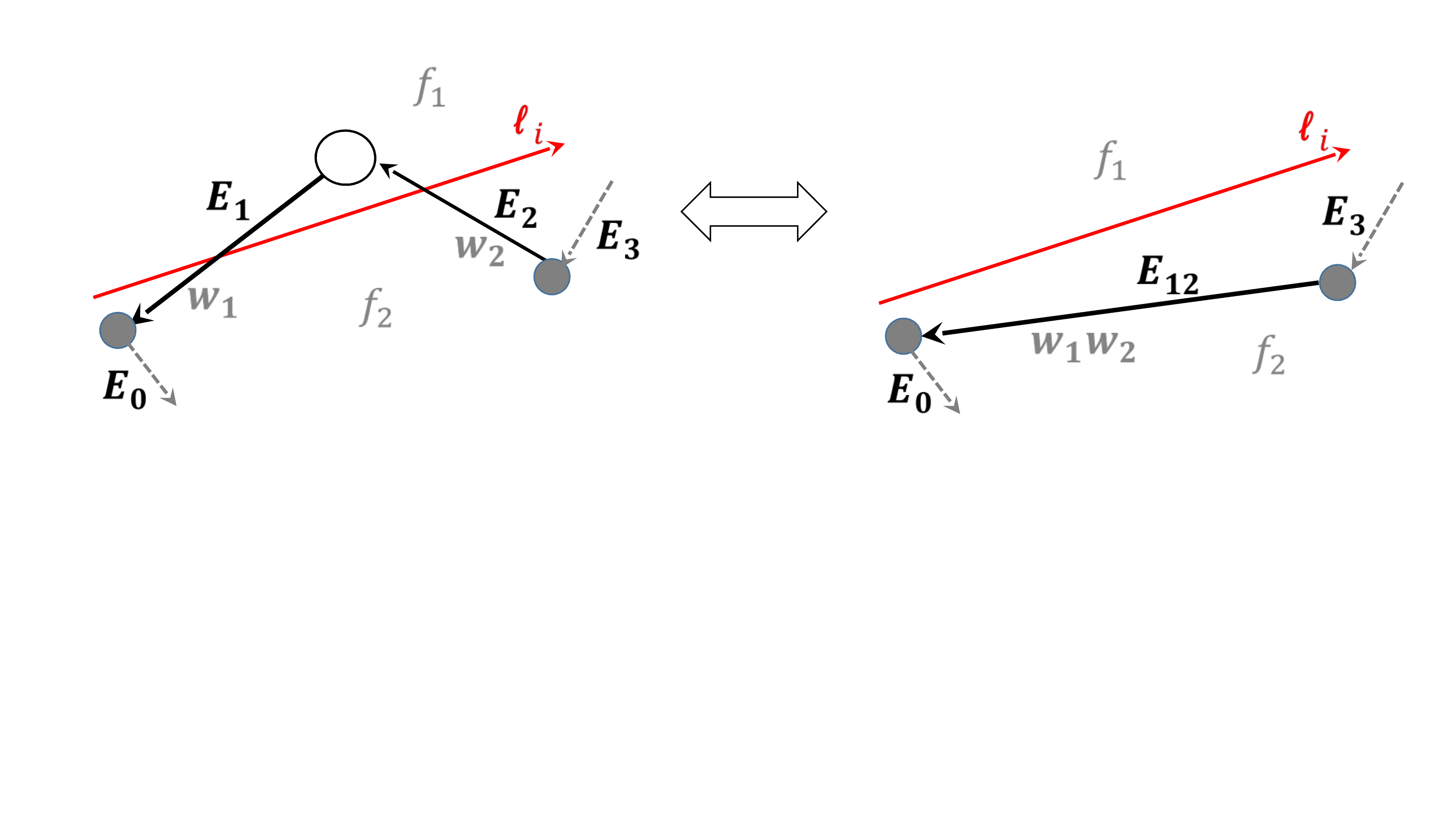}}
	\vspace{-2.1 truecm}
  \caption{\small{\sl The middle edge insertion/removal at a white vertex.}\label{fig:middle}}
\end{figure}

\begin{lemma}
Let the initial signature be geometric, and define the transformed one to be equal to the initial one at all edges common to both networks, and to satisfy the following identity at $e_1$, $e_2$, $e_{12}$: 
\begin{equation}
  \epsilon(e_1)+\epsilon(e_2)+\epsilon(e_{12}) =\left\{\begin{array}{ll} 0  & \mbox{if the additional vertex is black;} \\
                   1  & \mbox{if the additional vertex is white.} \end{array} \right.
 \end{equation}
 Then the transformed signature is geometric as well.            
 \end{lemma}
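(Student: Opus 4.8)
The plan is to follow the same route as for the moves (M1) and (M2): instead of recomputing local windings and intersection numbers on the modified network, I would verify that the prescribed signature $\tilde\epsilon$ still satisfies the face-signature formula \eqref{eq:sign_face} of Theorem~\ref{theo:sign_face}, and then invoke the completeness result of Theorem~\ref{theo:complete} to upgrade this to geometricity. The underlying principle is that, on a PBDTP graph, the collection of total face signatures $\epsilon(\Omega)$ defined in \eqref{eq:eps_tot} is a \emph{complete} gauge invariant. Indeed, it is gauge invariant because each internal vertex on $\partial\Omega$ contributes its gauge index twice (the corollary on the invariance of $\epsilon(\Omega)$ in Section~\ref{sec:vector_changes}); conversely, two signatures agreeing on every face agree on the boundary of every internal face, hence on every cycle and, up to the boundary arcs, on every directed path from boundary to boundary, so that Lemma~\ref{lemma:equiv_on_path} forces them to be gauge equivalent. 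Thus a signature is geometric exactly when $\epsilon(\Omega)=n_{\mbox{\scriptsize{white}}}(\Omega)+1 \mod 2$ on each finite face and $\epsilon(\Omega)=n_{\mbox{\scriptsize{white}}}(\Omega)+k \mod 2$ on the infinite face.

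With this reduction the argument becomes a short local computation. The middle edge insertion subdivides a single edge $e_{12}$ of $\mathcal N$ by a bivalent vertex $V$, creating the edges $e_1$ and $e_2$; since the move does not alter the face configuration, the only faces whose boundary changes are the at most two faces $\Omega$ incident to $e_{12}$. For every other face the boundary, the count $n_{\mbox{\scriptsize{white}}}(\Omega)$, and the restriction of $\tilde\epsilon$ all agree with those of the geometric signature $\epsilon$, so the face-signature formula persists there without any work. For a face $\Omega$ incident to $e_{12}$, passing from $\epsilon$ to $\tilde\epsilon$ replaces the term $\epsilon(e_{12})$ in \eqref{eq:eps_tot} by $\tilde\epsilon(e_1)+\tilde\epsilon(e_2)$, so $\epsilon(\Omega)$ shifts by $\tilde\epsilon(e_1)+\tilde\epsilon(e_2)+\epsilon(e_{12}) \mod 2$, which by the imposed relation equals $0$ when $V$ is black and $1$ when $V$ is white.

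It remains to compare this shift with the change of $n_{\mbox{\scriptsize{white}}}(\Omega)$. A black bivalent vertex leaves $n_{\mbox{\scriptsize{white}}}(\Omega)$ unchanged on both incident faces, while a white one increases it by one on each; in either case the parity of the shift of $n_{\mbox{\scriptsize{white}}}(\Omega)$ matches the parity of the shift of $\epsilon(\Omega)$ found above. Because the insertion preserves the finite/infinite type of every face and the source number $k$, the additive constant in \eqref{eq:sign_face} is unaffected, and hence $\tilde\epsilon$ satisfies the face-signature formula of Theorem~\ref{theo:sign_face} on $\tilde{\mathcal N}$. By the completeness characterization of Theorem~\ref{theo:complete} this makes $\tilde\epsilon$ geometric, which is the claim; the removal direction is identical, read backwards.

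I expect the only genuinely delicate point to be the completeness half of the ``complete invariant'' principle, namely that equality of all face totals forces gauge equivalence, since this relies on face boundaries generating the cycle space and on boundary arcs reconciling paths with different endpoints, exactly the content packaged by Lemma~\ref{lemma:equiv_on_path} and Theorem~\ref{theo:complete}. A minor edge case to treat separately is when $e_{12}$ borders the same face on both sides, where the vertex $V$ and the edges $e_1,e_2$ enter the boundary walk of that single face twice and the bookkeeping of the shift must be doubled accordingly.
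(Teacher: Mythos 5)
Your proof is correct and takes essentially the same route as the paper, which disposes of this lemma in one line by citing Theorems~\ref{theo:sign_face} and~\ref{theo:complete}: exactly the combination you spell out, namely the local face-signature shift computation plus the fact that on a PBDTP graph the face totals form a complete gauge invariant (Theorem~\ref{theo:sign_face} for necessity, Lemma~\ref{lemma:equiv_on_path} together with the planar region argument for sufficiency). The only remark worth adding is that the edge case you flag at the end cannot actually occur: on a PBDTP graph every edge lies on a directed boundary-to-boundary path, so no edge borders the same face on both sides.
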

Again, the statement follows immediately from Theorems~\ref{theo:sign_face} and \ref{theo:complete}.

\smallskip

{\bf (R1) The parallel edge reduction}
The parallel edge reduction consists in the removal of two trivalent vertices of different colour connected by a pair of parallel edges (see Figure \ref{fig:parall_red_poles}). If the parallel edge separates two distinct faces, the relation of the face weights before and after the reduction is 
${\tilde f}_1 = \frac{f_1}{1+(f_0)^{-1}}$, ${\tilde f}_2 = f_2 (1+f_0)$,
otherwise ${\tilde f}_1 = {\tilde f}_2 = f_1 f_0$ \cite{Pos}. 
\begin{figure}
  \centering{\includegraphics[width=0.55\textwidth]{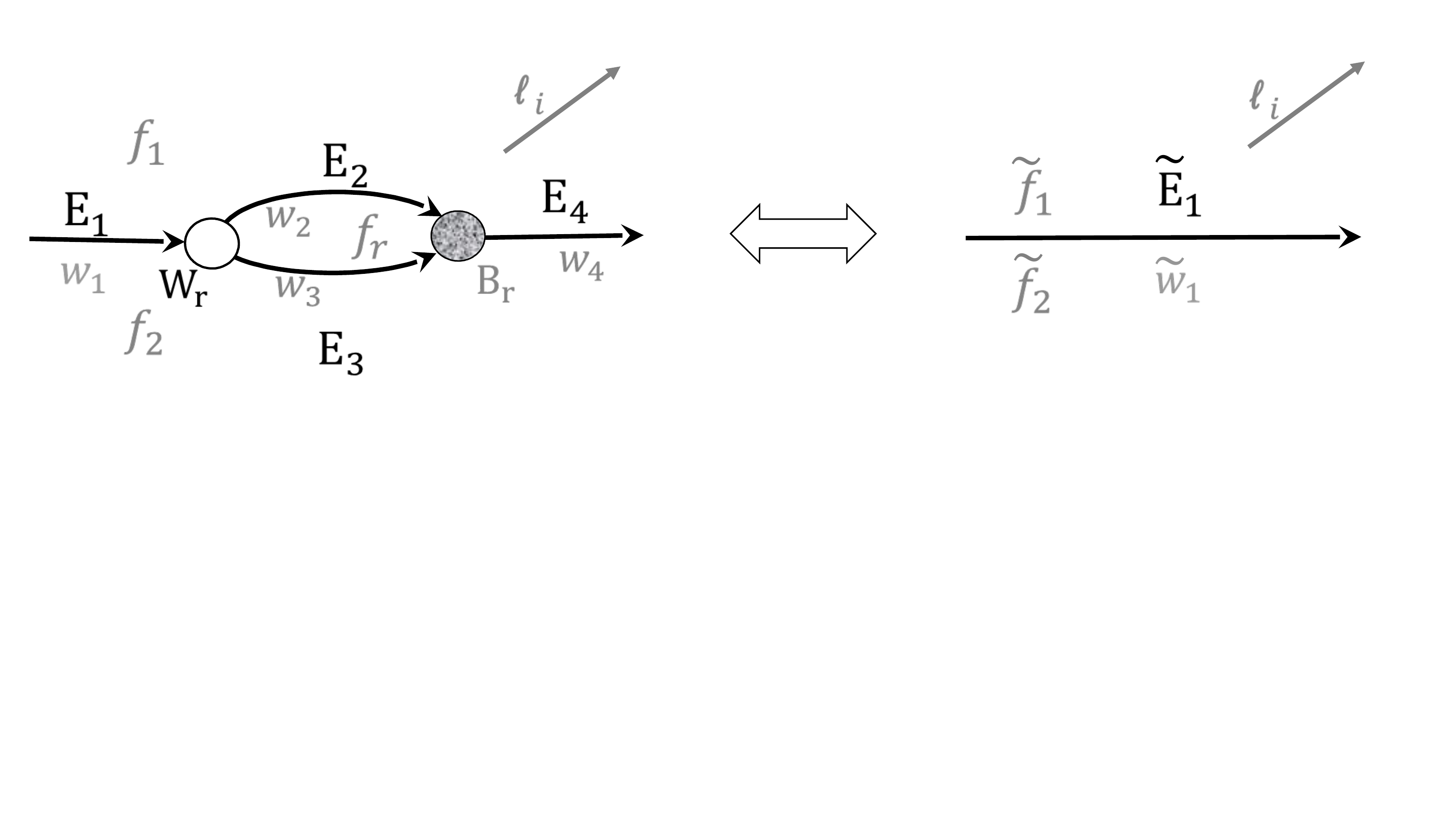}}
	\vspace{-3. truecm}
  \caption{\small{\sl The parallel edge reduction.}\label{fig:parall_red_poles}}
\end{figure}
If the original signature is geometric, necessarily it takes the same values at both parallel edges $\epsilon(e_2)=\epsilon(e_3)$. 
\begin{lemma}
Let the initial signature be geometric, and let us impose that the transformed signature remains unchanged at all edges common to both networks, whereas at $e_1$, $e_2$, $e_3$, $e_4$, $\tilde e_1$ it fulfils
\begin{equation}
\epsilon(e_1)+\epsilon(e_2)+\epsilon(e_4)+\epsilon(\tilde e_1) = 1.
\end{equation}  
Then the transformed signature is geometric as well.            
 \end{lemma}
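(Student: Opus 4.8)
The plan is to mirror the proofs of the moves (M1)--(M3) and reduce everything to the combinatorial characterization of geometric signatures. First I would note that the reduced network $({\tilde{\mathcal N}},{\tilde{\mathcal O}},{\mathfrak l})$ is again a PBDTP network representing the same positroid cell $\S$, since the parallel edge reduction leaves Postnikov's boundary measurement map invariant. By Theorem~\ref{theo:sign_face} and Theorem~\ref{theo:complete} (together with Lemma~\ref{lemma:equiv_on_path}, which detects gauge equivalence on paths and cycles and hence on face boundaries), a signature on a PBDTP graph is geometric if and only if the total signature $\epsilon(\Omega)$ of every face obeys $\epsilon(\Omega)=n_{\mathrm{white}}(\Omega)+1\pmod 2$ on a finite face and $\epsilon(\Omega)=n_{\mathrm{white}}(\Omega)+k\pmod 2$ on the infinite one. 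Thus it suffices to verify these identities for the transformed signature $\tilde\epsilon$ on every face of $\tilde{\mathcal N}$.

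Next I would identify the affected faces. Let $W$ (white) and $B$ (black) be the two removed trivalent vertices joined by the parallel edges $e_2,e_3$, let $e_1$ be the remaining edge at $W$ and $e_4$ the remaining edge at $B$; after the reduction $W,B,e_2,e_3$ disappear and $e_1,e_4$ are amalgamated into the single edge $\tilde e_1$. The bigon face bounded by $e_2,e_3$ is destroyed, the two faces $f_1,f_2$ flanking the parallel edges become $\tilde f_1,\tilde f_2$, and every other face together with its bounding edges is untouched, hence still satisfies the required identity because $\epsilon$ is geometric. For the two modified faces I record two bookkeeping facts: each of $f_1,f_2$ carries both $W$ and $B$ on its boundary and so loses exactly one white vertex, giving $n_{\mathrm{white}}(\tilde f_i)=n_{\mathrm{white}}(f_i)-1$; and on $\partial f_1$ the edges $e_1,e_2,e_4$ are replaced by $\tilde e_1$, while on $\partial f_2$ the edges $e_1,e_3,e_4$ are replaced by $\tilde e_1$.

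Then comes the verification. Since $\tilde\epsilon$ agrees with $\epsilon$ on all surviving edges,
\[
\tilde\epsilon(\tilde f_1)=\epsilon(f_1)-\epsilon(e_1)-\epsilon(e_2)-\epsilon(e_4)+\tilde\epsilon(\tilde e_1)\pmod 2,
\]
so substituting the imposed relation $\tilde\epsilon(\tilde e_1)=1+\epsilon(e_1)+\epsilon(e_2)+\epsilon(e_4)$ yields $\tilde\epsilon(\tilde f_1)=\epsilon(f_1)+1=n_{\mathrm{white}}(f_1)=n_{\mathrm{white}}(\tilde f_1)+1$, which is the finite-face identity (and the $+k$ variant if $f_1$ is the infinite face, $k$ being unchanged by the reduction). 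The analogous computation for $\tilde f_2$ demands $\tilde\epsilon(\tilde e_1)=1+\epsilon(e_1)+\epsilon(e_3)+\epsilon(e_4)$, and the two requirements agree precisely because geometricity of $\epsilon$ forces $\epsilon(e_2)=\epsilon(e_3)$: the bigon face has the single white vertex $W$, so $\epsilon(e_2)+\epsilon(e_3)=n_{\mathrm{white}}+1=0\pmod 2$. The degenerate case, in which $e_2,e_3$ border a single face, is treated by the same computation, the identity $\epsilon(e_2)=\epsilon(e_3)$ again producing the cancellation. This establishes the face formula on all of $\tilde{\mathcal N}$, whence $\tilde\epsilon$ is geometric.

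The main obstacle is the planar bookkeeping around the removed bigon: one must determine exactly which faces border $e_2$ and $e_3$ and how the shared edges $e_1,e_4$ and the endpoints $W,B$ are distributed among them, together with the induced change in the white-vertex counts. The consistency of the two modified faces is what singles out the prescribed relation on $\tilde e_1$, and it rests entirely on the identity $\epsilon(e_2)=\epsilon(e_3)$ coming from geometricity of the original signature on the bigon face.
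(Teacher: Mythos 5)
Your argument is correct and is essentially the paper's own proof: the paper disposes of this lemma by citing Theorems~\ref{theo:sign_face} and~\ref{theo:complete}, i.e.\ precisely the face-parity characterization of geometric signatures that you verify on the two faces flanking the bigon (all other faces being untouched), with the key consistency point $\epsilon(e_2)=\epsilon(e_3)$ supplied by the bigon face exactly as in the paper's preamble to the lemma. One caveat: your closing claim that the degenerate case $f_1=f_2$ is ``treated by the same computation'' is not accurate --- there both $e_2$ and $e_3$ would be removed from the boundary of the \emph{same} face, so after the cancellation $\epsilon(e_2)+\epsilon(e_3)=0$ a stray $\epsilon(e_2)$ survives and the face identity fails in general --- but this case is vacuous here, since for a PBDTP graph the portion of the graph attached behind the bigon would otherwise contain edges lying on no directed path from boundary to boundary.
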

Again, the statement follows immediately from Theorems~\ref{theo:sign_face} and \ref{theo:complete}.

\subsection{Amalgamation of positroid cells and geometric signatures}\label{sec:amalg}

In \cite{FG1} Fock and Goncharov introduced the amalgamation of cluster varieties, which has turned out to be relevant both for the construction of integrable systems on Poisson cluster varieties \cite{KG} and for the computation of the scattering amplitudes on on-shell diagrams in the $N=4$ SYM theory \cite{AGP2}. The amalgamation of positroid varieties admits a very simple representation in terms of simple operations on the corresponding plabic graphs. In our setting the vertices are those of the graph and the frozen ones are those at the boundary. Since the planarity property is essential, amalgamation is represented by compositions of the following elementary operations:
\begin{enumerate}
\item The disjoint union of a pair of planar graphs (see Figure~\ref{fig:amalg1}) which corresponds to the direct sum of the corresponding Grassmannians via a map $Gr^{\mbox{\tiny{TNN}}}(k_1,n_1)\times Gr^{\mbox{\tiny{TNN}}}(k_2,n_2)\rightarrow Gr^{\mbox{\tiny{TNN}}}(k_1+k_2,n_1+n_2)$;
\item The defrosting of a pair of consecutive boundary vertices (see Figure~\ref{fig:amalg2})  which corresponds to a projection map $Gr^{\mbox{\tiny{TNN}}}(k,n)\rightarrow Gr^{\mbox{\tiny{TNN}}}(k-1,n-2)$. 
\end{enumerate}

\begin{figure}
  \centering{\includegraphics[width=0.45\textwidth]{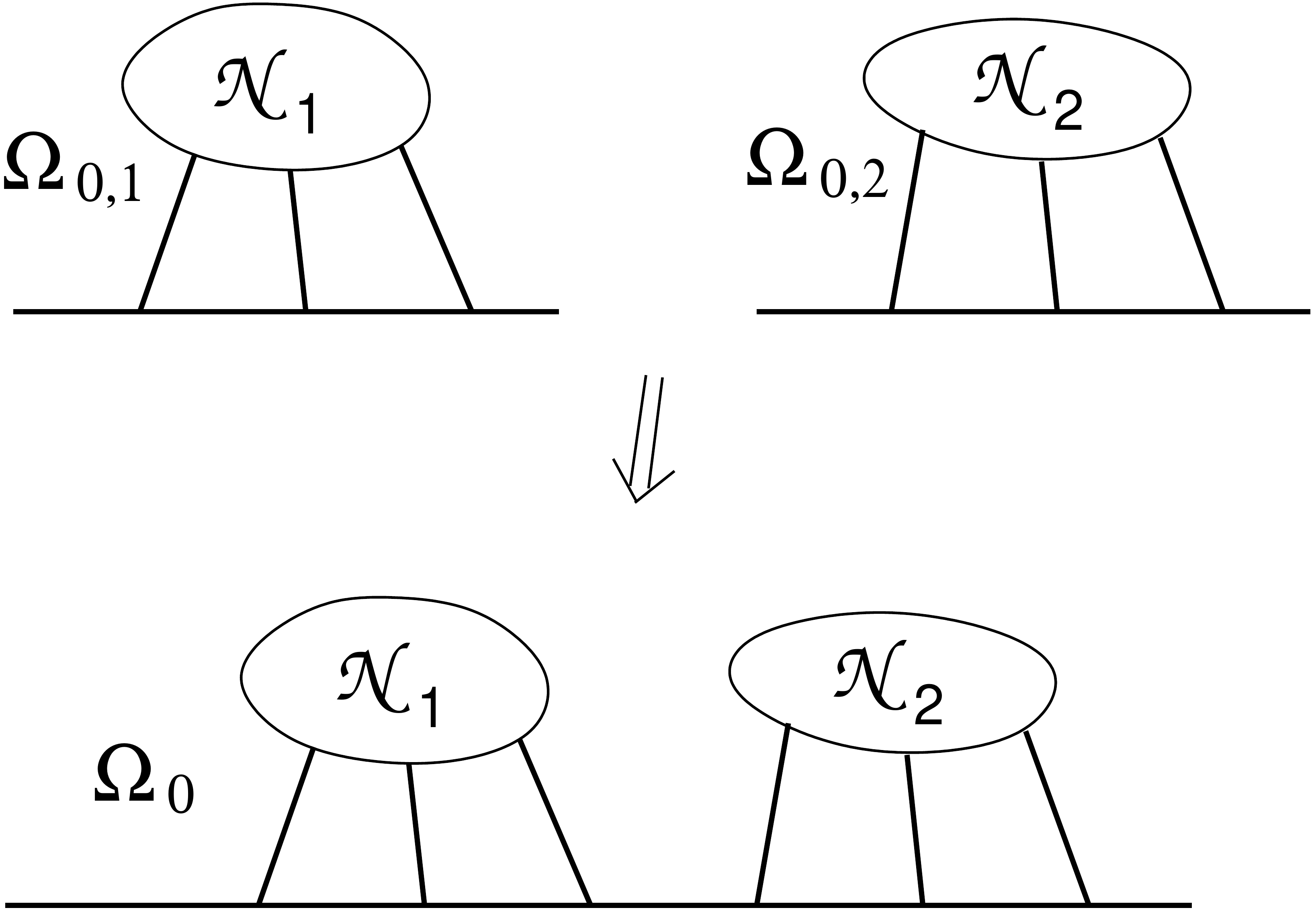}
	\hfill
	\includegraphics[width=0.45\textwidth]{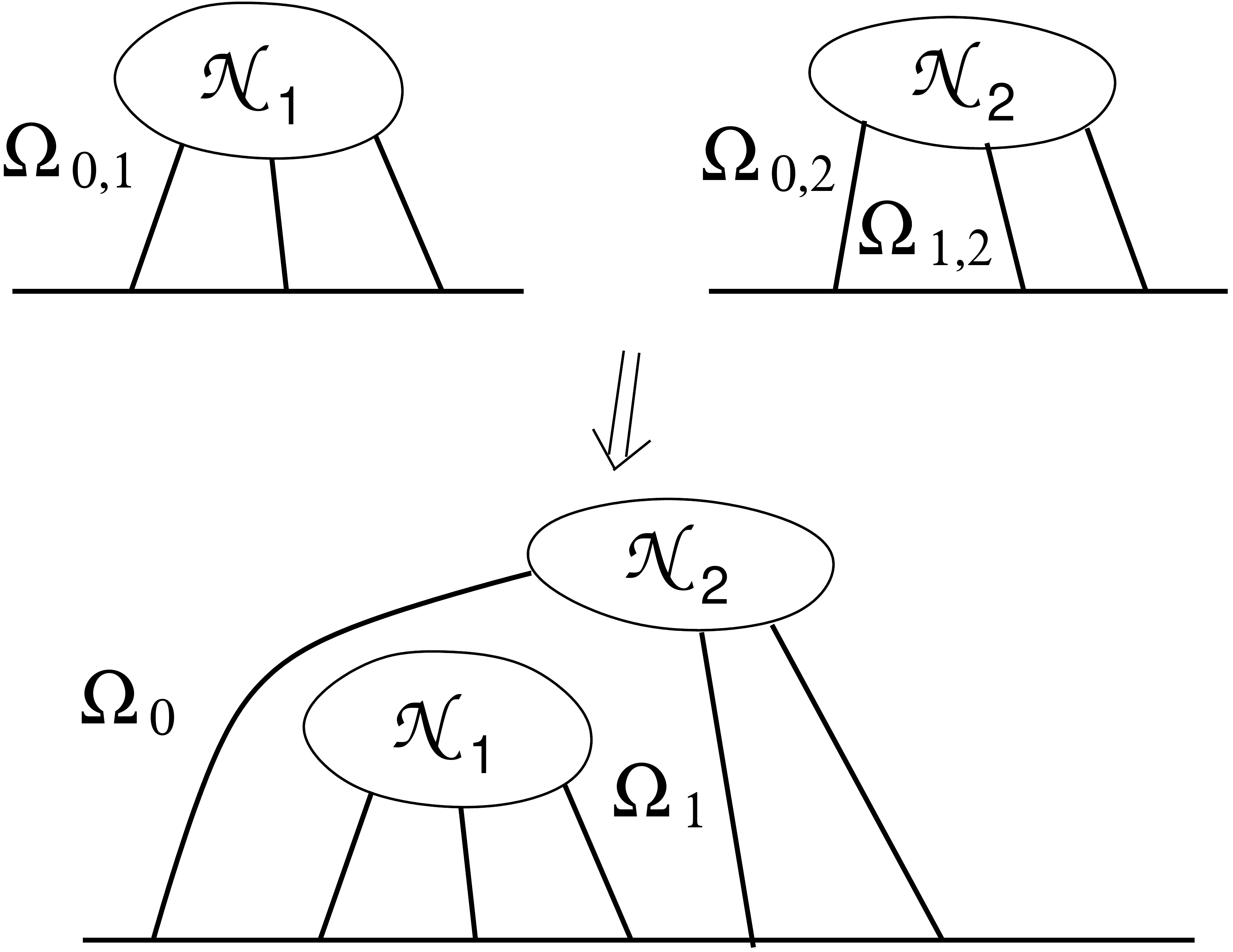}}
  \caption{\small{\sl The two possible ways of constructing disjoint unions of two given networks in the disc preserving the planarity property.}\label{fig:amalg1}}
\end{figure}

Let the initial points be $[A_i]\in Gr^{\mbox{\tiny{TNN}}}(k_i,n_i)$, $i=1,2$. Then we have exactly two ways to perform the disjoint union preserving the total non-negativity property:
\begin{enumerate}
\item All boundary vertices of one network precede all boundary vertices of the second one (see Figure~\ref{fig:amalg1} [left]).  In this case the resulting infinite face $\Omega_0$ is the union of the infinite faces $\Omega_{0,1}$, $\Omega_{0,2}$ of the initial networks and all finite faces are not modified;
\item All boundary vertices of one network are located between two consecutive boundary vertices of the other one (see Figure~\ref{fig:amalg1} [right]). Let us denote $\Omega_{1,2}$,  $\Omega_{1}$  respectively the finite face containing this pair of boundary vertices in the initial and final networks. In this case the infinite face $\Omega_0$ coincides with $\Omega_{0,2}$, the infinite face of the ``external'' network (${\mathcal N}_2$ in Figure~\ref{fig:amalg1} [right]), whereas $\Omega_1$ is built out of  $\Omega_{1,2}$ and of $\Omega_{0,1}$, the infinite face of the  ``internal'' network  (${\mathcal N}_1$ in Figure~\ref{fig:amalg1} [right]). All other faces are not modified.
\end{enumerate}

\begin{figure}
  \centering{\includegraphics[width=0.47\textwidth]{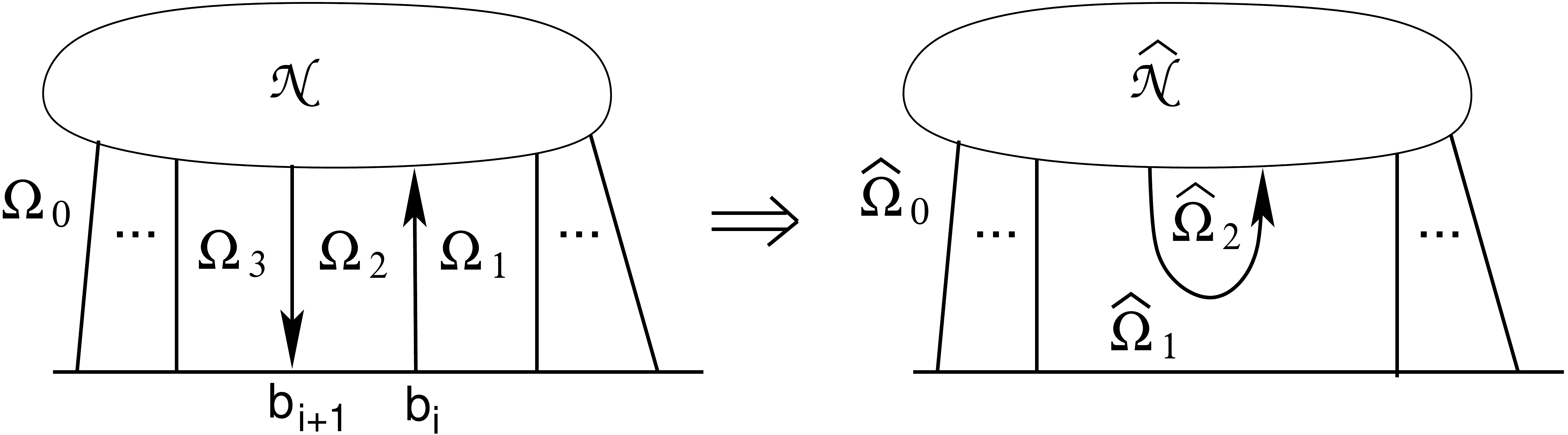} \hfill \includegraphics[width=0.47\textwidth]{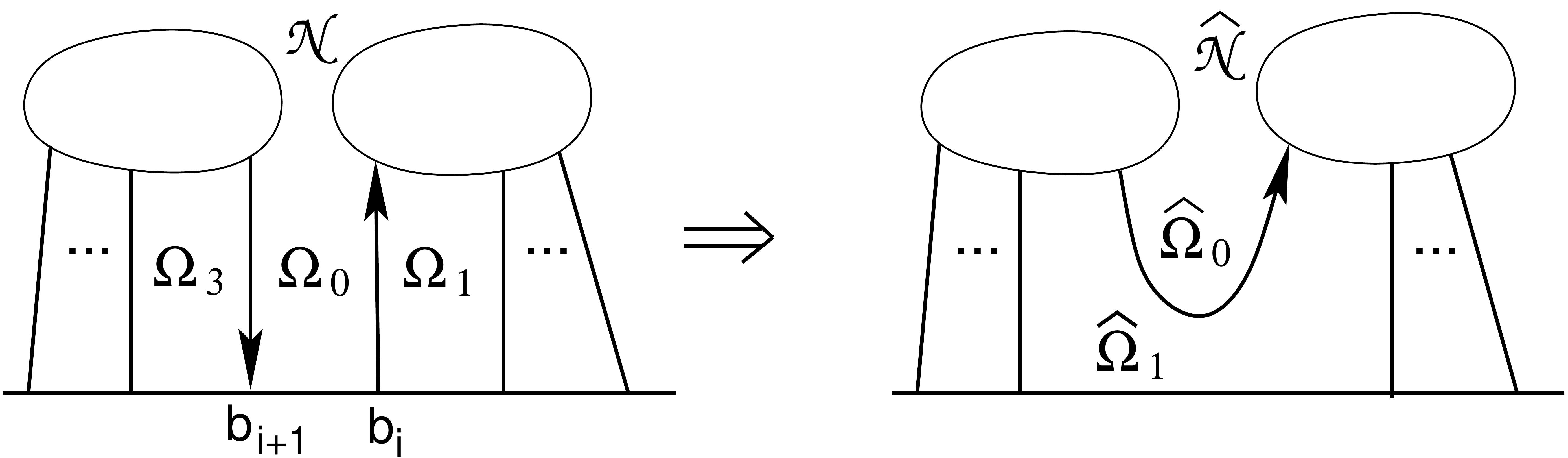}}
  
  \vspace{5mm}
   \includegraphics[width=0.47\textwidth]{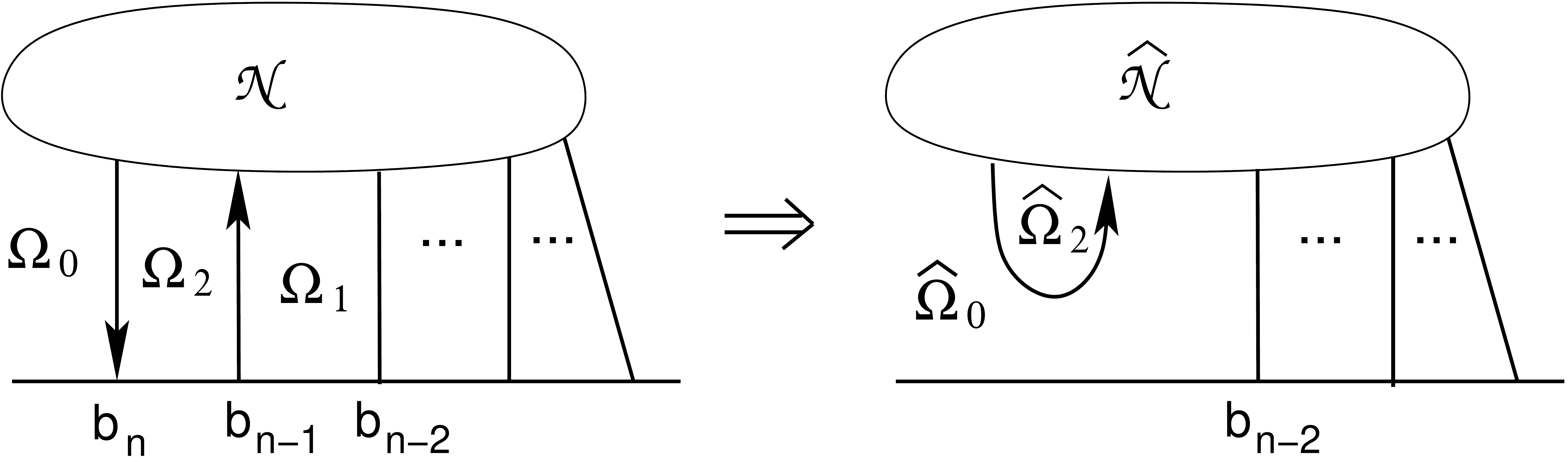}
  \caption{\small{\sl All the admissible ways of defrosting a graph preserve the planarity property.}\label{fig:amalg2}}
\end{figure}

Since we work only with planar directed graphs and we assume that any internal edge belongs to at least one path starting and ending at the boundary of the disc, in our setting defrosting consists in the elimination of two consecutive boundary vertices, one of which is a source  and the other one is a sink, and in gluing the resulting directed half-edges (see Figure~\ref{fig:amalg2}). Defrosting transforms the face $\Omega_2$ into the face $\hat\Omega_2$, and the faces $\Omega_1$, $\Omega_3$ are merged into the face $\hat\Omega_1$.

It is easy to check that any planar graph in the disc considered in our text can be obtained starting from several copies of Le-graphs associated with the small positive Grassmannians $Gr^{TP}(1,3)$, $Gr^{TP}(2,3)$ and $Gr^{TP}(1,2)$ in such a way that at any step planarity is preserved and any edge of the resulting graph belongs at least to one directed path starting and ending at the boundary of the disc. 

Let us now explain the effect of amalgamation on the face geometric signature. At this aim we use Thereom~\ref{theo:sign_face} to compute the edge signatures of the faces of the amalgamated networks in terms of those of the initial networks. As in the previous Section let $\mathcal N$ be a plabic network representing a point in $\GTNN$, and for any given face $\Omega$, let the indices $\epsilon(\Omega)$ and $n_{\mbox{\scriptsize{white}}}(\Omega)$ respectively denote the edge signature and the number of white vertices $\partial \Omega$. Then the proof of the following Lemmas follows from Formula (\ref{eq:sign_face}).

\begin{lemma}\textbf{Edge signature of the direct sum}\label{lem:dir_sum}
Let ${\mathcal N}_i$ be plabic networks representing points in $Gr^{\mbox{\tiny{TNN}}}(k_i,n_i)$, $i=1,2$ and $\mathcal N$ be their disjoint union representing a point in $Gr^{\mbox{\tiny{TNN}}}(k_1+k_2,n_1+n_2)$ with notations as in Figure \ref{fig:amalg1}. Then, the edge signature behaves as follows: 
\begin{enumerate}
\item If all boundary vertices of $\mathcal N_2$ precede all boundary vertices of $\mathcal N_1$ (Figure~\ref{fig:amalg1} [left]),
\begin{equation}\label{eq:inf_case1}
\epsilon(\Omega_0) \; = \;  \epsilon(\Omega_{0,1}) + \epsilon(\Omega_{0,2})  \quad \mod 2,
\end{equation}
and is unchanged in all other faces;
\item If all boundary vertices of ${\mathcal N}_1$ are located between two consecutive boundary vertices of ${\mathcal N}_2$ (Figure~\ref{fig:amalg1} [right]),
\begin{equation}\label{eq:inf_case2}
\epsilon(\Omega_0) \; = \;  \epsilon(\Omega_{0,2}) + k_1  \quad \mod 2,\qquad\qquad
\epsilon(\Omega_1) \; = \;  \epsilon(\Omega_{0,1}) + \epsilon(\Omega_{1,2}) + k_1, \quad \mod 2,
\end{equation}
and is unchanged in all other faces.
\end{enumerate}
\end{lemma}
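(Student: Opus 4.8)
The plan is to reduce the entire statement to Theorem~\ref{theo:sign_face}, which expresses each face signature purely through its number of white vertices plus a parity correction that is $+1$ for a finite face and $+k$ for the infinite face. Under the hypotheses the signatures in play are geometric and the amalgamated graph is again a PBDTP graph, so this formula is available simultaneously for the two summand networks $\mathcal{N}_1,\mathcal{N}_2$ and for their disjoint union $\mathcal N$. The argument is then pure bookkeeping: identify which faces are merged, add up the white vertices on their boundaries, and keep careful track of whether each face is \emph{finite} or \emph{infinite} in each network, since that status selects $+1$ versus $+k_i$, and it is precisely the change of $k$ under amalgamation that produces the $k_1$ shifts appearing in (\ref{eq:inf_case2}).

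First I would dispatch Case~1, where the two networks sit side by side and the only modified face is the infinite one, $\Omega_0=\Omega_{0,1}\cup\Omega_{0,2}$. Because the two summands share no vertices, the white vertices on $\partial\Omega_0$ are the disjoint union of those on $\partial\Omega_{0,1}$ and $\partial\Omega_{0,2}$, so $n_{\mbox{\scriptsize{white}}}(\Omega_0)=n_{\mbox{\scriptsize{white}}}(\Omega_{0,1})+n_{\mbox{\scriptsize{white}}}(\Omega_{0,2})$. Applying the infinite-face case of Theorem~\ref{theo:sign_face} to $\Omega_0$ with $k=k_1+k_2$, and to each $\Omega_{0,i}$ with $k_i$, the corrections $k_1+k_2$ on one side cancel $k_1$ and $k_2$ on the other, which gives (\ref{eq:inf_case1}) directly. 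Every other face is a finite face carried over unchanged, hence both its white-vertex count and its signature are preserved.

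Next I would treat Case~2, where $\mathcal{N}_1$ is inserted between two consecutive boundary vertices of $\mathcal{N}_2$. The decisive structural facts, already recorded before the statement, are that the new infinite face is identified with $\Omega_{0,2}$, so $n_{\mbox{\scriptsize{white}}}(\Omega_0)=n_{\mbox{\scriptsize{white}}}(\Omega_{0,2})$, while the new finite face $\Omega_1$ is the merge of $\Omega_{1,2}$ (finite in $\mathcal{N}_2$) with $\Omega_{0,1}$ (infinite in $\mathcal{N}_1$), so that $n_{\mbox{\scriptsize{white}}}(\Omega_1)=n_{\mbox{\scriptsize{white}}}(\Omega_{1,2})+n_{\mbox{\scriptsize{white}}}(\Omega_{0,1})$. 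For $\Omega_0$ I apply the infinite-face formula with $k=k_1+k_2$ and compare with the infinite-face formula for $\Omega_{0,2}$ with $k_2$; the difference of corrections is exactly $k_1$, which is the first identity of (\ref{eq:inf_case2}). For $\Omega_1$ I apply the finite-face formula ($+1$) in the amalgamated graph and substitute $\epsilon(\Omega_{1,2})=n_{\mbox{\scriptsize{white}}}(\Omega_{1,2})+1$ together with $\epsilon(\Omega_{0,1})=n_{\mbox{\scriptsize{white}}}(\Omega_{0,1})+k_1$; collecting the parity corrections $1+1+k_1$ yields the claimed $\epsilon(\Omega_1)=\epsilon(\Omega_{0,1})+\epsilon(\Omega_{1,2})+k_1$.

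The only genuinely delicate point is the vertex bookkeeping at the glued boundaries. I must confirm that no white vertex is lost or double-counted when two face boundaries are merged, and, more importantly, that the finite-versus-infinite status of each face is reassigned correctly in the union, since a face that was infinite in a summand, namely $\Omega_{0,1}$, becomes part of a finite face of $\mathcal N$. Getting these assignments right is exactly what converts the two applications of Theorem~\ref{theo:sign_face} into the asserted $k_1$-shifts; everything beyond that is modular arithmetic.
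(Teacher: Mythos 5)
Your proposal is correct and is essentially the paper's own proof: the paper derives the lemma directly from formula (\ref{eq:sign_face}) of Theorem~\ref{theo:sign_face}, i.e.\ exactly the white-vertex bookkeeping with the finite-face ($+1$) versus infinite-face ($+k$) corrections that you carry out. The $k_1$-shifts in (\ref{eq:inf_case2}) arise precisely as you describe: the infinite face of the union carries correction $k_1+k_2$ instead of $k_2$, and $\Omega_{0,1}$ changes status from infinite in $\mathcal N_1$ (correction $k_1$) to part of the finite face $\Omega_1$ (correction $1$).
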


We now describe the effect of defrosting on the edge signatures. We remark that if both faces $\Omega_1$, $\Omega_3$ are finite, then $\hat\Omega_1$ is also a finite face, otherwise it is the infinite face. Similarly, $\hat\Omega_2$ is the infinite face if and only if $\Omega_2$ is the infinite face.

\begin{lemma}\textbf{Effect of defrosting on edge signatures}\label{lem:defrost}
  Let $\mathcal N$ be a plabic network representing a point in $Gr^{\mbox{\tiny{TNN}}}(k,n)$,  and $\hat{\mathcal N}$ be the defrosted network representing a point in $Gr^{\mbox{\tiny{TNN}}}(k-1,n-2)$ with notations as in Figure \ref{fig:amalg2}. Then, the edge signature behaves as follows:

\begin{enumerate}
\item If $b_i\ne b_1,b_{n-1}$ and $\Omega_2$ is not the infinite face, then 
  \begin{equation}\label{eq:inf_case3}
 \begin{split}   
   \epsilon(\hat\Omega_1) \; &= \;  \epsilon(\Omega_{1}) + \epsilon(\Omega_{3}) +1  \quad \mod 2,\\
   \epsilon(\hat\Omega_2) \; &= \;  \epsilon(\Omega_{2}),   \quad \mod 2,\\
   \epsilon(\hat\Omega_0) \; &= \;  \epsilon(\Omega_{0})  +1   \quad \mod 2,\\
  \end{split} 
\end{equation}
\item If $b_i\ne b_1,b_{n-1}$ and $\Omega_2$ is the infinite face $\Omega_2=\Omega_0$ , then 
  \begin{equation}\label{eq:inf_case3_bis}
 \begin{split}   
   \epsilon(\hat\Omega_1) \; &= \;  \epsilon(\Omega_{1}) + \epsilon(\Omega_{3}) +1  \quad \mod 2,\\
   \epsilon(\hat\Omega_2) \; &= \;  \epsilon(\Omega_{2}) +1 ,   \quad \mod 2,\\
  \end{split} 
\end{equation}

\item If $b_i=b_{n-1}$ then 
  \begin{equation}\label{eq:inf_case4}
 \begin{split}   
   \epsilon(\hat\Omega_0) \; &= \;  \epsilon(\Omega_{0}) +  \epsilon(\Omega_{1})  \quad \mod 2,\\
   \epsilon(\hat\Omega_2) \; &= \;  \epsilon(\Omega_{2}),   \quad \mod 2.
 \end{split}
\end{equation}                                                             
\item The case  $b_i=b_{1}$ is similar to the previous one.
\end{enumerate}
In all other faces the signature is unchanged.
\end{lemma}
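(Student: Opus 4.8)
The plan is to read off all four cases directly from Theorem~\ref{theo:sign_face}, which collapses the total signature of a face to a white-vertex count plus a constant: $\epsilon(\Omega)\equiv n_{\mbox{\scriptsize{white}}}(\Omega)+1 \pmod 2$ for a finite face and $\epsilon(\Omega)\equiv n_{\mbox{\scriptsize{white}}}(\Omega)+k \pmod 2$ for the infinite face. Thus the entire computation reduces to tracking three data for each affected face: its white-vertex count, its finite/infinite status, and the value of $k$. First I would record the two structural facts that make this bookkeeping elementary. Defrosting deletes the boundary source $b_i$ and the adjacent boundary sink $b_{i+1}$ and glues their two half-edges into a single internal edge joining the internal vertices $U,V$ that were attached to them; it creates no internal vertex, changes the colour of no internal vertex, and alters the Grassmannian data only by $k\mapsto k-1$, $n\mapsto n-2$. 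Since boundary vertices do not enter $n_{\mbox{\scriptsize{white}}}$, deleting $b_i,b_{i+1}$ leaves every white-vertex count unchanged except through the merging of faces.

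Next I would analyse the merging pattern. In each of the first two cases the face $\Omega_2$ incident to both deleted edges is merely reshaped into $\hat\Omega_2$: its boundary loses the deleted arc and acquires the glued edge $VU$, but its set of white vertices is unchanged. Meanwhile the faces $\Omega_1,\Omega_3$ on the outer sides of the deleted edges become adjacent across the vanished boundary arc and merge into $\hat\Omega_1$. The only boundary elements the merged face gains are the glued edge and its endpoints $U,V$, each counted once, and $\Omega_1,\Omega_3$ share no other vertex, so $n_{\mbox{\scriptsize{white}}}(\hat\Omega_1)=n_{\mbox{\scriptsize{white}}}(\Omega_1)+n_{\mbox{\scriptsize{white}}}(\Omega_3)$. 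As $\hat\Omega_1,\Omega_1,\Omega_3$ are all finite, each carries the constant $1$, and since two faces become one the constants contribute $1-1-1\equiv 1\pmod 2$; this is exactly the $+1$ in $\epsilon(\hat\Omega_1)\equiv\epsilon(\Omega_1)+\epsilon(\Omega_3)+1$. The remaining signatures then depend only on whether $\Omega_2$ is finite or infinite: if $\Omega_2$ is finite its constant is preserved, giving $\epsilon(\hat\Omega_2)\equiv\epsilon(\Omega_2)$, whereas if $\Omega_2=\Omega_0$ is the infinite face the drop $k\mapsto k-1$ supplies an extra $+1$, giving $\epsilon(\hat\Omega_2)\equiv\epsilon(\Omega_2)+1$. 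The infinite face not otherwise touched inherits the single $+1$ coming from $k\mapsto k-1$, which yields $\epsilon(\hat\Omega_0)\equiv\epsilon(\Omega_0)+1$. Together these account for the formulas of the first two cases.

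For the boundary positions $b_i=b_{n-1}$ and $b_i=b_1$ the same counting applies, with the difference that one of the two outer faces is now the infinite face $\Omega_0$, so the merge is $\Omega_0\cup\Omega_1\to\hat\Omega_0$ (with $\hat\Omega_0$ again infinite). Here the additivity $n_{\mbox{\scriptsize{white}}}(\hat\Omega_0)=n_{\mbox{\scriptsize{white}}}(\Omega_0)+n_{\mbox{\scriptsize{white}}}(\Omega_1)$ together with the constants $k$ (for $\Omega_0$), $1$ (for $\Omega_1$) and $k-1$ (for $\hat\Omega_0$) gives a net shift $(k-1)-(k)-(1)\equiv 0\pmod 2$, hence $\epsilon(\hat\Omega_0)\equiv\epsilon(\Omega_0)+\epsilon(\Omega_1)$, while $\hat\Omega_2\equiv\Omega_2$ exactly as before; the case $b_i=b_1$ is the mirror image. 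I expect the only delicate points to be the verification that the merged face picks up no repeated white vertex (so that the counts are genuinely additive mod $2$) and the consistent exclusion of boundary vertices from $n_{\mbox{\scriptsize{white}}}$, since it is precisely these parities, combined with the finite/infinite constants and the shift $k\mapsto k-1$, that pin down each $+1$ correction in the statement.
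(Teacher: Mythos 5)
Your proposal is correct and follows exactly the paper's route: the paper proves Lemma~\ref{lem:defrost} (together with Lemma~\ref{lem:dir_sum}) by the one-line observation that it follows from Formula (\ref{eq:sign_face}) of Theorem~\ref{theo:sign_face}, and your case-by-case bookkeeping of white-vertex counts, finite/infinite status, and the shift $k\mapsto k-1$ is precisely the intended (but unwritten) computation behind that remark. Your explicit checks — additivity $n_{\mbox{\scriptsize{white}}}(\hat\Omega_1)=n_{\mbox{\scriptsize{white}}}(\Omega_1)+n_{\mbox{\scriptsize{white}}}(\Omega_3)$, invariance of $n_{\mbox{\scriptsize{white}}}(\hat\Omega_2)$, and the parity contributions of the constants — supply more detail than the paper itself gives.
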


\section{Appendix: Proof of Theorem~\ref{thm:z_orient_gauge} (invariance of the geometric signature)}\label{app:invariance}

\begin{remark} All the identities in this Section are meant $\mod 2$.\end{remark}

The following relations between the geometric indices and the cyclic order are proven in \cite{AG6}.

\begin{lemma} \cite{AG6}
  \label{lemma:equiv_rel}
  Let $V$ belong to $\mathcal P_0$ or $\mathcal Q_0$, $e_1,e_2,f$ be the edges at $V$, where $e_1$, $e_2$ belong to  $\mathcal P_0$ or $\mathcal Q_0$,  and $e_1$ (respectively $e_2$) is an incoming (respectively outgoing) edge in the initial configuration. 
\begin{enumerate}
\item If $V$ is black, then:
  \begin{align}
 \label{eq:black1}   
\widehat{\mbox{int}}(f) + \mbox{int}(f) + \gamma(f) +\gamma_1(e_1) &= [e_1,-e_2,f] \quad
                                                                         (\!\!\!\!\!\!\mod 2),\\
 \label{eq:black2}      
 \mbox{wind}(e_1,e_2) + \mbox{wind}(f,e_2) +  \mbox{wind}(f,-e_1) +\gamma_2 (e_1) &=  [e_1,-e_2,f] \quad
(\!\!\!\!\!\!\mod 2). 
\end{align}
\item If $V$ is white, then:
  \begin{align}
   \label{eq:white1}    
\gamma(f) +\gamma_1(e_1) &= [e_1,-e_2,-f] \quad
                               (\!\!\!\!\!\!\mod 2),\\
  \label{eq:white2}     
  \mbox{wind}(e_1,f) + \mbox{wind}(-e_2,f) + \mbox{wind}(-e_2,-e_1) +\gamma_2(e_1)&= 1-[e_1,-e_2,-f] \quad
(\!\!\!\!\!\!\mod 2). 
\end{align}
\end{enumerate}
\end{lemma}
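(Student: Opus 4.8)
The plan is to treat the four identities as purely \emph{local} statements at the trivalent vertex $V$. Every quantity occurring on either side is determined by the cyclic arrangement around $V$ of at most four directions — the incoming path edge $e_1$, the outgoing path edge $e_2$, the third edge $f$, and the gauge ray $\mathfrak{l}$ — together with the $\pm$ marking of the two regions of the disc abutting $V$. My first step would be to record how the gauge-ray configuration, and hence the intersection number of $f$, is affected by the reorientation. For a closed cycle $\mathcal{Q}_0$ the set of boundary sources is unchanged, so $\widehat{\mbox{int}}(f)=\mbox{int}(f)$ and the first two terms of (\ref{eq:black1}) drop out mod $2$; for a path $\mathcal{P}_0$ the ray at $i_0$ is deleted while a new ray at $j_0$ appears, so $\widehat{\mbox{int}}(f)+\mbox{int}(f)\pmod 2$ counts (mod $2$) the crossings of $f$ with exactly these two rays. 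By construction the $\pm$ marking bounds its regions using $\mathfrak{l}_{i_0}$ and $\mathfrak{l}_{j_0}$, so these crossings are precisely what $\gamma(f)$ and $\gamma_1(e_1)$ encode; this is the mechanism behind the invariance of $\gamma_1(e)+\gamma_2(e)+\mbox{int}(e)$ noted above. The upshot is that all nonlocal intersection data is replaced by local region indices at $V$.

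For the winding identities (\ref{eq:black2}) and (\ref{eq:white2}) I would expand each local winding number through its definition (\ref{eq:def_wind}), using that $\mbox{wind}(a,b)\equiv 1 \pmod 2$ exactly when $s(a,b)=s(a,\mathfrak{l})=s(\mathfrak{l},b)\neq 0$, i.e.\ when the arc from $a$ to $b$ sweeps across the gauge direction. Reversing orientation sends $e_1\mapsto -e_1$ and $e_2\mapsto -e_2$, and the antisymmetry relations for the cyclic order $[\,\cdot\,,\,\cdot\,,\,\cdot\,]$ recorded in Figure~\ref{fig:cyclic_order} let me collect the resulting terms into $[e_1,-e_2,f]$ (black) or $[e_1,-e_2,-f]$ (white). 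The term $\gamma_2(e_1)=\tfrac{1-s(e_1,\mathfrak{l})}{2}$ then supplies exactly the correction recording on which side of $\mathfrak{l}$ the edge $e_1$ lies, accounting for the $+1$ appearing on the right-hand side of (\ref{eq:white2}).

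With these reductions in hand, the verification becomes a finite check. Fixing $\mathfrak{l}$ and rotating, the three edge directions fall into finitely many generic angular sectors (the degenerate positions where some $s=0$ are excluded by the genericity hypotheses on $\mathfrak{l}$ in Definition~\ref{def:gauge_ray} and by general position of the graph). In each sector the functions $s$, $\mbox{wind}$, $\gamma_2$ and the cyclic orders are constant, while the region indices $\gamma$, $\gamma_1$ are determined near $V$ by which side of the path or cycle the adjacent regions lie — again a function of the local arrangement of $e_1,e_2,f$. I would tabulate all of these in each sector and check the four equalities directly; the cyclic symmetry of the defining relations in Figure~\ref{fig:cyclic_order} identifies many sectors, so that only a handful of representatives need to be examined.

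The main obstacle I anticipate is the bookkeeping in this case analysis rather than any conceptual difficulty: one must consistently interpret the $\pm$ region marking when the tail of $f$ lies on $\mathcal{P}_0$ or $\mathcal{Q}_0$ (resolved by the infinitesimal shift in the direction of $f$ built into (\ref{eq:eps_not_path})), and carefully separate the clockwise-versus-counterclockwise and black-versus-white subcases, which is where sign errors are most likely to creep in.
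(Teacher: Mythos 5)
A preliminary remark: this paper contains no proof of Lemma \ref{lemma:equiv_rel} to compare against --- the lemma is imported verbatim from \cite{AG6} (``The following relations between the geometric indices and the cyclic order are proven in \cite{AG6}''), so your argument has to be judged on its own merits rather than against an internal proof.

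Your localize-and-tabulate strategy is sound for the two winding identities (\ref{eq:black2}) and (\ref{eq:white2}): every term in them is a function of the four directions $e_1$, $e_2$, $f$, $\mathfrak l$ alone, so a chamber-by-chamber check of the direction arrangement proves them (with the caveat that Definition \ref{def:gauge_ray} only forbids edges parallel to $\mathfrak l$, not edges parallel to one another, so the degenerate chambers with $s(e_i,e_j)=0$ must also appear in your table). The genuine gap is in (\ref{eq:black1}). At a black $V$ the edge $f$ is incoming, so by (\ref{eq:eps_not_path}) the index $\gamma(f)$ is read off at the \emph{far} endpoint of $f$: it is not a function of the local picture at $V$, contrary to your opening claim and to your third paragraph, which plans to tabulate $\gamma$ and $\gamma_1$ from the local arrangement. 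The entire content of (\ref{eq:black1}) is that $\gamma(f)$ becomes local after adding $\mbox{int}(f)+\widehat{\mbox{int}}(f)$, and this requires two facts: (a) $\mbox{int}(f)+\widehat{\mbox{int}}(f)$ is, mod 2, the number of crossings of $f$ with $\mathfrak l_{i_0}\cup\mathfrak l_{j_0}$ (you have this); and (b) the $\pm$ marking changes along $f$ \emph{only} at those crossings, because the remaining curves bounding the regions are $\mathcal P_0$ (resp.\ $\mathcal Q_0$) and arcs of the disc boundary, which $f$ --- an edge of a planar embedded graph --- cannot cross. Fact (b), i.e.\ planarity, never appears in your proposal; it is exactly what your phrase ``the $\pm$ marking bounds its regions using $\mathfrak l_{i_0}$ and $\mathfrak l_{j_0}$'' hides, since the regions are bounded by $\mathcal P_0$ as well. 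Indeed, read literally, your sentence ``these crossings are precisely what $\gamma(f)$ and $\gamma_1(e_1)$ encode'' asserts $\mbox{int}(f)+\widehat{\mbox{int}}(f)\equiv\gamma(f)+\gamma_1(e_1)$, which combined with (\ref{eq:black1}) would force $[e_1,-e_2,f]\equiv 0$ in all configurations --- false. This same local/nonlocal distinction explains the asymmetry your proposal never addresses: the int terms occur in (\ref{eq:black1}) but are absent from (\ref{eq:white1}) precisely because at a white vertex $f$ is outgoing, so its starting point is $V$ itself (after the infinitesimal shift) and $\gamma(f)$ there is genuinely local.
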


Finally we prove Theorem \ref{thm:z_orient_gauge} step by step:

\begin{enumerate}
\item Let us prove that every change of orientation corresponds to a gauge transformation as in (\ref{eq:zeta_orient}):
\begin{enumerate}  
\item If $U$ and $V$ do not belong to $\mathcal Q$ (respectively $\mathcal P$), then the winding numbers at $U$ and $V$ remain the same, and the intersection number of the edge $(U,V)$ changes if and only if $U$ and $V$ lie in regions with different sign marks.  If $V$ is either a boundary source or a boundary sink, by construction the latter is always in a $+$ region. Therefore, in this case (\ref{eq:zeta_orient}) holds true.
\begin{figure}
  \centering
	{\includegraphics[width=0.9\textwidth]{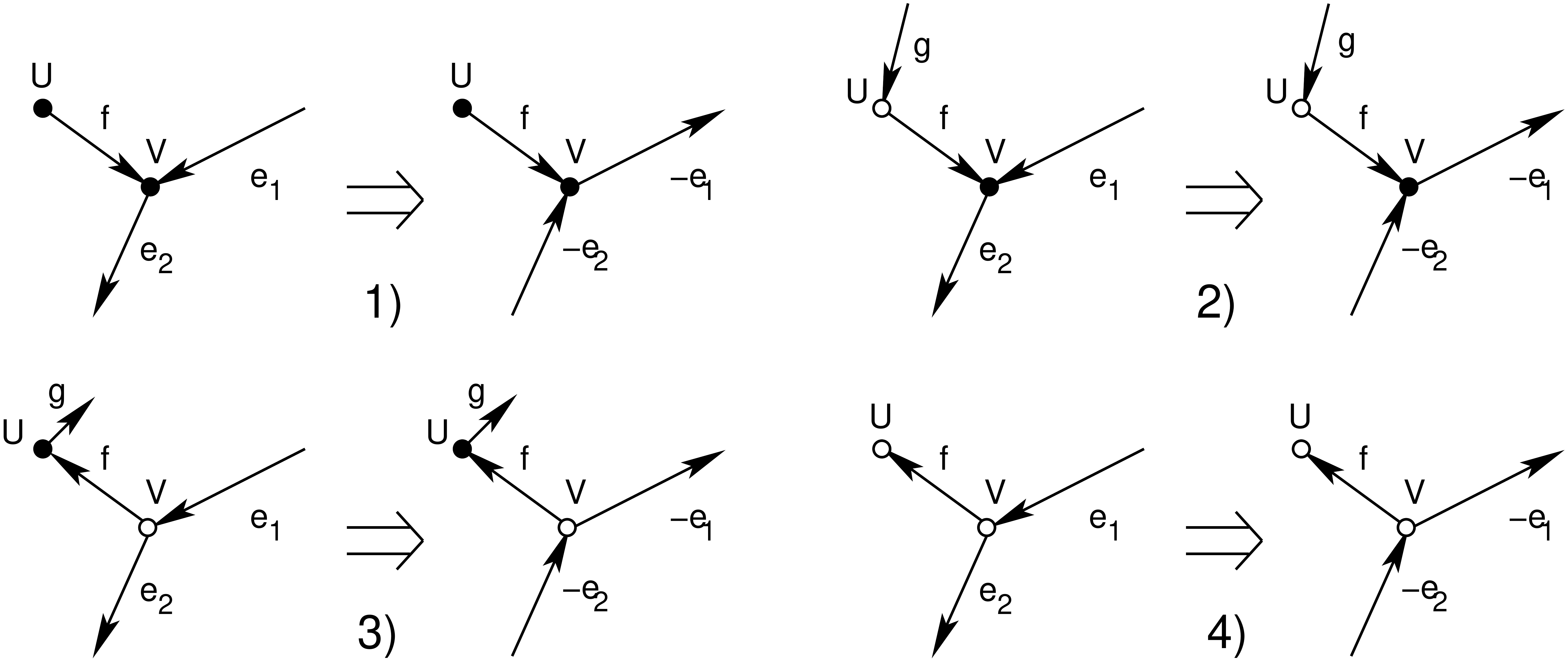}}
  \caption{\small{\sl }\label{fig:gauge1}}
\end{figure}
\item If $U$ does not belong to $\mathcal Q$ (respectively $\mathcal P$) and $V$ belongs to it, equations (\ref{eq:black1}),
  (\ref{eq:white1}) are equivalent to:
\begin{equation}\label{eq:eta-outside}  
  \eta(U) = \mbox{int}(f)+ \widehat{\mbox{int}}(f) + \gamma_1(e_1) +
  \left\{
    \begin{array}{ll} [e_1,-e_2,f] & \mbox{if} \ \ V \ \ \mbox{is black}, \\   
                       {[ e_1,-e_2,-f]}  & \mbox{if} \ \ V \ \ \mbox{is white}. 
    \end{array}
  \right.
\end{equation}
We also have:
\begin{equation}\label{eq:wind-opposite}  
\mbox{wind}(e_1,e_2) + \mbox{wind}(-e_2,-e_1) = \gamma_2(e_1) + \gamma_2(e_2).
\end{equation}

We have four possible cases  (see Figure~\ref{fig:gauge1}). 
\begin{enumerate}
\item If $V$ is black (Cases 1 and 2), then
$$
\epsilon(f) + \hat\epsilon(f) = \mbox{int}(f) +  \widehat{\mbox{int}}(f) + \mbox{wind}(f,e_2)+ \mbox{wind}(f,-e_1).
$$  
Using (\ref{eq:black2}) we obtain:
$$
\epsilon(f) + \hat\epsilon(f) = \mbox{int}(f) +  \widehat{\mbox{int}}(f) + \mbox{wind}(e_1,e_2)+ \gamma_2(e_1) + [e_1,-e_2,f]=
$$  
$$
= \big[\mbox{int}(f) +  \widehat{\mbox{int}}(f) +  \gamma_1(e_1)  + [e_1,-e_2,f] \big] + \big[ \gamma_1(e_1) + \gamma_2(e_1)  + \mbox{wind}(e_1,e_2)  \big] = \eta(U) + \eta(V);
$$
\item If $V$ is white (Cases 3 and 4), then
$$
\epsilon(f) + \hat\epsilon(f) = \mbox{int}(f) +  \widehat{\mbox{int}}(f) + \mbox{wind}(e_1,f)+ \mbox{wind}(-e_2,f).
$$  
Using (\ref{eq:white2}) we obtain:
$$
\epsilon(f) + \hat\epsilon(f) = \mbox{int}(f) +  \widehat{\mbox{int}}(f) +1 + \mbox{wind}(-e_2,-e_1)+ \gamma_2(e_1) + [e_1,-e_2,-f].
$$
Using (\ref{eq:wind-opposite}) we get  
$$
\epsilon(f) + \hat\epsilon(f) = \mbox{int}(f) +  \widehat{\mbox{int}}(f) + \mbox{wind}(e_1,e_2)+ 1+ \gamma_2(e_2) + [e_1,-e_2,-f]=
$$
$$
= \big[\mbox{int}(f) +  \widehat{\mbox{int}}(f) +  \gamma_1(e_1)  + [e_1,-e_2,-f] \big] + \big[1+ \gamma_1(e_1) + \gamma_2(e_1)  + \mbox{wind}(e_1,e_2)  \big] = \eta(U) + \eta(V).
$$
\end{enumerate}
\begin{figure}
  \centering
	{\includegraphics[width=0.9\textwidth]{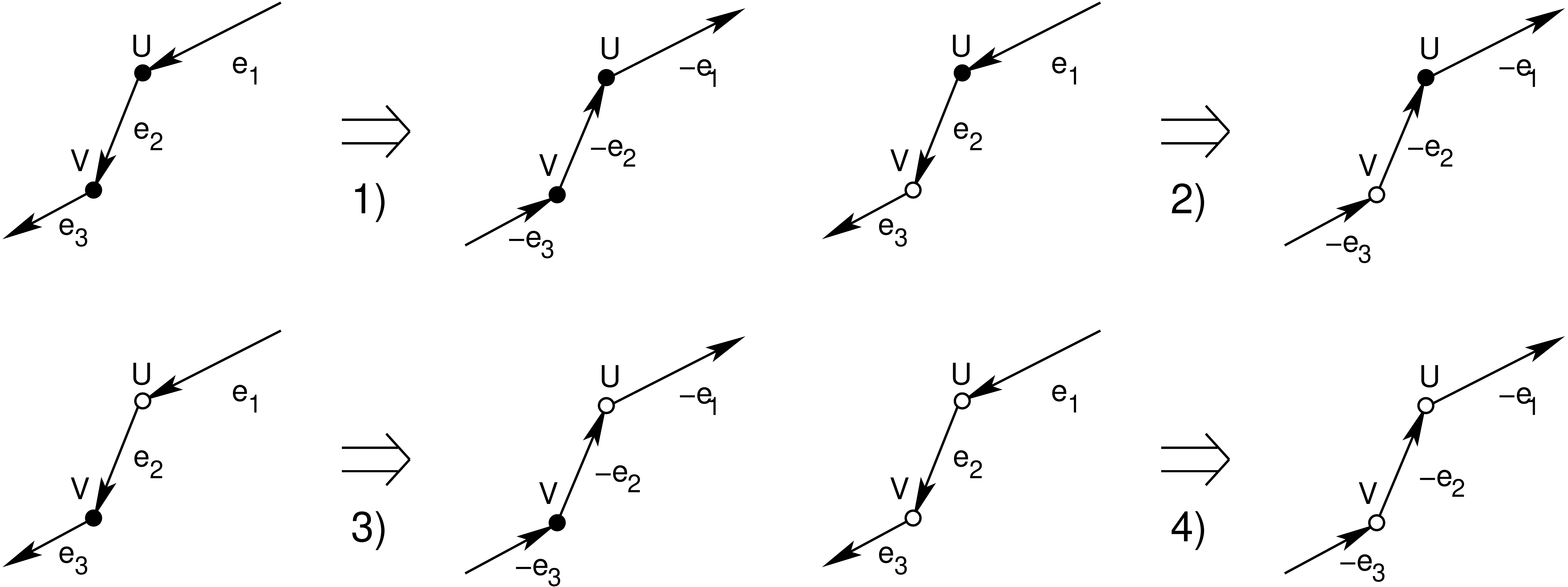}}
  \caption{\small{\sl }\label{fig:gauge2}}
\end{figure}
\item  If both $U$ and $V$ belong to $\mathcal Q$ ($\mathcal P$ respectively) and none of them is a boundary vertex, then we have four possible cases  (see Figure~\ref{fig:gauge2}). It is easy to check that in all cases
\begin{equation}\label{eq:int-diff}  
 \mbox{int}(e_2) +  \widehat{\mbox{int}}(e_2) = \gamma_1(e_1) + \gamma_1(e_2).
\end{equation}
\begin{enumerate}
\item If both $U$ and $V$ are black (Case 1) , then
$$
\epsilon(e_2) + \hat\epsilon(e_2) =   \mbox{int}(e_2) +  \widehat{\mbox{int}}(e_2) +  \mbox{wind}(e_2,e_3) +  \mbox{wind}(-e_1,-e_2). 
$$
Using (\ref{eq:wind-opposite}) and (\ref{eq:int-diff}) we obtain
$$
\epsilon(e_2) + \hat\epsilon(e_2) =   \gamma_1(e_1) + \gamma_1(e_2) +  \mbox{wind}(e_2,e_3) +  \mbox{wind}(e_1,e_2) + \gamma_2(e_2) + \gamma_2(e_1)= 
$$
$$
= [\mbox{wind}(e_1,e_2) +  \gamma_1(e_1) + \gamma_2(e_1) ] + [  \mbox{wind}(e_2,e_3) + \gamma_1(e_2) + \gamma_2(e_2) ]= \eta(U)+ \eta(V);
$$
\item If $U$ is black and $V$ is white (Case 2) , then
$$
\epsilon(e_2) + \hat\epsilon(e_2) =   \mbox{int}(e_2) +  \widehat{\mbox{int}}(e_2) +  \mbox{wind}(-e_1,-e_2) +  \mbox{wind}(-e_2,-e_3)+1. 
$$
Using (\ref{eq:wind-opposite}) and (\ref{eq:int-diff}) we obtain
$$
\epsilon(e_2) + \hat\epsilon(e_2) =   \gamma_1(e_1) + \gamma_1(e_2) +  \mbox{wind}(e_1,e_2) + \mbox{wind}(e_2,e_3) + \gamma_2(e_1) + \gamma_2(e_3) +1 = 
$$
$$
=[ \gamma_1(e_1) + \gamma_2(e_1) + \mbox{wind}(e_1,e_2) ] + [ \gamma_1(e_2) + \gamma_2(e_3) +  \mbox{wind}(e_2,e_3) +1] =  \eta(U)+ \eta(V);
$$
\item If $U$ is white and $V$ is black (Case 3) , then
$$
\epsilon(e_2) + \hat\epsilon(e_2) =   \mbox{int}(e_2) +  \widehat{\mbox{int}}(e_2) +  \mbox{wind}(e_1,e_2) +  \mbox{wind}(e_2,e_3)+1. 
$$
Using (\ref{eq:int-diff}) we obtain
$$
\epsilon(e_2) + \hat\epsilon(e_2) =   \gamma_1(e_1) + \gamma_1(e_2) +  \mbox{wind}(e_1,e_2) + \mbox{wind}(e_2,e_3) +1 = 
$$
$$
=[ \gamma_1(e_1) + \gamma_2(e_2) + \mbox{wind}(e_1,e_2) +1 ] + [ \gamma_1(e_2) + \gamma_2(e_2) +  \mbox{wind}(e_2,e_3) ] =  \eta(U)+ \eta(V);
$$
\item If both $U$ and $V$ are white (Case 4), then
$$
\epsilon(e_2) + \hat\epsilon(e_2) =   \mbox{int}(e_2) +  \widehat{\mbox{int}}(e_2) +  \mbox{wind}(e_1,e_2) +  \mbox{wind}(-e_2,-e_3). 
$$
Using (\ref{eq:wind-opposite}) and (\ref{eq:int-diff}) we obtain
$$
\epsilon(e_2) + \hat\epsilon(e_2) =   \gamma_1(e_1) + \gamma_1(e_2) +  \mbox{wind}(e_1,e_2) +  \mbox{wind}(e_2,e_3) + \gamma_2(e_2) + \gamma_2(e_3)= 
$$
$$
= [\mbox{wind}(e_1,e_2) +  \gamma_1(e_1) + \gamma_2(e_2) + 1 ] + [  \mbox{wind}(e_2,e_3) + \gamma_1(e_2) + \gamma_2(e_3)+ 1 ]= \eta(U)+ \eta(V).
$$
\end{enumerate}
\item  If $U$ belongs to $\mathcal P$, and $V=b$ is a boundary vertex on it, then we have two possible cases  (see Figure~\ref{fig:gauge3}). 
\begin{figure}
  \centering
	{\includegraphics[width=0.9\textwidth]{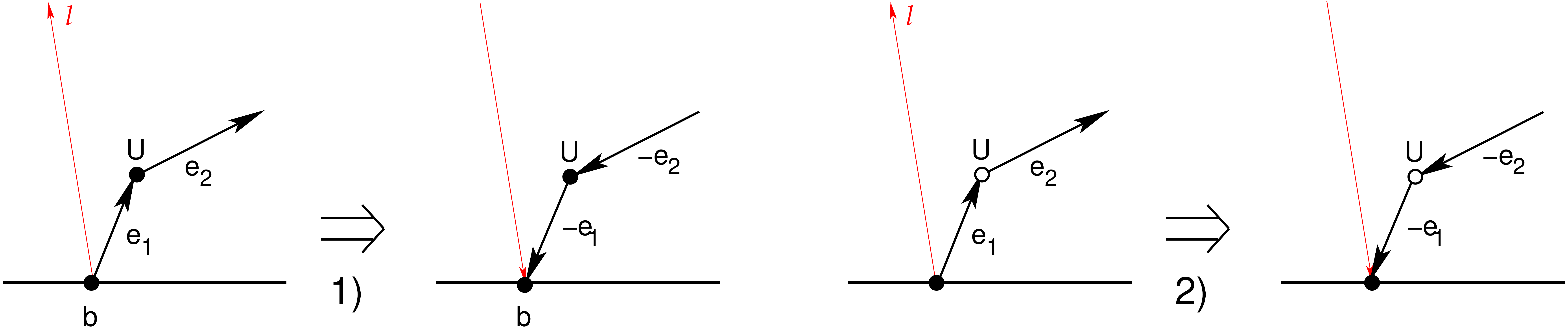}}
  \caption{\small{\sl }\label{fig:gauge3}}
\end{figure}
In both cases we have 
\begin{equation}\label{eq:int-diff-boundary}
  \mbox{int}(e_1) +  \widehat{\mbox{int}}(e_1) = \gamma_1(e_1) + \gamma_2(e_1)+1. 
\end{equation}
\begin{enumerate}
\item If $U$ is black, then
  $$
  \epsilon(e_1) + \hat\epsilon(e_1) = 1 + \mbox{int}(e_1) + \mbox{wind}(e_1,e_2)+ \widehat{\mbox{int}}(e_1)=
  $$
  $$
  =  \gamma_1(e_1) + \gamma_2(e_1) +  \mbox{wind}(e_1,e_2)= \eta(U); 
  $$
\item  If $U$ is white, then
  $$
  \epsilon(e_1) + \hat\epsilon(e_1) = 1 + \mbox{int}(e_1) + 1+ \widehat{\mbox{int}}(e_1) +  \mbox{wind}(-e_2,-e_1) =
  $$
  $$
  = 1 + \gamma_1(e_1) + \gamma_2(e_1) + \mbox{wind}(e_1,e_2) +\gamma_2(e_1) +  \gamma_2(e_2)  = \eta(U).   
  $$
\end{enumerate}
\end{enumerate}  
The proof of the first statement is complete.

\item To prove the invariance with respect to a change of gauge direction, we assume that the graph is generic, and we continuously rotate the gauge ray direction from $\mathfrak l$ to $\hat{ \mathfrak l}$. Then $\epsilon_{U,V}$ and $\eta(U)$ become functions of the rotation parameter. Let us check that (\ref{eq:equiv_sign}) is true for all values of the rotation parameter.

Let $e=(U,V)$ be an internal edge (for boundary edges the proof goes through with obvious modifications). The left-hand side and the right-hand side of  (\ref{eq:equiv_sign}) may change if:
\begin{enumerate}
\item A gauge ray crosses either $U$ or $V$; 
\item The gauge ray direction becomes parallel to one of the edges at $U$ or $V$.
\end{enumerate}
\begin{figure}
  \centering
	{\includegraphics[width=0.8\textwidth]{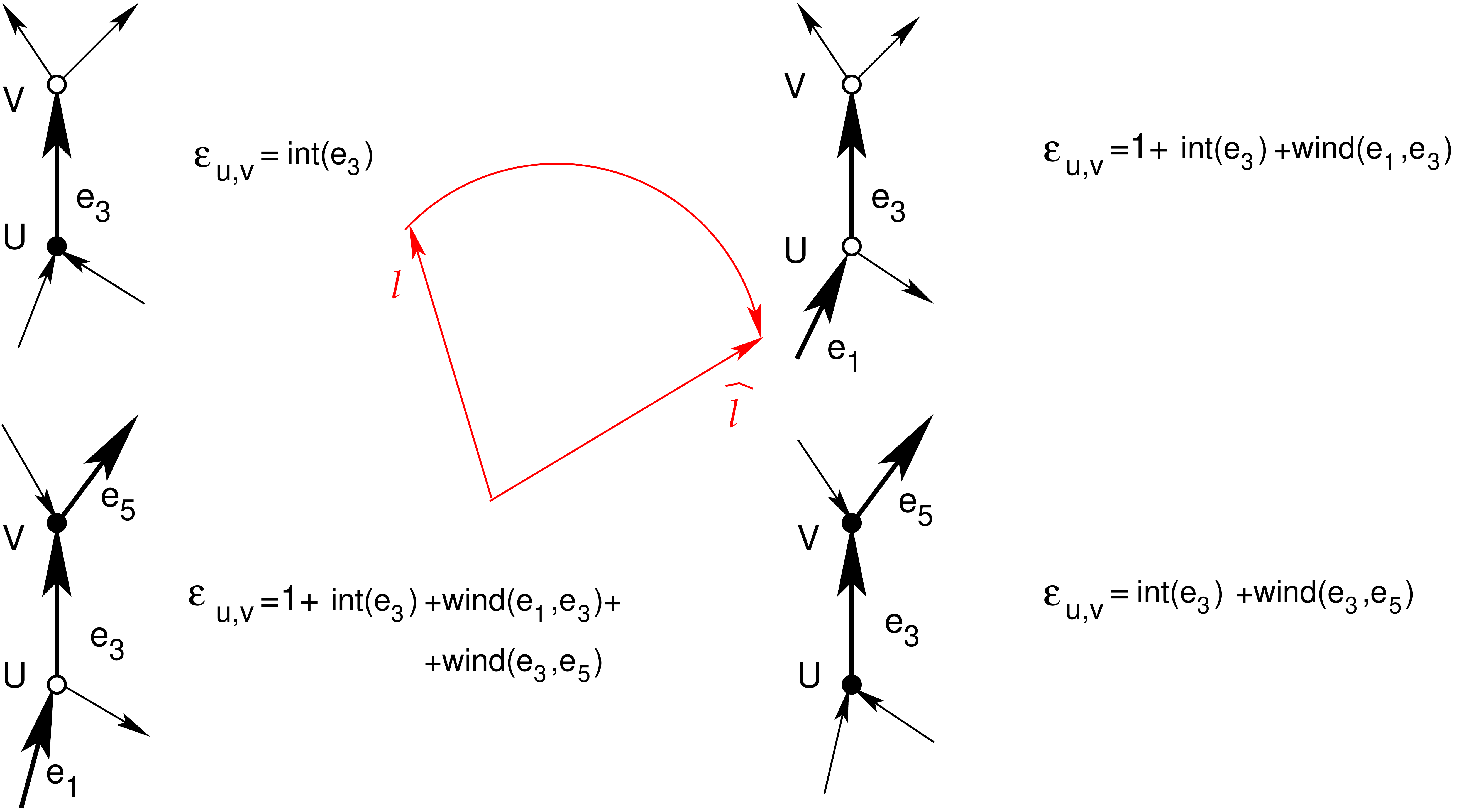}}
  \caption{\small{\sl }\label{fig:gauge4}}
\end{figure}
The assumption that the graph is generic means that we have at most one of such events for each value of the rotation parameter. Let us check that after each such an event (\ref{eq:equiv_sign}) remains true.
\begin{enumerate}
\item If a gauge ray crosses $U$, then
$$
 \epsilon_{U,V} \rightarrow \epsilon_{U,V} + 1, \ \ \eta(U) \rightarrow \eta(U) +1, \ \
 \eta(V) \rightarrow \eta(V); 
$$
\item If a gauge ray crosses $V$, then
$$
\epsilon_{U,V} \rightarrow \epsilon_{U,V} + 1, \ \ \eta(U) \rightarrow \eta(U), \ \
\eta(V) \rightarrow \eta(V)+1;  
$$
\item Let us check what happens if the gauge ray direction is parallel (not antiparallel) to one of the edges at $U$ or $V$. In Figure~\ref{fig:gauge4} the edges appearing in the formulas for the signature or gauge are drawn thick.

If the gauge ray direction becomes parallel to one of the edges drawn thin, neither $\epsilon_{U,V}$ nor $\eta(U)$, $\eta(U)$, change. Therefore it is sufficient to check what happens if the gauge ray direction is parallel to one of the thick edges.
\begin{enumerate}
\item If $U$ is black, $V$ is white, and $l$ passes the direction of $e_3$, then we have the following transformation:
$$
\epsilon_{U,V} \rightarrow \epsilon_{U,V},\ \
\eta(U) \rightarrow \eta(U)+1, \ \ \eta(V) \rightarrow \eta(V)+1; 
$$
 \item If $U$ and $V$ are white, and $l$ passes the direction of $e_1$, then we have the following transformation:
$$
\epsilon_{U,V} \rightarrow \epsilon_{U,V}+1,\ \
\eta(U) \rightarrow \eta(U)+1, \ \ \eta(V) \rightarrow \eta(V);  
$$
 \item If $U$ and $V$ are white, and $l$ passes the direction of $e_3$, then we have the following transformation:
$$
\epsilon_{U,V} \rightarrow \epsilon_{U,V}+1,\ \
\eta(U) \rightarrow \eta(U), \ \ \eta(V) \rightarrow \eta(V)+1;  
$$
\item If $U$ is white, $V$ is black, and $l$ passes the direction of $e_1$, then we have the following transformation:
$$
\epsilon_{U,V} \rightarrow \epsilon_{U,V}+1, \ \
\eta(U) \rightarrow \eta(U)+1, \ \ \eta(V) \rightarrow \eta(V);
$$
\item If $U$ is white, $V$ is black, and $l$ passes the direction of $e_3$, then we have the following transformation:
$$
\epsilon_{U,V} \rightarrow \epsilon_{U,V}, \ \
\eta(U) \rightarrow \eta(U), \ \ \eta(V) \rightarrow \eta(V);
$$
\item If $U$ is white, $V$ is black, and $l$ passes the direction of $e_3$, then we have the following transformation:
$$
\epsilon_{U,V} \rightarrow \epsilon_{U,V}+1, \ \
\eta(U) \rightarrow \eta(U), \ \ \eta(V) \rightarrow \eta(V)+1;
$$
\item If $U$, $V$ are black, and $l$ passes the direction of $e_3$, then we have the following transformation:
$$
\epsilon_{U,V} \rightarrow \epsilon_{U,V}+1, \ \
\eta(U) \rightarrow \eta(U)+1, \ \ \eta(V) \rightarrow \eta(V);
$$
\item If $U$, $V$ are black, and $l$ passes the direction of $e_5$, then we have the following transformation:
$$
\epsilon_{U,V} \rightarrow \epsilon_{U,V}+1, \ \
\eta(U) \rightarrow \eta(U), \ \ \eta(V) \rightarrow \eta(V)+1.
$$

\end{enumerate}
\end{enumerate}
The proof of the second statement is completed.

\item Let us prove the invariance of the signature with respect to a change of position of an internal vertex.

Again we assume that our configuration is generic, we continuously move
the vertex $U$, so that the position $U$ is a smooth function of a parameter $t$, and for each $t$ at most one of the following events may occur:
\begin{enumerate}
\item $U$ crosses one of the gauge rays, and there is no edge at $U$ parallel to the gauge ray direction;
\item One of the edges at $U$ is parallel to the gauge direction.  
\end{enumerate}

In the first case $\eta(U)$ changes by 1, all edges connected to $U$ change the intersection number by 1, and all windings remain unchanged. Therefore the signatures at these edges change by 1, and the statement holds.   

To check the Theorem in the second case we consider all possible configurations.

Assume that $U$ is not connected by an edge to the boundary.
\begin{enumerate}
\item Let $U$ be white as in Figure~\ref{fig:move_white}.
 \begin{figure}
  \centering
	{\includegraphics[width=0.49\textwidth]{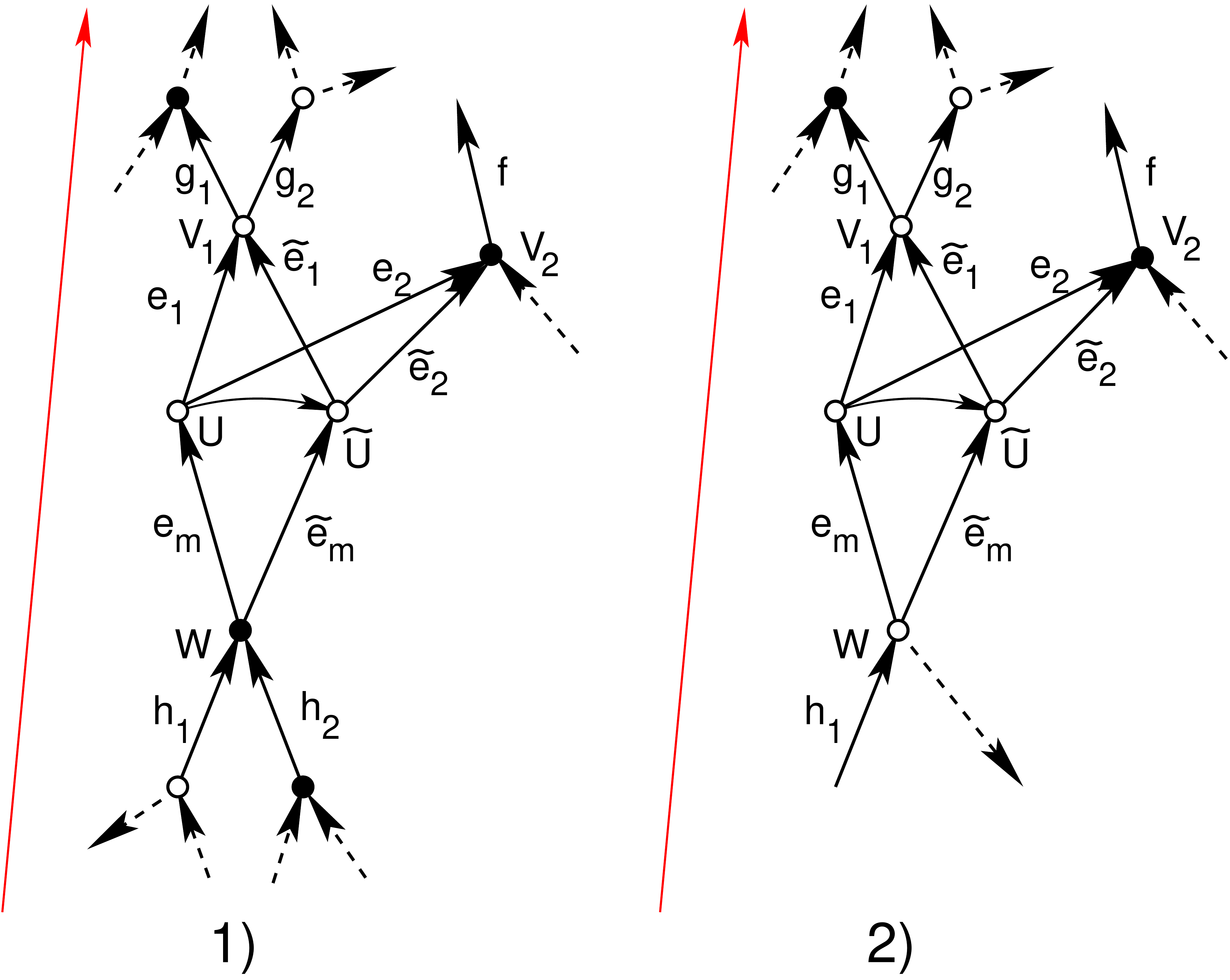}}
  \caption{\small{\sl If one of the edges at $U$ becomes parallel to $\mathfrak l$, some windings may change. We mark the edges participating in these windings with continuous lines and all other edges with dashed lines.}\label{fig:move_white}}
\end{figure} 
Here $W$ is the second end of the unique incoming edge $e_m$, $V_i$ denote the ends of the outgoing edges $e_i$ at $U$. A simple calculation shows that the statement is correct since:
\begin{align}\label{eq:move_white}
  \epsilon(f) -\tilde\epsilon(f) &= 0,\nonumber\\
  \epsilon(g_i) -\tilde\epsilon(g_i) &= \mbox{par}(e_1), \nonumber \\
  \epsilon(e_i) -\tilde\epsilon(e_i) &= \left\{
                                       \begin{array}{ll} \mbox{par}(e_1)+ \mbox{par}(e_m) &
                                                  \mbox{ if } V_i \mbox{  is white},  \\
                                         \mbox{par}(e_m) & \mbox{ if } V_i \mbox{  is black}, 
                                       \end{array}\right. \nonumber \\
  \epsilon(e_m) -\tilde\epsilon(e_m) &= \left\{
                                       \begin{array}{ll} \mbox{par}(e_m) &
                                                  \mbox{ if } W \mbox{  is white},  \\
                                         0 & \mbox{ if } W \mbox{  is black}, 
                                       \end{array}\right. \nonumber \\
   \epsilon(h_i) -\tilde\epsilon(h_i) &= \left\{
                                       \begin{array}{ll} 0 &
                                                  \mbox{ if } W \mbox{  is white},  \\
                                         \mbox{par}(e_m) & \mbox{ if } W  \mbox{  is black}; 
                                       \end{array}\right. \nonumber \\
\end{align}
\item Let $U$ be black as in Figure~\ref{fig:move_black}.
 \begin{figure}
  \centering
	{\includegraphics[width=0.49\textwidth]{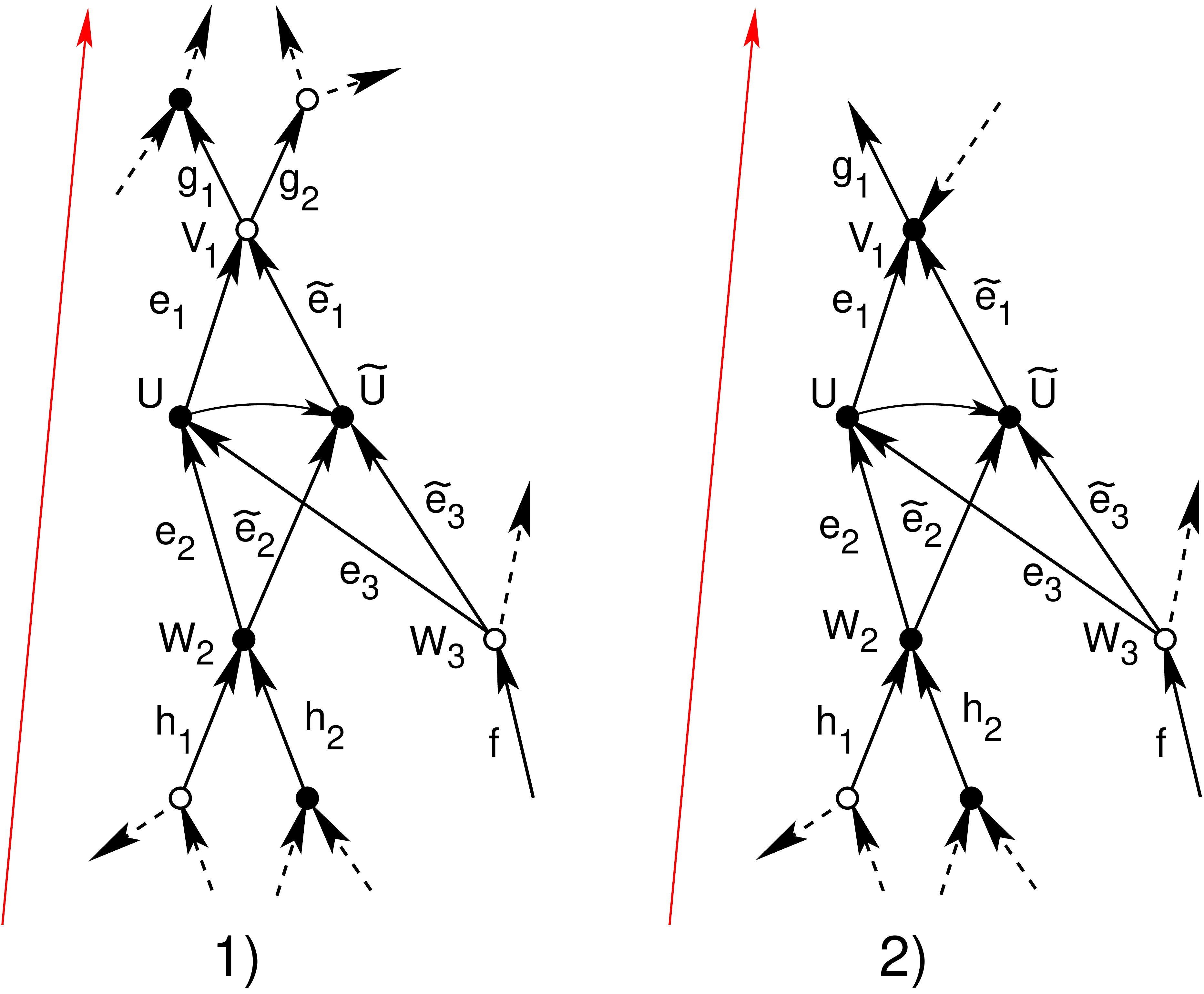}}
  \caption{\small{\sl If one of the edges at $U$ becomes parallel to $\mathfrak l$, some windings may change. We mark the edges participating in these windings with continuous lines and all other edges with dashed lines.}\label{fig:move_black}}
\end{figure} 
Again, a simple calculation shows that the statement is correct since:
\begin{align}\label{eq:move_black}
  \epsilon(f) -\tilde\epsilon(f) &= 0,\nonumber\\
  \epsilon(g_i) -\tilde\epsilon(g_i) &= \left\{
                                       \begin{array}{ll} \mbox{par}(e_1)  &
                                                  \mbox{ if } V_1 \mbox{  is white},  \\
                                         0 & \mbox{ if } V_1 \mbox{  is black}, 
                                       \end{array}\right. \nonumber \\
  \epsilon(e_1) -\tilde\epsilon(e_1) &=  \left\{
                                       \begin{array}{ll} 0 & 
                                                  \mbox{ if } V_1 \mbox{  is white},  \\
                                          \mbox{par}(e_1)  & \mbox{ if } V_1 \mbox{  is black}, 
                                       \end{array}\right. \nonumber \\
  \epsilon(e_j) -\tilde\epsilon(e_j) &= \left\{
                                       \begin{array}{ll} \mbox{par}(e_1) &
                                                  \mbox{ if } W_j \mbox{  is white},  \\
                                         \mbox{par}(e_1) +  \mbox{par}(e_j) & \mbox{ if } W_j \mbox{  is black}, 
                                       \end{array}\right. \nonumber \\
   \epsilon(h_j) -\tilde\epsilon(h_j) &= \left\{
                                       \begin{array}{ll} 0 &
                                                  \mbox{ if } W \mbox{  is white},  \\
                                         \mbox{par}(e_j) & \mbox{ if } W_j  \mbox{  is black}; 
                                       \end{array}\right. \nonumber \\
\end{align}
\end{enumerate}

If $V$ is a boundary sink, then $\mbox{par}(U,V)=0$, and formulas  (\ref{eq:move_white}),  (\ref{eq:move_black}) are valid.

If $W=b$ is a boundary source, and $U$ does not pass the gauge ray starting at $b$, then $\mbox{par}(b,U)=0$, and formulas  (\ref{eq:move_white}),  (\ref{eq:move_black}) are valid. If $U$ passes the gauge ray starting at $b$, then contemporary $\mbox{cr}(U)$ and $\mbox{par}(b,U)$ change by 1. 
\begin{figure}
  \centering
	{\includegraphics[width=0.9\textwidth]{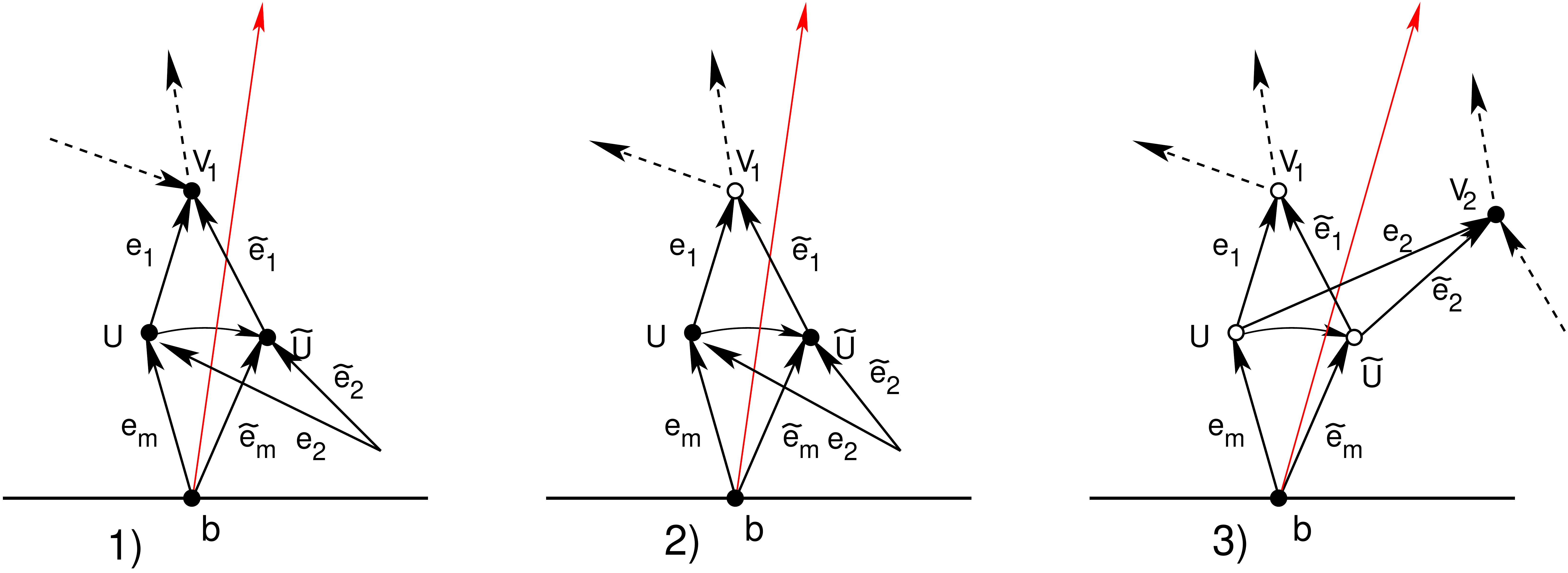}}
  \caption{\small{\sl If one of the edges at $U$ becomes parallel to $\mathfrak l$, some windings may change. We mark the edges participating in these windings with continuous lines and all other edges with dashed lines.}\label{fig:move_at_source}}
\end{figure}
In this case all other edges at $U$ change intersection index by 1, and winding numbers as in  formulas  (\ref{eq:move_white}),  (\ref{eq:move_black}). Therefore, in the notations of Figure~\ref{fig:move_at_source}:
\begin{equation}\label{eq:move_at_source}
  \epsilon(e_j) -\tilde\epsilon(e_j) = \left\{
                                       \begin{array}{ll} 1 & \mbox{ if } U \mbox{  is black},\\ 
                                                         0 & \mbox{ if } U \mbox{  is white}, 
                                       \end{array}\right. \ \ \mbox{for all  } j.
\end{equation}
The proof is complete.
\end{enumerate}

\section*{Acknowledgments} 
The authors would like to express their gratitude to T. Lam for pointing our attention to Reference \cite{AGPR}.

\end{document}